\documentclass[12pt,letterpaper]{amsart} 
\usepackage[utf8]{inputenc}

\usepackage[margin=3cm]{geometry} 

\usepackage{amssymb, amsmath, amscd, amsthm, mathtools}
\usepackage{mathrsfs}
\usepackage{mathdots}

\usepackage{tikz-cd}
\usepackage[all]{xy}
\usepackage{epstopdf} 

\usepackage{enumitem}
\allowdisplaybreaks
\usepackage{setspace}
\usepackage[bookmarks=true,bookmarksnumbered=true,colorlinks]{hyperref}
\definecolor{dartmouthgreen}{rgb}{0.05, 0.5, 0.06}  
\hypersetup{
    linkcolor=blue,         
    citecolor=dartmouthgreen,
}

\makeatletter

\newcommand{\nosubsection}[1]{%
  \par\addvspace{.7\linespacing\@plus.7\linespacing} 
  {\noindent\normalfont\bfseries #1} 
  \par\nopagebreak\addvspace{.5\linespacing} 
  \@afterindenttrue\@afterheading 
}

\def\subsection{\@startsection{subsection}{2}%
  \z@{.7\linespacing\@plus.7\linespacing}{.5\linespacing}%
  {\normalfont\bfseries}}
\makeatother

\numberwithin{equation}{section}

\theoremstyle{plain}
\newtheorem{thm}{Theorem}[section]
\newtheorem{lem}[thm]{Lemma}
\newtheorem{prop}[thm]{Proposition}
\newtheorem{cor}[thm]{Corollary}
\newtheorem{fac}[thm]{Fact}

\newtheorem*{customproof0}{Proof of Theorem~\ref{prop2}}

\theoremstyle{definition}
\newtheorem{rem}[thm]{Remark}
\newtheorem{defn}[thm]{Definition}
\newtheorem*{con}{Conjecture}
\newtheorem{con1}[thm]{Conjecture}

\newcommand{\Mp}{\widetilde{\Sp}}

\newcommand{\pair}[1]{\langle #1 \rangle}
\renewcommand{\1}{{\bf 1}}

\newcommand{\bb}{\bigbreak}

\def\Sym{\operatorname{Sym}}

\def\diag{\operatorname{diag}}

\def\Hom{\operatorname{Hom}}
\def\ind{\operatorname{ind}}
\def\Ind{\operatorname{Ind}}
\def\Irr{\operatorname{Irr}}

\def\Re{\operatorname{Re}}
\def\Res{\operatorname{Res}}

\def\Tr{\operatorname{Tr}}

\def\beq{\begin{equation}}
\def\eeq{\end{equation}}
\def\beqn{\begin{equation*}}
\def\eeqn{\end{equation*}}
\def\beqna{\begin{eqnarray}}
\def\eeqna{\end{eqnarray}}
\def\beqnan{\begin{eqnarray*}}
\def\eeqnan{\end{eqnarray*}}

\def\TT{\mathbb{T}}
\def\tb{\textbf}

\def\wt{\widetilde}
\def\vi{\varphi}

\def\mc{\mathcal}

\def\mf{\mathfrak}

\def\bs{\backslash}

\def\GL{\mathrm{GL}}

\def\O{\mathrm{O}}
\def\SO{\mathrm{SO}}
\def\SL{\mathrm{SL}}
\def\Sp{\mathrm{Sp}}
\def\G{\mathrm{G}}
\def\H{\mathrm{H}}
\def\D{\mathrm{D}}
\def\J{\mathrm{J}}
\def\I{\mathrm{I}}
\def\U{\mathrm{U}}
\def\P{\mathrm{P}}
\def\B{\mathrm{B}}
\def\R{\mathrm{R}}
\def\N{\mathrm{N}}
\def\M{\mathrm{M}}
\def\T{\mathrm{T}}

\def\K{\mathrm{K}}
\def\V{\mathrm{V}}

\def\A{\mathbb{A}}
\def\Ha{\mathbb{H}}
\def\Z{\mathrm{Z}}

\def\e{\ve}

\def\CC{\mathbb{C}}

\def\VV{\mathbb{V}}
\def\WW{\mathbb{W}}

\def\ZZ{\mathbb{Z}}
\def\z{\mathfrak{Z}}

\def\OO{\mathcal{O}}
\def\RR{\mathbb{R}}

\def\II{\mathbb{I}}
\def\col{\coloneqq}

\def\ss{\subset}

\def\k{\mathfrak{k}}

\def\la{\langle}
\def\ra{\rangle}

\def\y{\textbf{y}}

\def\w{\textbf{w}}
\def\t{\textbf{t}}

\def\ve{\varepsilon}

\title[Special periods and some non-tempered GGP conjecture]{SPECIAL PERIODS AND SOME NON-TEMPERED CASES OF THE GAN-GROSS-PRASAD CONJECTURE}
\begin{document}

\begin{abstract}
In this paper, we establish a relationship between special periods and special \(L\)-values of automorphic representations of classical groups, and prove the non-tempered global Gan--Gross--Prasad conjecture in several cases.

 Our approach consists of two main steps. First, inspired by Rallis' tower property, we study the interaction between special periods and the tower property for the genericity of global theta lifts. Second, we investigate the relationship between the analytic properties of \(L\)-functions and special periods via the Rankin--Selberg integral method.

 Combining these results with non-vanishing criteria for global theta lifts in terms of various \(L\)-values, we prove three explicit higher-corank families of non-tempered cases of the global
Gan–Gross–Prasad conjecture.
\end{abstract}

\dedicatory{Dedicated to the memory of Benedict Gross and Stephen Rallis}
\author{Jaeho Haan}
\address{Jaeho Haan}
\curraddr{Department of Mathematical Sciences, KAIST, 291 Daehak-ro, Yuseong-gu, Daejeon, 34141, South Korea}
\email{jaehohaan@gmail.com}
\author{Sanghoon Kwon}
\address{Sanghoon Kwon}
\curraddr{Department of Mathematical Education\\ Catholic Kwandong University \\ Gangneung 25601 \\ Republic of Korea}
\email{shkwon1988@gmail.com \\ skwon@cku.ac.kr}

\subjclass[2010]{Primary 11F70, 22E55, 11F27, 11F66, 22E50}
\keywords{the non-tempered Gan-Gross-Prasad conjecture, classical groups, theta correspondence, Rankin-Selberg integral, Bessel periods, Fourier-Jacobi periods, special $L$-values}


\maketitle
\tableofcontents

\section{Introduction}

\nosubsection{Motivation and purpose of the paper}
Special values of automorphic $L$-functions lie at the heart of modern number theory and arithmetic geometry.
In the automorphic setting, a guiding principle is that distinguished period integrals should detect central
(or near-central) values of suitable $L$-functions.
A particularly influential manifestation of this principle is the Gan--Gross--Prasad (GGP) conjecture,
which relates the non-vanishing of Bessel and Fourier--Jacobi periods on classical groups
to central values of Rankin--Selberg $L$-functions.

The present paper proves several cases of the \emph{non-tempered} global GGP conjecture~\cite{GGP20}.
More precisely, we establish the conjecture for the following three group pairs:
\[
(\SO_{2n},\SO_3),\qquad (\Sp_{2n},\Mp_2),\qquad (\Mp_{2n},\Sp_2),
\]
in a natural and explicitly describable class of non-tempered situations, where the second factor is fixed
to be either the trivial representation or a global Weil representation, both of which are non-tempered automorphic representation.

To state the theorem cleanly, we begin by recalling the brief history and formulation of the GGP conjecture, which underlies the main results of this paper.
\nosubsection{Bessel and Fourier-Jacobi periods}

In the early 1990s, Gross and Prasad \cite{GP92,GP94} formulated a striking conjecture relating the non-vanishing of certain period integrals on special orthogonal groups to the non-vanishing of central values of certain tensor product \(L\)-functions, together with its local counterpart. Later, together with Gan \cite{GGP12, GGP20}, they extended this conjecture to all classical groups as well as metaplectic groups. This extended conjecture is now referred to as the global \textit{GGP conjecture}.

The global GGP conjecture involves two types of period integrals: Bessel periods and Fourier--Jacobi periods. Bessel periods arise from automorphic forms on orthogonal groups or hermitian unitary groups, whereas Fourier--Jacobi periods are associated with automorphic forms on metaplectic--symple\-ctic groups or skew-hermitian unitary groups. In this paper, we restrict our attention to orthogonal and metaplectic--symplectic groups; thus, we briefly recall the definitions of these two types of periods in this setting.

Let \(F\) be a number field and let \(F_v\) denote its local completion at a place \(v\) of \(F\).
Let \(\mathbb{A}\) be the adele ring of \(F\), and let \(\psi\) be a nontrivial unitary additive character of \(F \backslash \mathbb{A}\).

For $\ve \in \{\pm\}$, let $Z_m \subset Z_n$ be non-degenerate $m$ and $n$-dimensional $\ve$-Hermitian spaces over $F$ satisfying:
\begin{itemize}
\item $\ve \cdot (-1)^{\text{dim}Z_m^{\perp}}=-1$;
\item $Z_m^{\perp}$ in $Z_n$ is a split space.
\end{itemize}

Define \(G_n\) and \(G_m\) to be the connected components of the isometry groups of
\(Z_n\) and \(Z_m\), respectively. We assume that \(G_n\) and \(G_m\) are both quasi-split. For \(k = n,m\) and \(R = F\), \(\mathbb{A}\), or \(F_v\), define
\[
\widetilde{G}_k(R) \coloneqq
\begin{cases}
\text{the metaplectic double cover of } G_k(R),
& \text{if } G_k(R) \text{ is symplectic},\\[4pt]
G_k(R),
& \text{if } G_k(R) \text{ is SO}.
\end{cases}
\]
There is a canonical lifting of \(G_k(F)\) into \(\widetilde{G_k}(\mathbb{A})\).
Although \(\widetilde{G_k}\) is not an algebraic group, we shall treat it as one for notational convenience, at least in the introduction.

Consider $G_m$ as a subgroup of $G_n$ that acts trivially on the orthogonal complement of $Z_m$ within $Z_n$. Let $X$ be a maximal isotropic subspace of $Z_m^{\perp}$, and let $P$ be the parabolic subgroup of $G_n$ defined as the stabilizer of a complete flag of subspaces within $X$. Write $n-m=2r+1$ if $n-m$ is odd, and $n-m=2r$ if $n-m$ is even. Denote by $U$ the unipotent radical of $P$, on which $G_m$ acts through conjugation. Write \(H_{m,r} = U \rtimes G_m\) and \(\wt{H}_{m,r} = U \rtimes \wt{G}_m\).
In the case where \(n-m = 2r+1\), there exists a generic character
\[
\psi_U \colon U(F)\backslash U(\mathbb{A}) \to \mathbb{C}^{\times},
\]
depending on \(\psi\), which is stabilized by \(G_m(\mathbb{A})\).
Moreover, this character extends to a character of
\(H_{m,r}(F)\backslash H_{m,r}(\mathbb{A})\).

When $n-m=2r$, there exists a global generic Weil representation denoted as $\nu_{\psi^{-1},Z_m}$ of $\wt{H}_{m,r}(\A)$. This representation is realized on the Schwartz space $\mc{S} = \mc{S}(Y(\A))$, where $Y$ is a Lagrangian $F$-space of $Z_m$.

For each $f \in  \mc{S}$, the associated  theta series is defined by 
\[
\Theta_{\psi^{-1},Z_m}(h ,f)=\sum_{x\in Y(F)} \left( \nu_{\psi^{-1},Z_{m}}(h)f \right) (x), 
\quad
h \in \wt{H}_{m,r}(\A).
\]
The space of theta series provides an automorphic realization of $\nu_{\psi^{-1},Z_m}$ that is invariant under the action of $H_{m,r}(F)$. Moving forward, we will denote $H_{m,r}(F) \bs H_{m,r}(\mathbb{A})$ as $[H_{m,r}]$.

Let \(\pi_1\) and \(\pi_2\) be irreducible  cuspidal automorphic representations of
\(\widetilde{G}_n(\mathbb{A})\) and \(\widetilde{G}_m(\mathbb{A})\), respectively.
When both \(G_n\) and \(G_m\) are symplectic groups, suppose that exactly one of
\(\pi_1\) and \(\pi_2\) is genuine. 

We view $H_{m,r}$ as a subgroup of $G_n$ through the mapping $(u,g) \mapsto ug$. Depending on whether $n-m$ is odd or even, we define the \textit{Bessel periods} and the \textit{Fourier--Jacobi periods} for $\pi_1,\pi_2$ as the following integrals:
\begin{itemize}

\item If $n-m$ is odd, the Bessel period for $\vi_1 \otimes \vi_2 \in \pi_1 \boxtimes \pi_2$ is defined by
\[\mathcal{B}(\vi_1\otimes \vi_2) \col \int_{ [H_{m,r}]}\vi_1(ug)\vi_2(g)\psi_U^{-1}(u)dudg.
\]

\item If $n-m$ is even, the Fourier--Jacobi period for $\vi_1 \otimes \vi_2 \otimes \phi \in \pi_1 \boxtimes \pi_2 \boxtimes \nu_{\psi^{-1},Z_m} $ is defined by 
\[
\mathcal{FJ}(\vi_1\otimes \vi_2\otimes \phi) \col \int_{ [ H_{m,r}]}\vi_1(u\wt{g})\vi_2(\wt{g})\Theta_{\psi^{-1},Z_m}\left((u,\wt{g}),\phi\right)dudg.
\]
\end{itemize}
\par
In the last integral, \(\widetilde{g}\) denotes any lift of \(g\) to \([\widetilde{G}_{m}]\).
This choice is immaterial, since exactly two of
\(\pi_1\), \(\pi_2\), and \(\nu_{\psi^{-1},Z_m}\) are genuine.

When both $\pi_1$ and $\pi_2$ are non-cuspidal, the above Bessel and Fourier--Jacobi periods may diverge, and one must introduce suitable regularizations. Concrete definitions of regularized periods were given in~\cite{IY19} and~\cite{Zyd22} when $H_{m,r}$ is reductive, and in~\cite{H24} when $H_{m,r}$ is non-reductive.
We shall use the same notation $\mc{B}$ and $\mc{FJ}$ to denote the regularized Bessel and Fourier--Jacobi periods, respectively.\\

\nosubsection{Global GGP conjecture}

\noindent We now state the global GGP conjecture as formulated in the framework of Arthur, using the language of global $A$-parameters.

\begin{con}[{\cite[Conjecture~9.1]{GGP20}}]\label{ggp1}
Let $M \times N$ be a discrete global $A$-parameter of $G_n \times G_m$, and let $\pi_1 \boxtimes \pi_2$ be an irreducible discrete automorphic representation of $G_n(\A) \times G_m(\A)$ with $A$-parameter $M \times N$. Then the following assertions hold:

\begin{enumerate}
    \item Suppose that $n-m$ is odd.
    \begin{enumerate}
        \item If the Bessel period $\mc{B}$ on $\pi_1 \boxtimes \pi_2$ is nonzero, then $(M, N)$ is a relevant pair.
        \item Assume that $(M, N)$ is a relevant pair. Then the following are equivalent:
        \begin{enumerate}
            \item[(1)] There exists an irreducible discrete automorphic representation $ \pi_1' \boxtimes \pi_2' \in \Pi_{M \times N}^{\mathrm{rel}}$ such that $\mc{B}$ on $\pi_1' \boxtimes \pi_2'$ is nonzero.
            \item[(2)] The extended central $L$-value $L(s, M, N) \big|_{s=0}$ is nonzero.
        \end{enumerate}
    \end{enumerate}

    \item Suppose that $n-m$ is even.
    \begin{enumerate}
        \item If the Fourier--Jacobi period $\mc{FJ}$ on $\pi_1 \boxtimes \pi_2 \boxtimes \nu_{\psi^{-1}, Z_m}$ is nonzero, then $(M, N)$ is a relevant pair.
        \item Assume that $(M, N)$ is a relevant pair. Then the following are equivalent:
        \begin{enumerate}
            \item[(1)] There exists an irreducible discrete automorphic representation $ \pi_1' \boxtimes \pi_2' \in \Pi_{M \times N}^{\mathrm{rel}}$ such that $\mc{FJ}$ on $\pi_1' \boxtimes \pi_2' \boxtimes \nu_{\psi^{-1}, Z_m}$ is nonzero.
            \item[(2)] The extended central $L$-value $L(s, M, N) \big|_{s=0}$ is nonzero.
        \end{enumerate}
    \end{enumerate}
\end{enumerate}
\end{con}

For the precise definitions of global discrete $A$-parameters, relevant pairs, and global relevant $A$-packets $\Pi_{M \times N}^{\mathrm{rel}}$, we refer the reader to Sections~\ref{sec:A-param} and~\ref{relev}. We remark that when the relevant group is metaplectic, the associated $A$-parameter intrinsically depends on the choice of the additive character $\psi$. 

The extended central $L$-value $L(s,M,N)\big|_{s=0}$ appearing in parts (i)-(b) and (ii)-(b) of the conjecture may be viewed as a natural generalization of the standard central $L$-value $L(1/2, \pi_1 \times \pi_2)$; see \eqref{clv} for its precise formulation. Furthermore, the local Gan--Gross--Prasad conjecture predicts the uniqueness of the distinguished representation $\pi_1' \boxtimes \pi_2' \in \Pi_{M \times N}^{\mathrm{rel}}$ satisfying the non-vanishing period condition in the Conjecture.
\nosubsection{Progress on tempered cases}

\noindent 
When $M$ and $N$ are tempered $A$-parameters, they form a relevant pair, and hence Conjecture~\ref{ggp1} recovers the tempered GGP conjecture proposed in~\cite{GGP12} as a special case.
In this tempered setting, substantial progress has been made over the past decade.
Most notably, for unitary groups the conjecture has now been completely resolved via the relative trace formula method.
See~\cite{BPLZZ21,BPCZ22,BPC25,Zha14a,Zha14b} for the Hermitian unitary cases, and~\cite{Xu14,Xu16,BLX24} for the skew-Hermitian unitary cases.

By contrast, it remains unclear how to apply the relative trace formula approach to classical groups other than unitary groups.
Nevertheless, for such groups one direction of the conjecture in the tempered setting is known, by relating the relevant period integrals to regularized periods of certain residual Eisenstein series; see~\cite{GJR04,JZ20, Yam18,IY19, H}.
Although the converse direction is not established in general, it has been proved in the case
$G_n \times G_m = \SO_{2n+1} \times \SO_2$.
See~\cite{JS07} when $\SO_2$ is split, and~\cite{FM17,FM21,FM24,JZ20} when $\SO_2$ is anisotropic.
\nosubsection{Main theorem}

\noindent 
While many significant results have been established for the tempered cases of the Gan--Gross--Prasad (GGP) conjecture, much less is known regarding the non-tempered cases. For corank $1$ cases, specifically when $(G_n \times G_m) = (\SO_5 \times \SO_4)$, Gan, Gourevitch, and Szpruch (\cite{GG09, GS15}) investigated certain non-tempered settings where the relevant Arthur parameter $M \times N$ of $G_n \times G_m$ is given by
\[
(M, N) = (M_1 \oplus [1], N_0) \quad \text{and} \quad (M_2 \boxtimes [1], N_0).
\]
Here, $M_1$ and $M_2$ are tempered $A$-parameters, and $N_0$ is an orthogonal tempered $A$-parameter. These results were obtained by employing the seesaw identity within the framework of theta correspondence. 

In the context of unitary groups, for the pair $(G_n \times G_m) = (\U_3 \times \U_2)$, Gelbart, Rogawski, and Soudry \cite{GRoS97} established a relationship between Bessel periods and the pole of a certain $L$-function at $s=1$. As noted in \cite[Theorem~11.7]{GGP20}, this result implies the non-tempered GGP conjecture for the case where the $A$-parameter $M \times N$ is given by
\[
(M, N) = (\mathbb{I}_{\GL_1} \oplus M_0, [1]),
\]
where $M_0$ is a tempered $A$-parameter.

For groups of higher corank, however, there have been no relevant results to the best of the authors' knowledge. In general, higher corank cases present significant technical challenges because the period integrals involve integration over large unipotent subgroups, making the analysis considerably more complex than in the corank $0$ or $1$ instances. 

In this paper, we establish the conjecture for the following pairs of quasi-split groups under specific non-tempered settings:
\[
(G_n, G_m) \in \{ (\SO_{2n}, \SO_3), (\Mp_{2n}, \Sp_2), (\Sp_{2n}, \Mp_2) \}.
\]
Notably, our results encompass cases of arbitrarily high corank, significantly extending the scope of the conjecture beyond previously known instances. Moreover, while all existing literature is restricted to scenarios where only one of the parameters $(M,N)$ is non-tempered, we present the first treatment of the case where both parameters $(M,N)$ are non-tempered for the pair $(G_n, G_m) = (\Sp_{2n}, \Mp_2)$.

We state our results more precisely in the following theorem. For $d \in F^\times$, let $\chi_d$ denote the quadratic character of $F^{\times} \backslash \A^{\times}$ associated with the quadratic extension $F(\sqrt{d}) / F$.
\begin{thm}\label{thm:main}
Let $(G, H)$ be one of the following pairs of quasi-split groups:
\[
(G, H) \in \{ (\SO_{2n}, \SO_3), (\Sp_{2n}, \Mp_2), (\Mp_{2n}, \Sp_2) \}.
\]
We assume that $n \ge 2$ when $G = \SO_{2n}$, and $n \ge 1$ when $G \in \{ \Sp_{2n}, \Mp_{2n} \}$. 

Let $M \times N_0$ be a global discrete $A$-parameter for $G \times H$, where $N_0$ is explicitly defined as
\[
N_0 = 
\begin{cases} 
[1], & \text{when } H=\SO_3, \\ 
\chi_d \boxtimes [1], & \text{when } H=\Mp_2, \\ 
[2], & \text{when } H=\Sp_2. 
\end{cases}
\] Then the following assertions hold:
\begin{enumerate}
    \item Let $\pi_1 \boxtimes \pi_2 \in \Pi_{M \times N_0}^{\mathrm{rel}}$. Assume that $\pi_1$ is cuspidal and locally generic at some finite place $v$ of $F$. Suppose that the associated period is non-vanishing (i.e., $\mc{B}$ is non-zero on $\pi_1 \boxtimes \pi_2$ when $H=\SO_3$, or $\mc{FJ}$ is non-zero on $\pi_1 \boxtimes \pi_2 \boxtimes \nu_{\psi^{-1},Z_2}$ when $H \in \{\Sp_2, \Mp_2\}$). Then $(M,N_0)$ is a relevant pair. Furthermore, $M$ is a tempered $A$-parameter when $H \in \{\SO_3, \Mp_2\}$, and $M = M' \oplus [1]$ for some tempered $A$-parameter $M'$ of $\SO_{2n-1}$ when $H=\Sp_2$.
    
    \item Assume that $M$ is tempered when $H \in \{\SO_3, \Mp_2\}$, and $M = M' \oplus [1]$ for some tempered $A$-parameter $M'$ of $\SO_{2n-1}$ when $H=\Sp_2$. Then the following conditions are equivalent:
    \begin{enumerate}
        \item There exists a representation $ \pi_1' \boxtimes \pi_2' \in \Pi_{M \times N_0}^{\mathrm{rel}}$ such that the associated period is non-vanishing (i.e., $\mc{B}$ is nonzero on $\pi_1' \boxtimes \pi_2'$ when $H=\SO_3$, or $\mc{FJ}$ is nonzero on $\pi_1' \boxtimes \pi_2' \boxtimes \nu_{\psi^{-1},Z_2}$ when $H \in \{\Sp_2, \Mp_2\}$).
        \item The central $L$-value satisfies $L(s, M, N_0) \big|_{s=0} \neq 0$.
    \end{enumerate}
\end{enumerate}
\end{thm}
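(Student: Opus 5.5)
The plan is to reduce each of the three period problems to a lower-rank period via the theta correspondence. In each case the small group $H$ carries a representation that is itself a theta lift: the trivial representation of $\SO_3$, the Weil-type representation of $\Mp_2$ with parameter $\chi_d\boxtimes[1]$, and the trivial representation of $\Sp_2$ with parameter $[2]$. We then exploit a tower property of genericity of theta lifts together with the seesaw principle.

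\textbf{Step 1: reduce the periods to Whittaker-type coefficients.} We unfold each period against the special small representation of $H$.
\begin{itemize}
\item[(i)] For $(\SO_{2n},\SO_3)$ with $\pi_2$ trivial, the Bessel period $\mc{B}(\vi_1\otimes 1)$ is an integral of $\vi_1$ over $U\rtimes\SO_3$ against $\psi_U$. Integrating the constant term along $\SO_3$ and using an $\SO_3$-orbit decomposition, we identify it with a degenerate Whittaker-type coefficient, one step short of a generic one. This is the ``special period''.
\item[(ii)] For $(\Sp_{2n},\Mp_2)$ we realise $\pi_2$ as an elementary theta series, and for $(\Mp_{2n},\Sp_2)$ we take $\pi_2=1$. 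Unfolding the product of the theta series on $\Mp_2$ with $\Theta_{\psi^{-1},Z_2}$ expresses the Fourier--Jacobi period as a Bessel- or Whittaker-type coefficient of $\pi_1$ along a larger unipotent group.
\end{itemize}
The outcome of this step is: the period is nonzero if and only if an explicit special period $\mc{P}(\pi_1)$ is nonzero.

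\textbf{Step 2: translate the special period into theta lifts.} Using a Rallis-type tower argument, we show that $\mc{P}(\pi_1)\neq0$ if and only if the theta lift of $\pi_1$ to a specific member of a Witt tower is nonzero and globally generic. The members are: $\Sp_{2n-2}$ (or $\Mp_{2n-2}$) from $\SO_{2n}$; $\SO_{2n+1}$ twisted by $d$ from $\Sp_{2n}$; and an odd orthogonal group from $\Mp_{2n}$. Local genericity of $\pi_1$ at a finite place, together with cuspidality, ensures that the first occurrence is controlled and that the lift is cuspidal. Arthur's multiplicity formula and Mœglin's computation of $A$-parameters of theta lifts then force the shape of $M$: tempered in the first two cases, and $M'\oplus[1]$ when $H=\Sp_2$, where the $[1]$ comes from the trivial summand lost in going down the tower. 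This also verifies relevance, which proves (1).

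\textbf{Step 3: link nonvanishing of theta lifts to $L$-values.} By Rankin--Selberg integrals (Ginzburg--Rallis--Soudry type) and the Rallis inner product formula, nonvanishing of the relevant generic theta lift is equivalent to a pole or a nonvanishing value of $L(s,\pi_1)$ or $L(s,\pi_1\times\chi_d)$ at the appropriate point. We then compute $L(s,M,N_0)$ from the definition (\ref{clv}): the factors $[1]$, $[2]$ shift the $L$-function so that $L(s,M,N_0)|_{s=0}$ is a product of standard $L$-values at $s=1/2$ and $s=1$ (and at $3/2$ for $[2]$). The extra factors are nonzero, since $L(1,\cdot)$ and $L(3/2,\cdot)$ do not vanish for tempered parameters. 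Hence the condition reduces to exactly the criterion obtained above. For the direction (b)$\Rightarrow$(a), we choose the member of the packet as the theta lift of a generic representation on the other side of the tower, using the local theta dichotomy to pick the correct pure inner form and local components. This construction produces $\pi_1'\boxtimes\pi_2'\in\Pi^{\mathrm{rel}}_{M\times N_0}$.

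\textbf{Main obstacle.} The hard part is Step 2, namely showing that nonvanishing of a higher-corank special period forces genericity of the theta lift. This requires a careful root-exchange and unfolding argument across the large unipotent group $U$, proceeding by induction along the tower. I would first attempt it for $\SO_{2n}$ by exchanging roots column by column and applying Fourier expansion along the Heisenberg parts.
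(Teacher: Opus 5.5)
There is a genuine gap. Your global architecture is the paper's: period $\Leftrightarrow$ nonzero generic theta lift; local genericity at $v$ plus the vanishing of twisted Jacquet modules of the Weil representation fixes the first occurrence; the unramified correspondence and disjointness of packets give the shape of $M$; the Rallis inner product gives the pole. But the step carrying all the weight is not supplied, and the reduction you propose for it fails. A Bessel period cannot be rewritten as a unipotent (degenerate Whittaker) coefficient of $\pi_1$: already for $n=2$ it is $\int_{[\SO_3]}\varphi$ on $\SO_4$, a genuinely reductive period. No root exchange is needed either.

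The missing idea is to work from the other side and unfold the $(\psi,\lambda)$-Whittaker coefficient of $\theta(\phi,\varphi)$ on $\Sp_{2k}$ (here $k=n-1$). The Siegel-unipotent integral forces $\mathrm{Gr}(x_1,\dots,x_k)=\mathrm{diag}(0,\dots,0,2\lambda)$. The integral over the maximal unipotent of $\GL_k$ kills every tuple with $x_1,\dots,x_{k-1}$ dependent, since a simple root subgroup fixes it while the character is nontrivial there; this is also why lifts beyond $k=n+\ve$ are non-generic. Witt's theorem leaves the single orbit of $(e_1,\dots,e_{k-1},e)$. Its stabilizer $\R_k^{\ve}$, together with the $\GL_k$-unipotent (which acts on this orbit through $\G_n^{\ve}$), reassembles into $\U_{n,k-1}^{\ve}\rtimes\G_{n,k}^{\ve}$ with the Bessel character. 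Hence
\[
W_\psi^{\lambda_\ve}\bigl(\theta(\phi,\varphi)\bigr)=\int_{\R_k^{\ve}(\A)\backslash\G_n^{\ve}(\A)}\bigl(\omega(g)\phi\bigr)(e_1,\dots,e_{k-1},e)\,\mc{B}\bigl(\mf{R}(g)\varphi\bigr)\,dg,
\]
which gives both directions at once; for the converse, concentrate $\phi$ at $(e_1,\dots,e_{k-1},e)$. The Fourier--Jacobi case is the same computation in a mixed model, where the sum over $V^{\ve}\otimes Y'$ produces the theta series on $\Mp_2$.

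Your dual pair for $\Sp_{2n}$ is also wrong. A non-genuine $\sigma$ lifts to the \emph{even} group $\SO_{2n}$ of discriminant $d$, giving $M=\chi_d\oplus\chi_d M'$. $\SO_{2n+1}$ pairs only with $\Mp_{2n}$, and $\SO_{2n}$ pairs with $\Sp_{2n-2}$, never $\Mp_{2n-2}$.

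In Step~3 the criterion is a pole count, not a central value. For $\SO_{2n}\times\SO_3$, $L(s+\tfrac12,M\times N_0)=L(s+1,M)L(s,M)$ while $L(s+1,\Sym^2 N_0)$ has a double pole at $s=0$, so $\operatorname{ord}_{s=0}L(s,M,N_0)=2\operatorname{ord}_{s=1}L(s,M)+2$. In the $\Mp_{2n}$ case $L(\tfrac12,M')$ cancels and $L(0,M,N_0)\neq 0$ automatically.

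For (a)$\Rightarrow$(b), $\pi_1'$ need not be generic, so only the direction theta-nonvanishing $\Rightarrow$ pole (Rallis inner product) is available; that suffices.

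For (b)$\Rightarrow$(a), the paper takes a globally generic member by descent and proves pole $\Rightarrow$ period with a Rankin--Selberg integral whose residue is the period. The Eisenstein residue is constant on $\SO_3$ and $\Sp_2$, and a theta series on $\Mp_2$. For quasi-split $\SO_{2n}$ this needed a new Rankin--Selberg theory valid for every Whittaker datum, which the existing GRS/Kaplan theory does not give.

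Your alternative, lifting a generic representation up from the smaller group, can bypass that, but two things must be proved:
\begin{itemize}
\item The lift is cuspidal, i.e.\ the earlier lift vanishes. In the $\Mp_{2n}$ case with $L(\tfrac12,M')\neq 0$ this forces twisting the $\O_{2n-1}$-extension by a suitable $\mathrm{sgn}_{\TT}$. That is the only place a dichotomy enters; no pure inner forms are needed.
\item By adjointness of theta integrals, lifting back returns your generic representation, so the identity above applies.
\end{itemize}
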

We summarize part (i) of the theorem in the following table:\bb

\begin{center}
\renewcommand{\arraystretch}{1.2}
\begin{tabular}{c|c|c|c}
\hline
$(G,H)$ & $N_0$ & \textbf{Relevant period} & \textbf{Condition on $M$} \\
\hline
$(\SO_{2n}, \SO_3)$ & $[1]$ & Bessel &  $M$ is tempered \\
$(\Sp_{2n}, \Mp_2)$ & $\chi_d \boxtimes [1]$ & Fourier--Jacobi & $M$ is tempered \\
$(\Mp_{2n}, \Sp_2)$ & $[2]$ & Fourier--Jacobi & $ M=M' \oplus [1]$ \\
\hline
\end{tabular}
\end{center}
\bb
For the precise definition of tempered $A$-parameters, we refer the reader to Section~\ref{subsec:discrete-A}. It should be emphasized that the $A$-parameter $N_0$ is strictly non-tempered in all three cases, and the parameter $M$ is additionally non-tempered when $H=\Sp_2$.

The assumption of local genericity at a single finite place is used to rule out any prior occurrences in the theta tower via the local vanishing theorem for twisted Jacquet modules. We further remark that although the general non-tempered GGP conjecture requires regularized periods, the specific cases treated in our main theorem involve cuspidal representations on the larger group. Thus, all period integrals appearing in the theorem are absolutely convergent.

Taken together, Theorem~\ref{thm:main} establishes a complete verification of the global non-tempered GGP conjecture for these specific pairs of classical groups of higher corank.

\nosubsection{The role of non-tempered representations and Arthur packets}

\noindent From the perspective of Arthur's framework, the non-tempered case of the GGP conjecture is of intrinsic importance. Global $A$-parameters are conjecturally defined as homomorphisms
\[
\psi \colon \mathcal{L}_F \times \SL_2(\CC) \longrightarrow {}^LG,
\]
where $\mathcal{L}_F$ denotes the hypothetical Langlands group. The presence of a non-trivial $\SL_2(\CC)$-factor is precisely what characterizes non-tempered Arthur parameters, distinguishing them from tempered ones. Consequently, non-tempered representations form an essential and natural part of global Arthur packets, rather than being pathological exceptions.

Within this framework, many automorphic representations arising as CAP representations, the residual spectrum, or via endoscopic transfer and global theta correspondence are expected---and in many cases known---to be non-tempered. In particular, the images of global theta lifts typically carry non-trivial $\SL_2(\CC)$-components in their associated $A$-parameters. Therefore, any attempt to fully understand the structure of Arthur packets through period integrals necessarily requires addressing the non-tempered setting.

The general GGP conjecture formulated in \cite{GGP20} reflects this perspective. It extends the original generic GGP conjecture \cite{GGP12} by replacing the central $L$-values of specific representations with a canonical $L$-value condition defined entirely in terms of global $A$-parameters. This reformulation highlights the fact that the relationship between periods and $L$-functions is fundamentally governed by the internal structure of Arthur packets---a structure whose interactions are significantly more complex and delicate in the non-tempered setting where generic representations typically do not even exist.

The non-tempered cases treated in this paper provide concrete examples in which one can explicitly trace how the $\SL_2(\CC)$-component of an $A$-parameter influences both the analytic behavior of the associated $L$-functions and the non-vanishing of special periods. We emphasize, however, that for the three group pairs considered here, one of the groups is of relatively small rank, and the corresponding non-tempered $A$-parameters involve $\SL_2(\CC)$-factors of limited dimension. As a result, our findings do not capture all possible phenomena that may arise in the full non-tempered GGP conjecture.

Nevertheless, these cases serve as natural and highly accessible testing grounds for the conjecture. They illustrate, in a concrete setting, how Arthur's $\SL_2(\CC)$-component enters into the delicate interplay between periods, theta correspondence, and special values of $L$-functions, and demonstrate how the general framework of \cite{GGP20} fits perfectly into Arthur's broader classification of automorphic representations.

\nosubsection{Strategy of proof: theta correspondence and Rankin--Selberg methods}

\noindent For clarity of exposition, in this subsection we explain the strategy of the proof only for the quasi-split group pair $(\SO_{2n}, \SO_3)$. This case already contains all the essential conceptual ingredients of the argument. The other cases treated in this paper follow the same general pattern, with appropriate technical modifications. To streamline our discussion in this introduction, we temporarily use the simplified notation $\Theta_{n,m}^{0}$ for the global theta lifting from $\SO_{2n}(\A)$ to $\Sp_{2m}(\A)$, and $\Theta_{n,m}^{1}$ for the global theta lifting from $\Sp_{2n}(\A)$ to $\SO_{2m}(\A)$.

\medskip

\noindent
\textbf{Step 1: From Bessel periods to theta lifts.}\\
Let $\pi$ be an irreducible cuspidal automorphic representation of $\SO_{2n}(\A)$.
The first step is to relate the non-vanishing of the special Bessel period
$\mathcal{B}$ on $\pi$ to the non-vanishing of a specific global theta lift.
By computing Whittaker--Fourier coefficients of global theta lifts via the standard
model of the Weil representation, we prove that
\[
\mathcal{B} \text{ is } \text{non-vanishing on } \pi
\quad \Longleftrightarrow \quad
\Theta_{n,n-1}^0(\pi) \neq 0 \text{ and generic}.
\]
This establishes a precise bridge between period integrals and the behavior of theta lifts
at the first occurrence range.

\medskip

\noindent
\textbf{Step 2: The first occurrence index and $A$-parameters.}\\
Once $\Theta_{n,n-1}^0(\pi)$ is known to be nonzero and generic, Rallis's tower property and Kudla's supercuspidal theorem allow us to recover strong information on the global $A$-parameter $M$
associated with $\pi$. The assumption that $\pi$ is locally generic at at least one finite place is used to rule out degenerate low-rank phenomena and to ensure that
the first occurrence index equals $n-1$. This yields part~(i) of Theorem~\ref{thm:main} in the case $(\SO_{2n},\SO_3)$.

\medskip

\noindent
\textbf{Step 3: From non-vanishing periods to poles of $L$-functions.}\\
Assume that $\mathcal{B} \text{ is } \text{non-vanishing on } \pi$.
Then, by Step~1, the theta lift $\Theta_{n,n-1}^0(\pi)$ is nonzero.
Applying the Rallis inner product formula in the first occurrence range,
together with the theory of local doubling zeta integrals,
we deduce that the standard $L$-function $L(s,\pi)$ has a pole at $s=1$.
Since $L(s,\pi)=L(s,M)$, this implies
\[
L(s,M)\ \text{has a pole at } s=1
\quad \Longrightarrow \quad
L(s,M,N_0)\big|_{s=0} \neq 0,
\]
which establishes the $(a)\Rightarrow(b)$ direction of part~(ii) of
Theorem~\ref{thm:main}.

\medskip

\noindent
\textbf{Step 4: Generic representations and an equivalence theorem.}\\
The converse implication $(a)\Leftarrow(b)$ in part~(ii) of
Theorem~\ref{thm:main} is substantially more delicate.
Its key input is Theorem~\ref{t2} which clarifies  the interaction between theta correspondence, periods and $L$-functions.

More precisely, we prove that for an irreducible \emph{generic} cuspidal
automorphic representation $\pi$ of $\SO_{2n}(\A)$,
the following five conditions are equivalent:
\begin{enumerate}
    \item the completed $L$-function $L(s,\pi)$ has a pole at $s=1$;
    \item the partial $L$-function $L^S(s,\pi)$ has a pole at $s=1$;
    \item the Bessel period $\mathcal{B}$ on $\pi$ is non-vanishing;
    \item the theta lift $\Theta_{n,n-1}^0(\pi)$ is nonzero and generic;
    \item the theta lift $\Theta_{n,n-1}^0(\pi)$ is nonzero.
\end{enumerate}

The implications
\[
\text{(iii)} \Rightarrow \text{(iv)} \Rightarrow \text{(v)} \Rightarrow \text{(i)}
\]
follow from the argument in \textbf{Step~1}, together with the Rallis inner product
formula~\cite{Yam14} in the first term range.

The remaining implications
\[
\text{(i)} \Rightarrow \text{(ii)} \Rightarrow \text{(iii)}
\]
are precisely where the genericity assumption on $\pi$ is required.

To establish the implication (i)$\Rightarrow$(ii), we use the trivial Ramanujan
bound of the generic representations to control the analytic behavior of local doubling zeta integrals. This allows us to deduce (ii) from (i).

To prove the implication (ii) $\Rightarrow$ (iii), we require a Rankin--Selberg integral theory for quasi-split $\SO_{2n}$. Although a Rankin--Selberg theory for $\SO_{2n} \times \GL_1$ (or more generally $\SO_{2n} \times \GL_l$) was developed in \cite{Kap15}, it does not encompass generic representations of quasi-split $\SO_{2n}$ associated with all generic characters; rather, it is restricted to a specific generic character. To accommodate all generic representations of $\SO_{2n}$, we extend this theory by introducing the notion of type $(d, c)$ for $\SO_{2n}$ (see Remark~\ref{kap}).

Specifically, we define a more general Rankin--Selberg integral \eqref{bsi}, prove a basic identity (Proposition~\ref{p1}) relating it to a Whittaker-model zeta integral, and establish its Euler product decomposition. This construction enables us to translate the analytic information regarding the pole of the $L$-function into the non-vanishing of the corresponding period, provided the representation is generic. Consequently, this yields the desired implication (ii) $\Rightarrow$ (iii).

\medskip

\noindent
\textbf{Step 5: Descent and the $(a)\Leftarrow(b)$ direction.}\\
Assume now that $L(s,M,N_0)\big|_{s=0} \neq 0$.
Equivalently, the standard $L$-function $L(s,M)$ has a pole at $s=1$.
At this stage, we do not yet have a specific automorphic representation
on which to apply the equivalence theorem in \textbf{Step~4}.

To overcome this difficulty, we invoke the global descent theory
of Ginzburg--Rallis--Soudry.
Since $M$ is a global tempered $A$-parameter, descent produces
an irreducible cuspidal \emph{generic} automorphic representation
$\pi' \in \Pi_M$ of $\SO_{2n}(\A)$ whose associated $A$-parameter is $M$.
Applying the equivalence theorem in \textbf{Step~4} to $\pi'$, the pole of
$L(s,\pi')=L(s,M)$ at $s=1$ implies that the Bessel period
$\mathcal{B}$ is non-vanishing on $\pi'$.
This completes the proof of the $(a)\Leftarrow(b)$ direction of part~(ii)
of Theorem~\ref{thm:main} in the case $(\SO_{2n},\SO_3)$.

\medskip

\medskip
\begin{rem}
The cases $(\Sp_{2n},\Mp_2)$ and $(\Mp_{2n},\Sp_2)$ follow the same overall conceptual
strategy as the orthogonal case discussed above.
However, their implementation in the Fourier--Jacobi setting is technically
 more involved.
In contrast to the Bessel case, the analysis of Fourier--Jacobi periods
requires a more delicate use of mixed models of the Weil representation,
together with a careful treatment of nontrivial metaplectic coverings. In particular, the computation of Whittaker--Fourier coefficients of global
theta lifts in these cases constitutes one of the main technical contributions of this paper.
\end{rem}
\nosubsection{A digression: genericity and the tower property of theta lifting}

\noindent
Before concluding the introduction, we isolate a collection of results that arise naturally in the course of our analysis but are of independent interest.

While the global theta correspondence is traditionally structured around Rallis' tower property concerning cuspidality, the analogous persistence of genericity within these towers remains incompletely understood, even for split reductive dual pairs. Although partial results regarding this phenomenon have appeared sporadically in the literature (e.g., \cite{Fu95, GRS97}), a comprehensive and systematic treatment for quasi-split dual pairs has been lacking to the best of our knowledge.
In this paper, we establish a precise and uniform description of how genericity behaves along the theta tower for quasi-split orthogonal, symplectic, and metaplectic groups. These results are summarized in Theorem~\ref{pp} below.

\begin{thm}[Theorems~\ref{a1} and~\ref{p6}, Theorems~\ref{b10} and~\ref{q1}]\label{pp}
Let $G_n$ be either $\SO_{2n}$ or $\Sp_{2n}$, and let $\pi$ be a generic cuspidal automorphic representation of $G_n(\A)$. Let $l(\pi)$ denote the first occurrence index of $\pi$ in the tower of theta liftings. Then for $\ve \in \{0,1\}$, the following statements hold:
\begin{enumerate}
    \item $n+\ve - 1 \le l(\pi) \le n+\ve$;
    \item $\Theta_{n,l(\pi)}^{\ve}(\pi)$ is generic;
    \item $\Theta_{n,l(\pi)+1}^{\ve}(\pi)$ is generic if and only if $l(\pi) = n+\ve - 1$;
    \item $\Theta_{n,i}^{\ve}(\pi)$ is non-generic for all $i \ge l(\pi) + 2$.\footnote{We thank Hiraku Atobe for pointing out that property (iv) can alternatively be deduced from the explicit description of the local theta correspondence for unramified representations, which follows from the cuspidal support theorem.}
\end{enumerate}
\end{thm}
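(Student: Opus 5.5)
Let $\pi$ be generic cuspidal on $G_n(\A)$; the case $G_n=\Sp_{2n}$ lifting to $\O_{2m}$ / $\SO_{2m}$ with $\ve=1$ is treated in parallel. The tool I would use throughout is the computation of Whittaker--Fourier coefficients of global theta lifts in the standard (or mixed) model of the Weil representation. For a cusp form $f$ in $\Theta^{\ve}_{n,m}(\pi)$, unfolding the theta kernel against the Whittaker character of the maximal unipotent subgroup of the target group gives the following picture. The Whittaker coefficient of $\theta(\vi,\phi)$ equals an adelic integral over a unipotent subgroup of $G_n$. After integrating out the part matching the target's unipotent radical, the inner integral is a period of $\vi$ against a character of a unipotent subgroup of $G_n$.

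\begin{itemize}
\item When $m=n+\ve-1$ or $m=n+\ve$, this character is the generic character (or a Bessel/Fourier--Jacobi-type character stabilizing a small group). So the Whittaker coefficient is nonzero exactly when $\vi$ has a nonzero $\psi$-Whittaker coefficient (for an appropriate generic character determined by the discriminant/the choice of $\psi$).
\item When $m\ge n+\ve+1$, the character is degenerate on a root subgroup that lies in a unipotent radical of $G_n$. So the integral factors through a constant term along a proper parabolic, which vanishes by cuspidality. This gives (iv) directly once we know the lift in question is not the first occurrence, but it needs care, since that lift is no longer cuspidal.
\end{itemize}

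For (i), the upper bound $l(\pi)\le n+\ve$ comes from the unfolding for $m=n+\ve$. The Whittaker coefficient of the lift is a nonzero multiple of a Whittaker coefficient of $\pi$, hence nonzero for suitable $\phi$ by genericity. For the lower bound $l(\pi)\ge n+\ve-1$, I would use that a generic $\pi$ has a generic local component at almost all places. At a finite place, the local vanishing of twisted Jacquet modules of the Weil representation for $m<n+\ve-1$ shows that a generic local representation cannot occur in the local theta correspondence below this range. Equivalently, the local Howe duality first occurrence of a generic representation is $\ge n+\ve-1$.

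For (ii), at the first occurrence the lift is cuspidal (Rallis tower). If $l(\pi)=n+\ve$, genericity follows from the unfolding above. If $l(\pi)=n+\ve-1$, the unfolding relates the Whittaker coefficient of the lift to the Whittaker coefficient of $\pi$ along a sub-character; more precisely, it relates to a Bessel/Fourier--Jacobi-type period of $\pi$. I would handle this by further expanding along the remaining one-dimensional root subgroup, then using cuspidality to reduce to the Whittaker coefficient of $\pi$.

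For (iii), I would apply the same unfolding at $m=l(\pi)+1$. The lift is then no longer cuspidal, but the Whittaker coefficient still computes via the theta kernel. If $l(\pi)=n+\ve-1$, then $m=n+\ve$ and the previous computation applies, giving genericity. If $l(\pi)=n+\ve$, then $m=n+\ve+1$ and the coefficient reduces to a constant term of $\pi$, which vanishes. For (iv), $i\ge l(\pi)+2\ge n+\ve+1$ and the same degeneracy argument applies.

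The main obstacle is the unfolding computation itself for noncuspidal lifts, together with the extra terms coming from lower-rank orbits in the theta series. These must be shown to vanish by the tower property: they involve theta lifts to smaller groups, which vanish below first occurrence. In the metaplectic case there is the further complication of the mixed model. I would first attempt the orbit decomposition of $Y(F)$ under the Borel of the target group, killing each non-open orbit via either cuspidality of $\pi$ or vanishing of earlier lifts.
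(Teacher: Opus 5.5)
Your treatment of (i), (iii), (iv), and of (ii) when $l(\pi)=n+\ve$, follows the paper. It rests on the unfoldings of Theorems~\ref{a1} and~\ref{b10} at $m=n+\ve$ and $m>n+\ve$, together with the local twisted-Jacquet vanishing of Propositions~\ref{c1} and~\ref{con} at a place where $\pi_v$ is generic.

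\textbf{The gap is (ii) when $l(\pi)=n+\ve-1$.} Your first bullet is wrong at $m=n+\ve-1$. There the unfolding does not produce the Whittaker coefficient of $\pi$. It gives $W(\theta(\phi,\vi))=\int_{\R(\A)\bs G(\A)}(\omega(g)\phi)(\mathbf{e}_0)\,\mc{B}(\mf{R}(g)\vi)\,dg$, where:
\begin{itemize}
\item for $\SO_{2n}$, $\mc{B}=\mc{B}^{0}_{n-1,\psi}$ is the $\SO_3$-Bessel period, i.e.\ an integral over $[\U^0_{n,n-2}]$ against $\mu_{n-1,0}$ followed by an integral over $[\SO_3]$ against the trivial character;
\item for $\Sp_{2n}$, it is the Fourier--Jacobi period $\mc{FJ}^1_{n-1,\psi}$ on $\sigma\boxtimes\omega_{\psi_d,W'}\boxtimes\nu_{\psi^{-1},W'}$, an integral over $[\Sp_2]$.
\end{itemize}
Expanding along the unipotent radical of the Borel of $\SO_3$ does turn the $[\N]$-integral into $\sum_{t\in\T(F)}\int W(\cdots t h)$; this is the inner step of Proposition~\ref{p1}. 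But the outer integral over $[\SO_3]$ against $1$ does not collapse. To unfold $\B(F)\bs\SO_3(F)$ one must insert an Eisenstein series, and the period then reappears only as a residue.

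A sanity check shows your reduction cannot work. It never uses $\Theta_{n+\ve-1}(\pi)\neq 0$, so it would prove that every generic $\pi$ has $l(\pi)\le n+\ve-1$. However, by Theorem~\ref{t2} (resp.~\ref{prop2}), a generic $\pi$ for which $L(s,\pi)$ (resp.\ $L_\psi(s,\sigma\times\chi_d)$) is holomorphic at $s=1$ has zero lift at $n+\ve-1$, i.e.\ $l(\pi)=n+\ve$.

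\textbf{The missing idea} is an analytic bridge from $\Theta_{n+\ve-1}(\pi)\neq0$ to non-vanishing of the Bessel/Fourier--Jacobi period. The paper builds it in Theorems~\ref{t2} and~\ref{prop2}:
\begin{enumerate}
\item Non-vanishing at the first occurrence $n+\ve-1$ forces a pole of $L(s,\pi)$ (resp.\ $L_\psi(s,\sigma\times\chi_d)$) at $s=1$, via Yamana's Rallis inner product formula and doubling theory.
\item Genericity gives the bound $e_1<1/2$ on local exponents. Hence the ramified local factors are holomorphic at $s=1$, and the pole survives in $L^S$.
\item The Rankin--Selberg integral $\I(\vi,f_s)=\int_{[\SO_3]}\vi^\psi(h)E(f_s,h)\,dh$ unfolds to the Whittaker zeta integral (Proposition~\ref{p1}). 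It equals $L^S(s,\pi)\zeta^S(2s)^{-1}\prod_{v\in S}Z_v$, and the $Z_v$ can be made non-zero at $s=1$ (Proposition~\ref{p3}, including archimedean $v$). So $\I$ has a pole at $s=1$.
\item Since $\Res_{s=1}E(f_s,\cdot)$ is constant, the residue is a multiple of the Bessel period, which is therefore non-zero. Theorem~\ref{a1} then gives genericity of the lift.
\end{enumerate}
For $\Sp_{2n}$ one uses instead the Ginzburg--Rallis--Soudry integrals with a metaplectic Eisenstein series on $\wt{\Sp}_2$, whose residue (at $s=3/4$) is a theta function $\theta'_{\psi_d,W'}$. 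This chain is where genericity of $\pi$ enters in an essential way.

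\textbf{A minor point.} For $m\ge n+\ve+1$, the degenerate orbits are not killed by constant terms of $\pi$ or by the tower property. After using $Z_m(F)$ to make a dependent vector zero, a simple root subgroup of the target's $Z_m$ fixes the representative while the target Whittaker character is non-trivial on it. No cuspidality is needed, and the orbit stabilizers need not contain a unipotent radical of $G_n$.
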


While properties (i), (iii), and (iv) are likely well known to experts (see, e.g., \cite{GRS97}, \cite{Fu95}), we believe that (ii) is a novel and highly non-trivial property established in this work. Previously, this was known only implicitly for cases where $\pi$ is strongly generic for split $\SO_{2n}$ \cite{GRS97} (cf. \cite{Fu95}). Our proof relies on explicit computations of the Whittaker--Fourier coefficients of global theta lifts—carried out within suitable mixed models of the Weil representation—in conjunction with the Rallis inner product formula, local $L$-factor theory, and Rankin--Selberg integrals. The investigation of these properties served as the original starting point for our research. We hope that furnishing such a systematic reference will be of independent interest beyond the immediate scope of the present work.
\bb

\noindent Before closing the introduction, we briefly highlight several features of the paper.
\begin{enumerate}
    \item In the classical Langlands program, the group $\SO_n$ is typically
preferred over $\O_n$, since $\O_n$ is a disconnected linear algebraic
group and therefore lies outside the usual scope of the Langlands
framework, which focuses on connected reductive groups.
However, in the context of the theta correspondence, it is often more
natural to work with $\O_n$, as it arises naturally in the theory of
reductive dual pairs.
For this reason, and with a view toward possible future applications,
we state and prove several key results for both $\SO_n$ and $\O_n$.

Meanwhile, as emphasized in~\cite{AG17}, results for $\SO_n$ and $\O_n$
can often be transferred from one group to the other due to their close
structural relationship, although certain subtleties arise in passing
between $\O_n$ and $\SO_n$.

Accordingly, we first establish the relevant theorems and propositions for one of these groups, and then transfer them to the other via a suitable technical argument.
\item Since the reductive dual pairs $(\O_{2n},\Sp_{2m})$ and $(\O_{2n+1},\Mp_{2m})$ exhibit strong structural similarities, we treat them in a uniform manner by introducing the notation $\varepsilon \in \{\pm 1\}$, following the treatment of classical groups in~\cite{GI14, GQT14, Yam14}.
\item Using the machinery developed in this paper, one can also treat a case of the GGP conjecture for the group pair
$(\SO_{2n+1},\SO_2)$.
However, this situation falls into the tempered setting.
Since our primary focus is on the non-tempered case of the GGP conjecture, we do not pursue this direction here.
Moreover, the tempered case has already been studied extensively in the
literature.
See~\cite{JS07} when $\SO_2$ is split, and
\cite{FM17,FM21,FM24,JZ20} when $\SO_2$ is anisotropic.
\end{enumerate}

While our focus in this paper is on orthogonal, symplectic, and metaplectic dual pairs, the methods developed here are expected to extend to quasi-split unitary groups as well. Once established for unitary groups, this framework will encompass the results in \cite{GRoS97} as a special case. We note that some related computations for unitary groups have already appeared in the literature (e.g., \cite{Gr04, Mo24}). Although the unitary case is conceptually analogous, its inclusion would necessitate considerably more intricate notations and conventions. For the sake of clarity and exposition, we therefore confine our discussion in this paper to the orthogonal, symplectic, and metaplectic settings.

After completing the first draft of this paper, we became aware of the
work of B.~P.~J.~Wang~\cite{Wa25}, who developed a general theory relating
periods of dual reductive groups under the theta correspondence. Some of his results appear to overlap with certain computations
presented in Sections~\ref{sec:4} and~\ref{sec:6} of this paper.
However, it is not clear to us whether Theorems~\ref{a1} and~\ref{b10}
are covered by his framework, as our results involve more refined information, such as the precise type of the quasi-split orthogonal groups and generic characters appearing in the statements. It is also worth noting that K.~Morimoto \cite{Mo14} carried out a similar computation in the setting of the dual pair $(\mathrm{GSp}(4), \mathrm{GSO}(6))$.
\bigbreak

The paper is organized as follows.
In \S\ref{sec2} we fix notation and recall the background on classical groups, metaplectic covers,
and the relevant periods.
In \S\ref{sec:3}--\S\ref{sec:7} we develop the theta correspondence and Rankin--Selberg-theoretic machinery,
including the genericity tower property and the modified Rankin--Selberg integrals.
Finally, in \S\ref{sec8} we assemble these results to prove the non-tempered GGP cases described in Theorem~\ref{thm:main}.

\section{Notation and Preliminaries}\label{sec2}

In this section, we establish the basic notation, conventions, and background material utilized throughout this paper. 
In Section~\ref{subsec:2-1}, we fix general notation, while Sections~\ref{ortho} and~\ref{symp} review the definitions and basic properties of orthogonal, symplectic, and metaplectic groups. 
Section~\ref{geno} discusses the distinction between genuine and non-genuine representations, which is fundamental to the representation theory of metaplectic groups.

In Section~\ref{atr}, we introduce the notion of almost tempered representations, a class of representations that encompasses the tempered ones. 
Section~\ref{ltn} provides a brief review of the partial and completed $L$-functions associated with automorphic representations of classical groups. 
Section~\ref{heis} summarizes the construction of the Weil representation and its role in the theory of theta series. 
Finally, in Sections~\ref{spb} and~\ref{spf}, we recall the definitions of the Bessel and Fourier--Jacobi periods, respectively, which are the central objects of study in the present work.
\subsection{Basic notation} \label{subsec:2-1}
\begin{itemize}
    \item $F$: a global field of characteristic not equal to $2$.
    \item $\A$: the ring of adeles of $F$.
    \item $v$: a place of $F$.
    \item $F_v$: the completion of $F$ at $v$.
    \item $\OO_v$: the ring of integers in $F_v$.
    \item $|\cdot|$: the normalized absolute value on $F$ or $F_v$.
    \item $G(K)$: the group of $K$-points of an algebraic group $G$ defined over $F$ for an $F$-algebra $K$. (We often write $G$ instead of $G(K)$ when the context is clear.)
    \item $\Irr(G)$: the set of isomorphism classes of irreducible smooth representations of $G(F_v)$.
    \item $\pi^{\vee}$: the contragredient (smooth) dual of a representation $\pi$.
    \item $N_{E/F}, \Tr_{E/F}$: the norm and trace maps for a quadratic extension $E/F$.
    \item $\Ind_{P}^G$: the normalized  induction from a parabolic subgroup $P$ to $G$.
    \item $\ind_{P}^G$: the unnormalized compactly supported induction from a parabolic subgroup $P$ to $G$.
    \item $\mc{S}(X)$: the Bruhat--Schwartz space on a topological space $X$.
    \item $\mc{A}(G)$: the space of cuspidal automorphic forms on $G(\A)$.
    \item $\mc{A}^2(G)$: the space of square integrable automorphic forms on $G(\A)$.
    \item $\mf{R}$: the right translation action of $G(\A)$ on a function space defined on $G(\A)$.
    \item $[G]$: the automorphic quotient $G(F) \backslash G(\A)$.
    \item $\mu_n$: the algebraic group of $n$-th roots of unity.
    \item $\mathbb{G}_a, \mathbb{G}_m$: the additive and multiplicative group schemes over $F$, respectively.
    \item $M_{n \times m}$: the space of $n \times m$ matrices over $F$.
    \item $\mathrm{Id}_n$: the $n \times n$ identity matrix.
    \item $\mathrm{Id}$: the identity operator on a given vector space.
    \item $\wt{w}_k$: the anti-diagonal matrix in $M_{k \times k}$ with all anti-diagonal entries equal to $1$.
    \item $a^{\T}$: the transpose of a matrix $a$.
    \item $a^{\star}$: the involution defined by $a^{\star} = \wt{w}_{n} (a^{\T})^{-1} \wt{w}_{n}$ for $a \in \GL_n$.
    \item $Z_k$: the standard maximal unipotent subgroup of $\GL_k$.
    \item $S_k$: the subspace $\{s \in M_{k \times k} \mid \wt{w}_k s^{\T} \wt{w}_k = -s\}$.
    \item $\1$: the identity element of a group.
    \item $\II$: the trivial character.
\end{itemize}

We denote by $\chi_d$ the quadratic character of $F_v^\times$ (resp.~$\A^\times / F^\times$) associated to the quadratic extension $F_v(\sqrt{d}) / F_v$ (resp.~$F(\sqrt{d}) / F$) for $d \in F^\times$. We fix a nontrivial unitary additive character $\psi = \otimes_v \psi_v$ of $F \backslash \A$. For $\lambda \in F^\times$ (resp.~$F_v^\times$), we define $\psi_\lambda(x) \coloneqq \psi(\lambda x)$ (resp.~$\psi_{v,\lambda}(x) \coloneqq \psi_v(\lambda x)$) for $x \in \A$ (resp.~$x \in F_v$).

We fix the self-dual Haar measure $dx_v$ on $F_v$ with respect to $\psi_v$. Throughout the paper, for a connected algebraic group $H$ defined over $F$, we use the Tamagawa measure $dh$ on $H(\A)$, which is decomposed into the product of local Haar measures $dh_v$ on $H(F_v)$. These local measures are determined by the aforementioned self-dual measure $dx_v$ on the coordinates. In addition, we adopt the following normalization conventions:
\begin{itemize}
    \item For any unipotent algebraic group $U$ defined over $F$, the Tamagawa measure $du$ on $U(\A)$ is normalized such that $\text{vol}(U(F) \backslash U(\A), du) = 1$.
    \item For any maximal compact subgroup $K_v$ of $H(F_v)$, the local Haar measure $dk_v$ is normalized so that $\text{vol}(K_v, dk_v) = 1$.
    \item For the finite group $\mu_2$, we let $dt = \prod_v dt_v$ be the measure on $\mu_2(\A)$, where each local measure $dt_v$ on $\mu_2(F_v)$ is normalized so that $\text{vol}(\mu_2(F_v), dt_v) = 1$.
\end{itemize}
\subsection{Orthogonal groups} \label{ortho}

Many of the concepts and definitions discussed in Sections~\ref{ortho} and~\ref{symp} apply uniformly to both the local and global settings. To streamline the exposition, we let $K$ denote either the global field $F$ or its local completion $F_v$ at a place $v$.

Let $\Ha$ be the hyperbolic plane over $K$, the split quadratic space of dimension $2$. For $k \ge 1$, we set $\overline{V_{k}} \coloneqq \Ha^{\oplus k}$ and $\overline{V_{0}} \coloneqq 0$. Let $\{e_1, \dots, e_k\}$ and $\{e_1^*, \dots, e_k^*\}$ be bases of the isotropic subspaces of $\overline{V_k}$ satisfying
\[
(e_i, e_j)_{\overline{V_k}} = (e_i^*, e_j^*)_{\overline{V_k}} = 0, \quad (e_i, e_j^*)_{\overline{V_k}} = \delta_{ij}.
\]
For $1 \le i \le k$, let $X_i = \mathrm{Span}_K \{e_1, \dots, e_i\}$ and $X_i^* = \mathrm{Span}_K \{e_1^*, \dots, e_i^*\}$, so that $\overline{V_i} = X_i \oplus X_i^*$.

For arbitrary $c, d \in K^{\times}$, let $V_{c,d} \coloneqq K[X]/(X^2-d)$, and let $\e$ be the involution on $V_{c,d}$ induced by $a+bX \mapsto a-bX$. We denote the images of $1, X \in K[X]$ in $V_{c,d}$ by $e$ and $e'$, respectively. We regard $V_{c,d}$ as a $2$-dimensional vector space over $K$ equipped with the symmetric bilinear form
\[
(\alpha, \beta)_{V_{c,d}} \coloneqq c \cdot \Tr_{V_{c,d}/K}(\alpha \cdot \e(\beta)).
\]
Let $V_d$ be a $1$-dimensional $K$-vector space equipped with the bilinear form $(\alpha, \beta)_{V_d} \coloneqq 2d \alpha \beta$. To facilitate a uniform treatment, we use the same notation $e$ and $e'$ to refer to the elements $1$ and $0$ in $V_d$, respectively. Note that the discriminants of both $V_{c, d}$ and $V_d$ are given by $d \pmod{K^{\times 2}}$. 

For a $(2n+\ve)$-dimensional orthogonal space $V$ over $K$, let $\G_n^{\ve}(V) \coloneqq \O(V)$ be the orthogonal group and $\H_n^{\ve}(V) \coloneqq \SO(V)$ be its special orthogonal subgroup. 

Henceforth, we fix $c, d \in K^{\times}$ and for $\ve \in \{0, 1\}$, we set
\[
K_{\ve} = \begin{cases} N_{K(\sqrt{d})/K}(K(\sqrt{d})^\times), & \text{if } \ve=0, \\ K^{\times 2}, & \text{if } \ve=1, \end{cases} \quad \lambda_{\ve} = \begin{cases} c \pmod{K_0}, & \text{if } \ve=0, \\ d \pmod{K_1}, & \text{if } \ve=1, \end{cases}
\]
and define the $(2n+\ve)$-dimensional quadratic space $V_n^{\ve}$ as
\[
V^{\ve} \coloneqq \begin{cases} V_{c,d}, & \text{if } \ve=0, \\ V_d, & \text{if } \ve=1, \end{cases} \quad V_n^{\ve} \coloneqq V^{\ve} \oplus \overline{V_{n-1+\ve}}.
\]
When $\ve=0$, we say that $V_n^0$ has type $(d, c)$. The collection $\{V_r^{\ve} \mid r \ge 0\}$ forms a Witt tower of orthogonal spaces. We assume $V = V_n^{\ve}$ and denote $\G_n^{\ve}(V)$ simply by $\G_n^{\ve}$. While not explicitly shown in the notation, the group $\G_n^{\ve}$ depends on the choice of $c, d \in K^{\times}$.

Consider a flag of isotropic subspaces $X_{k_1} \subset X_{k_1+k_2} \subset \dots \subset X_{k_1+\dots+k_r} \subset V_n^{\ve}$. Its stabilizer is a parabolic subgroup $\P$ of $\G_n^{\ve}$ with Levi factor
\[
\M \simeq \GL_{k_1} \times \dots \times \GL_{k_r} \times \G_{n-(k_1+\dots+k_r)}^{\ve}.
\]
We denote by $\M_{n,r}^{\ve}$ and $\U_{n,r}^{\ve}$ the Levi factor and the unipotent radical, respectively, of the parabolic subgroup associated with the case $k_1 = \dots = k_r = 1$.  When $r=0$, we define $\M_{n,0}^{\ve}=\G_n^{\ve}$ and $\U_{n,0}^{\ve}=
\{\1\}$, the trivial group. Let $\B_{n-1+\ve}^{\ve} = \T_{n-1+\ve}^{\ve} \U_{n-1+\ve}^{\ve}$ be the Borel subgroup of $\G_n^{\ve}$, where $\T_{n-1+\ve}^{\ve} \coloneqq \M_{n, n-1+\ve}^{\ve}$. For each place $v$ of $F$, let $\K_{\ve,v}$ be the standard maximal compact subgroup of $\G_n^{\ve}(F_v)$, and set $\K_{\ve} = \prod_v \K_{\ve,v} \subset \G_n^{\ve}(\A)$.

\subsection{Symplectic and metaplectic groups} \label{symp}

Let $\Ha'$ be the symplectic hyperbolic plane over $K$, i.e., the split symplectic space of dimension $2$. For $r \ge 0$, let $W_r = (\Ha')^{\oplus r}$ be the $2r$-dimensional symplectic space equipped with a non-degenerate symplectic form $\la \cdot, \cdot \ra_{W_r}$. The collection $\{W_r \mid r \ge 0\}$ forms a Witt tower of symplectic spaces. 
Let $\{f_1, \dots, f_m, f_1^*, \dots, f_m^*\}$ be an ordered basis of $W_m$ satisfying
\[
\pair{f_i, f_j}_{W_m} = \pair{f_i^*, f_j^*}_{W_m} = 0, \quad \pair{f_i, f_j^*}_{W_m} = \delta_{ij}.
\]
For $1 \le k \le m$, let $Y_k = \mathrm{Span}_K \{f_1, \dots, f_k\}$ and $Y_k^* = \mathrm{Span}_K \{f_1^*, \dots, f_k^*\}$, so that $W_m = Y_m \oplus Y_m^*$. Furthermore, we set \[W_{m,k} = \mathrm{Span}_K \{f_{k+1}, \dots, f_m, f_{k+1}^*, \dots, f_m^*\},\] which yields the decomposition $W_m = Y_k \oplus W_{m,k} \oplus Y_k^*$.

Let $\J_m \coloneqq \Sp(W_m)$ be the symplectic group associated with $W_m$. Consider a flag of isotropic subspaces $Y_{k_1} \subset Y_{k_1+k_2} \subset \dots \subset Y_{k_1+\dots+k_r} \subset W_m$. Its stabilizer in $\J_m$ is a parabolic subgroup $\P' = \M' \U'$ with Levi factor
\[
\M' \simeq \GL_{k_1} \times \dots \times \GL_{k_r} \times \J_{m_0}, \quad m_0 = m - \sum_{i=1}^r k_i.
\]
We denote by $\M_{m,r}'$ and $\U_{m,r}'$ the Levi factor and the unipotent radical, respectively, associated with the case $k_1 = \dots = k_r = 1$.  When $r=0$, we define $\M_{m,0}'=\J_m$ and $\U_{m,0}'=\{\1\}$, the trivial group.  Let $\B_m' = \T_m' \U_m'$ be the Borel subgroup of $\J_m$, where $\T_m' \coloneqq \M_{m,m}'$. For each place $v$ of $F$, let $\K_v'$ be the standard maximal compact subgroup of $\J_m(F_v)$, and set $\K' = \prod_v \K_v' \subset \J_m(\A)$.

The metaplectic group $\wt{\J}_m(F_v)$ is the twofold central extension of $\J_m(F_v)$ by $\mu_2 = \{\pm 1\}$:
\[
\xymatrix{1 \ar[r] & \mu_2 \ar[r] & \wt{\J}_m(F_v) \ar[r]^-{pr_v} & \J_m(F_v) \ar[r] & 1}.
\]
Realizing the elements of $\wt{\J}_m(F_v)$ as pairs $(h', \ve') \in \J_m(F_v) \times \mu_2$, the multiplication is given by $(h_1', \ve_1')(h_2', \ve_2') = (h_1' h_2', \ve_1' \ve_2' \cdot \mf{c}(h_1', h_2'))$, where $\mf{c}$ is the Rao cocycle \cite{Rao93}. 

Let $Z \coloneqq \{ (z_v) \in \bigoplus_v \mu_2 \mid \prod_v z_v = 1 \}$. We define the adelic metaplectic group as $\wt{\J}_m(\A) \coloneqq (\prod_v' \wt{\J}_m(F_v)) / Z,$ where $\prod_v'$ is a restricted product with respect to some choice of hyperspecial subgroups of $\J_m(F_v)$ at unramified places $v$ of $F$.
Since $pr \coloneqq \prod_v pr_v$ is trivial on $\mu_2$, it yields the central extension
\[
\begin{aligned}
\xymatrix{1 \ar[r] & \mu_2 \ar[r] & \wt{\J}_m(\A) \ar[r]^-{pr} & \J_m(\A) \ar[r] & 1}.
\end{aligned}
\]
A fundamental result of Weil \cite{We64} asserts that $\J_m(F)$ splits canonically in $\wt{\J}_m(\A)$.  Although $\wt{\J}_m$ is not an algebraic group in the strict sense, we shall treat it as such, given that its $F_v$-points and $\A$-points are explicitly defined.

Let $R$ denote either $F_v$ or $\A$. If a subgroup $L \subset \J_m(R)$ splits in $\wt{\J}_m(R)$, we identify $L$ with its image under the canonical splitting. In particular, any maximal compact subgroup and any unipotent subgroup of $\J_m(R)$ splits canonically in $\wt{\J}_m(R)$ \cite[Appendix~I]{Mo95}. If $L$ does not split, its preimage $pr^{-1}(L)$ in $\wt{\J}_m(R)$ is denoted by $\wt{L}$.

The double covering of a Levi factor $\M' \simeq \GL_{k_1} \times \dots \times \GL_{k_r} \times \J_{m_0}$ is given by the amalgamated product
\[
\wt{\M}' \coloneqq \wt{\GL}_{k_1} \times_{\mu_2} \dots \times_{\mu_2} \wt{\GL}_{k_r} \times_{\mu_2} \wt{\J}_{m_0},
\]
where $\wt{\GL}_k$ is the twofold covering defined by $(g_1, \ve_1')(g_2, \ve_2') = (g_1 g_2, \ve_1' \ve_2' \cdot (\det g_1, \det g_2)_R)$ with $(\cdot, \cdot)_R$ being the Hilbert symbol. The covering of the parabolic subgroup is then defined by $\wt{\P}' = \wt{\M}' \U'$, where $\U'$ is the unipotent radical identified with its splitting image.

\subsection{Genuine and non-genuine representations}\label{geno}

Let $R$ denote either $F_v$ or $\A$. A function $f : \wt{\J}_m(R) \to \CC$ is called \textit{genuine} if $f(\ve' \cdot x) = \ve' f(x)$, and \textit{non-genuine} if $f(\ve' \cdot x) = f(x)$ for all $\ve' \in \mu_2$ and $x \in \wt{\J}_m(R)$. Similarly, a representation $(\sigma, V)$ of $\wt{\J}_m(R)$ is \textit{genuine} if $\sigma(\ve' \cdot x) = \ve' \sigma(x)$, and \textit{non-genuine} if $\sigma(\ve' \cdot x) = \sigma(x)$ for all $\ve' \in \mu_2$ and $x \in \wt{\J}_m(R)$.

By inflation via the projection $pr : \wt{\J}_m(R) \to \J_m(R)$, we regard any function (resp.~representation) of $\J_m(R)$ as a non-genuine function (resp.~representation) of $\wt{\J}_m(R)$. Under this convention, we shall work primarily with functions and representations of $\wt{\J}_m(R)$, treating those of $\J_m(R)$ as non-genuine objects.

For $m \ge 0$, we denote by $\wt{\II}$ the \textit{genuine trivial character} of $\wt{\J}_m(R)$, which is defined by 
\[
\wt{\II}((x, \ve')) = \ve' \cdot \II \quad \text{for } (x, \ve') \in \wt{\J}_m(R).
\]
Then $\wt{\II}$ is an element of $\Irr(\wt{\J}_m)$ and is genuine.

For $\sigma \in \Irr(\wt{\J}_m)$ and $\tau \in \Irr(\J_k)$, let $\wt{\ve} \in \{0, 1\}$. We define the indicator $\ve_{\sigma} \in \{0, 1\}$ and the representation $\tau^{\wt{\ve}} \in \Irr(\wt{\J}_k)$ as follows:
\begin{equation}\label{nota}
\ve_{\sigma} \coloneqq 
\begin{cases}
1, & \text{if } \sigma \text{ is genuine}, \\
0, & \text{if } \sigma \text{ is non-genuine},
\end{cases}
\qquad
\tau^{\tilde{\ve}} := \begin{cases} \tau & \text{if } \tilde{\ve}=0, \\ \tau \otimes \tilde{\mathbb{I}} & \text{if } \tilde{\ve}=1 \end{cases}
\end{equation}

\subsection{Almost tempered representations}\label{atr}

We define the notion of almost tempered representations for general linear groups, classical groups, and their metaplectic counterparts. Let $G_n$ denote $\GL_n$, $\G_n^{\ve}$, or $\J_n$. Consider a standard parabolic subgroup $P = MN$ of $G_n$ with Levi factor $M \simeq \GL_{k_1} \times \dots \times \GL_{k_l} \times G_{n_0}$, where $n = n_0 + \sum_{i=1}^l k_i$. In the case $G_n = \GL_n$, we have $M \simeq \GL_{k_1} \times \dots \times \GL_{k_l}$ with $n = \sum k_i$ and $n_0 = 0$, where $G_{n_0}$ is understood to be the trivial group.

A standard module of $G_n(F_v)$ is defined by the parabolic induction:
\[\tau_1 |\cdot|^{s_1} \times \dots \times \tau_l |\cdot|^{s_l} \rtimes \pi_0 \coloneqq \Ind_{P(F_v)}^{G_n(F_v)} \left( \tau_1 |\cdot|^{s_1} \otimes \dots \otimes \tau_l |\cdot|^{s_l} \otimes \pi_0 \right),
\]
where:
\begin{itemize}
    \item each $\tau_i$ is an irreducible tempered representation of $\GL_{k_i}(F_v)$;
    \item $\pi_0$ is an irreducible tempered representation of $G_{n_0}(F_v)$. If $G_n = \GL_n$, we take $l \ge 1$ and let $\pi_0$ be the trivial representation of the trivial group;
    \item the real exponents satisfy $s_1 > s_2 > \dots > s_l > 0$ if $G_n$ is a classical group. For $G_n = \GL_n$, the exponents $s_i$ are not necessarily positive.
\end{itemize}

For the metaplectic group $\wt{\J}_n(F_v)$, we consider genuine irreducible representations. Let $\gamma_{\psi_v}$ be the Weil factor associated with $\psi_v$, which is an eighth root of unity (cf. \cite[Appendix]{Rao93}). For $\tau \in \Irr(\GL_k)$, we define its metaplectic counterpart $\tau_{\psi_v} \in \Irr(\wt{\J}_k)$ by
\[
\tau_{\psi_v}(h', \ve') \coloneqq \ve' \cdot \gamma_{\psi_v}(\det h') \cdot \tau(h') \quad \text{for } (h', \ve') \in \wt{\J}_k(F_v).
\]
The standard modules for $\wt{\J}_n(F_v)$ are defined analogously by using $\tau_{i, \psi_v}$ in place of $\tau_i$. By convention, we shall identify representations of $\GL_{k_i}(F_v)$ with those of $\wt{\J}_{k_i}(F_v)$ and maintain the notation $\tau_1 |\cdot|^{s_1} \times \dots \times \tau_l |\cdot|^{s_l} \rtimes \pi_0$, with the understanding that the dependence on $\psi_v$ is implicit in the metaplectic case.

By the Langlands classification, every irreducible smooth representation of $G_n(F_v)$ (or $\wt{\J}_n(F_v)$) can be realized as the unique irreducible quotient of a standard module. We let $L(\tau_1 |\cdot|^{s_1}, \dots, \tau_l |\cdot|^{s_l}, \pi_0)$ denote the unique irreducible quotient of the standard module $\tau_1 |\cdot|^{s_1} \times \dots \times \tau_l |\cdot|^{s_l} \rtimes \pi_0$.
\begin{defn}
An irreducible representation $L(\tau_1 |\cdot|^{s_1}, \dots, \tau_l |\cdot|^{s_l}, \pi_0)$ of $H_n(F_v)$, where $H_n \in \{ \GL_n, \G_n^{\ve}, \J_n, \wt{\J}_n \}$, is said to be \textbf{almost tempered} if its exponents satisfy:
\begin{enumerate}
    \item $1/2 > |s_1| > |s_2| > \cdots > |s_l|$ \qquad if $H_n = \GL_n$;
    \item $1/2 > s_1 > s_2 > \cdots > s_l > 0$ \qquad if $H_n \in \{\G_n^{\ve}, \J_n\}$;
    \item $1/2 \ge s_1 > s_2 > \cdots > s_l > 0$ \qquad if $H_n = \wt{\J}_n$.
\end{enumerate}
\end{defn}

Note that if $l=0$, an irreducible tempered representation is almost tempered. For $s_1 < 1/2$, the standard module is itself irreducible, and thus an almost tempered representation coincides with its corresponding standard module.

\begin{rem}\label{Ram}
The condition $s_1 < 1/2$ is motivated by the trivial bounds toward the generalized Ramanujan conjecture. Specifically, for an irreducible generic unitary representation, the exponents of its local components are known to satisfy $s_1 < 1/2$ when $H_n \neq \wt{\J}_n$ (cf. \cite[Corollary~10.1]{CKPS04}). In the metaplectic case $H_n = \wt{\J}_n$, this bound is slightly modified to $s_1 \le 1/2$ (cf. \cite[Theorem~11.2]{GRS11}). These bounds provide the analytic justification for our focus on almost tempered representations.
\end{rem}

\subsection[L-functions]{$L$-functions}\label{ltn}

In this section, we recall the definitions of partial and completed $L$-functions associated with the groups $\G_n^{\ve}$, $\H_n^{\ve}$, $\J_m$, and $\wt{\J}_m$. Throughout this section, we assume that $F$ is a number field to discuss the global theory.

Let $G$ be a reductive group over $F$ and $St : \widehat{G}(\CC) \to \GL_N(\CC)$ be the standard representation of the complex dual group $\widehat{G}(\CC)$. Let $\B = \T \N$ be a Borel subgroup of $G$, where $\T$ is a maximal torus and $\T^0$ is the maximal $F$-split torus in $\T$. If $G$ is split, we have $\T^0 = \T$.

Let $\chi = \otimes_v \chi_v$ be an automorphic character of $\GL_1(\A)$ and $\pi = \otimes'_v \pi_v$ be an irreducible cuspidal automorphic representation of $G(\A)$. For a place $v$ where $G, \pi_v$, and $\chi_v$ are unramified, let $\varpi$ be a uniformizer of $F_v$ and $q$ be the cardinality of the residue field. 

If $G$ is a split group of type $\H_n^{\ve}$ or $\J_n$, $\pi_v$ is the unramified constituent of the principal series $\Ind_{\B(F_v)}^{G(F_v)} (\lambda)$ for an unramified character $\lambda = \lambda_1 \otimes \dots \otimes \lambda_n$ of $\T(F_v)$. To simplify the notation, we write $\lambda_j$ for $\lambda_j(\varpi)$. We define the Satake parameter $c_{\pi_v}$ as 
\[
c_{\pi_v} \coloneqq \begin{cases}
\mathrm{diag}(\lambda_1, \dots, \lambda_n, \lambda_n^{-1}, \dots, \lambda_1^{-1}) \in \SO_{2n}(\CC) & \text{if } G = \H_n^{\ve}, \\
\mathrm{diag}(\lambda_1, \dots, \lambda_n, 1, \lambda_n^{-1}, \dots, \lambda_1^{-1}) \in \SO_{2n+1}(\CC) & \text{if } G = \J_n.
\end{cases}
\]
The local $L$-factor is then given by $L(s, \pi_v \times \chi_v) \coloneqq \det(\mathrm{Id}_N - c_{\pi_v} \cdot \chi_v(\varpi) q^{-s})^{-1}$.

In the case where $G$ is the quasi-split but non-split group $\H_n^0$, there exists a conjugacy class of unramified characters $\lambda = \lambda_1 \otimes \dots \otimes \lambda_{n-1}$ of $\T^0(F_v)$ such that $\pi_v$ is an unramified constituent of $\Ind_{\B(F_v)}^{G(F_v)}(\lambda)$. We set
\begin{align*}
c_{\pi_v} &= \diag(\lambda_1(\varpi), \dots, \lambda_{n-1}(\varpi), \lambda_{n-1}^{-1}(\varpi), \dots, \lambda_1^{-1}(\varpi)) \in \Sp_{2n-2}(\CC), \\
c_{\pi_v}' &= \diag(\lambda_1(\varpi), \dots, \lambda_{n-1}(\varpi), -1, 1, \lambda_{n-1}^{-1}(\varpi), \dots, \lambda_1^{-1}(\varpi)) \in \GL_{2n}(\CC).
\end{align*}
The local $L$-factor is defined as $L(s, \pi_v \times \chi_v) \coloneqq \det(\mathrm{Id}_{2n} - c_{\pi_v}' \cdot \chi_v(\varpi) q^{-s})^{-1}$.

When $G$ is the non-connected group $\G_n^{\ve}$, we define the local $L$-factor $L(s, \pi_v \times \chi_v)$ by setting it equal to $L(s, \pi_{v,0} \times \chi_v)$, where $\pi_{v,0}$ is an irreducible sub-representation of the restriction $\pi_v |_{\H_n^{\ve}(F_v)}$. An argument similar to the proof of \cite[Lemma 2.8]{HKK25} confirms that this definition is well-defined and independent of the choice of $\pi_{v,0}$.

For the metaplectic group $\wt{\J}_n$, let $\pi' = \otimes'_v \pi_v'$ be an irreducible genuine cuspidal automorphic representation of $\wt{\J}_n(\A)$. At an unramified place $v$, there exists an unramified character $\lambda = \lambda_1 \otimes \dots \otimes \lambda_n$ of $\T(F_v)$ such that $\pi_v' \simeq \Ind_{\wt{\B}(F_v)}^{\wt{\J}_n(F_v)}(\lambda_{\psi_v})$, where $\lambda_{\psi_v}$ is the character of $\wt{\T}(F_v)$ defined as in Section~\ref{atr}. The Satake parameter is
\[
c_{\pi_v'} \coloneqq \diag(\lambda_1(\varpi), \dots, \lambda_n(\varpi), \lambda_n^{-1}(\varpi), \dots, \lambda_1^{-1}(\varpi)) \in \GL_{2n}(\CC),
\]
and the local $L$-factor is $L_{\psi_v}(s, \pi_v' \times \chi_v) \coloneqq \det(\mathrm{Id}_{2n} - c_{\pi_v'} \cdot \chi_v(\varpi) q^{-s})^{-1}$. Note that this factor depends on the choice of $\psi$ via the identification of the unramified principal series.

Let $\G$ be either the quasi-split classical group or the metaplectic group and let $\sigma$ be an irreducible cuspidal automorphic representation of $\G(\A)$. For a finite set of places $S$ including all archimedean places and those where $\sigma$ or $\chi$ is ramified, we define the partial $L$-function as
\[
L_{\psi}^S(s, \sigma \times \chi) \coloneqq \begin{cases}
\prod_{v \notin S} L(s, \sigma_v \times \chi_v) & \text{if } \G = \G_n^{\ve},\ \H_n^{\ve}, \\

\prod_{v \notin S} L_{\psi_v}(s, \sigma_v \times \chi_v) & \text{if } \G = \wt{\J}_n \text{ and } \sigma \text{ is genuine}.
\end{cases}
\]
The dependency on $\psi$ occurs only in the metaplectic genuine case. If $\chi$ is trivial, we simply write $L_{\psi}^S(s, \sigma)$. 

As we have observed, for $\G = \G_n^{\ve}$, $\H_n^{\ve}$, $\J_n$, and $\wt{\J}_n$, the theory of partial $L$-functions for $\G \times \GL_1$ is primarily based on the local $L$-factors for unramified representations. To define the completed $L$-function, this notion must be extended to all irreducible smooth representations. Piatetski--Shapiro and Rallis~\cite{PS-R87} introduced a general class of zeta integrals, generalizing the Godement--Jacquet zeta integrals via the so-called doubling method. Subsequently, Yamana~\cite{Yam14} developed a comprehensive theory of these local zeta integrals and defined the local $L$-factor as the greatest common divisor (GCD) of the local zeta integrals over the space of good sections.

On the other hand, there exists an alternative definition of local $L$-factors in terms of the $\gamma$-factors arising from local zeta integrals, due to Lapid--Rallis~\cite{LR05} for classical groups and Gan~\cite{Gan12} for metaplectic groups. These two definitions possess complementary strengths: while the former is well-suited for controlling the poles of local zeta integrals, the latter has the advantage of satisfying the expected compatibility with the local Langlands correspondence; namely, it agrees with the corresponding $L$-factor on the Galois side. 

Yamana~\cite{Yam14} proved that these two definitions indeed give rise to the same local $L$-factor. It is also worth noting that Cai, Friedberg, and Kaplan~\cite{CFK22} further generalized the construction of Lapid--Rallis by defining local $\gamma$-factors (and hence local $L$-factors) in the broader setting of $\G(F_v) \times \GL_r(F_v)$ for $r \ge 1$ for split classical groups. Since our focus in this work is on quasi-split classical groups and metaplectic groups, we follow Yamana’s definition for the local $L$-factor $L_{\psi}(s, \sigma_v \times \chi_v)$ for any irreducible smooth representation $\sigma_v \times \chi_v$ of $\G(F_v) \times \GL_1(F_v)$. (Here, the additive character $\psi$ is relevant only when $\G = \wt{\J}_n$ and $\sigma_v$ is genuine.)

Following this theory, the completed $L$-function is defined by the product over all places:
\[
L_{\psi}(s, \sigma \times \chi) \coloneqq \prod_v L_{\psi}(s, \sigma_v \times \chi_v).
\]
This completed $L$-function admits a meromorphic continuation to $\CC$ and satisfies a functional equation. By construction, it relates to the partial $L$-function via
\[
L_{\psi}(s, \sigma \times \chi) = L_{\psi}^S(s, \sigma \times \chi) \cdot \prod_{v \in S} L_{\psi}(s, \sigma_v \times \chi_v).
\]

\subsection{Weil representation and theta series}\label{heis}

In this section, we summarize the construction of the Weil representation and the associated theta series, which are fundamental to the global theta correspondence. For $k \ge 0$, let $(W_k, \langle \cdot, \cdot \rangle_{W_k})$ be a $2k$-dimensional symplectic space over $F$, and fix a polarization $W_k = Y_k \oplus Y_k^*$ by choosing maximal totally isotropic subspaces $Y_k$ and $Y_k^*$. 

Let $\mc{H}(W_k)$ denote the Heisenberg group of rank $2k+1$ associated with $W_k$. For an $F$-algebra $R$ (where $R = \A$ or $F_v$), the elements of $\mc{H}(W_k)(R)$ are realized as triples $(y, y'; t) \in Y_k(R) \oplus Y_k^*(R) \oplus R$ with the group law
\[
(y_1, y_1'; t_1) + (y_2, y_2'; t_2) \coloneqq (y_1 + y_2, y_1' + y_2'; t_1 + t_2 + \tfrac{1}{2} (\langle y_1, y_2' \rangle_{W_k} + \langle y_2, y_1' \rangle_{W_k})).
\]
The center of $\mc{H}(W_k)$ is isomorphic to $\mathbb{G}_a$, consisting of elements of the form $(0, 0; t)$. For $0 \le k \le m-1$, we identify $\mc{H}(W_k)$ as a subgroup of the unipotent radical $\U_{m,m-k}' \subset \J_m$ via the embedding
\begin{equation}\label{hs}
(x, y; t) \mapsto \begin{pmatrix} \mathrm{Id}_{m-k-1} & 0 & 0 & 0 & 0 & 0 \\ & 1 & x & \frac{y}{2} & t & 0 \\ & & 1 & 0 & y^* & 0 \\ & & 0 & 1 & x^* & 0 \\ & & & & 1 & 0 \\ & & & & & \mathrm{Id}_{m-k-1} \end{pmatrix} \in \U_{m,m-k}'.
\end{equation}
This map induces an isomorphism between $\mc{H}(W_k)$ and the quotient group $\U_{m,m-k-1}' \backslash \U_{m,m-k}'$.
The metaplectic group $\wt{\J}_k(F_v)$ acts on the Heisenberg group $\mc{H}(W_k)(F_v)$ via the natural action of $\J_k(F_v)$ on the symplectic space $W_k(F_v)$. The \textit{Jacobi group}, defined by the semi-direct product $\mc{H}(W_k)(F_v) \rtimes \wt{\J}_k(F_v)$, admits the Weil representation $\Omega_{\psi_v,W_k}$ realized on the Schwartz space $\mc{S}(Y_k(F_v))$. It is important to note that $\Omega_{\psi_v,W_k}$ is the unique representation that extends the projective Schrödinger representation $\overline{\Omega}_{\psi_v,W_k}$ of $\mc{H}(W_k)(F_v) \rtimes \J_k(F_v)$ to the metaplectic Jacobi group $\mc{H}(W_k)(F_v) \rtimes \wt{\J}_k(F_v)$ (cf. \cite[Corollary~4.19]{Li08}). We denote the restriction of $\Omega_{\psi_v,W_k}$ to $\wt{\J}_k(F_v)$ by $\omega_{\psi_v,W_k}$.

We remark that while $\Omega_{\psi_v,W_k}$ is irreducible, its restriction $\omega_{\psi_v,W_k}$ is reducible; namely, $\omega_{\psi_v, W_k} = \omega_{\psi_v, W_k}^{+} \oplus \omega_{\psi_v, W_k}^{-}$, where $\omega_{\psi_v, W_k}^{+}$ consists of even functions in $\mc{S}(Y_k(F_v))$ and $\omega_{\psi_v, W_k}^{-}$ consists of odd functions in $\mc{S}(Y_k(F_v))$.\bb

To facilitate the subsequent analysis, we recall the explicit formulas for this action (cf. \cite[\S 1.6]{GRS98}):
\begin{enumerate}
    \item For $y, y_0 \in Y_k(F_v)$ and $\phi \in \mc{S}(Y_k(F_v))$,
    \[
    (\Omega_{\psi_v,W_k}((y, 0, 0)) \cdot \phi)(y_0) =  \phi(y + y_0).
    \]
    \item For $y_0 \in Y_k(F_v)$, $y' \in Y_k^*(F_v)$ and $\phi\in \mc{S}(Y_k(F_v))$,
    \[
    (\Omega_{\psi_v,W_k}((0, y', 0)) \cdot \phi)(y_0) = \psi_v\left(  \langle y_0, y' \rangle_{W_k} \right) \cdot \phi(y_0).
    \]
    \item For $y_0 \in Y_k(F_v), \ t \in F_v$ and $\phi \in \mc{S}(Y_k(F_v))$,
    \[
    (\Omega_{\psi_v,W_k}((0, 0, t)) \cdot \phi)(y_0) = \psi_v(t) \cdot \phi(y_0).
    \] 
    \item For $z \in \GL(Y_k)(F_v), \ \ve' \in \mu_2$ and $\phi \in \mc{S}(Y_k(F_v))$,
    \[
    \left( \Omega_{\psi_v,W_k} \left( \begin{pmatrix} z & \\ & z^* \end{pmatrix}, \ve' \right) \cdot \phi \right)(y) = \ve' \cdot \gamma_{\psi_v}(\det(z)) \cdot \phi(y\cdot z).
    \]
    \item For $s \in \Hom(Y_k, Y_k^*), \ \ve' \in \mu_2$ and $\phi \in \mc{S}(Y_k(F_v))$,
    \[
    \left( \Omega_{\psi_v,W_k}\big((s, \ve')\big) \cdot \phi \right)(y) = \ve' \cdot \psi_v\left( \tfrac{1}{2} \langle y, sy \rangle \right) \cdot \phi(y).
    \]
\end{enumerate}

In the degenerate case $k = 0$, the Heisenberg group $\mc{H}(W_0)$ reduces to $\mathbb{G}_a$, and $\wt{\J}_0$ is the trivial group. Consequently, we have
\[
\mc{H}(W_0) \rtimes \wt{\J}_0 \simeq \mathbb{G}_a,
\]
and the Weil representation is simply given by $\Omega_{\psi_v,W_0}(t) = \psi_v(t)$ for $t \in \mc{H}(W_0) \simeq \mathbb{G}_a$. When $\mc{H}(W_0)$ is viewed as a subgroup of the unipotent radical $\U_{m,m}'$, the character $\psi$ is explicitly given by
\begin{equation}\label{psi}
\psi(u) = \psi(\langle u f_m^*, f_m^* \rangle_{W_m}).
\end{equation}

Let $\Omega_{\psi,W_k} \coloneqq \bigotimes_v' \Omega_{\psi_v,W_k}$ be the global Weil representation of $\mc{H}(W_k)(\A) \rtimes \wt{\J}_k(\A)$ on $\mc{S}(Y_k(\A))$. Then using $\Omega_{\psi,W_k}$, we can define the global theta function $\theta_{\psi,W_k}(\phi)$ as
\begin{equation}\label{gtheta}
\theta_{\psi,W_k}(\phi)(v \wt{h}') \coloneqq \sum_{y \in Y_k(F)} (\Omega_{\psi,W_k}(v \wt{h}') \cdot \phi)(y), \quad (v, \wt{h}') \in \mc{H}(W_k)(\A) \rtimes \wt{\J}_k(\A).
\end{equation}
Therefore, $\theta_{\psi,W_k}$ gives an $(\mc{H}(W_k)(\A) \rtimes \wt{\J}_k(\A))$-equivariant map
\[
\theta_{\psi,W_k}\colon \Omega_{\psi,W_k} \rightarrow [\text{functions on } \mc{H}(W_k)(\A) \rtimes \wt{\J}_k(\A)].
\]

Under the identification $\mc{H}(W_k) \simeq \U_{m,m-k-1}' \backslash \U_{m,m-k}'$, we can view the theta functions $\{ \theta_{\psi,W_k}(\phi) \}_{\phi \in \mc{S}(Y_k(\A))}$ as being defined on $\U_{m,m-k}'(\A) \rtimes \wt{\J}_k(\A)$. Consequently, we regard $\Omega_{\psi,W_k}$ as a representation of $\U_{m,m-k}'(\A) \rtimes \wt{\J}_k(\A)$ on the space $\{ \theta_{\psi,W_k}(\phi) \}_{\phi \in \mc{S}(Y_k(\A))}$, where $\U_{m,m-k-1}'(\A)$ acts trivially. Let $\omega_{\psi,W_k} = \bigotimes_v' \omega_{\psi_v,W_k}$ denote the restriction of $\Omega_{\psi,W_k}$ to $\wt{\J}_k(\A)$. By restricting $\theta_{\psi,W_k}$ to $\omega_{\psi,W_k}$, we also obtain an $\wt{\J}_k(\A)$-equivariant map
\[
\theta_{\psi,W_k}'\colon \omega_{\psi,W_k} \rightarrow [\text{functions on } \wt{\J}_k(\A)]
\] 
defined by
\begin{equation}\label{gtheta_prime}
\theta_{\psi,W_k}'(\phi)(\wt{h}') \coloneqq \sum_{y \in Y_k(F)} (\omega_{\psi,W_k}(\wt{h}') \cdot \phi)(y), \quad \wt{h}' \in \wt{\J}_k(\A).
\end{equation}
For a finite set $S$ of places of $F$, the partial Weil representation $\omega_{\psi,W_k}^S$ is defined by the restricted tensor product
\[
\omega_{\psi,W_k}^S \coloneqq \Big(\sideset{}{'}\bigotimes_{v\notin S}\omega_{\psi_v,W_k}^{+}\Big) \otimes \Big(\bigotimes_{v\in S}\omega_{\psi_v,W_k}^{-}\Big).
\]
Then 
\[\omega_{\psi,W_k}=\bigoplus_{S}\omega_{\psi,W_k}^S\]
where $S$ ranges over all finite subsets of places of $F$.

The basic facts about the maps $\theta_{\psi,W_k}$ and $\theta_{\psi,W_k}'$ are summarized in the following proposition:
\begin{prop}\label{p.t}
\begin{enumerate}
    \item For every $\phi \in \Omega_{\psi,W_k}$, $\theta_{\psi,W_k}(\phi)$ is left $\U_{m,m-k}'(F) \rtimes \wt{\J}_k(F)$-invariant and contained in $\mc{A}^2(\U_{m,m-k}' \rtimes \wt{\J}_k)$.
    \item $\ker(\theta_{\psi,W_k}') = \bigoplus_{|S| \text{ is odd}} \omega_{\psi,W_k}^S$ and $\operatorname{Im}(\theta_{\psi,W_k}') \simeq \bigoplus_{|S| \text{ is even}} \omega_{\psi,W_k}^S$.
    \item $\theta_{\psi,W_k}'(\omega_{\psi,W_k}^S)$ is contained in $\mc{A}^2(\wt{\J}_k)$ and is cuspidal if and only if $S$ is non-empty.
\end{enumerate}
\end{prop}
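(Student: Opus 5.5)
The plan is to reduce everything to the local analysis of the Weil representation and the classical theory of theta series attached to the Schrödinger model. For part (i), I will first treat invariance. Left $F$-rational invariance of $\theta_{\psi,W_k}(\phi)$ under the Heisenberg part is checked directly from the explicit formulas (1)--(3).

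\begin{itemize}
\item Translation by $(y,0,0)$ with $y \in Y_k(F)$ permutes the summation set $Y_k(F)$.
\item The element $(0,y',0)$ with $y' \in Y_k^*(F)$ contributes $\psi(\langle y_0,y'\rangle)=1$ for $y_0 \in Y_k(F)$.
\item The central element $(0,0,t)$ with $t \in F$ contributes $\psi(t)=1$.
\end{itemize}

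Since a general $(y,y';t)$ is a product of such elements, this gives $\mc{H}(W_k)(F)$-invariance. Via the identification $\mc{H}(W_k)\simeq \U'_{m,m-k-1}\bs\U'_{m,m-k}$, and with $\U'_{m,m-k-1}(\A)$ acting trivially, this yields $\U'_{m,m-k}(F)$-invariance. Invariance under $\J_k(F)$, embedded via Weil's canonical splitting, is Weil's theorem that the distribution $\phi\mapsto\sum_{y\in Y_k(F)}\phi(y)$ is $\J_k(F)$-invariant. I will cite this rather than reprove it: one checks it on the generators of (4)--(5) and on the Weyl element, where the Poisson summation formula applies. The product formula $\prod_v\gamma_{\psi_v}(\det z)=1$ handles the Levi elements.

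Still within part (i), I must show moderate growth and square integrability on $[\U'_{m,m-k}\rtimes\wt\J_k]$. Here I will use the standard majorization of the theta series. On a Siegel domain $\U'\T'\K'$ of the Jacobi group, the Schwartz function satisfies $|\Omega(ut\kappa)\phi(y)|\le |\det t|^{1/2}\Phi(yt)$ for some $\Phi\in\mc{S}$. The sum over the lattice $Y_k(F)$ is then bounded by $\prod_i|t_i|^{1/2}\max(1,|t_i|^{-1})$. Integrating its square against the modular character of $\B'$ converges, because the Heisenberg variable integrates over a compact quotient and the $y$-sum only improves matters.

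For part (ii), I decompose $\omega=\bigoplus_S\omega^S$ and observe that $-1\in\J_k(F)$ lifts to the element $(-1,\ve)$ acting at each place by $\phi(y)\mapsto c_v\phi(-y)$. I must verify that the scalars satisfy $\prod_vc_v=1$; this uses $\prod_v\gamma_{\psi_v}((-1)^k)=1$ together with the product formula for Hilbert symbols. Hence $\theta'(\phi)=\theta'(\omega(-1)\phi)=(-1)^{|S|}\theta'(\phi)$ for $\phi\in\omega^S$, so the odd-$|S|$ pieces are killed. For the even-$|S|$ pieces, each $\omega^S$ is irreducible, and the summands are pairwise inequivalent because they differ locally. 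It therefore suffices to show that $\theta'$ is nonzero on each of them. I will do this by evaluating a constant term: apply the Siegel-parabolic unipotent integral, then choose a factorizable $\phi$ supported near a suitable point so that the sum is dominated by a single nonzero term.

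For part (iii), square integrability follows from part (i). For cuspidality, I compute the constant term of $\theta'(\phi)$ along the maximal parabolic stabilizing $Y_1$; by the tower structure it suffices to consider this one. Integrating over the unipotent radical with the formulas (5) and (1) projects onto the summands $y$ with vanishing first coordinate. This gives the expression $\theta'_{W_{k-1}}(\phi|_{\{y_1=0\}})$ up to a character, that is, a restriction to $y_1=0$. For $\phi_v$ odd, the restriction $\phi_v(0,\cdot)$ is odd in the remaining variables only after combining places, so I need to track this carefully. In effect, iterating down to $k=0$ gives evaluation at $0$, and odd functions vanish at $0$. Thus the constant terms vanish exactly when some $\phi_v$ is odd, i.e.\ $S\neq\emptyset$. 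When $S=\emptyset$, the same evaluation at $0$ is nonzero for suitable even $\phi$, so $\theta'(\omega^{\emptyset})$ is not cuspidal.

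I expect this last step to be the main obstacle. It requires a careful decomposition of the constant term through the mixed model together with the parity bookkeeping, and correct normalization of the metaplectic cocycle on the Levi factor.
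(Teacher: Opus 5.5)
The paper states this proposition without proof, so I am judging your argument on its own terms. Your treatment of (i) and of the vanishing half of (ii) is fine. The cuspidality argument in (iii), however, has a genuine gap, and in fact (iii) is false for $k\ge 2$.

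You correctly find that the constant term of $\theta'_{\psi,W_k}(\phi)$ along the maximal parabolic with Levi $\GL_1\times\wt{\J}_{k-1}$ is, up to a character of $\GL_1$, the theta function $\theta'_{\psi,W_{k-1}}(\phi|_{\{y_1=0\}})$. But the restriction of an odd function to a linear subspace is odd, place by place; no ``combining of places'' is involved. So for a pure tensor $\phi\in\omega^S_{\psi,W_k}$ the restriction lies in $\omega^S_{\psi,W_{k-1}}$. By (ii), its theta function is then nonzero as soon as $\phi|_{\{y_1=0\}}\neq 0$. For example, take $\phi_v(y)=\phi'_v(y_1)\phi''_v(y_2,\dots,y_k)$ with $\phi'_v$ even, $\phi'_v(0)\neq 0$, and $\phi''_v$ odd exactly for $v\in S$.

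Cuspidality needs this constant term itself to vanish. Iterating down to $k=0$ and obtaining $(\omega(g)\phi)(0)=0$ only shows that the Borel (iterated) constant term vanishes, which is strictly weaker. What you are running into is Rallis' tower property for the dual pair $(\O_1,\wt{\J}_k)$. The space $\omega^S_{\psi,W_k}$ is the lift of the character $\prod_{v\in S}\mathrm{sgn}_v$ of $\O_1(\A)=\mu_2(\A)$. For $S\neq\emptyset$ this character first occurs, cuspidally, on $\wt{\J}_1$, and its lift is therefore non-cuspidal on $\wt{\J}_k$ for every $k\ge 2$.

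So (iii) holds exactly for $k=1$. That is the only case the paper uses ($W'=W'_{n,n-1}$, i.e.\ $\Mp_2$). There your argument goes through verbatim: the Borel is the only proper parabolic, and its constant term $(\omega_{\psi,W_1}(g)\phi)(0)$ vanishes identically on $\omega^S_{\psi,W_1}$ if and only if $S\neq\emptyset$.

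There is a second, smaller gap in (ii). You cannot detect $\theta'_{\psi,W_k}\neq 0$ on $\omega^S_{\psi,W_k}$ for $S\neq\emptyset$ via a constant term, because the Siegel constant term is $(\omega(g)\phi)(0)$, which vanishes by the same parity argument. Use instead a rank-one Fourier coefficient along the Siegel unipotent radical $\{s\in\Hom(Y_k,Y_k^*)\}$, with the character $s\mapsto\psi(\tfrac12\langle y_0,sy_0\rangle)$ for a fixed $0\neq y_0\in Y_k(F)$. It picks out exactly $y=\pm y_0$. For pure tensors in $\omega^S_{\psi,W_k}$ it equals $\phi(y_0)+\phi(-y_0)=(1+(-1)^{|S|})\phi(y_0)$. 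Thus no single term can be made dominant, as you propose. For $|S|$ even, though, the two terms reinforce to $2\phi(y_0)$, which is nonzero for suitable $\phi$. Together with the irreducibility and pairwise inequivalence of the $\omega^S_{\psi,W_k}$, this gives (ii).

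Finally, square-integrability of $\theta'_{\psi,W_k}(\phi)$ on $[\wt{\J}_k]$ does not follow from the $L^2$ statement in (i). Restricting to the slice where the Heisenberg variable is trivial is a measure-zero operation. It does follow from your pointwise Siegel-domain bound.
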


\subsection{The Bessel period}\label{spb}

In this section, we introduce the Bessel periods and their associated local models, which play a central role in the formulation of the GGP conjectures. 

For $2 \le k \le n+\ve$, we define a character $\mu_{k,\ve} = \bigotimes_{v} \mu_{k,\ve,v}$ of $\U_{n,k-1}^{\ve}(\A)$ by
\[
\mu_{k,\ve}(u) \coloneqq \psi\left( (ue_2, e_1^*)_{V_{n}^{\ve}} + \dots + (ue_{k-1}, e_{k-2}^*)_{V_{n}^{\ve}} + (ue, e_{k-1}^*)_{V_{n}^{\ve}} \right).
\]
In the case $k=1$, we set $\mu_{1,\ve} = \II$. In the extremal case $k = n+\ve$, we simply denote $\mu_{n+\ve,\ve}$ (resp. $\mu_{n+\ve,\ve,v}$) by $\mu_{\ve}$ (resp. $\mu_{\ve,v}$). Note that while the type $(d,c)$ is suppressed in our notation, the character $\mu_{k,0}$ fundamentally depends on the underlying type of the orthogonal space $V_n^0$.

For $1\le k \le n+\ve-1$, let $V_{n,k}^{\ve}$ be the subspace of $V_n^{\ve}$ defined by
\[
V_{n,k}^{\ve} \coloneqq \bigoplus_{i = k}^{n + \ve - 1} (F e_i \oplus F e_i^*) \oplus F e',
\]
and we set $V_{n,n+\ve}^{\ve} \coloneqq F e'$. Note that $V_{n,n+1}^{1}=\{0\}$ since $e' = 0$. For $1 \le k \le n+\ve-1$, we define the groups
\begin{align*}
\G_{n,k}^{\ve} &\coloneqq \{ g \in \GL(V_{n,k}^{\ve}) \mid (gv_1, gv_2)_{V_{n,k}^{\ve}} = (v_1, v_2)_{V_{n,k}^{\ve}} \text{ for all } v_1, v_2 \in V_{n,k}^{\ve} \}, \\
\H_{n,k}^{\ve} &\coloneqq \{ h \in \G_{n,k}^{\ve} \mid \det(h) = 1 \}.
\end{align*}
For $k=n+\ve$, both $\G_{n,n+\ve}^{\ve}$ and $\H_{n,n+\ve}^{\ve}$ are defined to be the trivial group.

We also define a distinguished element $\e_k \in \G_{n,k}^{\ve}$ as follows:
\begin{itemize}
    \item If $\ve = 0$, $\e_k \in \G_{n,k}^{0}$ acts on $V_{n,k}^{0}$ as multiplication by $-1$, i.e., $\e_k = -\mathrm{Id}$.
    \item If $\ve = 1$ and $1 \le k \le n+\ve-1$, $\e_k \in \G_{n,k}^{1}$ acts trivially on the orthogonal complement of $F e_n \oplus F e_n^*$ in $V_{n,k}^{1}$, and acts on this plane by $\e_k(e_n) = e_n^*$ and $\e_k(e_n^*) = e_n$.
    \item If $\ve = 1$ and $k = n+1$, we set $\e_k = \mathrm{Id}$.
\end{itemize}
It is clear that $\G_{n,k}^{\ve} = \H_{n,k}^{\ve} \rtimes (\mu_2 \cdot \e_k)$. For $\mathbf{t} \in \mu_2=\{1,-1\}$, define $\mathbf{t} \cdot \e_k \in \G_{n,k}^{\ve}$ as 
\[
\mathbf{t} \cdot \e_k =
\begin{cases} 
\mathrm{Id}, & \quad \text{if } \mathbf{t}=1, \\ 
\e_k, & \quad \text{if } \mathbf{t}=-1.
\end{cases}
\]
By fixing the partial basis $\{e_1, \dots, e_{k-1}, e, e_{k-1}^*, \dots, e_1^*\}$, we may regard $\G_{n,k}^{\ve}$ as a subgroup of $\G_n^{\ve}$. Since $\e_k$ normalizes $\U_{n,k-1}^{\ve}$, we define the extended unipotent group $\wt{\U}_{n,k-1}^{\ve} \coloneqq \U_{n,k-1}^{\ve} \rtimes (\mu_2 \cdot \e_k)$. When $k=n+\ve-1$, we denote $\e_{n+\ve-1}$ simply by $\e$.

\subsubsection{Bessel models}
Let $\mathrm{sgn}_v$ be the character of $\mu_2(F_v) \cdot \e_k$ defined by $\mathrm{sgn}_v(\mathbf{t} \cdot \e_k) \coloneqq \det(\mathbf{t} \cdot \e_k)$. Since $\e_k$ stabilizes $\mu_{k,\ve,v}$, there are exactly two extensions $\mu_{k,\ve,v}^{\pm} \colon \wt{\U}_{n,k-1}^{\ve}(F_v) \to \CC^{\times}$ of $\mu_{k,\ve,v}$, where $\mu_{k,\ve,v}^+ \coloneqq \mu_{k,\ve,v} \otimes \II$ and $\mu_{k,\ve,v}^- \coloneqq \mu_{k,\ve,v} \otimes \mathrm{sgn}_v$. 
We set $\D_{n,k}^{\ve} \coloneqq \U_{n,k-1}^{\ve} \rtimes \H_{n,k}^{\ve}$ and $\wt{\D}_{n,k}^{\ve} \coloneqq \wt{\U}_{n,k-1}^{\ve} \rtimes \H_{n,k}^{\ve}$. 

For $\wt{\pi} \in \Irr(\G_n^{\ve}(F_v))$, we say that $\wt{\pi}$ is of $\mu_{k,\ve,v}^{\pm}$-\textit{Bessel type} if
\[
\Hom_{\wt{\D}_{n,k}^{\ve}(F_v)}(\wt{\pi} \boxtimes \rho, \mu_{k,\ve,v}^{\pm}) \neq 0
\]
for some $\rho \in \Irr(\H_{n,k}^{\ve}(F_v))$. Here, the representations $\rho$ and $\mu_{k,\ve,v}^{\pm}$ are extended to $\wt{\D}_{n,k}^{\ve}(F_v)$ by letting them act trivially on $\wt{\U}_{n,k-1}^{\ve}(F_v)$ and $\H_{n,k}^{\ve}(F_v)$, respectively. 

In this case, given a nonzero functional $\wt{l} \in \Hom_{\wt{\U}_{n,k-1}^{\ve}(F_v)}(\wt{\pi}, \mu_{k,\ve, v}^{\pm})$, we define the $\mu_{k,\ve,v}^{\pm}$-\textit{Bessel model} $\mc{B}_{k,\psi_v}^{\ve,\pm}(\wt{\pi})$ as the space consisting of the functions
\[
\mc{B}_{k,\psi_v}^{\ve,\pm}(\wt{\varphi})(g) \coloneqq \wt{l}(\wt{\pi}(g)\wt{\varphi}) \quad \text{for } \wt{\varphi} \in \wt{\pi}, \ g \in \G_n^{\ve}(F_v).
\]
Analogous definitions of Bessel type and Bessel models apply to representations $\pi \in \Irr(\H_n^{\ve}(F_v))$.

\subsubsection{Whittaker--Bessel models}
When $k = n + \ve$, the group $\H_{n,n+\ve}^{\ve}$ is trivial, and the Bessel functional $\mc{B}_{n + \ve, \psi_v}^{\ve,\pm}$ reduces to a Whittaker functional. For $\wt{\varphi} \in \wt{\pi}$, we denote this evaluation by $W_{\psi_v}^{\ve,\pm}(\wt{\varphi}) \coloneqq \mc{B}_{n + \ve, \psi_v}^{\ve,\pm}(\wt{\varphi})$. If this functional is nonzero, we say that $\wt{\pi}$ is $\mu_{\ve,v}^{\pm}$-\textit{generic}. 

We define the associated Whittaker functions on $\G_n^{\ve}(F_v)$ and $\H_n^{\ve}(F_v)$ by 
\[
W_{\psi_v}^{\ve,\pm}(\wt{\varphi})(g) \coloneqq W_{\psi_v}^{\ve,\pm}(\wt{\pi}(g)\wt{\varphi}) \quad \text{and} \quad W_{\psi_v}^{\ve}(\varphi)(h) \coloneqq W_{\psi_v}^{\ve}(\pi(h)\varphi),
\]
respectively. The spaces of these Whittaker functions, which correspond to the Whittaker models of $\wt{\pi}$ and $\pi$, are denoted by $\mc{W}_{\psi_v}^{\ve,\pm}(\wt{\pi})$ and $\mc{W}_{\psi_v}^{\ve}(\pi)$, respectively.

\subsubsection{Bessel periods}\label{Bp}
We now define the global counterparts of these notions. Let $\mf{T}$ be the collection of all finite sets of places of $F$ with even cardinality. For $\TT \in \mf{T}$, we define the character $\mathrm{sgn}_{\TT} \colon (\mu_2(F) \backslash \mu_2(\A)) \cdot \e_k \to \CC^{\times}$ by $\mathrm{sgn}_{\TT} \coloneqq \prod_{v \in \TT} \mathrm{sgn}_v$. When $ \TT = \emptyset $, $\mathrm{sgn}_{\emptyset}$ is just the trivial character. Sometimes, we implicitly extend the character $\mathrm{sgn}_{\TT}$ to $[\G_n^{\ve}]$ by extending it trivially across $[\H_n^{\ve}]$.

Given an irreducible cuspidal automorphic representation $\wt{\pi}$ (resp. $\pi$) of $\G_n^{\ve}(\A)$ (resp. $\H_n^{\ve}(\A)$) and an automorphic representation $\rho$ of $\H_{n,k}^{\ve}(\A)$, we define the global period functional $\mc{B}_{k,\psi}^{\ve,\TT}$ on $\wt{\pi} \boxtimes \rho$ (resp. $\pi \boxtimes \rho$) by
\begin{multline*}
\mc{B}_{k,\psi}^{\ve,\TT}(\wt{\varphi}_1 \otimes \varphi_2) \coloneqq 
\int_{[\H_{n,k}^{\ve}]} \Bigg(\int_{[\U_{n,k-1}^{\ve}]} \mu_{k,\ve}^{-1}(u) \left( \int_{[\mu_2]} \wt{\varphi}_1((\mathbf{t} \cdot \e_k)uh) \mathrm{sgn}_{\TT}(\mathbf{t} \cdot \e_k) \, d\mathbf{t} \right) \, du \Bigg)\varphi_2(h) \, dh,
\end{multline*}
and respectively,
\[
\mc{B}_{k,\psi}^{\ve}(\varphi_1 \otimes \varphi_2) \coloneqq 
\int_{[\H_{n,k}^{\ve}]} \left(\int_{[\U_{n,k-1}^{\ve}]} \mu_{k,\ve}^{-1}(u) \varphi_1(uh) \, du \right) \varphi_2(h) \, dh,
\]
for $\wt{\varphi}_1 \in \wt{\pi}$ (resp. $\varphi_1 \in \pi$) and $\varphi_2 \in \rho$.

When $\rho=\II$, we regard $\mc{B}_{k,\psi}^{\ve,\TT}$ (resp. $\mc{B}_{k,\psi}^{\ve}$) as a functional on $\wt{\pi}$ (resp. $\pi$) given by
\[
\mc{B}_{k,\psi}^{\ve,\TT}(\wt{\varphi}) \coloneqq 
\int_{[\H_{n,k}^{\ve}]} \int_{[\U_{n,k-1}^{\ve}]} \mu_{k,\ve}^{-1}(u) \left( \int_{[\mu_2]} 
\wt{\varphi}((\mathbf{t} \cdot \e_k)uh) \mathrm{sgn}_{\TT}(\mathbf{t} \cdot \e_k) \, d\mathbf{t} \right) \, du \, dh,
\]
and respectively,
\[
\mc{B}_{k,\psi}^{\ve}(\varphi) \coloneqq 
\int_{[\H_{n,k}^{\ve}]} \int_{[\U_{n,k-1}^{\ve}]} \mu_{k,\ve}^{-1}(u) \varphi(uh) \, du \, dh,
\]
for $\wt{\varphi} \in \wt{\pi}$ (resp. $\varphi \in \pi$).

\subsubsection{Whittaker--Bessel periods} \label{muge}
In the extremal case $k = n + \ve$, the group $\H_{n,n+\ve}^{\ve}$ is trivial. Consequently, the period $\mc{B}_{n+\ve,\psi}^{\ve}(\varphi)$ coincides with the \textit{Whittaker--Bessel period} of $\varphi \in \mc{A}(\H_n^{\ve})$. For $\TT \in \mf{T}$, $\wt{\varphi} \in \mc{A}(\G_n^{\ve})$, and $\varphi \in \mc{A}(\H_n^{\ve})$, we introduce the following notation:
\[
W_{\psi}^{\ve,\TT}(\wt{\varphi}) \coloneqq \mc{B}_{n+\ve,\psi}^{\ve,\TT}(\wt{\varphi}), \quad W_{\psi}^{\ve}(\varphi) \coloneqq \mc{B}_{n+\ve,\psi}^{\ve}(\varphi).
\]

For an irreducible cuspidal automorphic representation $\wt{\pi}$ (resp. $\pi$) of $\G_n^{\ve}(\A)$ (resp. $\H_n^{\ve}(\A)$), we say that $\wt{\pi}$ (resp. $\pi$) is $\mu_{\ve}^{\TT}$-\textit{generic} (resp. $\mu_{\ve}$-\textit{generic}) if the functional $W_{\psi}^{\ve,\TT}$ (resp. $W_{\psi}^{\ve}$) does not vanish identically on its space. If there exists some $\TT \in \mf{T}$ such that $\wt{\pi}$ is globally $\mu_{\ve}^{\TT}$-generic, we simply say that $\wt{\pi}$ is $\mu_{\ve}$-\textit{generic}.

As in the local setting, for $\wt{\varphi} \in \wt{\pi}$ (resp. $\varphi \in \pi$), we evaluate these functionals at group elements by defining the corresponding Whittaker functions on $\G_n^{\ve}(\A)$ (resp. $\H_n^{\ve}(\A)$) as follows:
\[
W_{\psi}^{\ve,\TT}(\wt{\varphi})(g) \coloneqq W_{\psi}^{\ve,\TT}(\wt{\pi}(g)\wt{\varphi}) \quad \big(\text{resp. } W_{\psi}^{\ve}(\varphi)(h) \coloneqq W_{\psi}^{\ve}(\pi(h)\varphi) \big).
\]
The space of these Whittaker functions  is denoted by $\mc{W}_{\psi}^{\ve,\TT}(\wt{\pi})$ (resp. $\mc{W}_{\psi}^{\ve}(\pi)$). 

\begin{rem} \label{type}
It is important to observe that the definition of Whittaker--Bessel periods depends fundamentally on the choice of the generic character $\mu_{\ve}$ of $\U_{n+\ve-1}^{\ve}(\A)$. When $\ve=1$, there exists a unique $\T_{n}^{1}(\A)$-orbit of generic characters. In contrast, when $\ve=0$, the $\T_{n-1}^{0}(\A)$-orbits of generic characters of $\U_{n-1}^{0}(\A)$ are parameterized by $c N_{E/F}(E^{\times})/F^{\times 2}$, where $E = F(\sqrt{d})$ (see \cite[Section~2.2]{AG17}). Indeed, by \cite[Section~12]{GGP12}, every such $\T_{n-1}^{0}(\A)$-orbit of generic characters can be realized as $\mu_{0}$ by varying the type of the underlying orthogonal space $V_{n}^{0}$ from $(d,c)$ to $(d,c')$ for $c' \in c N_{E/F}(E^{\times})/F^{\times 2}$. By identifying the corresponding unipotent groups via their natural isomorphisms, it suffices to establish results for $\G_n^0$ (resp., $\H_n^0$) across all possible types $(d,c) \in (F^{\times}/F^{\times 2})^2$ with their corresponding characters $\mu_{0}$ to cover all Whittaker data. This principle applies equally to the study of local Whittaker--Bessel models.
\end{rem}

\subsection{The Fourier--Jacobi period}\label{spf}

In this section, we recall the construction of the Fourier--Jacobi periods and their corresponding local models for the symplectic and metaplectic groups. For $2 \le k \le m$ and any place $v$ of $F$, we define a character $\mu_{k,v}'$ of $\U_{m,k}'(F_v)$ by
\[
\mu_{k,v}'(u') \coloneqq \psi\left( \langle u' f_2, f_1^* \rangle_{W_m} + \cdots + \langle u' f_k, f_{k-1}^* \rangle_{W_m} \right), \quad u' \in \U_{m,k}'(F_v).
\]
For $k=0, 1$, we define $\mu_{k,v}'$ to be the trivial character $\II$. The global character $\mu_k' = \otimes_v \mu_{k,v}'$ of $\U_{m,k}'(\A)$ is defined as the restricted tensor product of these local characters.

For $0 \le k \le m-1$, let $W_{m,k}'$ be the subspace of $W_m$ defined by
\[
W_{m,k}' \coloneqq \bigoplus_{i = k+1}^{m} (F f_i \oplus F f_i^*).
\]
We set $\J_{m,k}' \coloneqq \Sp(W_{m,k}')$ to be the associated symplectic group. By fixing the basis $\{f_1, \dots, f_k, f_1^*, \dots, f_k^*\}$, we regard $\J_{m,k}'$ as a subgroup of $\J_m$. For $k=m$, we define $W_{m,m}' = \{0\}$ and let $\J_{m,m}'$ be the trivial group. 

For $0 \le k \le m$, we define the subgroup $\D_{m,k}'$ of $\wt{\J}_m$ by
\[
\D_{m,k}' \coloneqq \U_{m,k}' \rtimes \wt{\J}_{m,k}'.
\]
Using the isomorphism $\mc{H}(W_{m,k}') \simeq \U_{m,k-1}' \backslash \U_{m,k}'$ as in \eqref{hs}, the Weil representation $\Omega_{\psi_v, W_{m,k}'}$ can be viewed as a representation of $\D_{m,k}'(F_v)$. We also extend $\mu_k'$ to a character of $\D_{m,k}'$ by letting it act trivially on $\wt{\J}_{m,k}'$.

\subsubsection{Fourier--Jacobi models}
For $\lambda \in F_v^{\times}$, let $\nu_{\psi_v,W_{m,k}'}^{\lambda} \coloneqq \mu_{k,v}' \otimes \Omega_{\psi_{\lambda,v},W_{m,k}'}$ be a representation of $\D_{m,k}'(F_v)$. When $\lambda = 1$, we suppress the superscript and simply write $\nu_{\psi_v,W_{m,k}'}$. Note that $\nu_{\psi,W_{m,m}'}^{\lambda} = \mu_{m,v}' \otimes \psi_{\lambda,v}$ (cf.~\eqref{psi}).

For $\sigma \in \Irr(\wt{\J}_m)$, we say that $\sigma$ is of $(\mu_{k,v}', \lambda)$-\textit{Fourier--Jacobi type} if
\[
\Hom_{\D_{m,k}'(F_v)}(\sigma \boxtimes \rho^{\ve_{\sigma}}, \nu_{\psi_v,W_{m,k}'}^{\lambda}) \neq 0
\]
for some $\rho \in \Irr(\wt{\J}_{m,k}')$. In this case, let $l' \in \Hom_{\U_{m,k}'(F_v)}(\sigma, \nu_{\psi_v,W_{m,k}'}^{\lambda})$ be a nonzero element. We define the $(\mu_{k,v}', \lambda)$-\textit{Fourier--Jacobi model} $\mc{FJ}_{k,\psi_v}^{\lambda}(\sigma)$ as the space of functions
\[
\mc{FJ}_{k,\psi_v}^{\lambda}(\varphi)(\wt{h'}) \coloneqq l'(\wt{h'} \cdot \varphi) \quad \text{for } \varphi \in \sigma, \ \wt{h'} \in \wt{\J}_m(F_v).
\]

\subsubsection{Whittaker--Fourier--Jacobi models}
When $k = m$, we have $\D_{m,m}' = \U_{m,m}'$. For $\lambda \in F_v^{\times}$, the functional $\mc{FJ}_{m,\psi_v}^{\lambda}$ reduces to a Whittaker functional. For $\varphi \in \sigma$, we denote this evaluation by $W_{\psi_v}^{\lambda}(\varphi) \coloneqq \mc{FJ}_{m,\psi_v}^{\lambda}(\varphi)$. If this functional is non-trivial, we say that $\sigma$ is $(\psi_v, \lambda)$-\textit{generic}.

We define the associated Whittaker function on $\wt{\J}_m(F_v)$ by 
\[
W_{\psi_v}^{\lambda}(\varphi)(\wt{h'}) \coloneqq W_{\psi_v}^{\lambda}(\sigma(\wt{h'})\varphi).
\]
The space of these Whittaker functions, which corresponds to the Whittaker--Fourier--Jacobi model of $\sigma$, is denoted by $\mc{W}_{\psi_v}^{\lambda}(\sigma)$.

\subsubsection{Fourier--Jacobi periods}
We now define the global analogues of these notions. For $0 \le k \le m-1$ and $\lambda \in F^{\times}$, let $\nu_{\psi,W_{m,k}'}^{\lambda} \coloneqq \otimes_v \nu_{\psi_v,W_{m,k}'}^{\lambda}$. Given an irreducible cuspidal automorphic representation $\sigma$ of $\wt{\J}_m(\A)$ and an automorphic representation $\rho$ of $\wt{\J}_{m,k}'(\A)$, we define the global period functional $\mc{FJ}_{k,\psi}^{\lambda}$ on $\sigma \boxtimes \rho^{\ve_{\sigma}} \boxtimes \nu_{\psi^{-1},m,k}^{\lambda}$ by
\begin{equation*} 
\begin{split}
&\mc{FJ}_{k,\psi}^{\lambda}(\varphi \otimes \varphi' \otimes \phi) \\
&\coloneqq \int_{[\J_{m,k}']} \int_{[\U_{m,k}']} \varphi(u'\wt{h'}) \varphi'(\wt{h'}) (\mu_k')^{-1}(u') \theta_{\psi_{\lambda}^{-1},W_{m,k}'}(\phi)(u'\wt{h'}) \, du' \, dh'
\end{split}
\end{equation*}
for $\varphi \in \sigma$, $\varphi' \in \rho^{\ve_{\sigma}}$, and $\phi \in \nu_{\psi^{-1},m,k}^{\lambda}$. Here, $\wt{h'} \in \wt{\J}_{m,k}'(\A)$ is a lifting of $h'$, and the integrand is independent of the choice of this lift. 

In the case where $\rho = \II$, we regard $\mc{FJ}_{k,\psi}^{\lambda}$ as a functional on $\sigma \boxtimes \nu_{\psi^{-1},W_{m,k}'}^{\lambda}$ given by

\begin{equation*}
\begin{split}
&\mc{FJ}_{k,\psi}^{\lambda}(\varphi \otimes \phi) \\&\coloneqq \int_{[\J_{m,k}']} \int_{[\U_{m,k}']} \varphi(u' \wt{h}') \II^{\ve_{\sigma}}(\wt{h}')(\mu_k')^{-1}(u') \theta_{\psi_{\lambda}^{-1},W_{m,k}'}(\phi)(u' \wt{h}') \, du' \, dh'.
\end{split}
\end{equation*}


\subsubsection{Whittaker--Fourier--Jacobi periods}
When $k = m$, the group $\J_{m,m}'$ is trivial. For an irreducible  automorphic representation $\sigma$ of $\wt{\J}_m(\A)$, the functional $\mc{FJ}_{m,\psi}^{\lambda}$ on $\sigma$ is defined by
\[
\mc{FJ}_{m,\psi}^{\lambda}(\varphi) \coloneqq \int_{[\U_{m}']} \varphi(u') (\mu_{m}' \cdot \psi_{\lambda})^{-1}(u') \, du'.
\]
This coincides with the \textit{Whittaker--Fourier--Jacobi period} of $\varphi$, often denoted by $W_{\psi}^{\lambda}(\varphi)$. If the functional $W_{\psi}^{\lambda}$ does not vanish identically on $\sigma$, we say that $\sigma$ is \textit{globally $(\psi, \lambda)$-generic}. 

As in the local setting, for $\varphi \in \sigma$, we evaluate this functional at group elements by defining the corresponding Whittaker function on $\wt{\J}_m(\A)$ as follows:
\[
W_{\psi}^{\lambda}(\varphi)(\wt{h'}) \coloneqq W_{\psi}^{\lambda}(\sigma(\wt{h'})\varphi), \quad \wt{h'} \in \wt{\J}_m(\A).
\]
The space of these Whittaker functions is denoted by $\mc{W}_{\psi}^{\lambda}(\sigma)$.

\section{Vanishing of the local theta liftings}\label{sec:3}

In this section, we compute the twisted Jacquet modules of the local Weil representations. These computations are essential for proving the vanishing of local theta liftings within a certain range. For the remainder of this section, we assume that all objects are defined over a non-archimedean local field $F_v$. For brevity, we suppress the subscript $v$ from the notation and identify an algebraic group $G$ with its group of $F_v$-rational points $G(F_v)$.

\subsection{Mixed models of local Weil representations}\label{weil}

In this subsection, let $(V, (\cdot, \cdot)_V)$ and $(W, \langle \cdot, \cdot \rangle_{W})$ denote vector spaces over $F$ of dimensions $n$ and $m$, respectively. We assume that the pair $(V, W)$ consists of one orthogonal space and one symplectic space. We consider $V \otimes W$ as a symplectic space over $F$ with the symplectic form $(\cdot, \cdot)$ given by
\[
(v_1 \otimes w_1, v_2 \otimes w_2) \coloneqq (v_1, v_2)_V \cdot \langle w_1, w_2 \rangle_W.
\]

Let $\G$ and $\J$ denote the isometry groups of $V$ and $W$, respectively. If $V$ (resp. $W$) is a symplectic space, we let $\wt{\G}$ (resp. $\wt{\J}$) denote the metaplectic double cover of $\G$ (resp. $\J$). To ensure a unified treatment, even when $V$ (resp. $W$) is an orthogonal space, we formally write $\wt{\G}$ (resp. $\wt{\J}$) for $\G$ (resp. $\J$). In such cases, an element in $\wt{\J}$ is represented as $(h, \ve') \in \J \times \{\pm 1\}$, although the underlying isometry is simply $h \in \J$.

There is a natural embedding of $\G \times \J$ into $\Sp(V \otimes W)$. Let $\Omega_{\psi,V,W}$ be the Weil representation associated with $\mc{H}(V \otimes W) \rtimes  \wt{\Sp}(V \otimes W)$. As shown in \cite[Sec.~8.1]{Yam14}, if $V$ (resp. $W$) is an odd-dimensional orthogonal space, the metaplectic group $\wt{\Sp}(V \otimes W)$ splits over $\G \times \wt{\J}$ (resp. $\wt{\G} \times \J$). In all other cases, $\wt{\Sp}(V \otimes W)$ splits over $\G \times \J$ (cf. \cite{Ku94}, \cite{MVW87}). For a unified formulation, we define the pair $(\G', \J')$ as follows:
\[
(\G', \J') \coloneqq \begin{cases} 
(\G, \wt{\J}) & \text{if $V$ is an odd-dimensional orthogonal space,} \\ 
(\wt{\G}, \J) & \text{if $W$ is an odd-dimensional orthogonal space,} \\ 
(\G, \J) & \text{otherwise.}
\end{cases}
\]

Let $X_k$ (resp. $Y_m$) be a $k$-dimensional (resp. $m$-dimensional) isotropic subspace of $V$ (resp. $W$), and let $X_k^*$ (resp. $Y_m^*$) be its dual. Set $V_k \coloneqq X_k \oplus X_k^*$ and let $V^{\perp}$ be the orthogonal complement of $V_k$ in $V$. Let $\G^0$ be the isometry group of $V^{\perp}$, and define $(\G^0)'$ to be $\G^0$ or $\wt{\G^0}$ consistently with the definition of $\G'$.

We have the decompositions $V = X_k \oplus V^{\perp} \oplus X_k^*$ and $W = Y_m \oplus Y_m^*$. By choosing a polarization $\z \oplus \z^*$ of $V^{\perp} \otimes W$, and using the polarization $(X_k^* \otimes W) \oplus \z^*$ of $V \otimes W$, we realize the action of $\omega_{\psi,V,W}$ of $\G' \times \J'$ on the Bruhat--Schwartz space $\mc{S}(X_k^* \otimes W) \otimes \mc{S}(\z^*)$ via the mixed Schrödinger model.

To describe this action, let $\P = \M \U$ be the parabolic subgroup of $\G$ stabilizing $X_k$, with Levi subgroup $\M$ and unipotent radical $\U$. For $\alpha \in \Hom(X_k^*, X_k)$, $\beta \in \Hom(V^{\perp}, X_k)$, and $\gamma \in \GL(X_k)$, we define \[\alpha^* \in \Hom(X_k^*, X_k), \ \beta^* \in \Hom(X_k^*, V^{\perp}), \text{ and } \gamma^* \in \GL(X_k^*)\] such that
\[
\langle \alpha x_1, x_2 \rangle = \langle x_1, \alpha^* x_2 \rangle, \quad \langle \beta v, x \rangle = \langle v, \beta^* x \rangle, \quad \langle \gamma x_1, x_2 \rangle = \langle x_1, \gamma^* x_2 \rangle
\]
for $x, x_i \in X_k^*, v \in V^{\perp}$. Let $\N \coloneqq \{ \alpha \in \Hom(X_k^*, X_k) \mid \alpha^* = -\alpha \}$. Then $\M \simeq \GL(X_k) \times \G^0$ and $\U \simeq \Hom(V^{\perp}, X_k) \times \N$. Let $\Z$ be the maximal unipotent subgroup of $\M$, which we regard as a subgroup of $\GL(X_k)$. The action of $(\Z \V \times (\G^0)') \times \J'$ on $\mc{S}(X_k^* \otimes W) \otimes \mc{S}(\z^*)$ via $\omega_{\psi,V,W} \simeq \omega_{\psi,V_k,W} \otimes \omega_{\psi,V^{\perp},W}$ is given as follows (cf. \cite[\S 7.4]{GI16}): for $\phi_1 \otimes \phi_2 \in \mc{S}(X_k^* \otimes W) \otimes \mc{S}(\z^*)$ and $(\w, \y) \in (X_k^* \otimes W) \oplus \z^*$,
\begin{enumerate}
    \item $\omega_{\psi,V,W}(z, 1)(\phi_1 \otimes \phi_2)(\w, \y) = \phi_1(z^* \w) \phi_2(\y)$ for $z \in \Z \subset \GL(X_k)$;
    \item $\omega_{\psi,V,W}(g, 1)(\phi_1 \otimes \phi_2)(\w, \y) = \phi_1(\w) (\omega_{\psi,V^{\perp},W}(g, 1) \phi_2)(\y)$ for $(g, \ve') \in (\G^0)'$;
    \item $\omega_{\psi,V,W}(t, 1)(\phi_1 \otimes \phi_2)(\w, \y) = \psi((\y, \y_{t,1}) + (\y_{t,2}, \y_{t,1})) (\phi_1 \otimes \phi_2)(\w, \y + \y_{t,2})$ for $t \in \Hom(V^{\perp}, X_k)$, where $(\y_{t,1}, \y_{t,2}) \in \z \oplus \z^*$ satisfies $t^* \w = \y_{t,1} + \y_{t,2}$;
    \item $\omega_{\psi,V,W}(s, 1)(\phi_1 \otimes \phi_2)(\w, \y) = \psi(\frac{1}{2} (s \w, \w)) (\phi_1 \otimes \phi_2)(\w, \y)$ for $s \in \N \subset \Hom(X_k^*, X_k)$;
    \item $\omega_{\psi,V,W}(1, (h', \ve'))(\phi_1 \otimes \phi_2)(\w, \y)$ 
\\ $= \phi_1(\w \cdot (h')^{-1}) (\omega_{\psi,V^{\perp},W}(1, (h', \ve')) \phi_2)(\y)$ for $(h', \ve') \in \J'.$
\end{enumerate}

Choose a basis $\{e_1,\cdots,e_{k}\}$ of $X_{k}$ and $\{e_{1}^*,\cdots,e_{k}^*\}$ of $X_{k}^*$ such that $(e_i,e_j^*)_V=\delta_{ij}$. Using the ordered basis $\{e_{1}^*,\cdots,e_{k}^*\}$ of $X_{k}^*$, we identify $X_{k}^* \otimes W$ with $W^{k}$. Furthermore, using the basis $\{e_1,\cdots,e_{k}\}$ of $X_{k}$, $\{e^{1},\cdots,e^{n-2k}\}$ of $V^{\perp}$, $\{e_{k}^*,\cdots,e_{1}^*\}$ of $X_{k}^*$, we can identify $\Z$, $\Hom(V^{\perp},X_{k})$ and $\N$  as $Z_{k}$, $M_{k,n-2k}$ and $S_{k}$, respectively. (For the definition of $Z_{k}$ and $S_{k}$, see subsection~\ref{subsec:2-1}). 

With the above identifications, we can describe the action of $\big(Z_{k}\cdot  S_{k}\cdot M_{k,n-2k}\ \times (\G^0)' \big) \times \J'$ on $\mc{S}(W^{k})\otimes \mc{S}(\z^*) $ is expressed by:
\begin{flalign}
\label{a5} \omega_{\psi,V,W}(z, 1)(\phi_1 \otimes \phi_2)(\w; \y) &= (\phi_1 \otimes \phi_2)(\w \cdot z; \y),  &z \in Z_k, \\
\label{a7} \omega_{\psi,V,W}(s, 1)(\phi_1 \otimes \phi_2)(\w; \y) &= \psi\left(\tfrac{1}{2} \mathrm{tr}(Gr(\mathbf{w}) s \varpi_k)\right) (\phi_1 \otimes \phi_2)(\w; \y),  &s \in S_k,
\end{flalign}
where $Gr(\mathbf{w}) = (\langle w_i, w_j \rangle_W)$.

If $\{e^1,\cdots,e^{n-2k}\}$ is an orthogonal basis of $V^{\perp}$, set $\z=V^{\perp}\otimes Y_m$ and $\z^*=V^{\perp}\otimes Y_m^*$. Then for $\w\in Y_m^{k}\ss W^{k}=X_k^*\otimes W$,  $\y=(y_1,\cdots,y_{n-2k})\in (Y_m^*)^{n-2k}=V^{\perp} \otimes Y_m^*$ and $t \in M_{k,n-2k}$, \begin{equation}\label{a9}\omega_{\psi,V,W}(t,1)(\phi_1\otimes \phi_2 )(\w;{\bf y})=\psi\big( \sum_{j=1}^{n-2k}\sum_{i=1}^{k}t_{i,j}\cdot \la y_j,w_i \ra_{W}\big)\cdot(\phi_1\otimes \phi_{2})(\mathbf{w},{\bf y}).\end{equation}

When $n=2l$ and $k=l-1$ and $V^{\perp}$ is a 2-dimensional split space such that $V^{\perp}=\la e_l \ra \oplus \la e_l^* \ra$, we may take $\z=\la e_l^* \ra\otimes W$ and $\z^*=\la e_l \ra\otimes W$. Then for $\w=(w_1,\cdots,w_{l-1}) \in {W}^{l-1}\simeq X_{l-1}^* \otimes W$ and $\y=w \in W\simeq \z^*$ and $t \in M_{l-1,2}$,
\begin{align}\label{a11}
    &\omega_{\psi,V,W}(t,1)(\phi_1\otimes \phi_2 )(\w;{\bf y}) \\ \nonumber
    &\quad = \psi\Big(\sum_{i=1}^{l-1} t_{i,2} \la w,w_i \ra + \sum_{i,j=1}^{l-1} t_{i,1} t_{j,2} \la w_i,w_j\ra\Big) \phi_1(\w) \phi_{2}\Big(\big(\sum_{i=1}^{l-1} t_{i,1} w_i+w\big)\otimes e_l\Big).
\end{align}

\begin{rem}\label{1dim}
If $V$ is a $1$-dimensional orthogonal space and $e \in V$ is a nonzero element, let $(e, e)_V = 2\lambda$. We denote $W_e \coloneqq V \otimes W$ and identify it with $W$ by endowing $W_e$ with the symplectic form $\langle \cdot, \cdot \rangle_{W_e}$ defined by $\lambda \langle \cdot, \cdot \rangle_W$. Under this identification, the decomposition $(e \otimes Y) \oplus (e \otimes Y^*)$ constitutes a polarization of $W_e$. 

In this setting, we ignore the action of $\G'$ and regard $\Omega_{\psi,V,W}$ as the module $\Omega_{\psi,W_e}$ for $\mc{H}(W_e) \rtimes \wt{\J'}$. By identifying both $\mc{H}(W)$ and $\mc{H}(W_e)$ with the set $W \times F$, the map $(w, t) \mapsto (w, \lambda t)$ provides an isomorphism $\mc{H}(W) \simeq \mc{H}(W_e)$ that intertwines the actions of $\wt{\J'}$. 

Pulling back the projective Schrödinger representation $\overline{\Omega}_{\psi,W_e}$ of $\mc{H}(W_e) \rtimes \J'$ via this map yields the projective representation $\overline{\Omega}_{\psi_{\lambda},W}$ of $\mc{H}(W) \rtimes \J'$. Since $\Omega_{\psi_{\lambda},W}$ is the unique representation extending $\overline{\Omega}_{\psi_{\lambda},W}$ to $\mc{H}(W) \rtimes \wt{\J'}$ (cf. \cite[Corollary~4.19]{Li08}), we conclude that as representations of $\mc{H}(W) \rtimes \wt{\J'}$,
\[
\Omega_{\psi,W_e} \simeq \Omega_{\psi_{\lambda},W}.
\]
\end{rem}

\subsection{Computation of the twisted Jacquet module of the local Weil representation}

In this subsection, we let $V \coloneqq V_n^{\ve}$ and $W \coloneqq W_k$. We investigate the twisted Jacquet modules of the local Weil representation $\omega_{\psi,V,W}$ with respect to certain unipotent subgroups of $\G_n^{\ve}$.

Let $(\omega_{\psi,V,W})_{\mu_{t,\ve}^{\pm}}$ denote the twisted Jacquet module of $\omega_{\psi,V,W}$ with respect to $\wt{\U}_{n,t-1}^{\ve}$ and $\mu_{t,\ve}^{\pm}$. Specifically, this is the quotient space $\omega_{\psi,V,W} / \mc{V}$, where $\mc{V}$ is the subspace spanned by vectors of the form 
\[
\omega_{\psi,V,W}(u) \phi - \mu_{t,\ve}^{\pm}(u) \phi, \quad u \in \wt{\U}_{n,t-1}^{\ve}, \ \phi \in \omega_{\psi,V,W}.
\]
Similarly, we define the twisted Jacquet module $(\omega_{\psi,V,W})_{\mu_{t,\ve}}$ with respect to $\U_{n,t-1}^{\ve}$ and $\mu_{t,\ve}$. Utilizing the explicit action of the Weil representation described in Section \ref{weil}, we establish the following vanishing theorem.

\begin{thm}[cf. {\cite[Prop.~9.4]{GS12}}, {\cite[Prop.~4.1]{MS00}}]\label{tjw} 
For any $2 \le t \le n+\ve$, if $k < t-1$, then 
\[
(\omega_{\psi,V,W})_{\mu_{t,\ve}^{\pm}} = (\omega_{\psi,V,W})_{\mu_{t,\ve}} = 0.
\]
\end{thm}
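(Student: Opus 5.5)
We compute the twisted Jacquet module in the mixed model of Section~\ref{weil}, taking $X_{t-1}=\mathrm{Span}\{e_1,\dots,e_{t-1}\}$ as the isotropic subspace of $V=V_n^{\ve}$. Then $\U_{n,t-1}^{\ve}$ is the subgroup $Z_{t-1}\cdot M_{t-1,n'-2(t-1)}\cdot S_{t-1}$ of the parabolic stabilizing the flag in $X_{t-1}$. The Weil representation is realized on $\mc{S}(W^{t-1})\otimes\mc{S}(\z^*)$, with $\w=(w_1,\dots,w_{t-1})\in W^{t-1}=X_{t-1}^*\otimes W$. Since the statement with $\mu^{\pm}_{t,\ve}$ is a quotient of the one with $\mu_{t,\ve}$ (the extra $\mu_2\cdot\e_t$ only imposes further relations), it suffices to show $(\omega_{\psi,V,W})_{\mu_{t,\ve}}=0$.

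\textbf{Step 1: the action of $S_{t-1}$.} By \eqref{a7}, $S_{t-1}$ acts by multiplication by the character $\psi(\tfrac12\mathrm{tr}(Gr(\w)s\varpi))$, while $\mu_{t,\ve}$ is trivial on $S_{t-1}$. By the standard Bernstein--Zelevinsky localization for a unipotent group acting through a character of a continuous function, the Jacquet module with respect to $(S_{t-1},\II)$ is $\mc{S}(\Sigma)\otimes\mc{S}(\z^*)$. Here $\Sigma=\{\w\in W^{t-1}: Gr(\w)=0\}$ is the set of tuples spanning a totally isotropic subspace of $W$. (One should check that the pairing $s\mapsto \mathrm{tr}(Gr(\w)s\varpi)$ detects all entries of the skew form $Gr(\w)$, which follows from the identification of $S_{t-1}$.)

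\textbf{Step 2: the action of $Z_{t-1}$ and $M_{t-1,\ast}$.} The character $\mu_{t,\ve}$ is nontrivial on the simple root subgroups $z_{i,i+1}$ of $Z_{t-1}$ for $1\le i\le t-2$, and on the last coordinate via $(ue,e_{t-1}^*)$. The latter lives in the $M_{t-1,\ast}$ part and is paired with $w_{t-1}$ through \eqref{a9} or \eqref{a11}. On $\mc{S}(\Sigma)$, $Z_{t-1}$ acts geometrically by $\w\mapsto\w z$. Decompose $\Sigma$ into $Z_{t-1}$-stable locally closed pieces according to the rank of $\mathrm{Span}\{w_1,\dots,w_{t-1}\}$, or more precisely according to the linear-dependence pattern of the flag $\langle w_1\rangle\subset\langle w_1,w_2\rangle\subset\cdots$. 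Since this span is isotropic in $W=W_k$, its dimension is at most $k<t-1$. Hence on every piece there is some index $i$ with $w_{i+1}\in\mathrm{Span}(w_1,\dots,w_i)$. The simplest such pattern is $w_1=0$, and there the contribution should also be killed through the $M$-part when the relevant index is $t-1$.

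\textbf{Step 3: killing each stratum.} On a piece with dependency at index $i$, use a one-parameter unipotent subgroup $z_{j,i+1}$ for suitable $j\le i$, or a product thereof, to shift $w_{i+1}$ by $w_j$. After a change of variables one needs to exhibit a unipotent element $u$ in the stabilizer of the orbit (so that it acts trivially on functions on the orbit, up to the geometric action) on which $\mu_{t,\ve}$ is nontrivial. Then the $(\U,\mu)$-coinvariants of $\mc{S}$ of that stratum vanish. Concretely, if $w_{i+1}=\sum_{j\le i}c_jw_j$, the action of $z_{i,i+1}$ (entry $x$) acts on $\w$ in a way that can be absorbed by the stabilizer. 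Alternatively, one reorganizes as in \cite[Prop.~9.4]{GS12} and proceeds by induction on $t$: peel off $e_1$, reduce to the Jacquet module of $\omega_{\psi,V_{n-1},W_{k'}}$ with $t-1$, and use that $w_1$ must be nonzero (else $z_{1,2}$ kills it) and isotropic, which lowers $k$ by one. The induction base is $t=2$, $k=0$, where $W=0$ and $\mu$ is nontrivial on $\U_{n,1}$ while the action is trivial.

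I expect the main obstacle to be Step 3: the bookkeeping of the mixed model for the last coordinate (the $(ue,e_{t-1}^*)$ term), which involves $\z^*$ via \eqref{a9}/\eqref{a11} and differs between $\ve=0$ (types $(d,c)$) and $\ve=1$. I would first try the inductive peeling argument, since it reduces everything to the first-row computation with $z_{1,2}$ and the orthogonal-complement Weil representation.
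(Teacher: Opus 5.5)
Your Steps~1 and~2 follow the paper's route. The paper also reduces to $\mu_{t,\ve}$ and uses the $S_{t-1}$-action \eqref{a7} to localize to $\mathbf{W}=\{Gr(\mathbf{w})=0\}$. It then decomposes $\mathbf{W}$ into finitely many $Z_{t-1}\times\wt{\J}_k$-orbits: use $Z_{t-1}$ to make the nonzero entries linearly independent, then use Witt's theorem to move them to $f_1,\dots,f_j$ with $j\le k<t-1$. Each orbit is killed by a subgroup of its stabilizer on which $\mu_{t,\ve}$ is nontrivial.

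Two points in your Step~3 need correcting.

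First, the stabilizing element is misidentified. The subgroup $z_{i,i+1}$ sends $w_{i+1}\mapsto w_{i+1}+xw_i$, so it fixes the point only when $w_i=0$. The right move is to use $Z_{t-1}$ to make a dependent $w_{i+1}$ equal to $0$. Then the simple root subgroup $z_{i+1,i+2}$ fixes the point and $\mu_{t,\ve}$ is nontrivial on it, provided $i+1\le t-2$. This is exactly why the index $t-1$ is exceptional: $Z_{t-1}$ has no root subgroup $z_{t-1,t}$.

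Second, the ``main obstacle'' you anticipate is a one-line check. The only orbit not handled this way is that of $(f_1,\dots,f_{t-2},0)$, which forces $k=t-2$. For it the paper uses the subgroup $\overline{M}_t^{\ve}\subset M_{t-1,2(n-t)+2+\ve}$ of matrices supported in the last row. By \eqref{a9}, such $\tau$ acts by $\psi\bigl(\sum_j\sum_{i\le t-2}\tau_{i,j}\langle f_i,y_j\rangle\bigr)=1$, because only $w_{t-1}$ pairs with the last row and $w_{t-1}=0$. On the other hand, $\mu_{t,\ve}$ is nontrivial on $\overline{M}_t^{\ve}$ through the term $(ue,e_{t-1}^*)$. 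You had already observed that this term pairs only with $w_{t-1}$; combining that with $w_{t-1}=0$ closes the argument, so no induction is needed.

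The peeling induction you say you would try first is a legitimate alternative. For $t\ge 3$, $\mu_{t,\ve}$ restricts to $\psi((ue_2,e_1^*))$ on the unipotent radical $N_1$ of the stabilizer of $Fe_1$, and the locus $w_1=0$ dies. On $w_1=f_1$ the twisted coinvariants are essentially an induction to $\wt{\J}_k$ of $\omega_{\psi,V_{n-1}^{\ve},W_{k-1}}$. This is viewed as a module for the stabilizer of $e_2$ in $\mathrm{O}(\langle e_1,e_1^*\rangle^{\perp})$, on which the remaining part of $\mu_{t,\ve}$ is $\mu_{t-1,\ve}$. The base case $t=2$, $k=0$ is where the $(ue,e_{t-1}^*)$ term gets used, since there the Weil representation is trivial.

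However, identifying the $(N_1,\mu_{t,\ve}|_{N_1})$-twisted Jacquet module in this way is the whole content of the inductive step, and your proposal only asserts it. It is a heavier computation than the direct orbit argument above, which already settles the case you flagged.
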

\begin{proof}
For the case $\ve=1$ and $t=n+1$, a proof was outlined in \cite[Prop.~2.1]{JS03}; however, certain technical details concerning specific orbits were omitted in their discussion (see Remark \ref{miss} below). To address this, we provide a refined proof that incorporates a subtle yet essential adjustment to ensure completeness. Furthermore, our approach accommodates the case $\ve=0$ and $t=n$ where $\H_n^0 = \SO_{2n}$ is quasi-split but non-split, thereby extending the result of \cite[Prop.~4.1]{MS00}, which treats the split case.

It suffices to prove $(\omega_{\psi, V, W})_{\mu_{t,\ve}} = 0$, as this directly implies the vanishing of the extended Jacquet module $(\omega_{\psi, V, W})_{\mu_{t,\ve}^{\pm}}$. Let $V^{\perp}$ be the orthogonal complement of $X_{t-1} \oplus X_{t-1}^*$ in $V$. For notational simplicity, we shall write $Y$ for $Y_m$.

We employ the polarization $(X_{t-1}^* \otimes W) \oplus (V^{\perp} \otimes Y^*)$ of $V \otimes W$ and utilize the explicit action of the Weil representation associated with this polarization. As in Section \ref{weil}, by choosing a basis $\{e_1^*, \dots, e_{t-1}^*\}$ for $X_{t-1}^*$ and a basis $\{e_{t}, \dots, e_{n-1}, e, e', e_{n-1}^*, \dots, e_{t}^*\}$ for $V^{\perp}$, we identify the underlying space with $W^{t-1} \oplus (Y^*)^{2(n-t)+(2+\ve)}$. Let $Gr(\mathbf{w}) = (\langle w_i, w_j \rangle_W)$ denote the Gram matrix. We define a closed subset $\mathbf{W} \subset W^{t-1}$ by
\[
\mathbf{W} \coloneqq \left\{ \mathbf{w} = (w_1, \dots, w_{t-1}) \in W^{t-1} \mid Gr(\mathbf{w}) = 0 \right\},
\]
and set $\overline{\mathbf{W}} \coloneqq \mathbf{W} \times (Y^*)^{2(n-t)+(2+\ve)}$. Let $J_{t}$ denote the twisted Jacquet functor with respect to $\U_{n,t-1}^{\ve}$ and $\mu_{t,\ve}$. We first claim that 
\[
(\omega_{\psi,V,W})_{\mu_{t,\ve}} \simeq J_{t}(\mc{S}(\overline{\mathbf{W}})).
\]

Note that $\overline{\mathbf{W}}$ is a closed subset of $W^{t-1} \times (Y^*)^{2(n-t)+(2+\ve)}$. To simplify the notation, let $\mathcal{V} = W^{t-1} \times (Y^*)^{2(n-t)+(2+\ve)}$. According to the excision principle of Bernstein and Zelevinsky \cite{BZ76}, we have the following exact sequence of Schwartz spaces:
\[
\xymatrix{0 \ar[r] & \mc{S}(\mathcal{V} \setminus \overline{\mathbf{W}}) \ar[r]^-{\overline{i}} & \mc{S}(\mathcal{V}) \ar[r]^-{\overline{\text{res}}} & \mc{S}(\overline{\mathbf{W}}) \ar[r] & 0},
\]
where $\overline{i}$ is the map induced by the open inclusion and $\overline{\text{res}}$ is the restriction map. Since the functor $J_{t}$ is exact, it induces an exact sequence
\[
\xymatrix{0 \ar[r] & J_{t}(\mc{S}(\mathcal{V} \setminus \overline{\mathbf{W}})) \ar[r]^-{\overline{\text{i}}} & J_{t}(\mc{S}(\mathcal{V})) \ar[r]^-{\overline{\text{res}}} & J_{t}(\mc{S}(\overline{\mathbf{W}})) \ar[r] & 0}.
\]

By the definition of $\overline{\mathbf{W}}$ and the action formula \eqref{a7}, $J_{t}\big(\mc{S}(\mathcal{V}\bs \overline{\mathbf{W}})\big)=0$.
 Thus, we obtain the desired isomorphism
\[
J_{t}(\mc{S}(\mathcal{V})) \simeq J_{t}(\mc{S}(\overline{\mathbf{W}})).
\]
This establishes our first claim. We now consider the action of $Z_{t-1} \times \wt{\J}_k$ on $\mathbf{W} \subset W^{t-1}$ inherited from the Weil representation $\omega_{\psi,V,W}$, which is given by
\[
(w_1, \dots, w_{t-1}) \cdot (z, (h', \ve')) = (w_1 \cdot (h')^{-1}, \dots, w_{t-1} \cdot (h')^{-1}) z.
\]
By exploiting the $Z_{t-1}$-action on $\mathbf{W}$, we can choose representatives for the $(Z_{t-1} \times \wt{\J}_k)$-orbits of $\mathbf{W}$ in the following form:
\[
(0, \dots, 0, w_{t_1}, 0, \dots, 0, w_{t_{j-1}}, 0, \dots, 0, w_{t_j}, 0, \dots, 0) \in \mathbf{W} \subset W^{t-1},
\]
for some $1 \le j \le k < t-1$, where $\{w_{t_1}, \dots, w_{t_j}\}$ is a linearly independent set in $W$. Furthermore, by the Witt extension theorem and the $\wt{\J}_k$-action on $\mathbf{W}$, we may choose even more restrictive representatives of the form:
\[
(0, \dots, 0, f_1, 0, \dots, 0, f_{j-1}, 0, \dots, 0, f_j, 0, \dots, 0) \in \mathbf{W} \subset W^{t-1},
\]
where $\{f_1, \dots, f_j\}$ is the standard partial basis of $W$ as defined in Section \ref{subsec:2-1}.

Let $\{ {\bf w}_i \}_{1 \le i \le N}$ denote these finite representatives in $\mathbf{W}$, and let $\{ C_i \}_{1 \le i \le N}$ be the corresponding $(Z_{t-1} \times \wt{\J}_k)$-orbits defined by $C_i \coloneqq {\bf w}_i \cdot (Z_{t-1} \times \wt{\J}_k)$. By reordering the indices if necessary, we may assume that the orbits are arranged in non-decreasing order of dimension, i.e., $\dim(C_i) \le \dim(C_{i+1})$ for $1 \le i \le N-1$.

For each $1 \le j \le N$, note that $C_j$ is a closed subset of the union $\bigcup_{i \ge j} C_i$. Consequently, the excision principle \cite{BZ76} provides the following exact sequence of Schwartz spaces:
\begin{align*} 
\xymatrix{0 \ar[r] & \mc{S}\big(\bigcup_{i \ge j+1} C_i \big) \ar[r] & \mc{S}\big(\bigcup_{i \ge j} C_i \big) \ar[r] & \mc{S}(C_j) \ar[r] & 0.}
\end{align*} 
To simplify the notation, we set $\mc{Y} \coloneqq (Y^*)^{2(n-t)+(2+\ve)}$ and for any subset $X \subset \mathbf{W}$, let $\mathcal{F}(X) \coloneqq J_{t}(\mc{S}(X)\otimes \mc{S}(\mc{Y}))$. Since the tensor product and the twisted Jacquet functor $J_{t}$ are right exact, we obtain the following right exact sequence:
\begin{equation}\label{ex}
\xymatrix{
\mathcal{F}(\bigcup_{i \ge j+1} C_i ) \ar[r] & \mathcal{F}(\bigcup_{i \ge j} C_i) \ar[r] & \mathcal{F}(C_j) \ar[r] & 0.
}
\end{equation}
 We claim that $\mathcal{F}(C_j) = 0$ for all $1 \le j \le N$.

Suppose, toward a contradiction, that $\mathcal{F}(C_{i_0})  \neq 0$ for some $1 \le i_0 \le N$. Let 
\[
{\bf{w}}_{i_0} = (0, \dots, 0, f_1, 0, \dots, 0, f_{s-1}, 0, \dots, 0, f_s, 0, \dots, 0), \quad s \le k < t-1,
\] 
be a representative of the orbit $C_{i_0}$, and let $R_{i_0}$ denote the stabilizer of ${\bf{w}}_{i_0}$ in $Z_{t-1} \times \wt{\J}_k$. We consider the map 
\[ 
\Phi_{{\bf{w}}_{i_0}} : \mc{S}(C_{i_0}) \longrightarrow \ind_{R_{i_0}}^{Z_{t-1} \times \wt{\J}_k} \II, \quad \phi \longmapsto [ (g, \wt{h'}) \mapsto (\omega_{\psi,V,W}(g, \wt{h'}) \phi)({\bf{w}}_{i_0}) ].
\]
It can be readily verified that $\Phi_{{\bf{w}}_{i_0}}$ is an isomorphism of $(Z_{t-1} \times \wt{\J}_k)$-modules. Consequently, we have
\[
\mc{S}(C_{i_0}) \otimes \mc{S}(\mc{Y}) \simeq \left( \ind_{R_{i_0}}^{Z_{t-1} \times \wt{\J}_k} \II \right) \otimes \mc{S}(\mc{Y}).
\]

First, consider the case where ${\bf{w}}_{i_0} \neq (f_1, \dots, f_{t-2}, 0)$. In this case, since $s < t-1$, there exists at least one simple root subgroup $L$ of $Z_{t-1}$ such that the character $\mu_{t,\ve}$ is non-trivial on $L$, while $L \times \{1\}$ is contained in the stabilizer $R_{i_0}$. This implies that $\mathcal{F}(C_{i_0}) = 0$, which contradicts our initial assumption.

Next, we address the critical case where ${\bf{w}}_{i_0} = (f_1, \dots, f_{t-2}, 0)$. To handle this orbit, we consider the subgroup $\overline{M}_{t}^{\ve} \subset M_{t-1, 2(n-t)+(2+\ve)}$ defined by
\[
\overline{M}_{t}^{\ve} \coloneqq \{ \tau \in M_{t-1, 2(n-t)+(2+\ve)} \mid \tau_{p,q} = 0 \text{ for } p \neq t-1 \}.
\]
For any $\phi \in \mc{S}(C_{i_0}) \otimes \mc{S}(\mc{Y})$ and $\tau \in \overline{M}_{t}^{\ve}$, let $\mathbf{y} = (y_1, \dots, y_{2(n-t)+(2+\ve)}) \in \mc{Y}$. By the action formula \eqref{a9}, we have
\begin{equation*}
\begin{aligned}
    &\left(\omega_{\psi,V,W}(\tau, 1) \phi\right)({\bf{w}}_{i_0}, \mathbf{y}) \\
    &= \psi\left( \sum_{j=1}^{2(n-t)+(2+\ve)}\sum_{i=1}^{t-2} \tau_{i,j} \langle f_i, y_j \rangle_{W_k} \right) \phi({\bf{w}}_{i_0}, \mathbf{y}) = \phi({\bf{w}}_{i_0}, \mathbf{y}),
\end{aligned}
\end{equation*}
where the last equality holds because $\tau_{i,j} = 0$ for all $1 \le i \le t-2$ by the definition of $\overline{M}_{t}^{\ve}$. However, since the character $\mu_{t,\ve}$ is non-trivial on $\overline{M}_{t}^{\ve}$, we obtain a contradiction.

Consequently, $\mathcal{F}(C_{j}) = 0$ for all $1 \le j \le N$. By applying the exact sequence \eqref{ex} repeatedly through the filtration of orbits, we conclude that
\[
\mathcal{F}(\mathbf{W})=J_{t}(\mc{S}(\mathbf{W}) \otimes \mc{S}(\mc{Y})) = 0.
\]
Given the identification $\mc{S}(\overline{\mathbf{W}}) \simeq \mc{S}(\mathbf{W}) \otimes \mc{S}(\mc{Y})$, it follows that $J_{t}(\mc{S}(\overline{\mathbf{W}})) = 0$, as desired.
\end{proof}
\begin{rem}\label{miss}
In \cite[Prop.~2.1]{JS03}, the argument overlooks the critical case where the representative ${\bf{w}}_{i_0}$ of an orbit $C_{i_0}$ takes the form $(f_1, \dots, f_{n-1}, 0)$. As demonstrated in our proof, to treat this case rigorously, one must invoke the action of $\Hom(V^{\perp}, X_{n-1})$ within the Weil representation $\omega_{\psi,V,W}$ as formulated in \eqref{a9}. This refinement ensures the completeness of the vanishing theorem by accounting for all possible orbits in the decomposition of $\mathbf{W}$.
\end{rem}

For $\wt{\pi} \in \Irr(\G_n^{\ve})$, the maximal $\wt{\pi}$-isotypic quotient of $\omega_{\psi,V,W}$ is of the form
\[
\wt{\pi} \boxtimes \Theta_{\psi,V,W}(\wt{\pi})
\]
for some smooth representation $\Theta_{\psi,V,W}(\wt{\pi})$ of $\wt{\J}_k$ of finite length, which we call the \textit{big theta lift} of $\wt{\pi}$. The maximal semisimple quotient of $\Theta_{\psi,V,W}(\wt{\pi})$ is denoted by $\theta_{\psi,V,W}(\wt{\pi})$ and is called the \textit{small theta lift} of $\wt{\pi}$. Note that $\theta_{\psi,V,W}(\wt{\pi})$ is a non-genuine (resp. genuine) representation of $\wt{\J}_k$ if $\ve=0$ (resp. $\ve=1$).

Similarly, for $\sigma \in \Irr(\wt{\J}_k)$, we define the big theta lift $\Theta_{\psi,W,V}(\sigma)$ as a smooth representation of $\G_n^{\ve}$ (or $\H_n^{\ve}$) of finite length such that 
\[
\Theta_{\psi,W,V}(\sigma) \boxtimes \sigma
\]
is the maximal $\sigma$-isotypic quotient of $\omega_{\psi,V,W}$. Its maximal semisimple quotient is denoted by $\theta_{\psi,W,V}(\sigma)$. Here, if $\sigma$ is a non-genuine (resp. genuine) representation of $\wt{\J}_k$, then the corresponding representation of $\G_n^{\ve}$ is for $\ve=0$ (resp. $\ve=1$). According to the Howe duality conjecture, which is now a theorem (\cite{GT1, GT2, Wa90}), both $\theta_{\psi,V,W}(\wt{\pi})$ and $\theta_{\psi,W,V}(\sigma)$ are irreducible whenever they are nonzero.

The following proposition demonstrates that the existence of a Bessel model imposes a vanishing condition on the theta lifts. In the case where $\H_n^{\ve}$ is split and $t=n+\ve$, this result was established in \cite[Prop.~3.3]{GRS97} and \cite[Cor.~4.1]{MS00} for $\ve=0$, and in \cite[Prop.~2.1]{JS03} and \cite[Cor.~9.5]{GS12} for $\ve=1$.

\begin{prop} \label{c1}
For $2 \le t \le n+\ve$, let $\wt{\pi} \in \Irr(\G_n^{\ve})$ (resp. $\pi \in \Irr(\H_n^{\ve})$) be of $\mu_{t,\ve}^{\pm}$ (resp. $\mu_{t,\ve}$)-Bessel type. If $k < t-1$, then the big theta lift $\Theta_{\psi,V,W}(\wt{\pi})$ (resp. $\Theta_{\psi,V,W}(\pi)$) vanishes.
\end{prop}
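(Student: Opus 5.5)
Suppose $\Theta_{\psi,V,W}(\wt{\pi})\neq 0$; we derive a contradiction with Theorem~\ref{tjw}. Then $\wt{\pi}\boxtimes\Theta_{\psi,V,W}(\wt{\pi})$ is a nonzero quotient of $\omega_{\psi,V,W}$. Since $\Theta_{\psi,V,W}(\wt{\pi})$ is nonzero, it has a nonzero linear functional $\lambda$, and composing the quotient map with $\mathrm{Id}\otimes\lambda$ gives a nonzero $\G_n^{\ve}$-equivariant surjection
\[
\omega_{\psi,V,W}\twoheadrightarrow \wt{\pi}.
\]
Only the action of $\G_n^{\ve}$ (and hence of its subgroup $\wt{\U}_{n,t-1}^{\ve}$) is needed from here on; the $\wt{\J}_k$-action plays no role.

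Next we use the Bessel hypothesis. There is a nonzero $\ell\in\Hom_{\wt{\D}_{n,t}^{\ve}}(\wt{\pi}\boxtimes\rho,\mu_{t,\ve}^{\pm})$ for some $\rho$. Choose a vector $r\in\rho$ with $\ell(\cdot\otimes r)\neq 0$. Then $\ell_r=\ell(\cdot\otimes r)$ is a nonzero element of $\Hom_{\wt{\U}_{n,t-1}^{\ve}}(\wt{\pi},\mu_{t,\ve}^{\pm})$, because $\wt{\U}_{n,t-1}^{\ve}$ acts trivially on $\rho$. In other words, the twisted Jacquet module $\wt{\pi}_{\mu_{t,\ve}^{\pm}}$ is nonzero.

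The twisted Jacquet functor is exact, so in particular right exact, and it therefore preserves surjections. Applying it to $\omega_{\psi,V,W}\twoheadrightarrow\wt{\pi}$ gives
\[
(\omega_{\psi,V,W})_{\mu_{t,\ve}^{\pm}}\twoheadrightarrow \wt{\pi}_{\mu_{t,\ve}^{\pm}}\neq 0 .
\]
Equivalently, pulling back $\ell_r$ along the surjection produces a nonzero $(\wt{\U}_{n,t-1}^{\ve},\mu_{t,\ve}^{\pm})$-equivariant functional on $\omega_{\psi,V,W}$. Theorem~\ref{tjw} says this Jacquet module vanishes when $k<t-1$, which is the desired contradiction.

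The $\H_n^{\ve}$ case runs the same way. One uses the restriction of $\omega_{\psi,V,W}$ to $\H_n^{\ve}\times\wt{\J}_k$, the maximal $\pi$-isotypic quotient defining $\Theta_{\psi,V,W}(\pi)$, the functional in $\Hom_{\U_{n,t-1}^{\ve}}(\pi,\mu_{t,\ve})$, and the vanishing $(\omega_{\psi,V,W})_{\mu_{t,\ve}}=0$ from Theorem~\ref{tjw}.

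There is no serious obstacle here. The one point needing care is that the $\wt{\pi}$-isotypic quotient, once a nonzero functional is applied to the second factor, really does yield a surjection onto $\wt{\pi}$ itself. The Bessel functional must also be restricted from $\wt{\D}_{n,t}^{\ve}$ to the unipotent part $\wt{\U}_{n,t-1}^{\ve}$, which is immediate.
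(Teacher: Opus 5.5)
Your proof is correct and is essentially the paper's argument. The paper embeds $\wt{\pi}$ into $\Ind_{\wt{\U}_{n,t-1}^{\ve}}^{\G_n^{\ve}}(\mu_{t,\ve}^{\pm})$ and uses Frobenius reciprocity to get a nonzero element of $\Hom_{\wt{\J}_k}\big((\omega_{\psi,V,W})_{\mu_{t,\ve}^{\pm}},\Theta_{\psi,V,W}(\wt{\pi})\big)$, contradicting Theorem~\ref{tjw}; you instead pull the Bessel functional back along $\omega_{\psi,V,W}\twoheadrightarrow\wt{\pi}$, which reaches the same contradiction slightly more directly by discarding the $\wt{\J}_k$-action.
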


\begin{proof} 
Suppose that $\Theta_{\psi,V,W}(\wt{\pi})$ is nonzero. By the definition of the big theta lift, we have a non-trivial $(\G_n^{\ve} \times \wt{\J}_k)$-equivariant homomorphism
\[
\Hom_{\G_n^{\ve} \times \wt{\J}_k}(\omega_{\psi,V,W}, \wt{\pi} \boxtimes \Theta_{\psi,V,W}(\wt{\pi})) \neq 0.
\]
Since $\wt{\pi}$ is of $\mu_{t,\ve}^{\pm}$-Bessel type, there exists a nonzero linear functional on $\wt{\pi}$ that is $(\wt{\U}_{n,t-1}^{\ve}, \mu_{t,\ve}^{\pm})$-equivariant, which implies $\wt{\pi} \hookrightarrow \Ind_{\wt{\U}_{n,t-1}^{\ve}}^{\G_n^{\ve}}(\mu_{t,\ve}^{\pm})$. Consequently, we obtain
\[
\Hom_{\G_n^{\ve} \times \wt{\J}_k}(\omega_{\psi,V,W}, \Ind_{\wt{\U}_{n,t-1}^{\ve}}^{\G_n^{\ve}}(\mu_{t,\ve}^{\pm}) \boxtimes \Theta_{\psi,V,W}(\wt{\pi})) \neq 0.
\]
By Frobenius reciprocity, this leads to
\[
\Hom_{\wt{\J}_k}((\omega_{\psi,V,W})_{\mu_{t,\ve}^{\pm}}, \Theta_{\psi,V,W}(\wt{\pi})) \neq 0,
\]
where $(\omega_{\psi,V,W})_{\mu_{t,\ve}^{\pm}}$ denotes the twisted Jacquet module. However, this contradicts Theorem \ref{tjw}, which states that the Jacquet module vanishes for $k < t-1$. The proof for $\pi$ of $\mu_{t,\ve}$-Bessel type follows analogously and is thus omitted.
\end{proof}
Using arguments analogous to those presented above, one can establish the following proposition, which addresses the vanishing of theta lifts for representations of Fourier--Jacobi type. For the case where $V$ is split and $t=k$, this result was proved in \cite[Prop.~2.4]{GRS97} and \cite[Cor.~2.2]{JS03}. Since the proof follows the same logic as that of Proposition \ref{c1}, we omit the details. 

\begin{prop}\label{con}
For $2 \le t \le k$, let $\sigma \in \Irr(\wt{\J}_k)$ be a representation of $(\mu_{t,v}', \lambda_{\ve_{\sigma}})$-Fourier--Jacobi type. Then, for $n < t - \ve_{\sigma}$, the big theta lift $\Theta_{\psi^{-1},W,V}(\sigma)$ vanishes.
\end{prop}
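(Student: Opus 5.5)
The plan is to mirror the proof of Proposition~\ref{c1}, with the roles of the two members of the dual pair interchanged. The unipotent subgroup now sits inside the symplectic (or metaplectic) group $\wt{\J}_k$, and the twisting datum is the representation $\nu^{\lambda}_{\psi_v,W'_{k,t}}$ of $\D'_{k,t}$ rather than a character.

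First, suppose $\Theta_{\psi^{-1},W,V}(\sigma)\neq 0$. Then there is a nonzero $(\G_n^{\ve}\times\wt{\J}_k)$-equivariant map
\[
\omega_{\psi^{-1},V,W}\to \Theta_{\psi^{-1},W,V}(\sigma)\boxtimes\sigma ,
\]
with $\ve=\ve_\sigma$. Since $\sigma$ is of $(\mu'_{t,v},\lambda_{\ve_\sigma})$-Fourier--Jacobi type, we have $\Hom_{\U'_{k,t}}(\sigma,\nu^{\lambda}_{\psi_v,W'_{k,t}})\neq 0$. Because $\U'_{k,t-1}$ acts on $\Omega_{\psi_\lambda,W'_{k,t}}$ through the character $\mu'_{t}$, restricting this functional produces a nonzero $(\U'_{k,t-1},\mu'_{t,v})$-equivariant functional on $\sigma$. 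Here one has to be careful about which unipotent group carries a genuine character; if necessary, compose with a nonzero functional on the Weil representation.

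Next, Frobenius reciprocity, exactly as in the proof of Proposition~\ref{c1}, yields a nonzero map from the twisted Jacquet module $(\omega_{\psi^{-1},V,W})_{\U'_{k,t-1},\mu'_t}$ to $\Theta_{\psi^{-1},W,V}(\sigma)$. It therefore suffices to prove that this Jacquet module vanishes when $n<t-\ve_\sigma$. This is the analogue of Theorem~\ref{tjw}.

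To prove the vanishing, realize $\omega$ in the mixed model attached to the polarization $(V\otimes Y_{t-1}^*)\oplus \mathfrak{z}^*$, where $\mathfrak z$ comes from $V\otimes W_{k,t-1}$. The subgroup $S_{t-1}$ of the Siegel-type unipotent radical then acts by $\psi(\tfrac12\operatorname{tr}(\mathrm{Gr}(\mathbf v)s))$ on $\mathbf v\in V^{t-1}$. By the same excision argument, the Jacquet module is supported on tuples $\mathbf v=(v_1,\dots,v_{t-1})$ with vanishing Gram matrix, that is, on tuples spanning a totally isotropic subspace of $V=V_n^{\ve}$. Such a subspace has dimension at most the Witt index, which is at most $n-1+\ve$.

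Then decompose this support into $(Z_{t-1}\times\G_n^{\ve})$-orbits. Representatives are tuples with the isotropic basis vectors $e_1,\dots,e_j$ placed in some positions and zeros elsewhere, where $j\le n-1+\ve$. The hypothesis $n<t-\ve$ gives $j\le t-2$, so some entry of every representative is zero.

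For each orbit there are two cases. In the first, some simple root subgroup of $Z_{t-1}$ on which $\mu'_t$ is nontrivial lies in the stabilizer; then the contribution of that orbit dies. In the second, critical, case the representative is $(e_1,\dots,e_{t-2},0)$. Here, as in Theorem~\ref{tjw}, I would use the action of the part of the Heisenberg-type unipotent radical $\Hom(W_{k,t-1},Y_{t-1})$ whose entries lie only in the last row; this is the analogue of $\overline{M}^{\ve}_t$. By formula \eqref{a9}, this part acts trivially at the representative, since the last vector is $0$, while the relevant twisting (the last coordinate of $\mu'_t$, or the Heisenberg center through $\psi_\lambda$) is nontrivial on it. That contradiction kills the orbit.

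I expect the main obstacle to be this last step, together with bookkeeping the shift by $\ve_\sigma$. The threshold $n<t-\ve_\sigma$ must exactly match the point at which the critical orbit cannot be extended. In the odd orthogonal case, the anisotropic line $e$ and the character $\psi_\lambda$ with $\lambda=\lambda_{\ve_\sigma}$ enter through Remark~\ref{1dim}, and one must check that the central character $\psi_\lambda$ of the Heisenberg group is nontrivial on the trivially acting subgroup. The rest is the right-exactness argument over the orbit filtration, verbatim as in Theorem~\ref{tjw}.
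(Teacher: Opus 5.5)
You are following the route the paper points to (it only says the argument is analogous to Proposition~\ref{c1}). However, the twisted Jacquet module you reduce to is too coarse, and its claimed vanishing is false in a case the statement covers.

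\textbf{The gap.} Restricting the Fourier--Jacobi functional to $\U'_{k,t-1}$ discards the Heisenberg group $\mc{H}(W'_{k,t})$ entirely. In particular it discards its center, the long-root coordinate $(t,t^*)$, on which $\Omega_{\psi_\lambda,W'_{k,t}}$ acts by $\psi_\lambda$. After that step, $\lambda$ plays no role in your argument.

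You then rely on the bound that the Witt index of $V_n^{\ve}$ is at most $n-1+\ve$. This fails for $\ve=0$ and $d\in F^{\times 2}$: then $V^0=V_{c,d}$ is a hyperbolic plane and $V_n^0$ has Witt index $n$. Take $n=t-1$, which is allowed since $n<t$. The tuple $(e_1,\dots,e_{t-2},x)$, with $x\in V^0$ nonzero isotropic, has no zero entry. So neither your simple-root argument nor your last-row argument applies to its orbit.

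This orbit genuinely contributes. Already for $t=k=2$ and $V$ a hyperbolic plane, take a nonzero isotropic $v_1$ and $y\in\mathfrak z^*$ with $(v_1,y)=1$. This matches $\mu'_2$, and the $(\U'_{2,1},\mu'_2)$-twisted Jacquet module of $\omega$ is nonzero (essentially $C_c^\infty(\O(V))$). This boundary case ($n=t-1$, $V_n^0$ split at $v$) is exactly the one used in the proof of Theorem~\ref{thm:main-Sp}(i), so it cannot be set aside.

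\textbf{The fix: keep the central character.}
\begin{itemize}
\item Let $C_t\subset\U'_{k,t}$ be the one-parameter subgroup of the $(t,t^*)$-entry, i.e.\ the center of $\mc{H}(W'_{k,t})$, and set $N_t:=\U'_{k,t-1}C_t$.
\item $N_t$ acts on $\nu^{\lambda}_{\psi_v,W'_{k,t}}$ by the scalar character $\mu'_t\psi_\lambda$. Composing a Fourier--Jacobi functional with any functional on $\Omega$ that is nonzero on its image therefore gives a nonzero $(N_t,\mu'_t\psi_\lambda)$-equivariant functional on $\sigma$.
\item It then suffices to prove $(\omega_{\psi^{-1},V,W})_{N_t,\mu'_t\psi_\lambda}=0$.
\item Use the mixed model attached to $Y_t$, i.e.\ $\mc{S}(V^t)\otimes\mc{S}(\mathfrak z^*)$. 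The whole Siegel part $S_t$ lies in $N_t$, and $\mu'_t$ is trivial on it.
\item Hence the Jacquet module is supported on tuples $(v_1,\dots,v_t)$ whose Gram matrix vanishes except for the $(t,t)$-entry, which is a nonzero multiple of $\lambda$. This is the local analogue of the set $\VV$ in the proof of Theorem~\ref{a1}.
\item Since $Z_t\subset N_t$ and $\mu'_t$ is nontrivial on every simple root subgroup $(i,i+1)$ with $1\le i\le t-1$, every orbit with some $v_i=0$ for $i\le t-1$ dies. This includes your critical orbit.
\item In a surviving orbit, $v_1,\dots,v_{t-1}$ span a $(t-1)$-dimensional totally isotropic subspace $X$, and $v_t\in X^\perp$ is anisotropic. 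This forces $2n+\ve=\dim V_n^{\ve}\ge 2t-1$, i.e.\ exactly $n\ge t-\ve$.
\end{itemize}
So the hypothesis $n<t-\ve_\sigma$ is precisely the condition $\dim V\le 2t-2$. This settles the $\ve_\sigma$ bookkeeping uniformly, with no Witt-index case distinction. Only $\lambda\neq 0$ is used, not its class modulo $F_{\ve}^{\times}$.
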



\section{The global theta lifts from $\G_n^{\ve}$ to $\wt{\J}_{k}$}
\label{sec:4}

Before delving into the technical details, we briefly outline the overarching structure of Sections 4 through 7. Because the arguments in these sections are quite substantial, the following table serves as a compact guide to help the reader keep the overall proof architecture in mind. Sections 4 and 5 treat the Bessel case, while Sections 6 and 7 treat the Fourier--Jacobi case. Within each pair, the first section relates the relevant period to a global theta lift, and the second section connects the same period to the analytic behavior of an $L$-function.\bb

\begin{center}
\renewcommand{\arraystretch}{1.2}
\begin{tabular}{c|c|c}
\hline
\textbf{Period type} & \textbf{Theta-lift relation} & \textbf{$L$-function relation} \\
\hline
Bessel & Section 4 & Section 5 \\
Fourier--Jacobi & Section 6 & Section 7 \\
\hline
\end{tabular}
\end{center}
\bb
In this section, we investigate the Whittaker periods of the global theta lifts from $\G_n^{\ve}(\A)$ to $\wt{\J}_k(\A)$. Throughout the remainder of this paper, when considering $(\G_n^{\ve}, \wt{\J}_k)$ as a dual reductive pair in the case $\ve = 0$, we canonically identify $\wt{\J}_k$ with $\J_k$. Under this identification, an element $\wt{h'} = (h', \ve') \in \wt{\J}_k$ is understood simply as $h' \in \J_k$.

We consider the global Weil representation $\omega_{\psi,V_n^{\ve},W_k} \coloneqq \bigotimes_v' \omega_{\psi_v,V_{n,v}^{\ve},W_{k,v}}$ of $\G_n^{\ve}(\A) \times \wt{\J}_k(\A)$. It is realized in the Bruhat--Schwartz space $\mc{S}((V_n^{\ve} \otimes Y_k^*)(\A)) = \bigotimes_v' \mc{S}(V_{n,v}^{\ve} \otimes Y_{k,v}^*)$. We define a symplectic form $( \cdot , \cdot )$ on $V_n^{\ve} \otimes W_k$ by
\[
(v_1 \otimes w_1, v_2 \otimes w_2) \coloneqq \la v_1, v_2 \ra_{V_n^{\ve}} \cdot \la w_1, w_2 \ra_{W_k}.
\]
For simplicity of notation, we shall write $V \coloneqq V_{n}^{\ve}$, $W \coloneqq W_k$, and $Y \coloneqq Y_k$.

Let $\P_k' = \M_k' \N_k'$ be the parabolic subgroup of $\J_k$ stabilizing the maximal isotropic subspace $Y$, with Levi component $\M_k'$ and unipotent radical $\N_k'$. We have the natural identifications
\[
\M_k' \simeq \GL(Y) \quad \text{and} \quad \N_k' \simeq \{ \alpha \in \Hom(Y^*, Y) \mid \alpha^* = -\alpha \},
\]
where $\alpha^* \in \Hom(Y^*, Y)$ is the adjoint element satisfying
\[
\la \alpha y_1, y_2 \ra = \la y_1, \alpha^* y_2 \ra \quad \text{for all } y_1, y_2 \in Y^*.
\]
Similarly, for $m \in \GL(Y)$, we denote by $m^* \in \GL(Y^*)$ the adjoint element satisfying
\[
\la m x, y \ra = \la x, m^* y \ra \quad \text{for all } x \in Y, \ y \in Y^*.
\]

Let $\Z_k'$ be the standard maximal unipotent subgroup of $\M_k'$, regarded as a subgroup of $\GL(Y)$ via the isomorphism $\M_k' \simeq \GL(Y)$. From the action of the local Weil representations, we obtain the global action of $\G_n^{\ve}(\A) \times (\Z_k'(\A) \N_k'(\A))$ on $\omega_{\psi,V,W}$ as follows. For $\phi \in \mc{S}((V \otimes Y^*)(\A))$ and $\tb{x} \in (V \otimes Y^*)(\A)$:
\begin{itemize}
    \item $\omega_{\psi,V,W}(g, 1)\phi(\tb{x}) = \phi(g^{-1} \cdot \tb{x})$ for $g \in \G_n^{\ve}(\A)$,
    \item $\omega_{\psi,V,W}(1, z)\phi(\tb{x}) = \phi(z^* \cdot \tb{x})$ for $z \in \Z_k'(\A) \subset \U_k'(\A)$,
    \item $\omega_{\psi,V,W}(1, n)\phi(\tb{x}) = \psi\left(\frac{1}{2}(n \cdot \tb{x}, \tb{x})\right)\phi(\tb{x})$ for $n \in \N_k'(\A) \subset \U_k'(\A)$.
\end{itemize}

There is a $(\G_n^{\ve}(\A) \times \wt{\J}_k(\A))$-equivariant map $\theta_{\psi,V,W} : \mc{S}((V \otimes Y^*)(\A)) \to \mc{A}(\G_n^{\ve} \times \wt{\J}_k)$ given by the theta series
\[
\theta_{\psi,V,W}(\phi; g, \wt{h'}) \coloneqq \sum_{\tb{x} \in (V \otimes Y^*)(F)} \left( \omega_{\psi,V,W}(g, \wt{h'}) \phi \right)(\tb{x}).
\]

We normalize the Haar measure $dg$ on $\G_{n}^{\ve}(\A)$ such that
\[
\int_{[\G_{n}^{\ve}]} f(g) \, dg = \int_{[\mu_2]} \int_{[\H_{n}^{\ve}]} f((\t \cdot \e) \cdot h) \, dh \, d\t
\]
for any smooth function $f$ on $\G_{n}^{\ve}(\A)$, whenever the integral on the right-hand side is absolutely convergent. Here, $[G]$ denotes the quotient space $G(F) \bs G(\A)$.

For $\TT \in \mf{T}$, $f \in \mc{A}(\G_n^{\ve})$, and $\phi \in \mc{S}((V \otimes Y^*)(\A))$, we define the global theta lift to $\wt{\J}_k(\A)$ by
\begin{align*}
\theta_{\psi,V,W}^{\TT}(\phi, f)(\wt{h'}) 
&\coloneqq \int_{[\G_n^{\ve}]} \theta_{\psi,V,W}(\phi; g, \wt{h'}) f(g) \mathrm{sgn}_{\TT}(g) \, dg \\
&= \int_{[\H_n^{\ve}]} \int_{[\mu_2]} \theta_{\psi,V,W}(\phi; (\t \cdot \e) \cdot h, \wt{h'})  f((\t \cdot \e) \cdot h) \mathrm{sgn}_{\TT}(\t \cdot \e) \, d\t \, dh.
\end{align*}
Analogously, for $f \in \mc{A}(\H_n^{\ve})$, we set
\[
\theta_{\psi,V,W}(\phi, f)(\wt{h'}) \coloneqq \int_{[\H_n^{\ve}]} \theta_{\psi,V,W}(\phi; h, \wt{h'}) f(h) \, dh.
\]

For an irreducible cuspidal automorphic representation $\wt{\pi}$ of $\G_n^{\ve}(\A)$ (resp. $\pi$ of $\H_n^{\ve}(\A)$), we define the space of global theta lifts
\[
\Theta_{\psi,V,W}^{\TT}(\wt{\pi}) \coloneqq \{ \theta_{\psi,V,W}^{\TT}(\phi, f) \mid \phi \in \omega_{\psi,V,W}, \ f \in \wt{\pi} \}
\]
\[
\big(\text{resp. } \Theta_{\psi,V,W}(\pi) \coloneqq \{ \theta_{\psi,V,W}(\phi, f) \mid \phi \in \omega_{\psi,V,W}, \ f \in \pi \} \big).
\]
When $\TT = \emptyset$, we simply denote $\Theta_{\psi,V,W}^{\emptyset}(\wt{\pi})$ by $\Theta_{\psi,V,W}(\wt{\pi})$. We note that $\Theta_{\psi,V,W}^{\TT}(\wt{\pi}) = \Theta_{\psi,V,W}(\wt{\pi} \otimes \mathrm{sgn}_{\TT})$.

The space $\Theta_{\psi,V,W}^{\TT}(\wt{\pi})$ (resp. $\Theta_{\psi,V,W}(\pi)$) forms an automorphic representation of $\wt{\J}_k(\A)$, which is genuine if $\ve=1$ and non-genuine if $\ve=0$. If it is square-integrable, it is irreducible (cf. \cite[Prop.~3.1]{Gan23}). Furthermore, if $k_0$ is the first occurrence index in the theta tower such that $\Theta_{\psi,V,W_{k_0}}^{\TT}(\wt{\pi}) \neq 0$ (resp. $\Theta_{\psi,V,W_{k_0}}(\pi) \neq 0$), then the lift is cuspidal by Rallis' tower property, and hence irreducible.

The following theorem demonstrates that the non-vanishing of the Bessel period $\mc{B}_{k, \psi}^{\ve,\TT}$ (resp. $\mc{B}_{k, \psi}^{\ve}$) is equivalent to the non-vanishing and genericity of $\Theta_{\psi, V, W_k}^{\TT}(\wt{\pi})$ (resp. $\Theta_{\psi, V, W_k}(\pi)$). For the case $\ve=1$, this was established in \cite[Prop.~1]{Fu95} specifically for $k = n$. For the case $\ve=0$, when $\H_n^0$ is split of type $(1,1)$, the statement appears in \cite[Prop.~3.2 and 3.5]{GRS97}.

\begin{thm}\label{a1} Let $\wt{\pi}$ (resp. $\pi$) be an irreducible cuspidal representation of $\G_n^{\ve}(\A)$ (resp. $\H_n^{\ve}(\A)$) and $\TT \in \mf{T}$. For $k>n+\ve$, if $\Theta_{\psi,V,W_{k}}^{\TT}(\wt{\pi})$ (resp. $\Theta_{\psi,V,W_{k}}(\pi)$) is nonzero, it is non-generic with respect to  $(\psi,\lambda)$ for any $\lambda \in F^{\times}$. For $k=n+\ve -1\text{ or } n+\ve$ and $k\ge 1$, if $\Theta_{\psi,V,W_{k}}^{\TT}(\wt{\pi})$ (resp. $\Theta_{\psi,V,W_{k}}(\pi)$) is nonzero and $(\psi,\lambda)$-generic for some $\lambda \in F^{\times}$, then $\lambda\equiv\lambda_{\ve} \pmod{F_{\ve}^{\times}}$.
Furthermore, $\Theta_{\psi,V,W_{k}}^{\TT}(\wt{\pi})$ (resp. $\Theta_{\psi,V,W_{k}}(\pi)$) is nonzero and $(\psi,\lambda_{\ve})$-generic if and only if $\mc{B}_{k,\psi}^{\ve,\TT} \ne 0$ on $\wt{\pi}$ (resp. $\mc{B}_{k,\psi}^{\ve
} \ne 0$ on $\pi$).
\end{thm}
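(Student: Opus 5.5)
The plan is to compute the Whittaker--Fourier--Jacobi coefficient $W_\psi^\lambda$ of $\theta^{\TT}_{\psi,V,W_k}(\phi,f)$ directly. We unfold the theta series in the Schrödinger model $\mc{S}((V\otimes Y^*)(\A))$ and integrate over $[\U_k']=[\Z_k'][\N_k']$.

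The first step is the integral over $[\N_k']$. Writing $\tb{x}=(x_1,\dots,x_k)\in V^k$, the unipotent $\N_k'$ acts by $\psi(\tfrac12 (n\tb{x},\tb{x}))$. The character $\mu_k'\psi_\lambda$ restricted to $\N_k'$ only sees the $(k,k)$ entry. Hence the $[\N_k']$-integral selects those $\tb{x}\in V(F)^k$ whose Gram matrix $((x_i,x_j))$ equals $\diag(0,\dots,0,2\lambda)$, with sign and normalisation to be matched with \eqref{psi}.

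The second step classifies these tuples. We have $\mathrm{span}(x_1,\dots,x_{k-1})$ totally isotropic and $x_k$ orthogonal to it with $(x_k,x_k)=2\lambda$.

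The third step is the integral over $[\Z_k']$. It acts on $\tb{x}$ through $z^*$, and the character $\mu_k'$ is nontrivial on simple roots. A standard argument (splitting into $\G_n^\ve(F)\times \Z_k'(F)$-orbits and using cuspidality of $f$) kills every orbit in which $x_1,\dots,x_{k-1}$ are linearly dependent. On such orbits either a simple root subgroup acts trivially, so the character integral vanishes, or the inner $g$-integral becomes a constant term of $f$ along a parabolic stabilising an isotropic subspace, which is zero.

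So only orbits with $x_1,\dots,x_{k-1}$ independent isotropic and $x_k$ anisotropic of norm $2\lambda$ in their complement survive. This immediately forces $k-1\le n+\ve-1$ (the Witt index of $V$ is $n+\ve-1$), giving the non-genericity for $k>n+\ve$.

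Next we pin down $\lambda$. For $k=n+\ve$ the vector $x_k$ must lie in the anisotropic kernel $V^\ve$ (or $X_{k-1}^\perp/X_{k-1}\simeq V^\ve$), which represents $2\lambda$ only if $\lambda\equiv\lambda_\ve \pmod{F_\ve^\times}$. For $\ve=0$, $V_{c,d}$ represents exactly $2c\,N(E^\times)$; for $\ve=1$, $V_d$ represents $2d F^{\times2}$.

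The case $k=n+\ve-1$ is subtler, since $X_{k-1}^\perp/X_{k-1}$ is $\Ha\oplus V^\ve$, which represents everything. There I expect to use the sum over $x_k$ together with the inner $\H_{n,k}$-integration (which becomes a period of $f$ over the stabiliser of $x_k$) and cuspidality once more, to show that only the case $x_k\in V^\ve$ contributes. I am least sure of this step. Isotropic-type contributions in $\Ha$ should again reduce to constant terms, but this needs a careful second orbit decomposition.

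Once $\lambda=\lambda_\ve$, Witt's theorem shows that $\G_n^\ve(F)$ acts transitively on surviving tuples. We take the representative $(e_1,\dots,e_{k-1},e)$, whose stabiliser is $\G^\ve_{n,k}(F)$ (or $\H_{n,k}$, with $\mu_2\cdot\e_k$ accounting for the $\TT$-twist). Unfolding $[\G_n^\ve]$ and then factoring out $\U_{n,k-1}^\ve$ yields
\[
W_\psi^{\lambda_\ve}(\theta^{\TT}(\phi,f))(1)=\int_{\U^\ve_{n,k-1}(\A)\G^\ve_{n,k}(\A)\bs \G_n^\ve(\A)} (\omega(g)\phi)(e_1,\dots,e_{k-1},e)\,\mc{B}^{\ve,\TT}_{k,\psi}(\wt\pi(g)f)\,dg.
\]
Establishing this formula, and in particular checking that the $\U_{n,k-1}^\ve$-integral produces exactly $\mu_{k,\ve}^{-1}$ via $\omega(u)\phi$, is the main computational obstacle.

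The equivalence then follows. If $\mc{B}\equiv0$ on $\wt\pi$, the right side vanishes for all $\phi,f$, so the lift is either zero or not generic. Conversely, if $\mc{B}(f_0)\ne0$, we choose $\phi$ as a product of local functions supported near the orbit of the representative (using that the map $g\mapsto g^{-1}\cdot(e_1,\dots,e)$ identifies the quotient with an orbit, open in the relevant locally closed set). Then $g\mapsto \mc{B}(\wt\pi(g)f_0)$ nonvanishing near $1$ forces the integral to be nonzero, so the lift is nonzero and $(\psi,\lambda_\ve)$-generic.

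The $\H_n^\ve$-version is the same argument without the $\mu_2$ averaging.
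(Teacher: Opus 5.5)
\textbf{Overall assessment.} Your route is the paper's. You unfold in the Schr\"odinger model, let $[\N_k']$ select tuples with Gram matrix $\diag(0,\dots,0,2\lambda)$, discard tuples with $x_1,\dots,x_{k-1}$ dependent, unfold onto the Bessel period via Witt's theorem, and localise $\phi$ near $(e_1,\dots,e_{k-1},e)$ for the converse.

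\textbf{The genuine gap.} The step you flagged cannot be closed, because for $k=n+\ve-1$ the assertion $\lambda\equiv\lambda_\ve \pmod{F_\ve^\times}$ is false. For every $\lambda\in F^\times$, the surviving tuples form one nonempty $\G_n^\ve(F)$-orbit. They are linearly independent with fixed Gram matrix, so Witt's theorem applies, and the orbit is nonempty because $X_{k-1}^\perp/X_{k-1}\cong\Ha\oplus V^\ve$ is isotropic, hence represents every value. The stabiliser of a representative $(e_1,\dots,e_{k-1},x_\lambda)$ contains the reductive group $\O(U_\lambda)$, where $U_\lambda$ is the complement of $x_\lambda$ in $\Ha\oplus V^\ve$. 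That orbit therefore contributes $\int(\omega(g)\phi)(e_1,\dots,e_{k-1},x_\lambda)\,\mc{P}_\lambda(\mf{R}(g)f)\,dg$, where $\mc{P}_\lambda$ is a Bessel-type period over $\U_{n,k-1}^\ve$ (character defined by $x_\lambda$) and $\O(U_\lambda)$. There is no further orbit splitting and no constant term, so cuspidality gives nothing. Your own localisation argument then shows the lift is $(\psi,\lambda)$-generic as soon as $\mc{P}_\lambda\neq0$.

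Concretely, take $n=\ve=1$, $k=1$. Then $\SO(V_d\oplus\Ha)\cong\mathrm{PGL}_2$ and $\Theta_{\psi,V,W_1}$ is Waldspurger's lift to $\Mp_2$. Its $\psi_\lambda$-coefficient is the period over the torus $\SO(x_\lambda^\perp)$, which is split iff $\lambda\in dF^{\times2}$. Waldspurger's theory gives nonzero anisotropic toric periods for $\lambda$ in many other square classes.

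So the $\lambda$-statement holds only for $k=n+\ve$, where your argument via $X_{k-1}^\perp/X_{k-1}\cong V^\ve$ is correct. The paper's proof handles $k=n+\ve-1$ only through the inference $\VV\neq\emptyset\Rightarrow V_\lambda\neq\emptyset$, which is likewise valid only for $k=n+\ve$. The final equivalence concerns $\lambda=\lambda_\ve$ alone and is unaffected.

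\textbf{Smaller corrections.}
\begin{enumerate}
\item When $d\in F^{\times2}$, the Witt index of $V_n^0$ is $n$, not $n-1$, so your bound only gives $k\le n+1$ in the split even case. Use instead that $x_k$ is anisotropic and lies in $X^\perp$, where $X=\mathrm{span}(x_1,\dots,x_{k-1})$. Hence $X^\perp\neq X$, i.e.\ $2n+\ve-2(k-1)\ge1$, which gives $k\le n+\ve$. This is how the paper treats $k=n+1$, $\ve=0$.
\item Dependent tuples need no constant terms. Some element of $Z_k(F)$ makes $x_i=0$ for some $i\le k-1$. The simple root subgroup at $(i,i+1)$ then fixes the tuple while $\mu_k'$ is nontrivial on it.
\item Your unfolded formula has the wrong domain. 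The integrand $g\mapsto(\omega(g)\phi)(e_1,\dots,e_{k-1},e)\,\mc{B}^{\ve,\TT}_{k,\psi}(\wt\pi(g)f)$ is not left $\U_{n,k-1}^\ve(\A)$-invariant, since $u^{-1}$ moves the tuple by an element of $Z_k$. The correct domain is $\R_k^\ve(\A)\bs\G_n^\ve(\A)$, with $\R_k^\ve$ the stabiliser of the tuple. The $[Z_k]$-integral, transported into $\G_n^\ve$, is what completes $[\R_k^\ve]$ to the full Bessel period.
\end{enumerate}
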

\begin{proof}
Since the proofs for the $\G_n^{\ve}$ and $\H_n^{\ve}$ cases are largely identical, we present the proof only for the $\G_n^{\ve}$ case. We employ the following ordered bases when representing elements of $\G_n^{\ve}(\A)$ as matrices:
\[
\begin{cases}
\{e_1, \dots, e_{n-1}, e, e', e_{n-1}^*, \dots, e_1^*\} & \text{(case $\ve=0$)} \\
\{e_1, \dots, e_{n}, e, e_{n}^*, \dots, e_1^*\} & \text{(case $\ve=1$).}
\end{cases}
\]
Similarly, elements of $\J_{k}(\A)$ are written as matrices with respect to the basis $\{f_1, \dots, f_{k}, f_{k}^*, \dots, f_{1}^*\}$.

For $z \in Z_{k}$, we set
\[
v'(z) \coloneqq \begin{pmatrix} z & 0 \\ 0 & z^* \end{pmatrix} \in \Z_k' \subset \wt{\J}_{k},
\]
and for $s' \in S_{k}'$, we set
\[
n'(s') \coloneqq \begin{pmatrix} \mathrm{Id}_{k} & s' \\ 0 & \mathrm{Id}_{k} \end{pmatrix} \in \U_k' \subset \wt{\J}_{k}.
\]
(For the precise definitions of $Z_k$ and $S_{k}'$, we refer the reader to Section \ref{subsec:2-1}).

Then $n'(S_{k}')v'(Z_{k})$ constitutes the standard maximal unipotent subgroup $\U_{k}'(\A)$ of $\wt{\J}_{k}(\A)$. We choose the Haar measures $du'$, $ds'$, and $dz$ on the respective unipotent subgroups to be consistent with the normalization established in Section \ref{subsec:2-1}, such that $du' = ds' \, dz$.

Using the basis $\{f_1^*, \dots, f_{k}^*\}$ of $Y^*$, we identify elements of $V \otimes Y^*$ with $k$-tuples $(x_1, \dots, x_k) \in V^k$. The actions of $Z_{k}(\A)$ and $S_{k}'(\A)$ on the Weil representation $\omega_{\psi,V,W_{k}}$ can be explicitly described via $v'(Z_{k})$ and $n'(S_{k}')$ as follows:
\begin{align}
    \label{w2} 
    (\omega_{\psi,V,W_k}(\mathbf{1}, v'(z)) \phi)(x_1, \dots, x_{k}) 
    &= \phi((x_1, \dots, x_{k}) \cdot z) 
    &&\text{for } z \in Z_{k}(\A), \\[10pt]
    \label{w1} 
    (\omega_{\psi,V,W_{k}}(\mathbf{1}, n'(s')) \phi)(x_1, \dots, x_{k}) 
    &= \psi\left(\frac{1}{2}\mathrm{tr}(Gr(\mathbf{x}) \cdot s' \cdot w_{k})\right) \phi(x_1, \dots, x_{k}) 
    &&\text{for } s' \in S_{k}'(\A).
\end{align}
where $\mathbf{x} = (x_1, \dots, x_{k}) \in V(\A)^{k}$ and $Gr(\mathbf{x}) = (\la x_i, x_j \ra_{V})$ is the Gram matrix.

For $f \in \Theta_{\psi,V,W_k}^{\TT}(\wt{\pi})$, we compute its $(\psi, \lambda)$-Whittaker period:
\[
W_{\psi}^{\lambda}(f) = \int_{[\U_{k}']} (\mu_k' \cdot \psi_{\lambda})^{-1}(u') f(u') \, du'.
\]
This integral can be rewritten as
\[
W_{\psi}^{\lambda}(f) = \int_{[Z_{k}]} (\mu_k')^{-1}(v'(z)) \int_{[S_{k}']}  \psi_{\lambda}^{-1}(n'(s')) f\big(n'(s')v'(z)\big) \, ds' \, dz.
\]
We first evaluate the partial Fourier coefficient $W_{\lambda}'(f)$ defined by
\[
W_{\lambda}'(f) \coloneqq \int_{[S_{k}']}  \psi_{\lambda}^{-1}(n'(s')) f(n'(s')) \, ds'.
\]

Recall that $f$ is given by
\begin{align*}
f(\wt{h'}) &= \int_{[\G_n^{\ve}]} \theta_{\psi,V,W_k}(\phi; g, \wt{h'}) \wt{\varphi}(g) \mathrm{sgn}_{\TT}(g) \, dg \\
&= \int_{[\H_n^{\ve}]} \int_{[\mu_2]} \theta_{\psi,V,W_k}(\phi; (\t \cdot \e) h, \wt{h'}) \wt{\varphi}((\t \cdot \e) h) \mathrm{sgn}_{\TT}(\t \cdot \e) \, d\t \, dh
\end{align*}
for some $\wt{h'} \in \wt{\J}_k(\A)$, $\phi \in \mc{S}((V \otimes Y_k^*)(\A))$, and $\wt{\varphi} \in \wt{\pi}$. 

Substituting this into $W_{\lambda}'(f)$ and unfolding the theta series, we obtain
\begin{align*}
W_{\lambda}'(f) &= \int_{[S_{k}']} \psi_{\lambda}^{-1}(n'(s')) \left( \int_{[\G_n^{\ve}]} \theta_{\psi,V,W_k}(\phi; g, n'(s')) \wt{\varphi}(g) \mathrm{sgn}_{\TT}(g) \, dg \right) ds' \\
&= \int_{[S_{k}']}  \psi_{\lambda}^{-1}(n'(s')) \bigg( \int_{[\G_{n}^{\ve}]} \sum_{\mathbf{x} \in V(F)^k} \big(\omega_{\psi,V,W_k}(g, n'(s'))\phi\big)(\mathbf{x})  \wt{\varphi}(g) \mathrm{sgn}_{\TT}(g) \, dg \bigg) ds' \\
&= \int_{[\G_{n}^{\ve}]} \left( \sum_{(x_1, \dots, x_{k}) \in \VV} \big(\omega_{\psi,V,W_k}(g, \1)\phi\big)(x_1, \dots, x_{k}) \right) \wt{\varphi}(g) \mathrm{sgn}_{\TT}(g) \, dg.
\end{align*}
where 
\[
\VV \coloneqq \left\{ (x_1, \dots, x_{k}) \in V(F)^{k} \mid Gr(x_1, \dots, x_{k}) = 
\begin{pmatrix} 
0 & \cdots & 0 \\ 
\vdots & \ddots & \vdots \\ 
0 & \cdots & 2\lambda 
\end{pmatrix} 
\right\}
\]
and the last equality follows from \eqref{w1}, since the integral over $S_k'$ vanishes unless $(x_1, \dots, x_{k}) \in \VV$.

The representation $\omega_{\psi,V,W_k}$ induces an action of $\G_n^{\ve}(F) \times Z_{k}(F)$ on $\VV \subset V(F)^{k}$ given by
\[
(x_1, \dots, x_{k}) \cdot (g, z) = (x_1 \cdot g^{-1}, \dots, x_{k} \cdot g^{-1}) z.
\]
Choose an arbitrary element $(y_1, \dots, y_k) \in \VV$. If $k > n+\ve$, we claim that the set $\{y_1, \dots, y_{k-1}\} \subset V(F)$ is linearly dependent. Suppose, for the sake of contradiction, that it is linearly independent. Since the dimension of a maximal isotropic subspace of $V$ is bounded by $n$, this implies $k = n+1$ and $\ve = 0$. We may then choose vectors $\{y_1^*, \dots, y_{k-1}^*\}$ in $V$ such that $\la y_i, y_j^* \ra_V = \delta_{ij}$. Because $\{y_1, \dots, y_{k-1}, y_1^*, \dots, y_{k-1}^*\}$ forms a basis for $V$, $y_k$ can be written as a linear combination of these basis elements. However, since $y_k$ is orthogonal to all $y_i$ and $y_j^*$ for $i,j < k$, this forces $y_k = 0$, which contradicts the condition $\la y_k, y_k \ra_V = 2\lambda \neq 0$. Thus, our claim holds.

Assume now that $\{y_1, \dots, y_{k-1}\} \subset V(F)$ is linearly dependent, and let $\mathbf{y} = (y_1, \dots, y_{k-1})$. There exist an integer $1 \le i \le k-1$ (noting $i \neq k$ since $\la y_k, y_k \ra_V = 2\lambda \neq 0$) and an element $z_0 \in Z_k(F)$ such that
\[
(y_{1}, \dots, y_k) \cdot z_0 = (y_{1}, \dots, y_{i-1}, 0, y_{i+1}, \dots, y_{k}).
\]
Let $\mathbf{y}_0 = (y_{1}, \dots, 0, \dots, y_k) \in \VV$. We define the integral
\begin{equation*}
\begin{split}
&W_{\phi,\wt{\varphi}}^{\psi,\lambda,\TT}(\mathbf{y}, \wt{h'}) \coloneqq \int_{[Z_{k}]} \mu_k'(v'(z))^{-1} \int_{[\G_{n}^{\ve}]} \big(\omega_{\psi,V,W_k}(g, v'(z)\wt{h'})\phi\big)(\mathbf{y}) \wt{\varphi}(g) \mathrm{sgn}_{\TT}(g) \, dg \, dz.
\end{split}
\end{equation*}
Since $z_0 \in Z_k(F)$, we have $W_{\phi,\wt{\varphi}}^{\psi,\lambda,\TT}(\mathbf{y}, \wt{h'}) = \mu_k'(v'(z_0))^{-1}  \cdot W_{\phi,\wt{\varphi}}^{\psi,\lambda,\TT}(\mathbf{y}_0, \wt{h'})$.

Observe that there is a simple root subgroup $L$ of $Z_k$ that stabilizes $\mathbf{y}_0$, and $\mu_k'$ is non-trivial on $v'(L(\A))$. Choosing an element $l_0 \in L(\A)$ such that $\mu_k'(v'(l_0)) \neq 1$, the invariance of $\mathbf{y}_0$ under $L$ yields
\[
W_{\phi,\wt{\varphi}}^{\psi,\lambda,\TT}(\mathbf{y}_0, l_0) = W_{\phi,\wt{\varphi}}^{\psi,\lambda,\TT}(\mathbf{y}_0, \1).
\]
On the other hand, applying the change of variables $z \mapsto z l_0^{-1}$ gives
\[
W_{\phi,\wt{\varphi}}^{\psi,\lambda,\TT}(\mathbf{y}_0, l_0) = \mu_k'(v'(l_0)) W_{\phi,\wt{\varphi}}^{\psi,\lambda,\TT}(\mathbf{y}_0, \1).
\]
Consequently, $W_{\phi,\wt{\varphi}}^{\psi,\lambda,\TT}(\mathbf{y}, \1) = \mu_k'(v'(z_0))^{-1}  \cdot W_{\phi,\wt{\varphi}}^{\psi,\lambda,\TT}(\mathbf{y}_0, \1) = 0$. Since $W_{\psi}^{\lambda}(f)$ is the sum of $W_{\phi,\wt{\varphi}}^{\psi,\lambda,\TT}(\mathbf{y}, \1)$ over all $\mathbf{y} \in \VV$, it follows that $W_{\psi}^{\lambda}(f) = 0$ when $k > n+\ve$. This establishes the first statement of the theorem.

Now we consider the cases $k = n-1+\ve$ or $k = n+\ve$. Define $V_{\lambda} \coloneqq \{v_0 \in V^{\ve} \mid \la v_0, v_0 \ra_V = 2\lambda\}$. From the above argument, if $\Theta_{\psi,V,W_{k}}^{\TT}(\wt{\pi}) \neq 0$, we must have $\VV \neq \emptyset$, which implies $V_{\lambda} \neq \emptyset$. Choosing any element $v_0 \in V_{\lambda}$, we can write $v_0 = a e + b e'$ for some $a, b \in F$. Then
\[
\la v_0, v_0 \ra_V = 
\begin{cases} 
2c(a^2 - db^2) & \text{if } \ve=0, \\ 
2da^2 & \text{if } \ve=1. 
\end{cases}
\]
Consequently, $\lambda \equiv \lambda_{\ve} \pmod{F_{\ve}^{\times}}$. This proves the second statement of the theorem.

Returning to the integration over $\VV$, if an element $(x_1, \dots, x_k) \in \VV$ contributes a non-trivial summand to $W_{\lambda_{\ve}}'(f)$, the subset $\{x_1, \dots, x_k\} \subset V(F)$ must be linearly independent. By the Witt extension theorem, the $\G_n^{\ve}(F)$-action on the linearly independent subsets in $\VV$ is transitive, yielding a single orbit. We denote the representative of this orbit by $(e_1, \dots, e_{k-1}, e)$.

Define the stabilizer
\[
\R_{k}^{\ve} \coloneqq \{ h \in \G_{n}^{\ve} \mid h e_1 = e_1, \dots, h e_{k-1} = e_{k-1}, \, h e = e \}.
\]
The explicit matrix forms of $\R_{k}^{\ve}$ are given as follows:
\begin{flalign*}
&\text{(Case: $\ve=0$)} & \R_{k}^{\ve} &= 
\begin{cases}
\begin{pmatrix} \mathrm{Id}_{n-2} & * & 0 & * & * & * \\ & * & 0 & * & * & * \\ & 0 & 1 & 0 & 0 & 0 \\ & * & 0 & * & * & * \\ & * & 0 & * & * & * \\ & & & & & \mathrm{Id}_{n-2} \end{pmatrix} & \text{if } k=n-1, \\ 
\begin{pmatrix} \mathrm{Id}_{n-1} & 0 & * & * \\ & 1 & 0 & 0 \\ & & * & * \\ & & & \mathrm{Id}_{n-1} \end{pmatrix} & \text{if } k=n.
\end{cases} &
\end{flalign*}
\begin{flalign*}
&\text{(Case: $\ve=1$)} & \R_{k}^{\ve} &= 
\begin{cases}
\begin{pmatrix} \mathrm{Id}_{n-1} & * & 0 & * & * \\ & * & 0 & * & * \\ & 0 & 1 & 0 & 0 \\ & * & 0 & * & * \\ & & & & \mathrm{Id}_{n-1} \end{pmatrix} & \text{if } k=n, \\ 
\begin{pmatrix} \mathrm{Id}_{n} & 0 & * \\ & 1 & 0 \\ & &  \mathrm{Id}_{n} \end{pmatrix} & \text{if } k=n+1.
\end{cases} &
\end{flalign*}

Therefore, we can rewrite the Whittaker period as:
\begin{align*}
W_{\psi}^{\lambda_{\ve}}(f) &= \int_{[Z_{k}]} \mu_{k}'^{-1}(v'(z)) \int_{[\G_{n}^{\ve}]} \left( \sum_{\mathbf{x} \in \VV} \big(\omega_{\psi,V,W_k}(g, v'(z))\phi\big)(\mathbf{x}) \right) \wt{\varphi}(g) \mathrm{sgn}_{\TT}(g) \, dg \, dz \\
&= \int_{[Z_{k}]} \mu_{k}'^{-1}(v'(z)) \int_{[\G_{n}^{\ve}]} \int_{\R_{k}^{\ve}(F) \bs \G_n^{\ve}(F)} \wt{\varphi}(g) \mathrm{sgn}_{\TT}(g) \\
&\quad \quad \times \big(\omega_{\psi,V,W_k}(g_1 g, v'(z))\phi\big)(e_1, \dots, e_{k-1}, e) \, dg_1 \, dg \, dz \\
&= \int_{[Z_{k}]} \mu_{k}'^{-1}(v'(z)) \int_{\R_{k}^{\ve}(F) \bs \G_{n}^{\ve}(\A)} \wt{\varphi}(g) \mathrm{sgn}_{\TT}(g) \big(\omega_{\psi,V,W_k}(g, v'(z))\phi\big)(e_1, \dots, e_{k-1}, e) \, dg \, dz.
\end{align*}

Every element of $Z_k$ can be factored as $\begin{pmatrix} z & \\ & 1 \end{pmatrix} \begin{pmatrix} \mathrm{Id}_{k-1} & a \\ & 1 \end{pmatrix}$ for $z \in Z_{k-1}$ and $a \in \M_{k-1 \times 1}$. We define a map $v : Z_k \to \G_n^{\ve}$ by
\[
v\left( \begin{pmatrix} z & \\ & 1 \end{pmatrix} \begin{pmatrix} \mathrm{Id}_{k-1} & a \\ & 1 \end{pmatrix} \right) \coloneqq v\left( \begin{pmatrix} z & \\ & 1 \end{pmatrix} \right) v\left( \begin{pmatrix} \mathrm{Id}_{k-1} & a \\ & 1 \end{pmatrix} \right),
\]
where
\[
v\left( \begin{pmatrix} z & \\ & 1 \end{pmatrix} \right) = 
\begin{cases} 
\begin{pmatrix} z & & \\ & \mathrm{Id}_{4-\ve} & \\ & & z^* \end{pmatrix} \in \G_n^{\ve} & \text{if } k=n+\ve-1, \\ 
\begin{pmatrix} z & & \\ & \mathrm{Id}_{2-\ve} & \\ & & z^* \end{pmatrix} \in \G_n^{\ve} & \text{if } k=n+\ve,
\end{cases}
\]
and \\

\noindent (Case: $\ve=0$)
{\small
\setlength{\arraycolsep}{3pt} 
\begin{equation*}
v\left( \begin{pmatrix} \mathrm{Id}_{k-1} & a \\ & 1 \end{pmatrix} \right) = 
\begin{cases} 
\begin{pmatrix} \mathrm{Id}_{n-2} & & a & & & \\ & 1 & & & & \\ & & 1 & & & a' \\ & & & 1 & & \\ & & & & 1 & \\ & & & & & \mathrm{Id}_{n-2} \end{pmatrix} \in \G_n^{\ve} & \text{if } k=n-1, \\[25pt] 
\begin{pmatrix} \mathrm{Id}_{n-1} & a & & \\ & 1 & & a' \\ & & 1 & \\ & & & \mathrm{Id}_{n-1} \end{pmatrix} \in \G_n^{\ve} & \text{if } k=n.
\end{cases}
\end{equation*}
}

\bigskip

\noindent (Case: $\ve=1$)
{\small
\setlength{\arraycolsep}{3pt}
\begin{equation*}
v\left( \begin{pmatrix} \mathrm{Id}_{k-1} & a \\ & 1 \end{pmatrix} \right) = 
\begin{cases} 
\begin{pmatrix} \mathrm{Id}_{n-1} & & a & & \\ & 1 & & & \\ & & 1 & & a' \\ & & & 1 & \\ & & & & \mathrm{Id}_{n-1} \end{pmatrix} \in \G_n^{\ve} & \text{if } k=n, \\[20pt] 
\begin{pmatrix} \mathrm{Id}_{n} & a & \\ & 1 & a' \\ & & \mathrm{Id}_{n} \end{pmatrix} \in \G_n^{\ve} & \text{if } k=n+1.
\end{cases}
\end{equation*}
}

Note that $\mu_{k}'(v'(z)) = \mu_{\ve}(v(z))$ for all $z \in Z_k$. From \eqref{w2}, it is straightforward to verify that for $t = \begin{pmatrix} z & \\ & 1 \end{pmatrix}$ or $t = \begin{pmatrix} \mathrm{Id}_{k-1} & a \\ & 1 \end{pmatrix} \in Z_k$, we have
\[
\big(\omega_{\psi,V,W_k}(g, v'(t))\phi\big)(e_1, \dots, e_{k-1}, e) = \big(\omega_{\psi,V,W_k}(v(t)^{-1}g, \1)\phi\big)(e_1, \dots, e_{k-1}, e).
\]

For $\wt{\varphi} \in \mc{A}(\G_n^{\ve})$ and $\TT \in \mf{T}$, we define the period integral over the stabilizer:
\[
\wt{\varphi}^{\R_{k}^{\ve},\TT}(g) \coloneqq \int_{[\R_{k}^{\ve}]} \wt{\varphi}(rg) \mathrm{sgn}_{\TT}(rg) \, dr, \quad g \in \G_n^{\ve}(\A).
\]

Since $\R_{k}^{\ve}$ is the stabilizer of $(e_1, \dots, e_{k-1}, e)$ in $\G_n^{\ve}$, we can rewrite the Whittaker period as follows:
\begin{align*}
W_{\psi}^{\lambda_{\ve}}(f) &= \int_{[Z_{k}]} \int_{\R_{k}^{\ve}(F) \backslash \G_{n}^{\ve}(\mathbb{A})} \mu_{k}'^{-1}(v'(z)) \widetilde{\varphi}(g) \mathrm{sgn}_{\TT}(g) \big(\omega_{\psi,V,W_k}(g, v'(z))\phi\big)(e_1, \dots, e_{k-1}, e) \, dg \, dz \\
&=  \int_{[Z_{k}]} \int_{\R_{k}^{\ve}(\mathbb{A}) \backslash \G_n^{\ve}(\mathbb{A})} \mu_{k}'^{-1}(v'(z)) \widetilde{\varphi}^{\R_{k}^{\ve},\TT}(g)  \big(\omega_{\psi,V,W_k}(g, v'(z))\phi\big)(e_1, \dots, e_{k-1}, e) \, dg \, dz \\
&= \int_{[Z_{k}]} \int_{\R_{k}^{\ve}(\mathbb{A}) \backslash \G_n^{\ve}(\mathbb{A})} \mu_{k}'^{-1}(v'(z)) \widetilde{\varphi}^{\R_{k}^{\ve},\TT}(g)  \big(\omega_{\psi,V,W_k}(v(z)^{-1}g, \mathbf{1})\phi\big)(e_1, \dots, e_{k-1}, e) \, dg \, dz \\
&= \int_{[Z_{k}]} \int_{\R_{k}^{\ve}(\mathbb{A}) \backslash \G_n^{\ve}(\mathbb{A})} \mu_{\ve}^{-1}(v(z)) \widetilde{\varphi}^{\R_{k}^{\ve},\TT}(v(z)g) \big(\omega_{\psi,V,W_k}(g, \mathbf{1})\phi\big)(e_1, \dots, e_{k-1}, e) \, dg \, dz \\
&= \int_{\R_{k}^{\ve}(\mathbb{A}) \backslash \G_n^{\ve}(\mathbb{A})} \big(\omega_{\psi,V,W_k}(g, \mathbf{1})\phi\big)(e_1, \dots, e_{k-1}, e)  \left( \int_{[Z_{k}]} \mu_{\ve}^{-1}(v(z)) \widetilde{\varphi}^{\R_{k}^{\ve},\TT}(v(z)g) \, dz \right) dg \\
&= \int_{\R_{k}^{\ve}(\mathbb{A}) \backslash \G_n^{\ve}(\mathbb{A})} \big(\omega_{\psi,V,W_k}(g, \mathbf{1})\phi\big)(e_1, \dots, e_{k-1}, e) \cdot \mathcal{B}_{k,\psi}^{\ve,\TT}(\mathfrak{R}(g)\widetilde{\varphi}) \, dg,
\end{align*}
where $\mf{R}(g)\wt{\varphi}$ denotes the right translation of $\wt{\varphi}$ by $g$.

Consequently, the non-vanishing of the Whittaker period $W_{\psi}^{\lambda_{\ve}}(f) \neq 0$ implies that the Bessel period $\mc{B}_{k,\psi}^{\ve,\TT}$ is non-trivial on $\wt{\pi}$. Conversely, if $\mc{B}_{k,\psi}^{\ve,\TT}$ is nonzero on $\wt{\pi}$ for some $\TT \in \mf{T}$, there exists an automorphic form $\wt{\varphi} \in \wt{\pi}$ such that $\mc{B}_{k,\psi}^{\ve,\TT}(\wt{\varphi}) \neq 0$. Conversely, if $\mc{B}_{k,\psi}^{\ve,\mathbb{T}}$ is nonzero on $\wt{\pi}$ for some $\mathbb{T}\in\mathfrak{T}$, there exists an automorphic $\wt{\varphi}'\in\wt{\pi}$ such that $\mc{B}_{k,\psi}^{\ve,\mathbb{T}}(\wt{\varphi}')\ne0$. Let $\mathbf{e}_0 = (e_1, \dots, e_{k-1}, e) \in (V \otimes Y_k^*)(F)$. Choose a finite set $S$ containing all archimedean places such that for $v \notin S$, $\mathbf{e}_{0,v} \in (V \otimes Y_k^*)(\mc{O}_v)$. 

For each $v \notin S$, we simply set the local Schwartz function $\phi_v$ to be the characteristic function of the standard $\mc{O}_v$-lattice $(V \otimes Y_k^*)(\mc{O}_v)$. Since $\mathbf{e}_{0,v}$ lies in this lattice, the evaluation of $\phi_v$ at this point is exactly $1$, ensuring that the infinite product is well-defined and non-vanishing.

For the places $v \in S$, we choose $\phi_v$ to be highly concentrated around $\mathbf{e}_{0,v}$. Specifically, for non-archimedean $v \in S$, we take it to be the normalized characteristic function of an arbitrarily small compact open neighborhood of $\mathbf{e}_{0,v}$. For archimedean $v \in S$, we choose it to be a non-negative smooth bump function tightly supported within an infinitesimal neighborhood of $\mathbf{e}_{0,v}$.

We then define the global Schwartz function as $\phi = \bigotimes_v \phi_v \in \mc{S}((V \otimes Y_k^*)(\A))$. By shrinking the supports of the local functions strictly at the places $v \in S$, the sequence of global Schwartz functions forms an approximate identity converging to the Dirac delta distribution supported at $\mathbf{e}_0$. Recall that $R_k^{\ve}$ is precisely the stabilizer of $\mathbf{e}_0$. Thus, in the limit, integrating over the quotient $R_k^{\ve}(\A) \bs G_n^{\ve}(\A)$ against this Dirac delta distribution localizes the integration variable exactly to the identity coset. 

This forces the entire global integral to collapse to a nonzero multiple of the inner period $\mc{B}_{k,\psi}^{\ve,\mathbb{T}}(\wt{\varphi}') \ne 0$. Therefore, we can guarantee the existence of an appropriate $\phi \in \omega_{\psi,V,W_k}$ such that $W_{\psi}^{\lambda_{\ve}}(f) \ne 0$ for $f = \theta_{\psi,V,W_k}^{\mathbb{T}}(\phi, \wt{\varphi}')$. This completes the proof.
\end{proof}
From Theorem \ref{a1}, we deduce the following two corollaries.

\begin{cor}\label{a2} 
Let $n \ge 2-\ve$, and let $\wt{\pi}$ (resp. $\pi$) be an irreducible cuspidal representation of $\G_n^{\ve}(\A)$ (resp. $\H_n^{\ve}(\A)$). Assume that $\wt{\pi}_v$ (resp. $\pi_v$) is $\mu_{\ve,v}^{\pm}$-generic (resp. $\mu_{\ve,v}$-generic) for some non-archimedean place $v$. Then for any $\TT \in \mf{T}$, the Bessel period $\mc{B}_{n-1+\ve,\psi}^{\ve,\TT}$ is non-trivial on $\wt{\pi}$ (resp. $\pi$) if and only if the global theta lift $\Theta_{\psi,V,W_{n-1+\ve}}^{\TT}(\wt{\pi})$ (resp. $\Theta_{\psi,V,W_{n-1+\ve}}(\pi)$) is nonzero, cuspidal, and $(\psi,\lambda_{\ve})$-generic.
\end{cor}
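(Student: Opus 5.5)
The plan is to deduce Corollary~\ref{a2} from Theorem~\ref{a1} applied with $k=n-1+\ve$, together with a cuspidality statement coming from Rallis' tower property and the local vanishing result Proposition~\ref{c1}. Since $n\ge 2-\ve$, we have $k=n-1+\ve\ge 1$, so Theorem~\ref{a1} applies. It gives that $\mc{B}_{n-1+\ve,\psi}^{\ve,\TT}\neq 0$ on $\wt{\pi}$ if and only if $\Theta_{\psi,V,W_{n-1+\ve}}^{\TT}(\wt{\pi})$ is nonzero and $(\psi,\lambda_\ve)$-generic. The direction ``cuspidal, nonzero, generic $\Rightarrow$ period nonzero'' is therefore immediate. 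For the converse, it remains only to show that $\Theta_{\psi,V,W_{n-1+\ve}}^{\TT}(\wt{\pi})$ is cuspidal whenever it is nonzero.

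For cuspidality I would invoke Rallis' tower property. If all earlier lifts $\Theta^{\TT}_{\psi,V,W_{j}}(\wt{\pi})$ with $j<n-1+\ve$ vanish, then $n-1+\ve$ is the first occurrence index, and the lift there is cuspidal (as recalled just before Theorem~\ref{a1}). So the key step is to show that $\Theta^{\TT}_{\psi,V,W_j}(\wt{\pi})=0$ for all $j<n-1+\ve$. Here the local genericity hypothesis enters. At the non-archimedean place $v$, $\wt{\pi}_v\otimes \mathrm{sgn}_{\TT,v}$ is still $\mu_{\ve,v}^{\pm}$-generic, possibly with the sign flipped, since twisting by $\mathrm{sgn}_v$ exchanges $\mu^{+}$ and $\mu^{-}$. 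It is therefore of $\mu_{t,\ve,v}^{\pm}$-Bessel type with $t=n+\ve$ and trivial $\H^{\ve}_{n,n+\ve}$. Proposition~\ref{c1} with $t=n+\ve$ then gives that the local big theta lift of $\wt{\pi}_v$ to $\wt{\J}_j(F_v)$ vanishes for $j<t-1=n-1+\ve$. A nonzero global theta lift $\Theta^{\TT}_{\psi,V,W_j}(\wt{\pi})$ would produce a nonzero $\G_n^{\ve}(F_v)\times\wt{\J}_j(F_v)$-equivariant map $\omega_{\psi_v}\to (\wt{\pi}_v\otimes\mathrm{sgn}_{\TT,v})\boxtimes(\cdot)$, since the global lift factors through local big theta lifts. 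Hence all lower lifts vanish. The $\H_n^{\ve}$ case is identical, using $\mu_{\ve,v}$-genericity and the $\pi$-version of Proposition~\ref{c1}.

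The main obstacle is bookkeeping in the first-occurrence step rather than anything deep. One must check that Rallis' tower property applies to the twisted lift $\Theta^{\TT}=\Theta(\wt{\pi}\otimes\mathrm{sgn}_{\TT})$, and that $\wt{\pi}\otimes\mathrm{sgn}_\TT$ is still cuspidal and locally generic at $v$ with respect to the appropriate sign extension. One must also confirm the passage from global nonvanishing to local nonvanishing of the big theta lift. I would settle this by noting that the theta integral defines a nonzero element of $\Hom(\omega_{\psi}\otimes\wt{\pi}^{\vee}\otimes\mathrm{sgn}_{\TT},\,\cdot\,)$, which factors through the local component at $v$.
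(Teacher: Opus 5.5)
Your proposal is correct and follows the paper's proof. Theorem~\ref{a1} with $k=n-1+\ve$ gives the $(\Leftarrow)$ direction, and also the nonvanishing and genericity in $(\Rightarrow)$. Cuspidality then comes from Rallis' tower property once Proposition~\ref{c1}, applied with $t=n+\ve$ to the $\mu^{\pm}_{\ve,v}$-generic local component of $\wt{\pi}\otimes\mathrm{sgn}_{\TT}$, rules out any earlier occurrence. The only cosmetic difference is that the paper argues by contradiction through the local component of an irreducible summand of the cuspidal first-occurrence lift, whereas you localize the nonzero theta integral at every lower level $j$; both are fine.
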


\begin{proof}
Since the proofs for the $\G_n^{\ve}$ and $\H_n^{\ve}$ cases are largely identical, we present the argument only for the $\G_n^{\ve}$ case. The $(\Leftarrow)$ direction is immediate from Theorem \ref{a1}, so it suffices to establish the $(\Rightarrow)$ direction. 

Suppose that the Bessel period $\mc{B}_{n-1+\ve,\psi}^{\ve,\TT}$ is non-trivial on $\wt{\pi}$. By Theorem \ref{a1}, the theta lift $\Theta_{\psi,V,W_{n-1+\ve}}^{\TT}(\wt{\pi})$ is nonzero and $(\psi,\lambda_{\ve})$-generic. It remains to show that it is cuspidal. 

Suppose, for the sake of contradiction, that $\Theta_{\psi,V,W_{n-1+\ve}}^{\TT}(\wt{\pi})$ is not cuspidal. By Rallis' tower property, there exists an index $k_0 < n-1+\ve$ such that the lift $\Theta_{\psi,V,W_{k_0}}^{\TT}(\wt{\pi}) = \Theta_{\psi,V,W_{k_0}}(\wt{\pi} \otimes \mathrm{sgn}_{\TT})$ is nonzero and cuspidal. Let $\sigma$ be an irreducible summand of $\Theta_{\psi,V,W_{k_0}}(\wt{\pi} \otimes \mathrm{sgn}_{\TT})$. By our hypothesis, the local component $(\wt{\pi} \otimes \mathrm{sgn}_{\TT})_v$ is either $\mu_{\ve,v}^{+}$-generic or $\mu_{\ve,v}^{-}$-generic, and $\sigma_v$ is the local theta lift of $(\wt{\pi} \otimes \mathrm{sgn}_{\TT})_v$ to $\wt{\J}_{k_0}(F_v)$. However, since $k_0 < n-1+\ve$, this contradicts Proposition \ref{c1}.
\end{proof}

\begin{cor}\label{a3} 
Let $\wt{\pi}$ (resp. $\pi$) be an irreducible cuspidal $\mu_{\ve}^{\TT}$-generic automorphic representation of $\G_n^{\ve}(\A)$ (resp. $\H_n^{\ve}(\A)$) for some $\TT \in \mf{T}$. Then $\Theta_{\psi,V,W_{n+\ve}}^{\TT}(\wt{\pi})$ (resp. $\Theta_{\psi,V,W_{n+\ve}}(\pi)$) is nonzero and $(\psi,\lambda_{\ve})$-generic. Furthermore, if $\Theta_{\psi,V,W_{n-1+\ve}}^{\TT}(\wt{\pi})$ (resp. $\Theta_{\psi,V,W_{n-1+\ve}}(\pi)$) vanishes, then $\Theta_{\psi,V,W_{n+\ve}}^{\TT}(\wt{\pi})$ (resp. $\Theta_{\psi,V,W_{n+\ve}}(\pi)$) is cuspidal.
\end{cor}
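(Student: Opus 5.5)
Both assertions follow from Theorem~\ref{a1} at $k=n+\ve$ together with Rallis' tower property; I give the argument for $\G_n^{\ve}$, the $\H_n^{\ve}$ case being identical.

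\textbf{Nonvanishing and genericity.} At $k=n+\ve$ the Bessel period $\mc{B}_{n+\ve,\psi}^{\ve,\TT}$ is, by definition (\S\ref{muge}), the Whittaker--Bessel period $W_{\psi}^{\ve,\TT}$. This holds because $\H_{n,n+\ve}^{\ve}$ is trivial and $\mu_{n+\ve,\ve}=\mu_{\ve}$. The hypothesis that $\wt{\pi}$ is $\mu_{\ve}^{\TT}$-generic says exactly that $W_{\psi}^{\ve,\TT}\neq 0$ on $\wt{\pi}$, i.e.\ $\mc{B}_{n+\ve,\psi}^{\ve,\TT}\neq0$. The final equivalence of Theorem~\ref{a1} with $k=n+\ve$ then gives that $\Theta_{\psi,V,W_{n+\ve}}^{\TT}(\wt{\pi})$ is nonzero and $(\psi,\lambda_{\ve})$-generic.

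Theorem~\ref{a1} carries the hypothesis $k\ge1$. This is automatic here: $n\ge1$ when $\ve=0$, since $V_n^0$ contains $V_{c,d}$, and $n+\ve\ge1$ when $\ve=1$. The only thing to check is that the small case $\ve=0,\ n=1$ is covered by the proof of Theorem~\ref{a1}. There $\U$ is trivial and the period reduces to the integral over $[\mu_2]$ weighted by $\mathrm{sgn}_{\TT}$; the orbit computation in Theorem~\ref{a1} goes through verbatim.

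\textbf{Cuspidality.} Assume in addition that $\Theta_{\psi,V,W_{n-1+\ve}}^{\TT}(\wt{\pi})=\Theta_{\psi,V,W_{n-1+\ve}}(\wt{\pi}\otimes\mathrm{sgn}_{\TT})$ vanishes. By Rallis' tower property, the lifts to all lower members $W_{k}$ with $k<n-1+\ve$ also vanish. Hence the first occurrence index of $\wt{\pi}\otimes\mathrm{sgn}_{\TT}$ in the tower $\{W_k\}$ is at least $n+\ve$. Since the lift at $k=n+\ve$ is nonzero by the first part, the first occurrence is exactly $n+\ve$. Rallis' tower property then says the lift at the first occurrence is cuspidal, so $\Theta_{\psi,V,W_{n+\ve}}^{\TT}(\wt{\pi})$ is cuspidal. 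This uses the constant term description: the constant terms of the $k$-th lift along maximal parabolics are expressed through lifts to smaller $W_{k'}$, which all vanish.

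\textbf{Main point of care.} The only subtle step is applying the tower property to $\wt{\pi}\otimes\mathrm{sgn}_{\TT}$ rather than to $\wt{\pi}$. This is justified by the identity $\Theta^{\TT}_{\psi,V,W}(\wt{\pi})=\Theta_{\psi,V,W}(\wt{\pi}\otimes\mathrm{sgn}_{\TT})$ noted earlier in the paper. With it, the vanishing hypothesis and the conclusion refer to the same tower.
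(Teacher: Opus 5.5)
Your proof is correct and follows the paper's argument. The first assertion is Theorem~\ref{a1} at $k=n+\ve$, where $\mc{B}_{n+\ve,\psi}^{\ve,\TT}=W_{\psi}^{\ve,\TT}$, and cuspidality follows because $n+\ve$ is the first occurrence. The one difference is how you exclude nonzero lifts below level $n-1+\ve$: the paper uses the local vanishing result Proposition~\ref{c1}, via the local genericity of $\wt{\pi}$, while you use persistence in Rallis' tower, which is equally valid and avoids the local input.
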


\begin{proof}
The first statement is an immediate consequence of Theorem \ref{a1}. The second statement follows directly from Rallis' tower property and Proposition \ref{c1}.
\end{proof}


\bb

\section{The relationship between $L(s,\pi)$ and the special Bessel period $\mc{B}_{n-1+\ve,\psi}^{\ve}(\pi)$} 
\label{sec:5}

In this section, we prove the following two main theorems.

\begin{thm}\label{t1} 
Let $n \ge 2-\ve$ and let $\wt{\pi}$ be an irreducible $\mu_{\ve}$-generic cuspidal representation of $\G_n^{\ve}(\A)$. Let $S$ be a finite set of places of $F$ containing all archimedean places, such that for $v \notin S$, both $\wt{\pi}_v$ and $\psi_{v}$ are unramified. Then the following conditions are equivalent:
\begin{enumerate}
    \item[$(i)$] \hspace{1mm}
    $\begin{cases} 
    L(s,\wt{\pi}) \text{ has a pole at } s=1 & \text{if } \ve=0, \\ 
    L(s,\wt{\pi}) \text{ is holomorphic and nonzero at } s=1/2 & \text{if } \ve=1. 
    \end{cases}$
    \item[$(ii)$] \hspace{1mm}
    $\begin{cases} 
    L^S(s,\wt{\pi}) \text{ has a pole at } s=1 & \text{if } \ve=0, \\ 
    L^S(s,\wt{\pi}) \text{ is holomorphic and nonzero at } s=1/2 & \text{if } \ve=1. 
    \end{cases}$
    \item[$(iii)$] $\mc{B}_{n-1+\ve,\psi}^{\ve}$ is non-trivial on $\wt{\pi} \otimes \mathrm{sgn}_{\TT}$ for some $\TT \in \mf{T}$.
    \item[$(iv)$] $\wt{\pi} \otimes \mathrm{sgn}_{\TT}$ has a nonzero and $(\psi,\lambda_{\ve})$-generic global theta lift to $\wt{\J}_{n-1+\ve}(\A)$ for some $\TT \in \mf{T}$.
    \item[$(v)$] $\wt{\pi} \otimes \mathrm{sgn}_{\TT}$ has a nonzero global theta lift to $\wt{\J}_{n-1+\ve}(\A)$ for some $\TT \in \mf{T}$.
\end{enumerate}
\end{thm}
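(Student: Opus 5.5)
I will prove the cycle of implications (iii)$\Rightarrow$(iv)$\Rightarrow$(v)$\Rightarrow$(i)$\Rightarrow$(ii)$\Rightarrow$(iii), following the outline in the introduction.

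\textbf{The implications (iii)$\Rightarrow$(iv)$\Rightarrow$(v).} The step (iii)$\Rightarrow$(iv) is Theorem~\ref{a1} with $k=n-1+\ve$, applied to $\wt{\pi}$ twisted by $\mathrm{sgn}_{\TT}$. The step (iv)$\Rightarrow$(v) is trivial.

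\textbf{The implication (v)$\Rightarrow$(i).} Let $k_0\le n-1+\ve$ be the first occurrence index of $\wt{\pi}\otimes\mathrm{sgn}_{\TT}$. The lift to $\wt{\J}_{k_0}$ is cuspidal by Rallis' tower property. Since $\wt{\pi}_v$ is generic at almost all places, Proposition~\ref{c1} forces $k_0\ge n-1+\ve$, so $k_0=n-1+\ve$. This lies in the first term range of Yamana's Rallis inner product formula \cite{Yam14}. There the Petersson norm of the theta lift equals, up to normalizing factors, a residue or value of $L^S(s,\wt{\pi})$ times a product of local doubling zeta integrals. For $\ve=0$ this is the residue at $s=1$; for $\ve=1$ it is the value at $s=1/2$. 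Nonvanishing of the lift forces the corresponding partial $L$-function to have a pole at $s=1$, respectively to be nonzero at $s=1/2$. The local zeta integrals are divisible by $L(s,\wt{\pi}_v)$ (Yamana's gcd definition) and can be chosen to equal it. From this we read off the completed statement (i).

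\textbf{The implication (i)$\Rightarrow$(ii).} We compare $L(s,\wt{\pi})$ with $L^S(s,\wt{\pi})$. Genericity and Remark~\ref{Ram} say that each $\wt{\pi}_v$ is almost tempered, with exponents $s_1<1/2$. By the Langlands-classification compatibility of Yamana's local factors, $L(s,\wt{\pi}_v)$ is then holomorphic and nonvanishing for $\Re s>1/2$. In particular it is holomorphic and nonzero at $s=1$. The case $\ve=1$, at $s=1/2$, needs care with the exponent bound; I expect the $\SO_{2n+1}$-standard factor is still holomorphic there because the exponents are strictly less than $1/2$, and nonvanishing is automatic for $L$-factors. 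So removing finitely many local factors does not change the order at the point in question.

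\textbf{The implication (ii)$\Rightarrow$(iii), the main obstacle.} This requires a Rankin--Selberg integral for $\G_n^{\ve}\times\GL_1$ adapted to the generic character $\mu_{\ve}$, including all types $(d,c)$ when $\ve=0$ (Remark~\ref{type}). The construction is as follows. Take an Eisenstein series on $\GL_1$ embedded suitably, or for $\ve=1$ simply a torus, and integrate it against $\wt{\varphi}$ over the stabilizer-type subgroup. Unfold the integral, using cuspidality and the Whittaker expansion, to an Euler product of local Whittaker zeta integrals. At unramified places these equal $L(s,\wt{\pi}_v)$ divided by normalizing factors.

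The basic identity must exhibit the residue at $s=1$, or the value at $s=1/2$ for $\ve=1$, of the global integral as the Bessel period $\mc{B}^{\ve}_{n-1+\ve,\psi}$ of $\wt{\varphi}$. This comes from taking the residue of the Eisenstein series, or from the integral specializing to the period itself. Local ramified integrals can be made nonzero at the relevant point by choosing Whittaker functions of small support. Then the pole of $L^S$ forces the period to be nonzero on some $\wt{\pi}\otimes\mathrm{sgn}_{\TT}$. The $\mathrm{sgn}_{\TT}$ twists arise when passing between $\G_n^{\ve}$ and $\H_n^{\ve}$.

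The hardest part will be the unfolding for non-split quasi-split $\SO_{2n}$ of general type. Here I will try to adapt Kaplan's integral \cite{Kap15} by conjugating by the element relating the type $(d,c)$ character to his.
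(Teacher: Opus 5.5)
Your proposal is correct and is essentially the paper's own proof. It uses the same cycle of implications: Theorem~\ref{a1} for (iii)$\Rightarrow$(iv); the bound $e_1<1/2$ plus multiplicativity of Yamana's doubling factors for (i)$\Rightarrow$(ii); Yamana's theta/$L$-function theorem for (v)$\Rightarrow$(i). For (ii)$\Rightarrow$(iii) it passes to a generic constituent of $\mf{J}(\wt{\pi}\otimes\mathrm{sgn}_{\TT})$ on $\H_n^{\ve}$ (Lemma~\ref{lg}). It then integrates the Bessel coefficient against a Borel Eisenstein series on the $\SO_3$ in the Bessel subgroup, not on $\GL_1$; for $\ve=1$ this is just the character $|\cdot|^{s-1/2}$ of the split $\SO_2$. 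That Eisenstein series has constant residue at $s=1$, so the residue of the global integral (resp.\ its value at $s=1/2$) is a multiple of the Bessel period.

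The differences are packaging only:
\begin{enumerate}
\item For (v)$\Rightarrow$(i), the paper quotes \cite[Theorem~2]{Yam14} to get a pole of the completed $L(s,\wt{\pi})$ at $s=0$. It then moves the pole to $s=1$ via the functional equation and $L(s,\wt{\pi})=L(s,\wt{\pi}^{\vee})$.
\item The paper extends Kaplan's integral directly to every type $(d,c)$, rather than conjugating to his normalization as you propose.
\item At archimedean places, the paper replaces your small-support choice of Whittaker functions by a Dixmier--Malliavin approximate-identity argument (Proposition~\ref{p3}).
\end{enumerate}
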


\begin{thm}\label{t2}
Let $n \ge 2-\ve$ and let $\pi$ be an irreducible $\mu_{\ve}$-generic cuspidal representation of $\H_n^{\ve}(\A)$. Let $S$ be a finite set of places of $F$ containing all archimedean places, such that for $v \notin S$, both $\pi_v$ and $\psi_v$ are unramified. Then the following conditions are equivalent:
\begin{enumerate}
    \item[$(i)$] \hspace{1mm}
    $\begin{cases} 
    L(s,\pi) \text{ has a pole at } s=1 & \text{if } \ve=0, \\ 
    L(s,\pi) \text{ is holomorphic and nonzero at } s=1/2 & \text{if } \ve=1. 
    \end{cases}$
    \item[$(ii)$] \hspace{1mm}
    $\begin{cases} 
    L^S(s,\pi) \text{ has a pole at } s=1 & \text{if } \ve=0, \\ 
    L^S(s,\pi) \text{ is holomorphic and nonzero at } s=1/2 & \text{if } \ve=1. 
    \end{cases}$
    \item[$(iii)$] $\mc{B}_{n-1+\ve,\psi}^{\ve}$ is non-trivial on $\pi$. 
    \item[$(iv)$] $\pi$ has a nonzero and $(\psi,\lambda_{\ve})$-generic global theta lift to $\wt{\J}_{n-1+\ve}(\A)$.
    \item[$(v)$] $\pi$ has a nonzero global theta lift to $\wt{\J}_{n-1+\ve}(\A)$.
\end{enumerate}
\end{thm}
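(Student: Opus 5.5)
I will prove Theorem~\ref{t2} along the cycle of implications
\[
(iii)\Rightarrow(iv)\Rightarrow(v)\Rightarrow(i)\Rightarrow(ii)\Rightarrow(iii).
\]
The equivalence $(iii)\Leftrightarrow(iv)$ is Theorem~\ref{a1} with $k=n-1+\ve$, stated for $\H_n^{\ve}$. The implication $(iv)\Rightarrow(v)$ is trivial. For $(v)\Rightarrow(i)$, I note that $\pi$ is $\mu_{\ve}$-generic, hence locally generic at every place. Proposition~\ref{c1} with $t=n+\ve$ then rules out any nonzero lift to $\wt{\J}_k$ with $k<n-1+\ve$. So $n-1+\ve$ is the first occurrence, and the lift there is cuspidal by Rallis' tower property. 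I then apply the Rallis inner product formula in the first term range (Yamana~\cite{Yam14}). It expresses $\langle\theta(\phi,f),\theta(\phi,f)\rangle$ as a residue (for $\ve=0$, at $s=1$) or a value (for $\ve=1$, at $s=1/2$) of $L(s,\pi)$ times normalized local doubling zeta integrals. Yamana's theory lets me choose data making those local factors nonzero, so the nonvanishing of the lift forces a pole of $L(s,\pi)$ at $s=1$ (resp. holomorphy and nonvanishing at $s=1/2$).

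For $(i)\Rightarrow(ii)$, I compare $L(s,\pi)=L^S(s,\pi)\prod_{v\in S}L(s,\pi_v)$. It suffices that each $L(s,\pi_v)$, $v\in S$, is holomorphic and nonzero at $s=1$ (resp. $s=1/2$). Local $L$-factors never vanish. For holomorphy, each $\pi_v$ is generic and unitary, so by Remark~\ref{Ram} it is almost tempered, with exponents $<1/2$. Since the standard module is irreducible, the $L$-factor is the product of $L$-factors of $\tau_i|\cdot|^{\pm s_i}$ and of the tempered $\pi_0$, via compatibility of Yamana's factor with the LLC. Such factors are holomorphic for $\Re s>1/2$, which handles $s=1$. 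The point $s=1/2$ when $\ve=1$ needs more care: the $\tau_i|\cdot|^{-s_i}$ terms give poles only at $\Re s=s_i<1/2$, and the tempered part is holomorphic for $\Re s>0$, so $s=1/2$ is fine as well.

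The implication $(ii)\Rightarrow(iii)$ is the main step and the main obstacle. Here I would construct a Rankin--Selberg integral for $\H_n^{\ve}\times\GL_1$ adapted to type $(d,c)$ (extending \cite{Kap15} for $\ve=0$, and the Ginzburg / Gelbart--Piatetski-Shapiro integral for $\ve=1$). The integral pairs $\varphi\in\pi$ with an Eisenstein series $E(g,f_s)$ on the subgroup $\H_{n,n-1+\ve}^{\ve}$ or on a small $\SO$ containing it, against the $\mu_{n-1+\ve,\ve}$ Fourier coefficient along $\U_{n,n-2+\ve}^{\ve}$. Unfolding gives the Whittaker-model zeta integral $\int W_\varphi(\cdots)f_s$, which factors into an Euler product, with unramified factor $L^S(s,\pi)/(\text{normalizing }\zeta)$. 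Choosing local data at $v\in S$ so that the local integrals are nonzero constants, the pole of $L^S$ at $s=1$ (or its nonvanishing at $1/2$, balanced against the pole of the Eisenstein series) lifts to the global integral. Taking the residue, for $\ve=0$ the residue of $E$ is constant, so the residue of the global integral is exactly $\mc{B}_{n-1+\ve,\psi}^{\ve}(\varphi)$, which is therefore nonzero. For $\ve=1$ the roles are swapped: the Eisenstein series on $\SL_2$ has a constant residue at a point whose $L$-value is $L^S(1/2,\pi)$. The delicate parts are the unfolding with a general type $(d,c)$, which requires the correct choice of the embedded torus so that $\mu_0$ matches, and the archimedean nonvanishing of local integrals. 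I expect the unfolding identity to be the heaviest computation.

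Theorem~\ref{t1} follows the same chain, with the $\mathrm{sgn}_{\TT}$ twists, and it also reduces to Theorem~\ref{t2} by restricting $\wt{\pi}$ to $\H_n^{\ve}$. The $L$-functions agree by definition, and averaging over $\mu_2$ with $\mathrm{sgn}_{\TT}$ recovers $\mc{B}$ on constituents.
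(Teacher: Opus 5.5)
There is a genuine gap in your main step $(ii)\Rightarrow(iii)$ for $\ve=1$. For $\ve=0$ your residue argument over $[\SO_3]$ agrees with the paper, and so do the remaining implications, but your $\ve=1$ mechanism would fail.

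When $\ve=1$, the group $\H^{1}_{n,n}=\SO(Fe_n\oplus Fe_n^*)$ is the split torus $\T^1\simeq\GL_1$. It is exactly the stabilizer of $\mu_{n,1}$ (which involves $(ue,e_{n-1}^*)$) inside the $\SO_3$ acting on $Fe_n\oplus Fe\oplus Fe_n^*$. So the coefficient $\varphi^{\psi}$ is left invariant only under $\T^1(F)$, and an integral of $\varphi^{\psi}$ against an Eisenstein series on $\SL_2$ (or $\SO_3$) is not well defined. Even formally, its residue would be a period over $[\SL_2]$, not $\mc{B}^{1}_{n,\psi}$. Moreover, condition (ii) for $\ve=1$ is a nonvanishing statement, so there is no pole for a residue to capture.

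The actual mechanism is simpler. Since $\H^1=\B^1=\T^1$, the ``Eisenstein series'' is just the character $f_s(t(a))=|a|^{s-1/2}$. The global integral is then $\I(\varphi,f_s)=\int_{F^\times\bs\A^\times}\varphi^{\psi}(t(a))|a|^{s-1/2}\,d^\times a$, with no Eisenstein series involved. By Propositions~\ref{p1} and~\ref{p2} it equals $L^{S_1}(s,\pi)\prod_{v\in S_1}Z_v$, and at $s=1/2$ it is \emph{literally} $\mc{B}^{1}_{n,\psi}(\varphi)$. Nonvanishing follows from $L^{S_1}(1/2,\pi)\neq0$ together with Proposition~\ref{p3} at $s_0=1/2$. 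The $\SL_2$ Eisenstein series with constant residue that you describe belongs to the genuine Fourier--Jacobi case of Theorem~\ref{prop2}, where $L_{\psi}(s,\sigma)$ has a pole at $s=3/2$.

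A smaller omission is in $(v)\Rightarrow(i)$. Yamana's pole criterion \cite[Theorem~2]{Yam14} is formulated for the full isometry group $\G_n^{\ve}$, so it does not apply verbatim to a theta lift from $\H_n^{\ve}$. This is why the paper proves Theorem~\ref{t1} first and then deduces Theorem~\ref{t2}:

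\begin{enumerate}
\item it chooses $\wt\pi$ with $\pi\subset\Res(\wt\pi)\cap\mf{J}(\wt\pi\otimes\mathrm{sgn}_{\TT})$, using \cite[Proposition~2.4]{HKK25};
\item it shows that $\Theta(\pi)\neq0$ forces $\Theta(\wt\pi\otimes\mathrm{sgn}_{\TT'})\neq0$ for some $\TT'\in\mf{T}$, by decomposing $\wt\varphi$ and $\phi$ into $\e$-eigenvectors at finitely many places.
\end{enumerate}

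Your ordering (Theorem~\ref{t2} first, Theorem~\ref{t1} by restriction) skips this transfer. It is routine, but it must be done.

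Also, what this implication needs from the local doubling integrals is holomorphy of $Z_v/L(s,\pi_v)$, which is built into Yamana's gcd definition. Being able to choose data that make them nonzero is what the converse direction uses, not this one.
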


\begin{rem}\label{str}
When $\ve=0$ and $\H_n^{0}$ is split of type $(1,1)$, the equivalence of conditions $(ii)$, $(iii)$, and $(iv)$ in Theorem \ref{t2} was established in \cite[Theorem~3.4]{GRS97}. For strongly generic representations $\pi$, the equivalence of conditions $(ii)$--$(v)$ is discussed following \cite[Proposition~3.5]{GRS97} for the $\ve=0$ case, and in \cite[Main Theorem]{Fu95} for the $\ve=1$ case. Consequently, Theorem \ref{t2} extends these results to the broader setting of \textit{generic} representations of \textit{quasi-split} $\H_n^{\ve}$ of arbitrary type $(d,c) \in (F^{\times}/ F^{\times 2})^2$.
\end{rem}

\begin{rem}\label{kap}
To prove Theorem \ref{t1}, we utilize the Rankin--Selberg theory for $\SO_{2n} \times \GL_1$ and $\SO_{2n+1} \times \GL_1$. For the case $\ve = 1$ (the odd orthogonal case), most of the required results are available in the literature. However, for $\ve = 0$ (the even orthogonal case), the foundational results established in \cite{Kap15} naturally focus on specific types of $\H_n^0$, namely:
$$
(d,c) = 
\begin{cases} 
(d, -1/8) & \text{if } d \in F^{\times 2}, \\ 
(d, 1/2d) & \text{if } d \notin F^{\times 2}. 
\end{cases}
$$
Moreover, these specific cases were addressed independently. Recently, Adrian, Henniart, Kaplan, and Oi \cite[Section~5.2]{AHKO25} demonstrated that, for split $\SO_{2n}$, the local gamma factor for $\SO_{2n} \times \GL_1$ originally defined in \cite{Kap15} can be extended to an arbitrary Whittaker datum—rather than being restricted to a specific one—while preserving the relevant properties of the gamma functions. Consequently, in the split setting, one might expect that the Rankin--Selberg integral methods established in \cite{Kap15} can be adapted to arbitrary Whittaker data in a relatively straightforward manner. However, when $\SO_{2n}$ is quasi-split but non-split, such an extension presents distinct structural challenges and does not follow in a straightforward manner from existing techniques.

Given our objective to encompass an arbitrary type $(d, c)$ for $\H_n^0$ (cf. Remark \ref{type}), we take Kaplan's robust framework as our starting point to generalize the Rankin--Selberg integral theory for $\SO_{2n} \times \GL_1$ to all types of $\SO_{2n}$ in a uniform manner. While we build upon the original methods in \cite{Kap15}, achieving this uniform generalization requires developing refined technical modifications tailored to the arbitrary type setting. In addition to providing self-contained expositions for several key steps that were only briefly outlined in \cite{Kap15}, we establish new propositions required for our broader context. Most notably, we provide a complete and rigorous proof for Proposition \ref{p3}, including a full treatment of the archimedean cases. This result is essential for our purposes, and its formal verification extends the literature significantly, even for the specific types of $\H_n^0$ previously studied. Although we omit certain standard technical details that follow \textit{mutatis mutandis} from \cite{Kap15}, our exposition ensures a rigorous, self-standing, and comprehensive foundation for the general setting.
\end{rem}

We first define the global zeta integral $\I$ which represents the partial $L$-function $L^S(s,\pi)$ for an irreducible cuspidal $\mu_{\ve}$-generic automorphic representation of $\H_n^{\ve}(\A)$. For convenience, we will henceforth represent the group $\G_n^{\ve}$ and its subgroups using the following ordered bases of $V_n^{\ve}$:
\[
\begin{cases} 
\{e_1, \dots, e_{n-1}, e, e', e_{n-1}^*, \dots, e_{1}^*\} & \text{if } \ve=0, \\ 
\{e_1, \dots, e_{n-1}, e_n, e, e_n^*, e_{n-1}^*, \dots, e_{1}^*\} & \text{if } \ve=1.
\end{cases}
\]

We define the subgroup
\[
\G^{\ve} \coloneqq \G_{n,n-1+\ve}^{\ve} = 
\begin{cases}
\G_{n,n-1}^{0} \simeq \O(3) & \text{if } \ve=0, \\ 
\G_{n,n}^{1} \simeq \O(2) & \text{if } \ve=1,
\end{cases}
\]
and specify its embedding into $\G_n^{\ve}$ as follows:
\begin{align*}
&\begin{pmatrix} a & b & c \\ d & e & f \\ g & h & k \end{pmatrix} \longmapsto 
\begin{pmatrix} 
\mathrm{Id}_{n-2} & & \\ 
& \begin{pmatrix} a & & b & c \\ & 1 & & \\ d & & e & f \\ g & & h & k \end{pmatrix} & \\ 
& & \mathrm{Id}_{n-2} 
\end{pmatrix} 
& \text{if } \ve=0, \\
&\begin{pmatrix} a & b \\ c & d \end{pmatrix} \longmapsto 
\begin{pmatrix} 
\mathrm{Id}_{n-1} & & \\ 
& \begin{pmatrix} a & & b \\ & 1 & \\ c & & d \end{pmatrix} & \\ 
& & \mathrm{Id}_{n-1} 
\end{pmatrix} 
& \text{if } \ve=1. 
\end{align*}Through these embeddings, we naturally regard $\G^{\ve}$ as a subgroup of $\G_n^{\ve}$. Accordingly, its special orthogonal subgroup $\H^{\ve}$ is naturally regarded as a subgroup of $\H_n^{\ve}$.

Let $\B^{\ve} = \T^{\ve}\N^{\ve}$ be the standard Borel subgroup of $\H^{\ve}$, where $\T^{\ve}$ is the split maximal torus and $\N^{\ve}$ is the unipotent radical. (Note that when $\ve=1$, we simply have $\H^{1} = \B^{1} = \T^{1}$ and $\N^{1}$ is the trivial.) Since $\T^{\ve} \simeq \mathbb{G}_m$, we write $t = t(a) = \mathrm{diag}(a, 1,  a^{-1}) \in \T^{0}(F)$ (resp. $t = t(a) = \mathrm{diag}(a,a^{-1}) \in \T^{1}(F)$) for $a \in F^{\times}$ in the case $\ve=0$ (resp. $\ve=1$).

Let $|\cdot|^{s}$ be the character of $\B^{\ve}$ defined by
\[
|\cdot|^{s} : 
\begin{cases}  
\begin{pmatrix} a & x & y \\ & 1 & x' \\ & & a^{-1} \end{pmatrix} \longmapsto |a|^{s} & \text{if } \ve=0, \\ 
\begin{pmatrix} a & \\ & a^{-1} \end{pmatrix} \longmapsto |a|^{s} & \text{if } \ve=1,
\end{cases}
\]
and let $I(s) = \Ind_{\B^{\ve}}^{\H^{\ve}}(|\cdot|^{s-1/2})$ denote the normalized induced representation of $\H^{\ve}$. For each place $v$ of $F$, let $\K^{\ve,v} \subset \K_{\ve,v}$ be the standard maximal compact subgroup of $\H^{\ve}(F_v)$, chosen such that $\K^{\ve,v}$ is special if $v$ is non-archimedean. We set $\K^{\ve} \coloneqq \prod_v \K^{\ve,v} \subset \H^{\ve}(\A)$.

The Eisenstein series attached to a smooth holomorphic section $f_s \in I(s)(\A) = \Ind_{\B^{\ve}(\A)}^{\H^{\ve}(\A)}(|\cdot|^{s-1/2})$ is defined by
\[
E(f_s, g) \coloneqq \sum_{\gamma \in \B^{\ve}(F) \bs \H^{\ve}(F)} f_s(\gamma g), \quad \text{for } g \in \H^{\ve}(\A).
\]

For a cusp form $\varphi \in \mc{A}_{cusp}(\H_n^{\ve})$, write
\[
\varphi^{\psi}(\varphi)(h) \coloneqq \int_{[\U_{n,n-1+\ve}^{\ve}]} \varphi(uh) \mu_{n-1+\ve,\ve}^{-1}(u) \, du, \quad h \in \H_n^{\ve}(\A).
\]

Let $\pi$ be an irreducible $\mu_{\ve}$-generic cuspidal automorphic representation of $\H_n^{\ve}(\A)$. The global zeta integral $\I(\cdot, f_s)$ that will provide the $L$-function of $\pi$ is defined as follows:  
\begin{equation}\label{bsi}
\I(\varphi, f_s) \coloneqq \int_{\H^{\ve}(F) \bs \H^{\ve}(\A)} \varphi^{\psi}(\varphi)(h)\cdot  E(f_s, h) \, dh, \quad \varphi \in \pi.
\end{equation}
The rapid decay of the cusp form $\varphi$ and the moderate growth of the Eisenstein series ensure that the integral $\I(\varphi, f_s)$
is absolutely convergent for all $s \in \CC$, except at the poles of $E(f_s, \cdot)$.

Define the Weyl element $w_{\ve} \in \mathrm{M}_{(2n+\ve) \times (2n+\ve)}(F)$ by
\[
w_{\ve} = \begin{pmatrix}
 & 1 & & & \\
\mathrm{Id}_{n-2+\ve} & & & & \\
 & & \mathrm{Id}_{2-\ve} & & \\
 & & & & \mathrm{Id}_{n-2+\ve} \\
 & & & 1 & 
\end{pmatrix},
\]
and for $x \in \mathbb{G}_a^{n-2+\ve}$, define
\[
\U_{\text{op}}^{\ve}(x) = \begin{pmatrix} 
1 & & & & \\ 
{}^t x & \mathrm{Id}_{n-2+\ve} & & & \\ 
& & \mathrm{Id}_{2-\ve} & & \\ 
& & & \mathrm{Id}_{n-2+\ve} & \\ 
& & & x' & 1 
\end{pmatrix} \in \H_n^{\ve}.
\]
Let $\U_{\text{op}}^{\ve} \coloneqq \{ \U_{\text{op}}^{\ve}(x) \mid x \in \mathbb{G}_a^{n-2+\ve} \}$.  For $W \in \mc{W}_{\psi}^{\ve}(\pi)$, where $\mc{W}_{\psi}^{\ve}(\pi)$ is the Whittaker model of $\pi$, we define the corresponding zeta integral $Z(W, f_s)$ by
\begin{equation}\label{zeta}
Z(W, f_s) \coloneqq \int_{\N^{\ve}(\A) \bs \H^{\ve}(\A)} \int_{\U_{\text{op}}^{\ve}(\A)} W(r w_{\ve} h) f_s(h) \, dr \, dh.
\end{equation}

In Propositions \ref{p1}--\ref{p3} below, we investigate several key properties of the zeta integral $Z(W, f_s)$. The first proposition relates the global integral $\I(\varphi, f_s)$ to the zeta integral $Z(W, f_s)$.
\begin{prop}[cf. {\cite[Prop.~3.3]{Kap15}}]\label{p1} 
Let $n \ge 2-\ve$ and let $\pi$ be an irreducible $\mu_{\ve}$-generic cuspidal representation of $\H_n^{\ve}(\A)$. For $\varphi \in \pi$, $s \in \CC$ with $\mathrm{Re}(s) \gg 0$, and a smooth holomorphic section $f_s \in I(s)(\A)$, we have the identity
\[
\I(\varphi, f_s) = Z(W_{\psi}^{\ve}(\varphi), f_s).
\]
\end{prop}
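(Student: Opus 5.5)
We prove the identity by the standard unfolding argument for Rankin--Selberg integrals, following \cite[Prop.~3.3]{Kap15}, with the generic character $\mu_{\ve}$ of arbitrary type $(d,c)$ in place of Kaplan's fixed one. For $\mathrm{Re}(s)\gg 0$ the Eisenstein series converges absolutely, so we first unfold $E(f_s,h)$:
\[
\I(\varphi,f_s)=\int_{\B^{\ve}(F)\bs \H^{\ve}(\A)} \varphi^{\psi}(\varphi)(h)\, f_s(h)\,dh
=\int_{\N^{\ve}(\A)\bs\H^{\ve}(\A)}\int_{[\N^{\ve}]}\varphi^{\psi}(\varphi)(nh)\,dn\ \sum_{a\in F^{\times}} f_s(t(a)h)\,dh,
\]
up to reorganizing the torus sum. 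When $\ve=1$ the group $\N^{1}$ is trivial and only the torus sum remains.

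Next we analyze the function $h\mapsto \int_{[\N^{\ve}]}\varphi^{\psi}(\varphi)(nh)\,dn$, which is a Fourier coefficient of $\varphi$ along $\U_{n,n-1+\ve}^{\ve}\N^{\ve}$ with a character that is degenerate in one direction. We Fourier-expand $\varphi$ along the abelian quotient given by the coordinates of the root subgroup linking $e_{n-2+\ve}$ (or the first row) to the $\H^{\ve}$-block. Then we use the conjugation by $w_{\ve}$ and the columns $\U^{\ve}_{\mathrm{op}}$ to exchange roots. This is Ginzburg's root exchange lemma, or equivalently repeated expansion along $\mathbb{G}_a$ columns.

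The expected outcome is
\[
\int_{[\N^{\ve}]}\varphi^{\psi}(\varphi)(nh)\,dn=\sum_{a\in F^\times}\ \sum_{\gamma}\int_{\U^{\ve}_{\mathrm{op}}(\A)} W_{\psi}^{\ve}(\varphi)(r\,w_{\ve}\,\gamma\,t(a) h)\,dr,
\]
where the nonconstant terms are exactly the $\mu_{\ve}$-Whittaker coefficients. The $F^\times$-orbit of characters corresponds to the torus $\T^{\ve}(F)$. Cuspidality of $\varphi$ kills the constant terms, since these produce integrals over unipotent radicals of proper parabolics. The sum over $a$ then combines with the torus sum from the unfolded Eisenstein series to collapse the integration over $F^\times\bs\A^\times$ into an integration over $\A^\times$. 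This yields precisely $\int_{\N^{\ve}(\A)\bs\H^{\ve}(\A)}\int_{\U^{\ve}_{\mathrm{op}}(\A)}W(rw_{\ve}h)f_s(h)\,dr\,dh=Z(W_\psi^{\ve}(\varphi),f_s)$.

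The main obstacle is the exchange step in the non-split case $\ve=0$, $d\notin F^{\times2}$. There the middle block spanned by $e,e'$ is anisotropic, so the character $\mu_{n-1+\ve,\ve}$ pairs $e_{n-2}$ with $e$ rather than with a split vector. We must check that the relevant $\H^{\ve}\simeq\SO(3)$-stabilizer acts compatibly and that only the orbit realizing $\mu_{\ve}$ of type $(d,c)$ survives, using Remark~\ref{type}. We will verify this by writing the root subgroups explicitly in the basis $\{e_1,\dots,e_{n-1},e,e',e_{n-1}^*,\dots,e_1^*\}$ and checking the commutator conditions of the exchange lemma.

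Convergence justifies all interchanges. For $\mathrm{Re}(s)\gg0$ the integral $Z$ converges absolutely by the gauge estimates for Whittaker functions of cusp forms. The inner $\U^{\ve}_{\mathrm{op}}(\A)$-integral converges absolutely because the exchanged integral is compactly supported modulo the relevant unipotent group.
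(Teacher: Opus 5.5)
\textbf{Gap.} Your outer framework is the paper's: unfold $E(f_s,\cdot)$, expand the inner $[\N^{\ve}]$-integral into Whittaker coefficients, and let the resulting rational sums unfold the automorphic quotients. But the step that carries the content of the proposition is only announced, and the formula you announce for it is not correct as written. What has to be proved (for $\ve=0$) is
\[
\int_{[\N^{0}]}\varphi^{\psi}(\varphi)(nh)\,dn=\sum_{t\in \T^{0}(F)}\int_{\U^{0}_{\text{op}}(\A)}W_{\psi}^{0}(\varphi)(r\,w_{0}\,t\,h)\,dr ,
\]
with no further sum over $\gamma$. Your $\sum_{\gamma}$ is never identified, and any nontrivial such sum would destroy the identity with $Z$.

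Your first display is also not literally an equality. Its integrand is left $\T^{\ve}(F)$-invariant, so its integral over $\N^{\ve}(\A)\bs\H^{\ve}(\A)$ diverges. The unfolded integral lives on $\T^{\ve}(F)\N^{\ve}(\A)\bs\H^{\ve}(\A)$. The $\T^{\ve}(F)$-sum arises only from the expansion, and it is absorbed using $f_s(th)=f_s(h)$ for $t\in\T^{\ve}(F)$ (product formula).

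\textbf{The missing mechanism.} In the paper it goes as follows.
\begin{enumerate}
\item Write the inner integral as $\int_{[\U']}\varphi(uh)\mu_{\U'}^{-1}(u)\,du$. Here $\U'$ is the maximal unipotent subgroup with the root subgroup $e\mapsto e+te_{n-1}$ removed; this is the only root not already integrated over.
\item Conjugate by $w_0$ and take the constant term along $\U_0$, which consists of the $e'$-column together with the corner $\Hom(X_{n-1}^*,X_{n-1})$-block. This gives a function $\wt{\varphi}_h$ on the mirabolic $\P_n$ of $\GL_n$, with $e$ as the last basis vector.
\item Since $\wt{\varphi}_h$ is cuspidal, the mirabolic Fourier expansion gives
\[
\wt{\varphi}_h(\mathrm{Id}_n)=\sum_{\gamma\in Z_{n-1}(F)\bs\GL_{n-1}(F)}W_{\psi}^{0}(\varphi)(\mathrm{diag}(\gamma,\mathrm{Id}_2,\gamma^{*})h).
\]
Cuspidality is used only here.
\item The terms are then selected by orthogonality of characters in the remaining $[\mathbb{G}_a^{n-2}]$- and $[Z_{n-2}]$-integrals. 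This forces $\gamma=\begin{pmatrix}c&0\\ a&\mathrm{Id}_{n-2}\end{pmatrix}$.
\item The sum over $a\in F^{n-2}$ unfolds the $y$-integral to $\U^{0}_{\text{op}}(\A)$, and $c\in F^{\times}$ is exactly $\T^{0}(F)$.
\end{enumerate}
So it is not the case that, once constant terms are discarded, the survivors are ``exactly the Whittaker coefficients.'' Most Whittaker terms are killed by step 4, and that step is what dictates the shape of $Z$.

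This route also shows that your anticipated obstacle does not arise. The anisotropic plane enters only through the $e'$-column, which lies in $\U_0$ and carries the trivial character. Both $\mu_{n-1,0}$ and $\mu_0$ see only the $e$-column. Hence no orbit analysis on $\mathrm{span}(e,e')$ is needed, and Remark~\ref{type} plays no role; it concerns changing the space, not selecting orbits within a single expansion. A root-exchange argument could presumably be made to work. As proposed, however, its commutator conditions and the vanishing of the degenerate terms are deferred rather than verified.
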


For the case $\ve=1$, this result has been established in \cite{No75, Gin90} (see also \cite[Section~5]{Fu95}). Consequently, we focus on proving the case $\ve=0$.

\begin{proof} 
Expanding the Eisenstein series, we obtain
\begin{align*}
\I(\varphi, f_s) &= \int_{[\H^{0}]} \mc{B}_{n-1,\psi}^{0}(\varphi)(h) \sum_{\gamma \in \B^{0}(F)\bs \H^{0}(F)} f_s(\gamma h) \, dh \\ 
&= \int_{[\H^{0}]} \sum_{\gamma \in \B^{0}(F)\bs \H^{0}(F)} \mc{B}_{n-1,\psi}^{0}(\varphi)(h) f_s(\gamma h) \, dh \\ 
&= \int_{[\H^{0}]} \sum_{\gamma \in \B^{0}(F)\bs \H^{0}(F)} \mc{B}_{n-1,\psi}^{0}(\varphi)(\gamma h) f_s(\gamma h) \, dh.
\end{align*}
Here, the third equality follows from the fact that $\varphi$ is left $\G_n^{0}(F)$-invariant and $\mu_{n-1,0}(\gamma u \gamma^{-1}) = \mu_{n-1,0}(u)$ for all $\gamma \in \H^{0}(F)$ and $u \in \U_{n-1}^0(\A)$. Collapsing the integral over the quotient, we get
\begin{align}\label{in}
\nonumber \I(\varphi, f_s) &= \int_{\B^{0}(F)\bs \H^{0}(\A)} \mc{B}_{n-1,\psi}^{0}(\varphi)(h) f_s(h) \, dh \\
\nonumber &= \int_{\T^0(F)\N^0(\A)\bs \H^0(\A)} \int_{[\N^0]} \mc{B}_{n-1,\psi}^{0}(\varphi)(nh) f_s(nh) \, dn \, dh \\
&= \int_{\T^0(F)\N^0(\A)\bs \H^0(\A)} \left( \int_{[\N^0]} \mc{B}_{n-1,\psi}^{0}(\varphi)(nh) \, dn \right) f_s(h) \, dh.
\end{align}

We now analyze the inner integral. Recall that $Z_k$ denotes the standard maximal unipotent subgroup of $\GL_k$. Define the subgroup
\[
\U' \coloneqq \left\{ u = \begin{pmatrix} z & x & a \\ & \mathrm{Id}_2 & x' \\ & & z^* \end{pmatrix} \in \H_n^0 \;\middle|\; z \in Z_{n-1}, \ x \in \mathrm{M}_{(n-1) \times 2}, \ x_{n-1,1}=0 \right\},
\]
and assign it the character $\mu_{\U'}$ defined by
\[
\mu_{\U'}(u) \coloneqq \psi\left( \sum_{i=1}^{n-3} z_{i,i+1} + x_{n-2,1} \right).
\]
We also set
\[
\U_0 \coloneqq \left\{ u = \begin{pmatrix} \mathrm{Id}_{n-1} & & {}^t b & a \\ & 1 & & \\ & & 1 & b^* \\ & & & \mathrm{Id}_{n-1} \end{pmatrix} \in \H_n^{0} \;\middle|\; a \in \mathrm{M}_{(n-1) \times (n-1)}, \ b \in \mathbb{G}_a^{n-1} \right\}.
\]
For $h \in \H_n^0(\A)$, let $I'(\varphi, h)$ denote the inner integral from \eqref{in}:
\[
I'(\varphi, h) \coloneqq \int_{[\N^0]} \mc{B}_{n-1,\psi}^{0}(\varphi)(nh) \, dn.
\]
By decomposing $\N^0$ according to $\U'$, we can write
\begingroup
\setlength{\arraycolsep}{3pt} 
\begin{align*}
I'(\varphi, h) &= \int_{[\U']} \varphi(uh) \mu_{\U'}(u)^{-1} \, du \\
&= \int_{[Z_{n-2}]} \int_{[\mathbb{G}_a^{n-2}]} \int_{[\mathbb{G}_a^{n-2}]} \int_{[\U_0]} 
\varphi\Bigg( u 
\begin{pmatrix}
z & {}^ty & {}^tx & * & * & * \\
 & 1 & 0 & * & * & * \\
 & & 1 & 0 & 0 & x' \\
 & & & 1 & * & * \\
 & & & & 1 & y' \\
 & & & & & z^*
\end{pmatrix} h \Bigg) \\
&\qquad \times \psi\left(\sum_{i=1}^{n-3} z_{i,i+1} + x_{n-2}\right)^{-1} \, du \, dx \, dy \, dz.
\end{align*}

\begin{align*}
I'(\varphi, h) &= \int_{[Z_{n-2}]} \int_{[\mathbb{G}_a^{n-2}]} \int_{[\mathbb{G}_a^{n-2}]} \int_{[\U_0]} 
\varphi\Bigg( w_0 u w_0^{-1} \cdot w_0 
\begin{pmatrix}
z & {}^ty & {}^tx & * & * & * \\
 & 1 & 0 & * & * & * \\
 & & 1 & 0 & 0 & x' \\
 & & & 1 & * & * \\
 & & & & 1 & y' \\
 & & & & & z^*
\end{pmatrix} w_0^{-1} \cdot w_0 h \Bigg) \\
&\quad \times \psi\left(\sum_{i=1}^{n-3} z_{i,i+1} + x_{n-2}\right)^{-1} \, du \, dx \, dy \, dz \\
&= \int_{[Z_{n-2}]} \int_{[\mathbb{G}_a^{n-2}]} \int_{[\mathbb{G}_a^{n-2}]} \int_{[\U_0]} 
\varphi\Bigg( u 
\begin{pmatrix}
1 & 0 & 0 & * & * & * \\
{}^ty & z & {}^tx & * & * & * \\
 & & 1 & 0 & x' & 0 \\
 & & & 1 & * & * \\
 & & & & z^* & 0 \\
 & & & & y' & 1
\end{pmatrix} w_0 h \Bigg) \\
&\quad \times \psi\left(\sum_{i=1}^{n-3} z_{i,i+1} + x_{n-2}\right)^{-1} \, du \, dx \, dy \, dz,
\end{align*}
\endgroup
where the last equality follows by applying the change of variable $u \mapsto w_0^{-1} u w_0$.

Let $\P_n$ be the mirabolic subgroup of $\GL_n$, given by
\[
\P_n = \left\{ \begin{pmatrix} A & B \\ 0 & 1 \end{pmatrix} \in \GL_n \;\middle|\; A \in \GL_{n-1}, \ B \in  \mathbb{G}_a^{n-1} \right\}.
\]
We embed $\P_n$ into $\H_n^{0}$ via the map
\[
\begin{pmatrix} A & B \\ 0 & 1 \end{pmatrix} \longmapsto 
\begin{pmatrix} 
A & B & & \\ 
& 1 & & B' \\ 
& & 1 & \\ 
& & & A^* \end{pmatrix} \in \H_n^{0}.
\]
For any $h \in \H_n^0(\A)$, define the function $\wt{\varphi}_h$ on $\P_n(\A)$ by
\[
\wt{\varphi}_h(p) \coloneqq \int_{[\U_0]} \varphi(u p h) \, du.
\]
Because $\varphi$ is cuspidal, $\wt{\varphi}_h$ inherits this property and defines a cuspidal function on $\P_n(F) \bs \P_n(\A)$. Observe that
\[
\int_{[Z_n]} \wt{\varphi}_h(z) \mu_{n-1,0}(z)^{-1} \, dz = W_{\psi}^{0}(\varphi)(h).
\]
Applying the Fourier expansion of $\wt{\varphi}_h$ evaluated at the identity matrix $\mathrm{Id}_n$ (cf. \cite[(1.0.1)]{Ge-P87}), we have
\begin{equation}\label{Fo}
\wt{\varphi}_h(\mathrm{Id}_n) = \int_{[\U_0]} \varphi(uh) \, du = \sum_{\gamma \in Z_{n-1}(F) \bs \GL_{n-1}(F)} W_{\psi}^{0}(\varphi)\left( \begin{pmatrix} \gamma & & \\ & \mathrm{Id}_2 & \\ & & \gamma^* \end{pmatrix} h \right).
\end{equation}

Substituting \eqref{Fo} back into $I'(\varphi, h)$, we obtain
\begingroup
\setlength{\arraycolsep}{2.5pt} 
\begin{align*}
&I'(\varphi, h)\\ &= \sum_{\gamma \in Z_{n-1}(F) \bs \GL_{n-1}(F)}  \int_{[\mathbb{G}_a^{n-2}]} \int_{[Z_{n-2}]} \int_{[\mathbb{G}_a^{n-2}]} W_{\psi}^{0}(\varphi) \Bigg( \begin{pmatrix} \gamma & & \\ & \mathrm{Id}_2 & \\ & & \gamma^* \end{pmatrix} \\
&\quad \times \begin{pmatrix} 1 & 0 & 0 & * & * & * \\ {}^ty & z & {}^tx & * & * & * \\ & & 1 & 0 & x' & 0 \\ & & & 1 & * & * \\ & & & & z^* & 0 \\ & & & & y' & 1 \end{pmatrix} w_0 h \Bigg)  \psi\left(\sum_{i=1}^{n-3} z_{i,i+1} + x_{n-2}\right)^{-1} \, dx \, dz \, dy.
\end{align*}
\endgroup

Note that the quotient spaces $[\mathbb{G}_a^{n-2}]$ and $[Z_{n-2}]$ are compact, and the integral of a character over a compact group vanishes unless the character is trivial. To ensure the integral does not vanish over $x$, the last row of $\gamma$ must be $(0, \dots, 0, 1)$. Furthermore, since the summation is taken modulo $Z_{n-1}(F)$, we may assume without loss of generality that the last column of $\gamma$ is ${}^t(0, \dots, 0, 1)$. Proceeding inductively, we find that only matrices of the form
\[
\begin{pmatrix} c & 0 \\ a & \mathrm{Id}_{n-2} \end{pmatrix} \in Z_{n-1}(F) \bs \GL_{n-1}(F), \quad \text{for } c \in F^{\times}, \ a \in F^{n-2},
\]
contribute non-trivial summands to $I'(\varphi, h)$. 

Therefore, by unfolding the sum, we can rewrite $I'(\varphi, h)$ as
\begin{align*}
&I'(\varphi, h) \\ &= \sum_{c \in F^{\times}} \sum_{a \in F^{n-2}} \int_{[\mathbb{G}_a^{n-2}]} W_{\psi}^{0}(\varphi)\left( \begin{pmatrix} c & & \\ & \mathrm{Id}_{2n-2} & \\ & & c^{-1} \end{pmatrix} \U_{\text{op}}^{0}(a) \U_{\text{op}}^{0}(y) w_0 h \right) \, dy \\
&= \sum_{c \in F^{\times}} \int_{\mathbb{G}_a^{n-2}(\A)} W_{\psi}^{0}(\varphi)\left( \begin{pmatrix} c & & \\ & \mathrm{Id}_{2n-2} & \\ & & c^{-1} \end{pmatrix} \U_{\text{op}}^{0}(y) w_0 h \right) \, dy \\
&= \sum_{c \in F^{\times}} \int_{\mathbb{G}_a^{n-2}(\A)} W_{\psi}^{0}(\varphi)\left( \U_{\text{op}}^{0}(y) \begin{pmatrix} c & & \\ & \mathrm{Id}_{2n-2} & \\ & & c^{-1} \end{pmatrix} w_0 h \right) \, dy \\
&= \sum_{c \in F^{\times}} \int_{\mathbb{G}_a^{n-2}(\A)} W_{\psi}^{0}(\varphi)\left( \U_{\text{op}}^{0}(y) w_0 \begin{pmatrix} \mathrm{Id}_{n-2} & & & & \\ & c & & & \\ & & \mathrm{Id}_2 & & \\ & & & c^{-1} & \\ & & & & \mathrm{Id}_{n-2} \end{pmatrix} h \right) \, dy \\
&= \sum_{t \in \T^{0}(F)} \int_{\mathbb{G}_a^{n-2}(\A)} W_{\psi}^{0}(\varphi)\big( \U_{\text{op}}^{0}(y) w_0 t h \big) \, dy.
\end{align*}

Substituting this back into \eqref{in}, we finally conclude that
\[
\I(\varphi, f_s) = \int_{\N^{0}(\A) \bs \H^{0}(\A)} \int_{\mathbb{G}_a^{n-2}(\A)} W_{\psi}^{0}(\varphi)\big( \U_{\text{op}}^{0}(y) w_0 h \big) f_s(h) \, dy \, dh = Z(W_{\psi}^0(\varphi), f_s),
\]
as desired.
\end{proof}
Motivated by the definition of the global zeta integral $Z(\cdot, f_s)$, for each place $v$ of $F$, we define the local zeta integral $Z_v(\cdot, f_{s,v})$. Here, $f_{s,v}$ is a local section in the induced representation space $I(s)(F_v) = \Ind_{\B^{\ve}(F_v)}^{\H^{\ve}(F_v)}(|\cdot|^{s-1/2})$, and the integral is defined by
\[
Z_v(W_v, f_{s,v}) \coloneqq \int_{\N^{\ve}(F_v)\bs \H^{\ve}(F_v)} \int_{\U_{\text{op}}^{\ve}(F_v)} W_{v}(r w_{\ve} h) f_{s,v}(h) \, dr \, dh, \quad W_v \in \mc{W}_{\psi_v}^{\ve}(\pi_v).
\]

The following proposition provides the evaluation of the local zeta integral $Z_v$ for unramified data. 

\begin{prop}[cf. {\cite[Section~3.2]{Kap15}}]\label{p2} 
Let $n \ge 2-\ve$. Let $F_v$ be a non-archimedean local field with a residue field of order $q$. Suppose that $\pi \in \Irr(\H_n^{\ve}(F_v))$ is an unramified, $\mu_{\ve,v}$-generic representation. Let $W^0 \in \mc{W}_{\psi_v}^{\ve}(\pi)$ and $f_{s}^0 \in I(s)(F_v)$ be the normalized spherical vectors such that $W^0(\1)=1$ and $f_{s}^{0}(\1)=1$. Furthermore, assume that $W^0$ and $f_s^0$ are invariant under the right action of $\K_{\ve,v}$ and $\K^{\ve,v}$, respectively. Then, for $\mathrm{Re}(s) \gg 0$, we have
\[
Z_v(W^0, f_{s}^0) = 
\begin{cases}
L(s, \pi) \cdot \zeta_{v}(2s)^{-1} & \text{if } \ve=0, \\ 
L(s, \pi) & \text{if } \ve=1,
\end{cases}
\]
where $\zeta_{v}(s) = (1-q^{-s})^{-1}$.
\end{prop}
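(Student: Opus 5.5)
We will compute $Z_v(W^0,f_s^0)$ for $\ve=0$ by following the standard unramified computation of \cite[Section~3.2]{Kap15}; the case $\ve=1$ is classical (\cite{No75, Gin90}) and goes the same way. The plan is to collapse the $\H^{\ve}$-integration to a torus integral using the Iwasawa decomposition, then expand the inner unipotent integral via Casselman--Shalika, and finally identify the resulting sum with $L(s,\pi)$ times the normalizing factor.

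\textbf{Step 1: reduce to the torus.} Write $h=t(a)k$ with $k\in\K^{\ve,v}$ in the Iwasawa decomposition $\H^{\ve}(F_v)=\B^{\ve}(F_v)\K^{\ve,v}$. We need $w_\ve\K^{\ve,v}w_\ve^{-1}\subset\K_{\ve,v}$, which is clear from the matrix forms. Then $f_s^0(t(a)k)=|a|^{s-1/2}\delta_{\B^\ve}^{1/2}(t(a))$, and $W^0$ is right $\K_{\ve,v}$-invariant, so the $k$-integral drops out. We obtain
\[
Z_v(W^0,f_s^0)=\int_{F_v^\times}\int_{\U_{\text{op}}^{\ve}(F_v)} W^0\big(r\,w_\ve t(a)w_\ve^{-1}\big)\,|a|^{s-1/2}\delta_{\B^{\ve}}^{-1/2}(t(a))\,dr\,d^\times a .
\]
Here $w_\ve t(a)w_\ve^{-1}$ is the element $\mathrm{diag}(a,\mathrm{Id},a^{-1})$ placed at the first coordinate.

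\textbf{Step 2: kill the $\U_{\text{op}}$-integral.} Conjugating $r=\U_{\text{op}}^\ve(x)$ past this torus element gives $r\mapsto \U^\ve_{\text{op}}(a^{-1}x)$, with Jacobian $|a|^{n-2+\ve}$. We then handle $\U_{\text{op}}^\ve(x)$ by an Iwasawa decomposition inside the embedded $\GL_{n-1+\ve}$. For $x\notin\mathcal{O}_v^{n-2+\ve}$, write $\U^\ve_{\text{op}}(x)=u\,t_x\,k$, where $u$ is upper unipotent, $t_x$ is diagonal with entries involving $\|x\|^{\pm1}$, and $k\in\K$. The character $\mu_{\ve,v}$ evaluated on $u$ then forces $W^0$ to vanish on this region, by the support property of unramified Whittaker functions ($W^0(t)\neq0$ only for dominant $t$). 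This leaves $x\in\mathcal{O}_v^{n-2+\ve}$, a region of volume $1$. This is exactly the argument of \cite[Lemma~3.? / Section~3.2]{Kap15}, and it must be checked that the type $(d,c)$ only affects the middle $\mathrm{Id}_{2-\ve}$ block, so it does not interfere.

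\textbf{Step 3: Casselman--Shalika.} After Step 2 we have
\[
Z_v=\sum_{m\ge0}W^0(t_m)\,q^{-m(s-1/2)}\cdot(\text{modulus factor}),\qquad t_m=\mathrm{diag}(\varpi^m,\mathrm{Id},\varpi^{-m}).
\]
The Casselman--Shalika formula (for quasi-split $\SO_{2n}$ with the Satake parameter $c'_{\pi_v}$) gives $\delta^{-1/2}(t_m)W^0(t_m)=\Tr\,\Sym^m$-type characters. Concretely, for the first fundamental coweight direction it is the character of the irreducible representation of highest weight $m\varpi_1$ of $\widehat{\H_n^\ve}$, namely the harmonic part of $\Sym^m(\mathrm{St})$, i.e.\ $\Sym^m\mathrm{St}\ominus\Sym^{m-2}\mathrm{St}$ for $\SO_{2n}(\CC)$ (resp.\ $\Sym^m$ directly for $\Sp_{2n}(\CC)$). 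Once the modulus factors cancel, summing the generating series gives
\[
\sum_m \chi_{m\varpi_1}X^m=\frac{1-X^2}{\det(1-c'X)}\quad(\ve=0),\qquad \frac{1}{\det(1-cX)}\quad(\ve=1),
\]
with $X=q^{-s}$. This yields $L(s,\pi)\zeta_v(2s)^{-1}$, resp.\ $L(s,\pi)$. The $(1-X^2)$ factor arises because $\mathrm{St}$ is orthogonal, and is absent in the symplectic dual case; for the non-split $\SO_{2n}$ the same identity holds with the twisted parameter $c'_{\pi_v}$ containing $\pm1$.

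\textbf{Main obstacle.} I expect the main obstacle to be Step 2 together with the exact bookkeeping of the modulus characters $\delta_{\B^\ve}$ and $\delta_{\B_n}$ and the Jacobian. These must cancel precisely so that the exponent of $q$ is $-ms$. I would first verify this in the smallest case $n=2$, $\ve=0$, where $\U_{\text{op}}$ is trivial, and then induct.
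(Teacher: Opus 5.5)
Your proposal is correct and follows essentially the paper's own argument: Iwasawa reduction to $t'(a)=w_\ve t(a)w_\ve^{-1}$, restriction of the $\U_{\text{op}}^{\ve}$-integral to $x\in\OO_v^{n-2+\ve}$ and of $a$ to $\OO_v$, then Casselman--Shalika and a generating-function identity. The only variations are minor and equivalent: you get the support restriction from dominance after an Iwasawa decomposition of $\U_{\text{op}}^{\ve}(x)$ in $\GL_{n-1+\ve}$, where the paper uses right $\K_{\ve,v}$-invariance under the unipotents $u(c)$ and $n(v)$; and you sum the series via $\Sym^m\ominus\Sym^{m-2}$ with the twisted parameter $c'_{\pi_v}$, where the paper cites \cite[(3.22)]{Kap15} with $\Sp_{2n-2}(\CC)$-characters in the non-split case.

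One slip needs fixing. Since $t'(a)^{-1}\U_{\text{op}}^{\ve}(x)\,t'(a)=\U_{\text{op}}^{\ve}(ax)$, the Jacobian is $|a|^{-(n-2+\ve)}$, not $|a|^{n-2+\ve}$. Only with this sign do the exponents combine to $|a|^{s}$. Your proposed test case $n=2$, $\ve=0$ has trivial $\U_{\text{op}}^{0}$, so it cannot detect this error.
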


\begin{proof}
For the case $\ve=1$, this result was established in \cite[Section~12]{Sou93}. In the case $\ve=0$, the identity was proved in \cite[Section~3.2]{Kap15} for specific types of $\H_n^0$; however, the arguments therein naturally extend to an arbitrary type $(d,c)$. The key observation is that the Casselman--Shalika formula for Whittaker functions remains applicable to quasi-split classical groups with an arbitrary Whittaker datum. In view of its importance, we provide the details of this extension below.

Since we are working in a local setting, we suppress the subscript $v$ from our notation. For an algebraic group $G$, let $\delta_{G}$ denote its modulus character. Using the Iwasawa decomposition, we obtain
\begin{align*}
Z(W^0, f_s^0) &= \int_{\N^0(F) \bs \H^0(F)} \int_{\U_{\text{op}}^0(F)} W^0(r w_0 h) f_s^0(h) \, dr \, dh \\
&= \int_{\T^0(F)} \int_{\U_{\text{op}}^0(F)} W^0(r w_0 t) f_s^0(t) \delta_{\B^0}^{-1}(t) \, dr \, dt.
\end{align*}
Since $\T^{0} \simeq \mathbb{G}_m$, for each $a \in F^\times$, let $t(a) = \mathrm{diag}(a, 1, a^{-1}) \in \T^0(F)$. We define its image under conjugation by $w_0$ as $t'(a) \coloneqq w_0 t(a) w_0^{-1}$, which corresponds to the element $\mathrm{diag}(a, 1, \dots, 1, a^{-1}) \in \H_n^0(F)$. Here, the central identity block is of size $(2n-2) \times (2n-2)$, consistent with the embedding of $\T^0$ into $\H_n^0$.

Since $w_0 \in \K_{0}$ and $t'(a)$ normalizes $\U_{\text{op}}^0$, the change of variables $r \mapsto t'(a) r t'(a)^{-1}$ yields $dr \mapsto |a|^{2-n} dr$. Thus,
\[
Z(W^0, f_s^0) = \int_{F^{\times}} \left( \int_{\U_{\text{op}}^0(F)} W^0(t'(a) r) \, dr \right) f_s^0(t(a)) \delta_{\B^0}^{-1}(t(a)) |a|^{2-n} \, dt.
\]

To analyze the inner integral, we define the following elements in $\G_n^0(F)$. For $a \in F^\times$ and $x = (x_1, \dots, x_{n-2}) \in F^{n-2}$, let $m(a,x)$ denote the matrix
\[
m(a,x) = \begin{pmatrix} 
a & & & & \\ 
{}^t x & \mathrm{Id}_{n-2} & & & \\ 
& & \mathrm{Id}_2 & & \\ 
& & & \mathrm{Id}_{n-2} & \\ 
& & & x' & a^{-1} 
\end{pmatrix},
\]
where $x'$ is uniquely determined by the symmetric bilinear form defining the orthogonal group. We also define a specific unipotent element $u(c) \in \N^0(F)$ for $c \in F$:
\[
u(c) = \begin{pmatrix} 
1 & & c & & & \\ 
& \mathrm{Id}_{n-2} & & & & \\ 
& & 1 & & & c \\ 
& & & 1 & & \\ 
& & & & \mathrm{Id}_{n-2} & \\ 
& & & & & 1 
\end{pmatrix}.
\]
By the right $\K_0$-invariance of $W^0$, for any $c \in \mc{O}$, we have $W^0(m(a,x)) = W^0(m(a,x)u(c))$. A direct matrix multiplication yields
\[
m(a,x) u(c) = \begin{pmatrix} 
1 & & ac & & & \\ 
& \mathrm{Id}_{n-2} & {}^t x c & & & \\ 
& & 1 & &  &ac \\ 
& & & 1 & & \\ 
& & & & \mathrm{Id}_{n-2} & \\ 
& & & & & 1 
\end{pmatrix} m(a,x).
\]
Because $W^0$ is a Whittaker function with respect to the character $\psi$, this gives
\[
W^0(m(a,x)u(c)) = \psi(x_{n-2}c) \cdot W^0(m(a,x)).
\]
Therefore, $W^0(m(a,x))$ must vanish unless $x_{n-2} \in \mc{O}$. When $x_{n-2} \in \mc{O}$, the right $\K_0$-invariance implies that 
\[
W^0(m(a,x)) = W^0(m(a,x'')),
\]
where $x'' = (x_1, \dots, x_{n-3}, 0)$. By repeating this argument inductively with appropriate unipotent elements in $\N^0(\mc{O})$, we deduce that $W^0(m(a,x)) = 0$ unless $x \in \mc{O}^{n-2}$, in which case
\[
W^0(m(a,x)) = W^0(m(a,0)).
\]

Next, to restrict the support of $a$, consider the unipotent element $n(v)$ for $v = (v_1, \dots, v_{n-2}) \in F^{n-2}$:
\[
n(v) = \begin{pmatrix} 
1 & v & & & \\ 
& \mathrm{Id}_{n-2} & & & \\ 
& & \mathrm{Id}_2 & & \\ 
& & & \mathrm{Id}_{n-2} & {}^t v' \\ 
& & & & 1 
\end{pmatrix}.
\]
For any $v \in \mc{O}^{n-2}$, the right $\K_0$-invariance yields \[W^0(m(a,0)) = W^0(m(a,0)n(v)).\] We observe that
\[
m(a,0) n(v) = \begin{pmatrix} 
1 & av & & & \\ 
& \mathrm{Id}_{n-2} & & & \\ 
& & \mathrm{Id}_2 & & \\ 
& & & \mathrm{Id}_{n-2} & {}^t v' a \\ 
& & & & 1 
\end{pmatrix} m(a,0).
\]
Applying the Whittaker transformation property, we obtain \[W^0(m(a,0)n(v)) = \psi(av_1) \cdot W^0(m(a,0)).\] Since this identity must hold for all $v_1 \in \mc{O}$, we conclude that $W^0(m(a,0)) = 0$ unless $a \in \mc{O} \cap F^\times$. Consequently, the integral simplifies to
\[
Z(W^0, f_s^0) = \int_{\mc{O} \cap F^\times} W^0(t'(a)) \cdot |t(a)|^{s-1/2} \cdot \delta_{\B^0}^{-1/2}(t(a)) \cdot |a|^{2-n} \, da.
\]

Let the dual group $H_n'$ and the parameter $\ve'$ be defined as follows:
\[
H_n' = 
\begin{cases} 
\SO_{2n}(\CC) & \text{if } \H_n^0 \text{ is split}, \\ 
\Sp_{2n-2}(\CC) & \text{if } \H_n^0 \text{ is non-split}, 
\end{cases} 
\quad 
\ve' = 
\begin{cases} 
0 & \text{if } \H_n^0 \text{ is split}, \\ 
1 & \text{if } \H_n^0 \text{ is non-split}. 
\end{cases}
\]
For a dominant weight $J = (j_1, \dots, j_{n-\ve'}) \in \mathbb{Z}^{n-\ve'}$ with $j_1 \ge \dots \ge j_{n-\ve'}$, let $\rho_J$ denote the irreducible representation of $H_n'$ with highest weight $J$. For $a \in \mc{O} \cap F^\times$, let $|a| = q^{-\eta(a)}$ and set $J(a) = (\eta(a), 0, \dots, 0)$. Applying the Casselman--Shalika formula \cite[Theorem~5.4]{CS80}, we obtain
\[
W^0(t'(a)) = \delta_{\B_{n-1}^0}^{1/2}(t'(a)) \mathrm{Tr}(\rho_{J(a)}(c_\pi)).
\]
Since $\delta_{\B_{n-1}^0}(t'(a)) \delta_{\B^0}^{-1}(t(a)) = |a|^{2n-3}$, the integral becomes
\[
\int_{\mc{O} \cap F^\times} \mathrm{Tr}(\rho_{J(a)}(c_\pi)) |a|^s \, dt = \sum_{k=0}^\infty \mathrm{Tr}(\rho_{(k,0,\dots,0)}(c_\pi)) q^{-ks}.
\]
According to \cite[(3.22)]{Kap15}, this sum evaluates exactly to $L(s, \pi) \zeta(2s)^{-1}$, which completes the proof.
\end{proof}

Proposition \ref{p2} implies that the local zeta integral $Z_v$ converges absolutely for $\mathrm{Re}(s) \gg 0$ and admits a meromorphic continuation to $\CC$ for unramified data. It is a fundamental fact that these analytic properties hold generally, including in the ramified case.

\begin{prop}[cf. {\cite{Kap15, Sou93, Sou95}}]\label{p4}
Let $\pi_v \in \Irr(\H_n^{\ve}(F_v))$ be an irreducible $\mu_{\ve,v}$-generic representation. For any Whittaker function $W_v \in \mc{W}_{\psi_v}^{\ve}(\pi_v)$ and any smooth section $f_{s,v} \in I(s)(F_v)$, the integral $Z_v(W_v, f_{s,v})$ is absolutely convergent for $\mathrm{Re}(s) \gg 0$ and admits a meromorphic continuation to $\CC$.
\end{prop}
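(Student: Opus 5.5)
The plan is to reduce $Z_v(W_v,f_{s,v})$ to a one-variable integral of Whittaker functions over a torus times a unipotent group. Absolute convergence will then follow from the standard gauge estimates for Whittaker functions, and meromorphic continuation from the asymptotic expansion of Whittaker functions along the torus. For $\ve=1$ the statement is in \cite{Sou93, Sou95}, since there $\H^1=\T^1$ and the integral is the Novodvorsky--Ginzburg integral. So the work is in the case $\ve=0$, where we follow the strategy of \cite{Kap15} for an arbitrary type $(d,c)$.

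\textbf{Step 1: reduce to a torus integral.} Using the Iwasawa decomposition $\H^0(F_v)=\B^0(F_v)\K^{0,v}$, we write
\[
Z_v(W_v,f_{s,v})=\int_{\K^{0,v}}\int_{F_v^\times}\int_{\U_{\text{op}}^0(F_v)} W_v(r\,t'(a)\,w_0 k)\,|a|^{s-1/2}\delta_{\B^0}^{-1/2}(t(a))\,|a|^{2-n}\, f_{s,v}(k)\,dr\,da\,dk,
\]
exactly as in the proof of Proposition~\ref{p2}. Since $f_{s,v}|_{\K^{0,v}}$ is smooth and independent of $s$ for a standard section, and $\K^{0,v}$ is compact, it suffices to treat the inner double integral with $W_v$ replaced by the right translate $\pi_v(w_0k)W_v$. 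These translates range over a bounded family, which is a finite-dimensional family in the non-archimedean case.

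\textbf{Step 2: bound the Whittaker functions.} We bound $W_v$ on the set $\{r\,t'(a)\}$. By the Dixmier--Malliavin lemma, or by the smoothness argument used in Proposition~\ref{p2}, the function $x\mapsto W_v(\U_{\text{op}}^0(x)t'(a))$ is supported in $|x|\ll \max(1,|a|^{-1})$ in the non-archimedean case, and decays rapidly in the archimedean case. This follows from conjugating suitable elements $u(c)\in\N^0$ past $m(a,x)$ and using the $\psi$-equivariance of $W_v$. Next, the gauge estimate $|W_v(t'(a))|\le C|a|^{-N}\xi(a)$ holds on the full torus, with $\xi$ rapidly decreasing as $|a|\to\infty$; in the non-archimedean case $\xi$ is compactly supported. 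Together these give absolute convergence for $\Re(s)\gg0$, uniformly in $k$.

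\textbf{Step 3: meromorphic continuation.} For $\mathcal O$-valued $a\to 0$ we use the asymptotic expansion of Whittaker functions (Casselman's finite-function expansion in terms of exponents of the Jacquet module of $\pi_v$ along the parabolic with Levi $\GL_1\times\H_{n-1}^0$). After integrating out $x$, this expresses
\[
\int_{\U_{\text{op}}^0} W_v(r\,t'(a))\,dr
\]
as a finite sum of terms $\chi(a)|a|^{\lambda}(\log|a|)^m\phi(a)$ with $\phi$ smooth and compactly supported. In the $p$-adic case, the Mellin transform against $|a|^s$ then yields a rational function in $q^{-s}$. In the archimedean case it yields meromorphic functions, via the standard Bernstein--Soudry arguments \cite{Sou95}.

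The main obstacle is the archimedean case of Step~2, where the inner $x$-integral no longer has compact support. I would handle it by following Soudry's treatment: express $W_v$ through a Dixmier--Malliavin decomposition against the unipotent elements $u(c)$, so that integration by parts in $c$ gives decay in $x_{n-2}$, and then argue inductively on the coordinates of $x$.
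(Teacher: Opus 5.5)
Your proposal is correct and follows the paper's route. The paper proves this purely by citation: Soudry for $\ve=1$, and for $\ve=0$ the convergence and continuation arguments of \cite[Sections~5.6 and 8]{Kap15}, asserted to extend \emph{mutatis mutandis} to arbitrary type $(d,c)$. Your Iwasawa reduction, $u(c)$-support and gauge bounds, and Casselman/Soudry asymptotic expansion are exactly the content of those cited arguments.

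Two slips are harmless and do not affect the conclusion. First, the Jacobian $|a|^{2-n}$ belongs with $W_v(t'(a)\,r\,w_0k)$, not with $W_v(r\,t'(a)\,w_0k)$; as written it is counted twice, which only shifts $s$. Second, in the archimedean case the coefficient functions in the asymptotic expansion are Schwartz rather than compactly supported.
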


\begin{proof}
For the case $\ve=1$, we refer the reader to \cite{Sou93, Sou95}. For the case $\ve=0$, the arguments presented in \cite[Section~5.6 and Section~8]{Kap15} naturally extend to an arbitrary type $(d,c)$. Consequently, we omit the detailed proof here, as it follows \textit{mutatis mutandis} from the cited works.
\end{proof}
By Proposition \ref{p1} and Proposition \ref{p4}, for a pure tensor $\varphi = \bigotimes_v \varphi_v \in \pi$ and a holomorphic decomposable section $f_s = \bigotimes_v f_{s,v} \in I(s)(\A)$, we obtain the Euler product factorization
\begin{equation}\label{dec}
\I(\varphi, f_s) = \prod_v Z_v(W_{\psi_v}^{\ve}(\varphi_v), f_{s,v}) \quad \text{for } \mathrm{Re}(s) \gg 0.
\end{equation}

We also require the non-vanishing property of the local zeta integrals at all places. A local section $f_{s,v} \in I(s)(F_v)$ is called \textit{standard} if it is $\K_v^{\ve}$-finite and its restriction to $\K_v^{\ve}$ is independent of $s$.
\begin{prop}[cf. {\cite[Lemma~1.6]{GRS97}}]\label{p3}
Let $v$ be a place of $F$. Suppose that $\pi_v \in \Irr(\H_n^{\ve}(F_v))$ is an irreducible $\mu_{\ve,v}$-generic representation. Then for any $s_0 \in \CC$, there exist a Whittaker function $W_v \in \mc{W}_{\psi_v}^{\ve}(\pi_v)$ and a standard holomorphic section $f_{s,v} \in I(s)(F_v)$ such that $Z_v(W_v, f_{s,v})$ is holomorphic and nonzero at $s=s_0$.
\end{prop}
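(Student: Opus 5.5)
The plan is the standard ``small support'' argument. We reduce to an integral over the torus, and then choose $W_v$ so that the integrand is essentially $\omega(a)|a|^{s}$ supported near $a=1$. Using the Iwasawa decomposition $\H^{\ve}(F_v)=\N^{\ve}\T^{\ve}\K^{\ve,v}$, as in the proof of Proposition~\ref{p2}, we write
\[
Z_v(W_v,f_{s,v})=\int_{\K^{\ve,v}}\int_{F_v^\times}\int_{\U_{\text{op}}^{\ve}(F_v)} W_v(r w_\ve t(a) k)\, f_{s,v}(k)\,|a|^{s-1/2}\delta_{\B^\ve}^{-1/2}(t(a))\,dr\,da\,dk.
\]
We then conjugate $t(a)$ past $r$, using $w_\ve t(a)w_\ve^{-1}=t'(a)$ and the change of measure $|a|^{2-n}$, so that the integrand becomes $W_v(t'(a) r w_\ve k)$ times a fixed power of $|a|$ times $|a|^{s}$.

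For non-archimedean $v$, we first take $f_{s,v}$ supported in $\B^{\ve}(F_v)\K_1$, where $\K_1\subset\K^{\ve,v}$ is a small open compact subgroup. We normalize $f_{s,v}$ to be $1$ on $\K_1$. This is a standard section, and the $k$-integral then reduces to a constant multiple of the $k=\1$ term once $W_v$ is right $w_\ve^{-1}\K_1 w_\ve$-invariant. Next we must choose $W_v$ with two properties: $r\mapsto W_v(t'(a)r w_\ve)$ is compactly supported (ideally essentially supported in a small neighborhood of $\1$ in $\U_{\text{op}}^{\ve}$), and $a\mapsto W_v(t'(a)w_\ve)$ is supported in a small neighborhood of $1\in F_v^\times$.

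To produce such a $W_v$, we use the theory of Bessel/Kirillov-type models. The subgroup generated by $t'(a)$ and $\U_{\text{op}}^{\ve}$, conjugated by $w_\ve$, sits inside a mirabolic-type subgroup $P$. Restriction of Whittaker functions to $P$ contains $\ind_{\N}^{P}(\mu_\ve)$, which for $\GL$ is Gelfand--Kazhdan/Bernstein--Zelevinsky, and for orthogonal groups is the analogue used by Soudry and Kaplan. Given this, we pick $W_v$ with $W_v(p w_\ve)=\phi(p)$ for a chosen compactly supported $\phi$ on $P$ modulo $\N$, concentrated near $\1$. The zeta integral then equals a positive constant, independent of $s$, times $\int \phi\,|a|^{s+c}$, which is a nonzero constant (or a polynomial in $q^{\pm s}$) and hence entire and nonvanishing at $s_0$. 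Equivalently, one can start from any $W$ and average it by $\pi_v(\phi')$ for $\phi'$ supported near the identity in the unipotent radical, truncating via the $\psi$-equivariance as in \cite[Lemma~1.6]{GRS97}.

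The archimedean case is the main obstacle, since compactly supported Whittaker functions are not available in the Casselman--Wallach category. Here we would follow the Jacquet--Shalika approach. Convolving $W_v$ against $\phi\in C_c^\infty(\U_{\text{op}}^{\ve}$-direction$)$ produces integrals of the form $\int \hat\phi\,W$, and by varying $\phi$ and the section we make the $(a,r)$-integral approximate $W_v(w_\ve)\ne0$ times a bump function in $a$. Two ingredients are needed for this: the Dixmier--Malliavin lemma, to realize such convolutions inside the smooth representation, and Soudry's/Kaplan's bounds on Whittaker functions, for absolute convergence. This shows the set of values of $Z_v$ at $s_0$ (meromorphically continued, by Proposition~\ref{p4}) is not identically zero. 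Holomorphy at $s_0$ then follows because for such data the integral is absolutely convergent for all $s$, by compact support in $a$, so the continuation is entire. The delicate point is exchanging limits with the continuation; we would handle it by proving uniform convergence on compact $s$-sets for the chosen family.
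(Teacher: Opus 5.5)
Your skeleton matches the paper's archimedean argument; for finite $v$ the paper only cites Soudry and Kaplan. The shared skeleton is: Iwasawa decomposition, a standard section whose restriction to $\K_v^{\ve}$ is an approximate identity at $\1$, then Dixmier--Malliavin smoothing of $W$ with Fourier truncation to remove the $\U_{\text{op}}^{\ve}$-integral and localize $a$ at $1$, ending at $W(\1)\neq 0$.

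\textbf{The smoothing step.} As written, it would fail. In the paper's normalization $W(g)=W'(gw_{\ve}^{-1})$, the inner integral is $\int W'(r\,t'(a))\,dr$. Since $t'(a)$ normalizes $\U_{\text{op}}^{\ve}$, convolving $W'$ along $\U_{\text{op}}^{\ve}$ merely translates $r$. It multiplies the integral by $\int\phi$ and produces no $\widehat{\phi}$.

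You must instead convolve along root subgroups of $\U_{n-1+\ve}^{\ve}$ whose conjugates by $r\,t'(a)$ stay in $\U_{n-1+\ve}^{\ve}$, and on which $\mu_{\ve}$ becomes a character linear in one coordinate of $r$ (or in $a$). For $\ve=0$ the paper uses $u(x)$, with $W_1(\U_{\text{op}}^0({}^ty)t'(a)u(x))=\psi(ay_{n-2}x)\,W_1(\U_{\text{op}}^0({}^ty)t'(a))$. This gives $\widehat{\phi}_1(ay_{n-2})$. The substitution $u=ay_{n-2}$ and $\widehat{\phi}_1\to\delta_0$ then remove one coordinate. One inducts, and finally $\widehat{e_2(y)}$ yields $\widehat{\phi}_2(a)$.

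\textbf{The non-archimedean paragraph.} The same issue appears there. The groups $\U_{n-1+\ve}^{\ve}$ and $\U_{\text{op}}^{\ve}\cdot\{t'(a)\}$ generate a group containing the whole embedded $\GL_{n-1+\ve}$, since $[E_{j1},E_{1k}]=E_{jk}$ for the matrix units. So there is no mirabolic-type subgroup, and the Gelfand--Kazhdan/Bernstein--Zelevinsky containment does not apply as stated. The bump-function $W$ you want does exist. It is produced by the iterated truncation you list as an alternative, taking the $\widehat{\phi}$'s to be characteristic functions of small lattices; $Z_v$ is then a nonzero constant.

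\textbf{Holomorphy at archimedean $v$.} Your argument there does not apply, because the data built this way are not compactly supported in $a$. The localization $\widehat{\phi}_2(a)$ occurs only in the innermost step, after the $\U_{\text{op}}$-variables have been set to $0$. At the top level, $a$ enters through $W_1(\U_{\text{op}}^0({}^ty)t'(a))\,\widehat{\phi}_1(ay_{n-2})$, and nothing in the construction truncates it.

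What is needed is absolute convergence, for the final fixed data, of $\int|W|\,|f_{s,v}|\,|a|^{\sigma+\beta}$ for $\sigma$ in a neighbourhood of $\mathrm{Re}(s_0)$. This could come, for example, from dominated convergence along the approximating family. The integral has the form $\int_{F_v^{\times}}G(a)|a|^{\sigma}\,d^{\times}a$ with $G\ge 0$, so by H\"older the set of $\sigma$ where it is finite is an interval. Together with convergence for $\mathrm{Re}(s)\gg 0$, this makes $Z_v$ holomorphic near $s_0$ and equal there to the continuation of Proposition~\ref{p4}.

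The paper asserts this absolute convergence by running its limits on absolute values. Your ``uniform convergence on compact $s$-sets'' has to deliver exactly this statement.
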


\begin{proof}
For non-archimedean places $v$, the result follows from \cite[Proposition~6.1]{Sou93} in the case $\ve=1$. For $\ve=0$, the result is obtained by extending \cite[Proposition~5.11]{Kap15} to an arbitrary type of $\H_n^0(F_v)$.

In contrast, for archimedean places $v$, the corresponding results do not appear to be available in the existing literature. Consequently, we provide a dedicated proof for the archimedean case here. Our underlying strategy adapts the arguments of \cite[Lemma~1.6]{GRS97}---which were originally established for the symplectic group $\Sp_{2n}$---to our quasi-split orthogonal group setting. As the arguments for both $\ve=0$ and $\ve=1$ are essentially identical, we restrict our presentation to the case $\ve=0$ for simplicity. 

For some $f_s \in I(s)(F_v)$ and $W \in \mc{W}_{\psi_v}^{0}(\pi_v)$, assume temporarily that $Z_v(W, f_s)$ is absolutely convergent at $s=s_0$. By the Iwasawa decomposition, we have 
\[
Z_v(W, f_{s_0}) = \int_{\K_v^{0}} \int_{F_v^{\times}} \int_{\U_{\text{op}}^{0}(F_v)} W(r w_0 t(a) k) f_{s_0}(t(a) k) |a|^{\alpha} \, dr \, da \, dk
\]
for some $\alpha \in \RR$. 

First, we choose a standard section $f_s \in I(s)(F_v)$. By the transformation property of the induced representation, $f_{s_0}(t(a) k)$ factors into a character of $a$ multiplied by $f_{s_0}(k)$. We can absorb this character into the existing power of $|a|$, replacing $\alpha$ with an appropriate $\beta \in \RR$. Therefore, such a section is completely determined by its restriction to $\K_v^{0} \cap \B^{0}(F_v) \bs \K_v^{0}$. By choosing the restriction $f_{s_0}|_{\K_v^{0}}$ to be a sequence of non-negative functions in $C^\infty(\K_v^0)$ forming an approximate identity that converges, in the sense of distributions, to the Dirac measure at the identity $\1 \in \K_v^{0}$, the integral over $\K_v^{0}$ converges to the value of the integrand evaluated at $k=\1$:
\[
\int_{F_v^{\times}} \int_{\U_{\text{op}}^0(F_v)} W(r w_0 t(a)) |a|^{s_0+\beta} \, dr \, da.
\]
Therefore, it suffices to find a Whittaker function $W \in \mc{W}_{\psi_v}^{0}(\pi_v)$ that makes this limiting double integral strictly nonzero. We may assume that $W(g) = W'(g w_0^{-1})$ for some $W' \in \mc{W}_{\psi_v}^{0}(\pi_v)$, so our goal is reduced to finding $W'$ such that
\[
\int_{F_v^{\times}} \int_{\U_{\text{op}}^0(F_v)} W'(r w_0 t(a) w_0^{-1}) |a|^{s_0+\beta} \, dr \, da \neq 0.
\]

For any $m \in \GL_{n-1}(F_v)$, we define its natural embedding into $\H_n^0(F_v)$ by $\widehat{m} \coloneqq \mathrm{diag}(m, \mathrm{Id}_2, m^*)$, where $m^* \in \GL_{n-1}(F_v)$ is uniquely determined by the symmetric bilinear form. For $1 \le i \le n-1$, let $e_i(x) \in \GL_{n-1}(F_v)$ denote the elementary matrix with $x$ in the $(1,i)$-entry and $1$ on the diagonal. We denote its embedding in $\H_n^0(F_v)$ simply by $\widehat{e_i(x)}$.

For each $x \in F_v$, let $u(x) \in \U_{n-1}^0(F_v)$ be the specific unipotent element defined as
\[
u(x) = \begin{pmatrix} 1 & & x & & & \\ & \mathrm{Id}_{n-2} & & & & \\ & & 1 & &  & x\\ & & & 1 & & \\ & & & & \mathrm{Id}_{n-2} & \\ & & & & & 1 \end{pmatrix},
\]
where the central block is of size $2 \times 2$. This element acts as the exact geometric analogue of the embedded elementary matrix $\widehat{e_n(x)}$ from the split case. 

By the Dixmier--Malliavin theorem \cite{DM78}, any element in $\mc{W}_{\psi_v}^{0}(\pi_v)$ can be expressed as a finite linear combination of functions of the form
\[
\int_{F_v} W_1(g \cdot u(x)) \phi_1(x) \, dx,
\]
where $W_1 \in \mc{W}_{\psi_v}^{0}(\pi_v)$ and $\phi_1 \in \mc{S}(F_v)$. Therefore, we may assume that $W'$ is of this form.

Let $t'(a) = w_0 t(a) w_0^{-1}$. Then, by a direct matrix computation and utilizing the transformation property of the Whittaker function, we obtain
\begin{align*}
&\int_{F_v^{\times}} \int_{\U_{\text{op}}^0(F_v)} W'(r t'(a)) |a|^{s_0+\beta} \, dr \, da \\ 
&= \int_{F_v^{\times}} \int_{F_v^{n-2}} \int_{F_v} W_1(\U_{\text{op}}^0({}^t y) t'(a) u(x)) \phi_1(x) |a|^{s_0+\beta} \, dx \, dy \, da \\
&= \int_{F_v^{\times}} \int_{F_v^{n-2}} \int_{F_v} W_1(\U_{\text{op}}^0({}^t y) t'(a)) \psi(a y_{n-2} x) \phi_1(x) |a|^{s_0+\beta} \, dx \, dy \, da \\
&= \int_{F_v^{\times}} \int_{F_v^{n-2}} W_1(\U_{\text{op}}^0({}^t y) t'(a)) \widehat{\phi}_1(a y_{n-2}) |a|^{s_0+\beta} \, dy \, da,
\end{align*}
where $\widehat{\phi}_1$ denotes the Fourier transform of $\phi_1$ with respect to $\psi$.

For each $1 \le j \le n-2$, define the subgroup
\[
\U_{\text{op}}^j(F_v) \coloneqq \left\{ \U_{\text{op}}^0({}^t y) \;\middle|\; y = (y_1, \dots, y_{n-2}) \in F_v^{n-2}, \ y_j = \dots = y_{n-2} = 0 \right\}.
\]

To decouple the variable $a$ from the support of $\widehat{\phi}_1$, we apply the change of variables $u = a y_{n-2}$. Then $dy_{n-2} = |a|^{-1} du$, and the above integral becomes
\[
\int_{F_v^{\times}} \int_{F_v^{n-3}} \int_{F_v} W_1(\U_{\text{op}}^0({}^t y', a^{-1} u) t'(a)) \widehat{\phi}_1(u) |a|^{s_0+\beta-1} \, du \, dy' \, da,
\]
where $y' = (y_1, \dots, y_{n-3})$. By choosing a sequence of Schwartz functions $\phi_1 \in \mc{S}(F_v)$ such that their Fourier transforms $\widehat{\phi}_1$ form an approximate identity converging, in the sense of distributions, to the Dirac measure at $0 \in F_v$, the integral converges to
\begin{equation*}
\begin{split}
&\int_{F_v^{\times}} \int_{F_v^{n-3}} W_1(\U_{\text{op}}^0({}^t y', 0) t'(a)) |a|^{s_0+\beta-1} \, dy' \, da \\
&\quad = \int_{F_v^{\times}} \int_{\U_{\text{op}}^{n-2}(F_v)} W_1(r t'(a)) |a|^{s_0+\beta-1} \, dr \, da.
\end{split}
\end{equation*}
Because the limit of this sequence of integrals is strictly nonzero, the integral itself must evaluate to a nonzero value for some specific choice of $\phi_1 \in \mc{S}(F_v)$ sufficiently far along the sequence. Note that the power of $|a|$ has shifted from $\beta$ to $\beta-1$, but since $\beta \in \RR$ is arbitrary, this shift is immaterial. Thus, it suffices to find $W_1 \in \mc{W}_{\psi_v}^{0}(\pi_v)$ such that 
\[
\int_{F_v^{\times}} \int_{\U_{\text{op}}^{n-2}(F_v)} W_1(r t'(a)) |a|^{s_0+\beta'} \, dr \, da \neq 0
\]
for $\beta' = \beta - 1$.

By repeating this process inductively and replacing the integration domain $\U_{\text{op}}^{n-2}(F_v)$ with $\U_{\text{op}}^{i}(F_v)$ for $i = n-3, \dots, 1$, we eventually reduce the problem to finding $W_2 \in \mc{W}_{\psi_v}^{0}(\pi_v)$ such that 
\[
\int_{F_v^{\times}} W_2(t'(a)) |a|^{s_0+\beta} \, da \neq 0.
\]

Next, we replace $W_2$ by a smoothed function
\[
W_2(g) = \int_{F_v} W_3(g \cdot \widehat{e_2(y)}) \phi_2(y) \, dy
\]
for some $W_3 \in \mc{W}_{\psi_v}^{0}(\pi_v)$ and $\phi_2 \in \mc{S}(F_v)$. Substituting this into the integral yields
\begin{align*}
\int_{F_v^{\times}} W_2(t'(a)) |a|^{s_0+\beta} \, da &= \int_{F_v^{\times}} W_3(t'(a)) |a|^{s_0+\beta} \left( \int_{F_v} \psi(ay) \phi_2(y) \, dy \right) da \\
&= \int_{F_v^{\times}} W_3(t'(a)) |a|^{s_0+\beta} \widehat{\phi}_2(a) \, da.
\end{align*}
By choosing a sequence of Schwartz functions $\phi_2 \in \mc{S}(F_v)$ such that $\widehat{\phi}_2$ forms an approximate identity converging to the Dirac measure at $1 \in F_v^{\times}$, the integral converges to the value of the integrand evaluated at $a=1$:
\[
W_3(t'(1)) |1|^{s_0+\beta} = W_3(\1).
\]
Therefore, by selecting $W_3 \in \mc{W}_{\psi_v}^{0}(\pi_v)$ such that $W_3(\1) \neq 0$, the limiting value is strictly nonzero. This guarantees the existence of an actual test function $\phi_2 \in \mc{S}(F_v)$ for which 
\[
\int_{F_v^{\times}} W_2(t'(a)) |a|^{s_0+\beta} \, da \neq 0,
\]
successfully satisfying the non-vanishing condition.

Finally, it remains to verify the absolute convergence of $Z_v(W, f_s)$ at $s=s_0$ for our specific choices. Applying the same limit arguments to the absolute values of the functions constructed above, the absolute integral converges to the evaluation of the positive functions at the identity:
\[
\int_{\K_v^{0}} \int_{F_v^{\times}} \int_{\U_{\text{op}}^0(F_v)} |W(r w_0 t(a) k)| \cdot |f_{s_0}(t(a) k)| \cdot |a|^{\alpha} \, dr \, da \, dk \longrightarrow |W_3|(\1) < \infty.
\]
This confirms absolute convergence and completes the proof.
\end{proof}

Proposition~\ref{p1}, Proposition~\ref{p2} and Proposition~\ref{p3} are concerned with $\H_n^{\ve}$ instead of $\G_n^{\ve}$. Therefore, to prove Theorem~\ref{t1}, we need a lemma which makes it possible to transfer the problem from $\G_n^{\ve}(\A)$ to $\H_n^{\ve}(\A)$.

Define two maps 
\noindent $\Res, \ \mf{J} \colon \mc{A}(\G_n^{\ve}) \to \mc{A}(\H_n^{\ve})$ as
\begin{align*}
&\Res(\wt{\varphi}) = \wt{\varphi} \big|_{\H_n^{\ve}(\A)}, \quad \wt{\varphi} \in \mc{A}(\G_n^{\ve}),\\
&\mf{J}(\wt{\varphi})(h) \coloneqq \int_{\mu_2(F) \bs \mu_2(\A)} \wt{\varphi}(h \cdot (\mathbf{t} \cdot \e)) \, d\mathbf{t}, \quad h \in \H_n^{\ve}(\A), \ \wt{\varphi} \in \mc{A}(\G_n^{\ve}).
\end{align*}

\begin{lem}\label{lg}
For some $\TT \in \mf{T}$, if $\wt{\pi}$ is an irreducible $\mu_{\ve}^{\TT}$-generic cuspidal automorphic representation of $\G_n^{\ve}(\A)$, there is an irreducible $\mu_{\ve}$-generic cuspidal automorphic representation $\pi$ of $\H_n^{\ve}(\A)$ which belongs to both $\Res(\wt{\pi})$ and $\mf{J}(\wt{\pi}\otimes \mathrm{sgn}_{\TT})$.
\end{lem}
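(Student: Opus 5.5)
We work with the restriction-of-scalars picture $\G_n^{\ve}(\A)=\H_n^{\ve}(\A)\rtimes(\mu_2(\A)\cdot\e)$ and the fact that $\mathrm{sgn}_{\TT}$ is trivial on $\H_n^{\ve}(\A)$. Fix $\wt{\varphi}\in\wt{\pi}$ with $W_{\psi}^{\ve,\TT}(\wt{\varphi})\neq 0$.

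The first step is to identify $W_{\psi}^{\ve,\TT}$ with an $\H_n^{\ve}$-Whittaker period of a form on $\H_n^{\ve}(\A)$. Since $\e=\e_{n+\ve}$ normalizes $\U_{n+\ve-1}^{\ve}$ and fixes $\mu_{\ve}$, the definition unfolds as
\[
W_{\psi}^{\ve,\TT}(\wt{\varphi})=\int_{[\U^{\ve}_{n+\ve-1}]}\mu_{\ve}^{-1}(u)\int_{[\mu_2]}\wt{\varphi}((\t\cdot\e)u)\,\mathrm{sgn}_{\TT}(\t\cdot\e)\,d\t\,du .
\]
Conjugating $u$ past $\t\cdot\e$ (a change of variables preserving $\mu_{\ve}$ and $du$) turns the inner integral into $\mf{J}(\wt{\varphi}\otimes\mathrm{sgn}_{\TT})(u)$, suitably interpreted. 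Here one uses that $\mf{J}$ integrates on the right, while the unfolded expression has $\t\cdot\e$ on the left. The two agree after inserting the $\G_n^{\ve}(F)$-invariance and the relation $(\t\cdot\e)u=((\t\cdot\e)u(\t\cdot\e)^{-1})(\t\cdot\e)$, where the conjugated $u$ again lies in $\U^{\ve}_{n+\ve-1}(\A)$. This gives
\[
W_{\psi}^{\ve,\TT}(\wt{\varphi})=W_{\psi}^{\ve}\big(\mf{J}(\wt{\varphi}\otimes\mathrm{sgn}_{\TT})\big).
\]
Hence $\mf{J}(\wt{\pi}\otimes\mathrm{sgn}_{\TT})$ is a nonzero space of cusp forms on $\H_n^{\ve}(\A)$ with a nonvanishing $\mu_{\ve}$-Whittaker period. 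Cuspidality is inherited, since constant terms along parabolics of $\H_n^{\ve}$ commute with right translation by $\t\cdot\e$ up to conjugation of the parabolic.

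The second step is to decompose. Both $\Res(\wt{\pi})$ and $\mf{J}(\wt{\pi}\otimes\mathrm{sgn}_{\TT})$ are $\H_n^{\ve}(\A)$-invariant subspaces of $\mc{A}_{cusp}(\H_n^{\ve})$, so they are direct sums of irreducible cuspidal representations. I would show that $\mf{J}(\wt{\pi}\otimes\mathrm{sgn}_{\TT})\subset\Res(\wt{\pi})$. Indeed, $\mf{J}(\wt{\varphi}')$ is an integral over the compact group $[\mu_2]$ of right translates $\mf{R}(\t\cdot\e)\wt{\varphi}'|_{\H_n^{\ve}}$, each of which lies in $\Res(\wt{\pi})$. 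Since this is effectively a finite average, it lies in $\Res(\wt{\pi})$ as well. The twist by $\mathrm{sgn}_{\TT}$ is harmless because $\wt{\varphi}\,\mathrm{sgn}_{\TT}$ restricts to $\wt{\varphi}$ on $\H_n^{\ve}(\A)$ times a locally constant sign.

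Finally, choose an irreducible summand $\pi\subset\mf{J}(\wt{\pi}\otimes\mathrm{sgn}_{\TT})$ on which $W_{\psi}^{\ve}$ is nonzero. Such a summand exists because the nonzero functional $W_{\psi}^{\ve}$ cannot vanish on every summand. This $\pi$ is cuspidal, irreducible, $\mu_{\ve}$-generic, and contained in both spaces.

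The main obstacle is the bookkeeping in the first step: left versus right placement of $\t\cdot\e$, and the adelic meaning of $\mathrm{sgn}_{\TT}$ on $\mu_2(\A)$ with $\TT$ of even cardinality, which is needed to justify the equality of periods. A secondary subtlety is that $\Res(\wt{\pi})$ may be a sum of two non-isomorphic irreducibles. This is why we produce $\pi$ inside $\mf{J}(\wt{\pi}\otimes\mathrm{sgn}_{\TT})$ and use the containment, rather than trying to match summands separately.
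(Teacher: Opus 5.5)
Your proposal is correct and follows essentially the same route as the paper. Both rest on three steps: the identity $W_{\psi}^{\ve,\TT}(\wt{\varphi})=W_{\psi}^{\ve}\big(\mf{J}(\wt{\varphi}\otimes\mathrm{sgn}_{\TT})\big)$; the observation that $\mf{J}$ is a finite average of right translates and so lands in $\Res(\wt{\pi}\otimes\mathrm{sgn}_{\TT})=\Res(\wt{\pi})$; and the choice of an irreducible summand on which $W_{\psi}^{\ve}$ is nonzero.

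The paper justifies the finiteness of that average via $\K^{\ve}$-finiteness: $\wt{\varphi}$ is right invariant under $\mu_2(F_v)\cdot\e$ for $v$ outside a finite set. It simply asserts the period identity whose left/right placement you flag as the main obstacle. That placement issue is harmless once the period is read as an integral over $\wt{\U}_{n,n+\ve-1}^{\ve}(F)\backslash\wt{\U}_{n,n+\ve-1}^{\ve}(\A)$.
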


\begin{proof}
Choose an arbitrary $\wt{\varphi} \in \wt{\pi}\otimes \mathrm{sgn}_{\TT}$. Since $\wt{\varphi}$ is $\K^{\ve}$-finite, there is a finite subset $S$ of places including all archimedean places of $F$ such that $\wt{\varphi}$ is right $\big(\prod_{v \notin S} \mu_2(F_v)\big)\cdot \e$-invariant. Since $\big(\prod_{v \in S} \mu_2(F_v)\big)\cdot \e$ is a finite set, we see that $\mf{J}(\wt{\varphi})(h) = c \cdot \sum_{\mathbf{t}_i \in \prod_{v \in S} \mu_2(F_v)} \wt{\varphi}(h \cdot (\mathbf{t}_i \cdot \e))$ for some nonzero constant $c$. Therefore, $\mf{J}(\wt{\varphi})$ belongs to $\Res(\wt{\pi}\otimes \mathrm{sgn}_{\TT})$. Moreover, since $\wt{\pi}$ is $\mu_{\ve}^{\TT}$-generic, $W_{\psi}^{\ve,\TT}(\wt{\varphi}) = W_{\psi}^{\ve}(\mf{J}(\wt{\varphi}\otimes \mathrm{sgn}_{\TT})) \neq 0$ for some $\wt{\varphi} \in \wt{\pi}$. Therefore, there is an irreducible $\mu_{\ve}$-generic cuspidal representation $\pi_0$ of $\H_n^{\ve}(\A)$ in $\mf{J}(\wt{\pi}\otimes \mathrm{sgn}_{\TT})$ which belongs to $\Res(\wt{\pi})$ because $\Res(\wt{\pi}\otimes \mathrm{sgn}_{\TT}) = \Res(\wt{\pi})$.
\end{proof}

Now we prove Theorem~\ref{t1}.

\begin{proof}[Proof of Theorem~\ref{t1}]
Since $\wt{\pi}$ is $\mu_{\ve}$-generic, there exists some $\TT \in \mf{T}$ such that $\wt{\pi}$ is $\mu_{\ve}^{\TT}$-generic. 

We first prove the implication (i) $\Rightarrow$ (ii). Fix an arbitrary place $v$ of $F$. We can uniquely express $\wt{\pi}_v$ as $L(\tau_{1,v} |\cdot|^{e_1}, \dots, \tau_{k,v} |\cdot|^{e_k}, \wt{\pi}_{0,v})$, the Langlands quotient of a standard module (see Sect.~\ref{atr}.) Then by \cite[Theorem~1]{Yam14},
\[
L(s,\wt{\pi}_v) = L(s,\wt{\pi}_{0,v}) \prod_{i=1}^k L_{\mathrm{GJ}}(s+e_i,\tau_{i,v}) L_{\mathrm{GJ}}(s-e_i,\tau_{i,v}^{\vee}),
\]
where $L_{\mathrm{GJ}}$ denotes the local $L$-factor of general linear groups defined by Godement and Jacquet (\cite{GJ72}). 

Define 
\[
s_0 \coloneqq \begin{cases} 1, & \text{if } \ve=0, \\ \frac{1}{2}, & \text{if } \ve=1. \end{cases}
\]
Note that $e_1 < \frac{1}{2}$ by the non-trivial Ramanujan bound \cite[Corollary~10.1]{CKPS04}; consequently, the product $\prod_{i=1}^k L_{\mathrm{GJ}}(s+e_i,\tau_{i,v}) L_{\mathrm{GJ}}(s-e_i,\tau_{i,v}^{\vee})$ is holomorphic at $s=s_0$ by \cite[Remark~3.2.4]{Jac79}. Furthermore, $L(s,\wt{\pi}_{0,v})$ is holomorphic at $s=s_0$ by \cite[Lemma~7.2]{Yam14}. Therefore, $L(s,\wt{\pi}_v)$ is holomorphic at $s=s_0$.

Observe that 
\[
L(s,\wt{\pi}) = L^S(s,\wt{\pi}) \prod_{v \in S} L(s,\wt{\pi}_v).
\]
Since the finite product $\prod_{v \in S} L(s,\wt{\pi}_v)$ is holomorphic at $s=s_0$, if $L(s,\wt{\pi})$ has a pole at $s=1$, it must arise from a pole of $L^S(s,\wt{\pi})$. Similarly, if $L(s,\wt{\pi})$ is nonzero at $s=\frac{1}{2}$, then $L^S(s,\wt{\pi})$ must also be nonzero at $s=\frac{1}{2}$. This completes the proof of the implication (i) $\Rightarrow$ (ii).

Next, we prove (ii) $\Rightarrow$ (iii). Since $\wt{\pi}$ is $\mu_{\ve}^{\TT}$-generic, by Lemma~\ref{lg}, there is an irreducible $\mu_{\ve}$-generic cuspidal representation $\pi$ of $\H_n^{\ve}(\A)$, which occurs as a sub-representation of $\mf{J}(\wt{\pi}\otimes \mathrm{sgn}_{\TT})$. Choose a decomposable element $\varphi=\otimes_v \varphi_v$ in $\pi$ and $f_s=\otimes_v f_{s,v} \in \Ind_{\B^{\ve}(\A)}^{\H^{\ve}(\A)}(|\cdot|^{s-\frac{1}{2}})$.

For a place $v$ of $F$, put 
\begin{align*}
\overline{L(s,\pi_v)} &= 
\begin{cases} 
L(s,\pi_v)\cdot \zeta_{v}(2s)^{-1}, &\text{if } \ve=0, \\ 
L(s,\pi_v), &\text{if } \ve=1, 
\end{cases} 
\\[10pt] 
\overline{L^{S}(s,\pi)} &= 
\begin{cases} 
L^S(s,\pi)\cdot \zeta^S(2s)^{-1}, &\text{if } \ve=0, \\ 
L^S(s,\pi), &\text{if } \ve=1. 
\end{cases}
\end{align*}

By \eqref{dec} and Proposition~\ref{p2}, for $\mathrm{Re}(s) \gg 0$, there is a finite set $S_1$ of places of $F$ containing all archimedean places of $F$ such that
\begin{equation}\label{ie}
    \I(\varphi,f_s) = \overline{L^{S_1}(s,\pi)}\cdot \prod_{v \in S_1}Z_v(W_{\psi_v}^{\ve}(\varphi_v),f_{s,v}).
\end{equation} 
We can enlarge $S_1$ so that $S \subset S_1$ and \eqref{ie} holds. By Proposition~\ref{p3}, for each $v \in S_1$, we can choose $W_v^0 \in \mc{W}_{\psi_v}^{\ve}(\pi_v)$ and a standard section $f_{s,v}^0\in \Ind_{\B^{\ve}(F_v)}^{\H^{\ve}(F_v)}(|\cdot|^{s-\frac{1}{2}})$ such that $Z_v(W_v^0 ,f_{s,v}^0)$ is holomorphic and nonzero at $s=s_0$. Take $\varphi^0=\otimes_v (\varphi^0)_v \in \pi$ such that $(\varphi^0)_v=\varphi_v$ for all $v \notin S_1$ and $W_{\psi_v}^{\ve}((\varphi^0)_v)=W_v^0$ for all $v \in S_1$. We also take $f_s'=\otimes_v (f_s')_v \in \Ind_{\B^{\ve}(\A)}^{\H^{\ve}(\A)}(|\cdot|^{s-\frac{1}{2}})$ such that $(f_s')_v=f_{s,v}$ for all $v \notin S_1$ and $(f_{s}')_v=f_{s,v}^0$ for all $v \in S_1$. (When $\ve=1$, we simply take $f_s'=|\cdot|^{s-\frac{1}{2}}$.)

Then for such a choice $\varphi^0$ and $f_s'$,
\[
    \I(\varphi^0,f_s') = \overline{L^{S_1}(s,\pi)}\cdot \prod_{v \in S_1}Z_v(W_{v}^{0},f_{s,v}^0)
\]
holds in the sense of meromorphic continuation. Note that $\overline{L^{S_1}(s,\pi)} = \overline{L^{S}(s,\pi)}\cdot \prod_{v \in S_1 \setminus S}\overline{L(s,\pi_v)}$. Since $L(s,\pi_v)$ is holomorphic and nonzero at $s=s_0$ by \cite[Corollary~10.1]{CKPS04} for $v \in S_1 \setminus S$, by the assumption, $\overline{L^{S_1}(s,\wt{\pi})} = \overline{L^{S_1}(s,\pi)}$ has a pole at $s=1$ in the case $\ve=0$ and is holomorphic and nonzero at $s=\frac{1}{2}$ in the case $\ve=1$. Choose a $\wt{\varphi}^0\in \wt{\pi}$ such that $\mf{J}(\wt{\varphi}^0\otimes \mathrm{sgn}_{\TT})=\varphi^0$. 

When $\ve=0$, $\I(\varphi^0,f_s')$ has a pole at $s=1$ and this must come from $E(f_s',h)$ and thus is simple. Therefore, 
\[
\int_{[\H^{0}]} (\varphi^0)^{\psi}(h)\cdot \mathrm{Res}_{s=1}(E(f_s',h)) \, dh \ne 0.
\]
Since $E(f_s',h)$ is a Siegel Eisenstein series, the residue of $E(f_s',h)$ at $s=1$ is constant. Then, $\mc{B}_{n-1,\psi}^{0,\TT}(\wt{\varphi}^0) = \int_{[\H^{0}]} (\varphi^0)^{\psi}(h) \, dh \ne 0$.

When $\ve=1$, $\I(\varphi^0,f_s')$ is holomorphic and nonzero at $s=\frac{1}{2}$. Note that $E(f_s',h)=f_s'(h)=|h|^{s-\frac{1}{2}}$. Therefore, 
\begin{align*}
\I(\varphi^0,f_s') \big|_{s=1/2} 
&= \int_{[\H^{\ve}]} (\varphi^0)^{\psi}(h)\cdot E(f_s',h) \big|_{s=1/2} \, dh \\
&= \int_{[\H^{\ve}]} (\varphi^0)^{\psi}(\varphi)(h) \, dh = \mc{B}_{n-1+\ve,\psi}^{\ve,\TT}(\wt{\varphi}^0).
\end{align*}
We proved the (ii) $\Rightarrow$ (iii) direction in both cases.

\noindent The direction (iii) $\Rightarrow$ (iv) is a consequence of Theorem~\ref{a1} and the direction (iv) $\Rightarrow$ (v) is obvious.

\noindent Lastly, we prove the (v) $\Rightarrow$ (i) direction. Suppose that $\Theta_{\psi,V_n^{\ve},W_{n-1+\ve}}^{\TT}(\wt{\pi})\ne0$. We know that $\Theta_{\psi,V_n^{\ve},W_{t}}(\wt{\pi}\otimes \mathrm{sgn}_{\TT})=0$ for $t<n-1+\ve$ by Corollary~\ref{c1}. Then the completed $L$-function $L(s,\wt{\pi})= \prod_{v} L(s,\wt{\pi}_v)$ has a pole at $s=0$ in the case $\ve=0$ and is holomorphic and nonzero at $s=\frac{1}{2}$ in the case $\ve=1$ by \cite[Theorem~2]{Yam14}. Furthermore, in the case $\ve=0$, it has a pole at $s=1$ by the functional equation \cite[Theorem~9.1]{Yam14} and the fact $L(s,\wt{\pi})=L(s,\wt{\pi}^{\vee})$ (\cite[Proposition~5.4]{Yam14}). This completes the proof. 
\end{proof}

Using Theorem~{\ref{t1}}, we can prove Theorem~{\ref{t2}} as follows.
\begin{proof}[Proof of Theorem~\ref{t2}] 
Choose an irreducible $\mu_{\ve}^{\TT}$-generic cuspidal automorphic representation $\wt{\pi}$ of $\G_n^{\ve}(\A)$ such that $\pi$ belongs to both $\text{Res}(\wt{\pi})$ and $\mf{J}(\wt{\pi}\otimes \text{sgn}_{\TT})$. (The existence of such \( \wt{\pi} \text{ and }  \TT\in \mf{T} \) is guaranteed by \cite[Proposition~2.4]{HKK25} in the case $\ve=0$ and the case $\ve=1$ is similar.) Then we have $L(s,\wt{\pi})=L(s,\pi)$ and $L^S(s,\wt{\pi})=L^S(s,\pi)$. Therefore, the direction (i)$\mapsto$(ii) follows from that of Theorem~{\ref{t1}}. By our choice of $\wt{\pi}$, it is easy to verify that $\mc{B}_{n-1+\ve,\psi}^{\ve,\TT}(\wt{\pi})\ne0$ is equivalent to $\mc{B}_{n-1+\ve,\psi}^{\ve}(\pi)\ne0$. Therefore, the proof for the directions (ii)$\mapsto$(iii)$\mapsto$(iv)$\mapsto$(v) is just the application of Theorem~\ref{a1} and Theorem~\ref{t1}. Therefore, we only prove (v)$\mapsto$(i) direction. By Theorem~{\ref{t1}}, it suffices to show that $\Theta_{\psi,V_n^{\ve},W_{n-1+\ve}}(\pi)\ne 0$  implies $\Theta_{\psi,V_n^{\ve},W_{n-1+\ve}}(\wt{\pi}\otimes \text{sgn}_{\TT'})\ne0$ for some $\TT' \in \mf{T}$.

Suppose that $\Theta_{\psi,V_n^{\ve},W_{n-1+\ve}}(\pi)\ne 0$. Then there is some $\phi \in \mc{S}(V_n^{\ve} \otimes Y_{n-1+\ve}^*)(\A)$ and $\varphi \in \pi$ such that $\theta_{\psi,V_n^{\ve},W_{n-1+\ve}}(\phi,\varphi) \ne 0$. Choose an element $\wt{\varphi} \in \wt{\pi}$ such that $\Res(\wt{\varphi})=\varphi$. It can be expressed as a sum of pure tensors, namely,
$\wt{\varphi}=\sum_{i=1}^\ell\wt{\varphi}_i$, where each $\wt{\varphi}_i$ is of the form $\wt{\varphi}_{i}=\otimes_v \wt{\varphi}_{i,v}$. Since $\Res(\wt{\varphi})=\sum_{i=1}^\ell\Res(\wt{\varphi}_i)=\varphi$ and $\theta_{\psi,V_n^{\ve},W_{n-1+\ve}}(\phi,\varphi)\ne 0$, there is a $1\le j \le \ell$ such that $\theta_{\psi,V_n^{\ve},W_{n-1+\ve}}(\phi,\text{Res}(\wt{\varphi}_j))  \ne 0$. Let us denote this particular $\wt{\varphi}_j$ by $\wt{\varphi}'$.

There is a finite set $S_0$ including all archimedean places of $F$ such that $\wt{\varphi}'$ is right $\big(\prod_{v \notin S_0} \mu_2(F_v)\big)\cdot \e$-invariant. For each $v \in S_0$, decompose $\wt{\varphi}'_{v}=\wt{\varphi}'_{v,1}+\wt{\varphi}'_{v,2}$ (one of $\wt{\varphi}'_{v,i}$ might be zero) such that $\wt{\varphi}'_{v,i}$ is in $(-1)^{i+1}$-eigenspace of $\e$ in $\wt{\pi}_v$ for each $i=1,2$. Therefore, we can write $\wt{\varphi}'=\sum_{m=1}^k \wt{\varphi}'_m$ such that for each $\wt{\varphi}'_m=\otimes_v (\wt{\varphi}'_m)_v$, where $(\wt{\varphi}'_m)_v$ is an eigen-vector of $\e$ for each $v \in S_0$ and $\e$-invariant for each $v \notin S_0$.

Again, since $\theta_{\psi,V_n^{\ve},W_{n-1+\ve}}(\phi,\text{Res}(\wt{\varphi}'))  \ne 0$ and $\Res(\wt{\varphi}')=\sum_{m=1}^k \Res(\wt{\varphi}'_m)$, there is a $1\le k_0 \le k$ such that $\theta_{\psi,V_n^{\ve},W_{n-1+\ve}}(\phi,\text{Res}(\wt{\varphi}'_{k_0}))  \ne 0$. We may assume that $k_0=1$.

Since $\wt{\varphi}'_1=\otimes_v \wt{\varphi}'_{1,v} \in \wt{\pi}$, denote $\t_0=(\t_{0,v}) \in \prod_{v \in S_0}\mu_2(F_v)$  given by \[\t_{0,v} =
\begin{cases} 1, & \text{ if } \wt{\varphi}_{1,v} \text{ is an eigenvector of $\e$ with the eigenvalue 1}  \\ -1, & \text{ if } \wt{\varphi}_{1,v} \text{ is an eigenvector of $\e$ with the eigenvalue $-1$}.
\end{cases}
\]

Let $\TT_0$ be a subset of $S_0$ consisting of place $v$ such that $\t_{0,v}=-1$. Since $(\e\cdot \wt{\varphi}'_1)(\textbf{1})=\wt{\varphi}'_1(\textbf{1})$, we see that the number of elements in $S$ is even and so $\TT_0 \in \mf{T}$. 

In a similar way, we can find $\phi'=\otimes_v \phi'_v \in \mc{S}(V_n^{\ve} \otimes Y_{n-1+\ve}^*)(\A)$ such that  $\theta_{\psi,V_n^{\ve},W_{n-1+\ve}}(\phi',\text{Res}(\wt{\varphi}_1'))  \ne 0$ and there is some $\TT_1\in \mf{T}$ such that $\phi'_v$ is an eigenvector of $\e$ with eigenvalue $-1$ for $v \in \TT_1$ and $\e$-invariant for $v\notin \TT_1$. Denote $\TT'=\TT_0 \cup \TT_1- (\TT_0\cap\TT_1)$. Then $|\TT'|=|\TT_0|+|\TT_1|-2\cdot |\TT_0\cap\TT_1|$ is even, we see that $\TT' \in \mf{T}$.

Then for $\wt{\varphi}' =\wt{\varphi}'_1\otimes \text{sgn}_{\TT'}\in \wt{\pi} \otimes \text{sgn}_{\TT'}$, 
\begin{align*}
\theta_{\psi,V_n^{\ve},W_{n-1+\ve}}(\phi',\wt{\varphi}')(h') 
&= \int_{[\H_n^{\ve}]} \int_{[\mu_2]} \theta_{\psi,V_n^{\ve},W_{n-1+\ve}}(\phi';h\cdot(\t \cdot \ve),h')  \wt{\varphi}'(h\cdot(\t \cdot \ve)) \, d\t \, dh \\
&= \int_{[\H_n^{\ve}]} \theta_{\psi,V_n^{\ve},W_{n-1+\ve}}(\phi';h,h')\cdot \wt{\varphi}'(h) \, dh \\
&= \theta_{\psi,V_n^{\ve},W_{n-1+\ve}}(\phi',\text{Res}(\wt{\varphi}'))(h')\\ &= \theta_{\psi,V_n^{\ve},W_{n-1+\ve}}(\phi',\text{Res}(\wt{\varphi}_1'))(h') \ne 0.
\end{align*} 
Therefore, $\Theta_{\psi, V_n^{\ve},W_{n-1+\ve}}(\wt{\pi}\otimes \text{sgn}_{\TT'})\ne0$ and this completes the proof of Theorem~\ref{t2}.
\end{proof}
\begin{rem}\label{non-ge}
In the proofs of Theorems \ref{t1} and \ref{t2}, the implications (iii) $\Rightarrow$ (iv) $\Rightarrow$ (v) $\Rightarrow$ (i) remain valid even when $\wt{\pi}$ and $\pi$ are not generic. In the implication (i) $\Rightarrow$ (ii), we specifically utilized the almost tempered property of the local components of globally generic representations (see Remark \ref{Ram}). Consequently, the chain of implications (iii) $\Rightarrow$ (iv) $\Rightarrow$ (v) $\Rightarrow$ (i) $\Rightarrow$ (ii) remains valid even if $\wt{\pi}$ and $\pi$ are not globally generic, provided that they are almost tempered at every place.
\end{rem}

As consequences of Theorems \ref{t1} and \ref{t2}, we obtain the following corollaries.

\begin{cor}\label{co1}
Let $\wt{\pi}$ be an irreducible $\mu_{\ve}$-generic cuspidal representation of $\G_n^{\ve}(\A)$. If $\Theta_{\psi,V_n^{\ve},W_{n-1+\ve}}(\wt{\pi}\otimes \mathrm{sgn}_{\TT_0})$ is nonzero for some $\TT_0 \in \mf{T}$, there exists some $\TT \in \mf{T}$ such that $\Theta_{\psi,V_n^{\ve},W_{n-1+\ve}}(\wt{\pi}\otimes \mathrm{sgn}_{\TT})$ is nonzero and $(\psi,\lambda_{\ve})$-generic.
\end{cor}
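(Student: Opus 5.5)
The natural route is to pass through the chain of equivalences in Theorem~\ref{t1}. Let $\wt{\pi}$ be $\mu_{\ve}$-generic, and suppose $\Theta_{\psi,V_n^{\ve},W_{n-1+\ve}}(\wt{\pi}\otimes \mathrm{sgn}_{\TT_0})\neq 0$ for some $\TT_0\in\mf{T}$. Since $\Theta^{\TT_0}_{\psi,V,W}(\wt{\pi})=\Theta_{\psi,V,W}(\wt{\pi}\otimes\mathrm{sgn}_{\TT_0})$, this is exactly condition (v) of Theorem~\ref{t1} for $\wt{\pi}$. It is important to observe that Theorem~\ref{t1} is stated for $\wt{\pi}$ itself, with the existential quantifier over $\TT$ built into conditions (iii)--(v). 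We therefore do not need to twist $\wt{\pi}$ by $\mathrm{sgn}_{\TT_0}$ before applying it, and we do not need to know whether $\wt{\pi}\otimes\mathrm{sgn}_{\TT_0}$ is $\mu_{\ve}$-generic.

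The argument then has three steps.

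\textbf{Step 1 (to an $L$-function statement).} Use (v) $\Rightarrow$ (i) $\Rightarrow$ (ii). The proof of (v) $\Rightarrow$ (i) uses only that the lift to $\wt{\J}_{n-1+\ve}$ is nonzero. Lower lifts vanish by Proposition~\ref{c1}, because the local components of the generic $\wt{\pi}$ are generic, hence of $\mu^{\pm}_{\ve,v}$-Bessel type with $t=n+\ve$. One then applies the Rallis inner product formula and the pole analysis of \cite{Yam14}. This gives a pole of $L(s,\wt{\pi})$ at $s=1$ when $\ve=0$, and holomorphy with non-vanishing at $s=1/2$ when $\ve=1$. Genericity of $\wt{\pi}$ and the Ramanujan-type bound then transfer this to $L^S(s,\wt{\pi})$.

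\textbf{Step 2 (to a Bessel period).} Use (ii) $\Rightarrow$ (iii), the Rankin--Selberg step, which also relies on genericity. It produces some $\TT\in\mf{T}$, namely the one for which $\wt{\pi}$ is $\mu_{\ve}^{\TT}$-generic via Lemma~\ref{lg}, such that the Bessel period $\mc{B}^{\ve,\TT}_{n-1+\ve,\psi}$ is nonzero on $\wt{\pi}$. Equivalently, $\mc{B}^{\ve}_{n-1+\ve,\psi}$ is nonzero on $\wt{\pi}\otimes\mathrm{sgn}_{\TT}$.

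\textbf{Step 3 (back to a generic theta lift).} Apply Theorem~\ref{a1} with $k=n-1+\ve$. Non-vanishing of $\mc{B}^{\ve,\TT}_{n-1+\ve,\psi}$ on $\wt{\pi}$ is equivalent to $\Theta^{\TT}_{\psi,V,W_{n-1+\ve}}(\wt{\pi})=\Theta_{\psi,V_n^{\ve},W_{n-1+\ve}}(\wt{\pi}\otimes\mathrm{sgn}_{\TT})$ being nonzero and $(\psi,\lambda_{\ve})$-generic. This is the desired conclusion; in other words, (iii) $\Rightarrow$ (iv) of Theorem~\ref{t1}.

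The only point I expect to need care is bookkeeping. The $\TT$ produced in Steps 2--3 is generally different from the given $\TT_0$, which is precisely why the corollary asserts existence of \emph{some} $\TT$. I would also check that the hypothesis $n\ge 2-\ve$ of Theorem~\ref{t1} is in force; it is implicit in the statement, since otherwise $W_{n-1+\ve}$ degenerates.
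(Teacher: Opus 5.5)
Your proposal is correct and matches the paper: Corollary~\ref{co1} is just the implication (v)~$\Rightarrow$~(iv) of Theorem~\ref{t1}, and your three steps retrace how the paper proves that loop. These are (v)~$\Rightarrow$~(i) via Proposition~\ref{c1} and \cite{Yam14}; (i)~$\Rightarrow$~(ii)~$\Rightarrow$~(iii) via the Ramanujan bound and the Rankin--Selberg integral with Lemma~\ref{lg}; and (iii)~$\Rightarrow$~(iv) via Theorem~\ref{a1}. Your remark that the resulting $\TT$ need not equal $\TT_0$ is exactly why the conclusion is existential.
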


\begin{cor}\label{co2}
Let $\pi$ be an irreducible $\mu_{\ve}$-generic cuspidal representation of $\H_n^{\ve}(\A)$. If $\Theta_{\psi,V_n^{\ve},W_{n-1+\ve}}(\pi)$ is nonzero, then it must be $(\psi,\lambda_{\ve})$-generic.
\end{cor}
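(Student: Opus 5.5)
The statement to prove is Corollary~\ref{co2}: if $\pi$ is $\mu_{\ve}$-generic cuspidal on $\H_n^{\ve}(\A)$ and $\Theta_{\psi,V_n^{\ve},W_{n-1+\ve}}(\pi)\neq 0$, then this lift is $(\psi,\lambda_{\ve})$-generic. The plan is to run the cycle of implications in Theorem~\ref{t2}. The hypothesis $\Theta_{\psi,V_n^{\ve},W_{n-1+\ve}}(\pi)\neq 0$ is exactly condition (v) of Theorem~\ref{t2}. Since $\pi$ is $\mu_{\ve}$-generic, Theorem~\ref{t2} applies and gives (v)$\Rightarrow$(i)$\Rightarrow$(ii)$\Rightarrow$(iii). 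So $\mc{B}_{n-1+\ve,\psi}^{\ve}$ is non-trivial on $\pi$.

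Next I would invoke Theorem~\ref{a1} with $k=n-1+\ve$. This value lies in the allowed range $k\in\{n+\ve-1,n+\ve\}$, and $k\ge 1$ because $n\ge 2-\ve$. Theorem~\ref{a1} says the non-vanishing of $\mc{B}_{k,\psi}^{\ve}$ on $\pi$ is equivalent to $\Theta_{\psi,V,W_k}(\pi)$ being nonzero and $(\psi,\lambda_{\ve})$-generic. Here $\Theta_{\psi,V,W_k}(\pi)$ is the single automorphic space of all theta lifts $\theta_{\psi,V,W_k}(\phi,f)$, with no choice of $\TT$ on the $\H_n^{\ve}$ side. The space obtained is therefore the same space as in the hypothesis, and it is $(\psi,\lambda_{\ve})$-generic, which is the claim.

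The step needing care is to confirm that genericity is a property of the whole lifted space, so no irreducibility issue arises. Since $\Theta(\pi)$ could in principle be non-cuspidal at this level, I would check this. Genericity is defined by the non-vanishing of the Whittaker--Fourier--Jacobi functional $W_\psi^{\lambda_\ve}$ on some element of the space, and Theorem~\ref{a1} produces such an element $\theta(\phi,\varphi)$ directly. Hence nothing beyond Theorems~\ref{t2} and~\ref{a1} is needed.

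The analogous Corollary~\ref{co1} follows the same way from Theorem~\ref{t1}. The only difference is that the $\TT$ produced in (iii) may differ from the given $\TT_0$, which is why that statement is existential in $\TT$.
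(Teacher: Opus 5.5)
Your argument is correct and follows the paper's route. The corollary is read off from the equivalences in Theorem~\ref{t2}, where (v) $\Rightarrow$ (iv) goes through (iii) and then uses Theorem~\ref{a1} with $k=n-1+\ve$, exactly as you do; you could also cite (v) $\Rightarrow$ (iv) directly, since on the $\H_n^{\ve}$ side there is a single space $\Theta_{\psi,V_n^{\ve},W_{n-1+\ve}}(\pi)$ with no $\TT$-twist.
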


Combining Proposition \ref{c1}, Corollary \ref{a3}, and Corollary \ref{co1}, we obtain the following theorem.

\begin{thm}\label{p5}
Let $\wt{\pi}$ be an irreducible $\mu_{\ve}$-generic cuspidal representation of $\G_n^{\ve}(\A)$. Suppose there is some $\TT_0 \in \mf{T}$ such that $\Theta_{\psi,V_n^{\ve},W_{k}}(\wt{\pi}\otimes \mathrm{sgn}_{\TT_0})$ is nonzero and $\Theta_{\psi,V_n^{\ve},W_{i}}(\wt{\pi}\otimes \mathrm{sgn}_{\TT_0})=0$ for all $i<k$. Then there is some $\TT_1 \in \mf{T}$ such that $\Theta_{\psi,V_n^{\ve},W_{k}}(\wt{\pi}\otimes \mathrm{sgn}_{\TT_1})$ is $(\psi,\lambda_{\ve})$-generic and cuspidal.
\end{thm}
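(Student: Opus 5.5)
We first pin down the first occurrence index $k$. Fix a non-archimedean place $v$. Since $\wt{\pi}$ is $\mu_{\ve}$-generic, its local component $(\wt{\pi}\otimes\mathrm{sgn}_{\TT_0})_v$ is $\mu_{\ve,v}^{+}$- or $\mu_{\ve,v}^{-}$-generic, i.e. of Bessel type for $t=n+\ve$. The global lift at level $k$ is nonzero, so the local theta lift of $(\wt{\pi}\otimes\mathrm{sgn}_{\TT_0})_v$ to $\wt{\J}_k(F_v)$ is nonzero. Proposition~\ref{c1} then forces $k\ge n+\ve-1$. On the other hand, $\wt{\pi}$ is $\mu_{\ve}^{\TT}$-generic for some $\TT$, and Corollary~\ref{a3} shows that $\Theta_{\psi,V_n^{\ve},W_{n+\ve}}(\wt{\pi}\otimes\mathrm{sgn}_{\TT})\neq0$. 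The same local vanishing argument applies with $\TT$ in place of $\TT_0$, since it uses only local genericity. We therefore expect $k\in\{n+\ve-1,\,n+\ve\}$ and split into these two cases.

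\emph{Case $k=n+\ve-1$.} The lift at level $n+\ve-1$ is nonzero for $\TT_0$, so Corollary~\ref{co1} produces $\TT_1\in\mf{T}$ with $\Theta_{\psi,V_n^{\ve},W_{n-1+\ve}}(\wt{\pi}\otimes\mathrm{sgn}_{\TT_1})$ nonzero and $(\psi,\lambda_{\ve})$-generic. For cuspidality, suppose this lift is not cuspidal. Rallis' tower property then gives a nonzero lift of $\wt{\pi}\otimes\mathrm{sgn}_{\TT_1}$ at some level $<n+\ve-1$. Taking a local component at $v$ and applying Proposition~\ref{c1} gives a contradiction. This is the argument of Corollary~\ref{a2}, so the lift is cuspidal.

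\emph{Case $k=n+\ve$.} Here the lift of $\wt{\pi}\otimes\mathrm{sgn}_{\TT_0}$ at level $n+\ve-1$ vanishes. Take $\TT_1=\TT$ with $\wt{\pi}$ $\mu_{\ve}^{\TT}$-generic. By Corollary~\ref{a3}, $\Theta_{\psi,V_n^{\ve},W_{n+\ve}}(\wt{\pi}\otimes\mathrm{sgn}_{\TT_1})$ is nonzero and $(\psi,\lambda_{\ve})$-generic. By the same corollary it is cuspidal provided the level-$(n+\ve-1)$ lift for $\TT_1$ also vanishes.

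The main obstacle is exactly this proviso: first occurrence is assumed only for the twist $\TT_0$, while genericity is attached to the possibly different twist $\TT$. To handle it we argue by contraposition. Suppose the level-$(n+\ve-1)$ lift of $\wt{\pi}\otimes\mathrm{sgn}_{\TT}$ is nonzero. The implication (v)$\Rightarrow$(i) of Theorem~\ref{t1} then shows that $L(s,\wt{\pi})$ has the prescribed behavior at $s_0$. That $L$-function does not depend on the twist, because $L(s,\wt{\pi}\otimes\mathrm{sgn}_{\TT})=L(s,\wt{\pi})$ by its definition via restriction to $\H_n^{\ve}$. Running the chain (i)$\Rightarrow$(iv) of Theorem~\ref{t1} again only yields a lift for some unspecified twist, not necessarily $\TT_0$. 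We therefore read Corollary~\ref{co1} and Theorem~\ref{t1} as saying that the property ``some twist has a nonzero lift at level $n+\ve-1$'' is twist-independent. If even this fails to pin down $\TT_0$, the fallback is to use the freedom in choosing $\TT_1$. If some twist has a nonzero lift at level $n+\ve-1$, we are in the situation of the first case with that twist, which already yields a generic cuspidal lift, albeit at level $n+\ve-1$ rather than $k$. The remaining work is to reconcile this with the level $k$ in the statement, and we expect this reconciliation of twists to be the delicate point of the proof.
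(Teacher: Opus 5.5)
\textbf{There is a genuine gap, and it is exactly where you suspect it.}

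\emph{The reduction to two cases is not justified.} Proposition~\ref{c1} gives $k\ge n+\ve-1$. But Corollary~\ref{a3} gives a nonzero lift at level $n+\ve$ only for a twist $\TT$ with $\wt{\pi}$ $\mu_{\ve}^{\TT}$-generic. It says nothing about $\wt{\pi}\otimes\mathrm{sgn}_{\TT_0}$, so $k\in\{n+\ve-1,n+\ve\}$ is unproved.

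In fact an individual twist can first occur above $n+\ve$. For $\ve=0$, the local conservation relation $k(\pi_v)+k(\pi_v\otimes\det)=2n$ shows the following: once one twist occurs at level $n-1$, every other twist has first occurrence $\ge n+1$. At such a level Theorem~\ref{a1} forbids any $(\psi,\lambda_{\ve})$-generic lift.

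\emph{Your case $k=n+\ve$ cannot be closed as planned.} There you need the level-$(n+\ve-1)$ lift of the \emph{generic} twist to vanish. This does not follow from the hypothesis on $\TT_0$, and it can genuinely fail. Suppose some other twist occurs at level $n+\ve-1$. Run the chain (v)$\Rightarrow$(i)$\Rightarrow$(ii)$\Rightarrow$(iii) from the proof of Theorem~\ref{t1}; it works for every $\TT$ with $\wt{\pi}$ $\mu_{\ve}^{\TT}$-generic. Combined with Theorem~\ref{a1}, it shows that every generic twist already has a nonzero lift at level $n+\ve-1$. By Rallis' tower property, its lift at level $n+\ve$ is then non-cuspidal. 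On the other hand, Theorem~\ref{a1} at level $n+\ve$ says that any $\TT_1$ with a $(\psi,\lambda_{\ve})$-generic lift there satisfies $W_{\psi}^{\ve,\TT_1}\neq 0$ on $\wt{\pi}$, so $\TT_1$ is itself a generic twist. Hence no $\TT_1$ exists.

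So the ``reconciliation'' you are looking for does not exist. The theorem has to be read with $k$ the smallest index at which \emph{some} twist $\wt{\pi}\otimes\mathrm{sgn}_{\TT}$ has a nonzero lift. Equivalently, the vanishing below $k$ should be assumed for all $\TT\in\mf{T}$.

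\textbf{How the paper's proof proceeds.} The paper's proof effectively uses this reading: it never matches the given $\TT_0$. Instead of splitting on the first occurrence of $\TT_0$, it splits on whether \emph{any} twist occurs at level $n-1+\ve$.
\begin{itemize}
\item If some $\wt{\pi}\otimes\mathrm{sgn}_{\TT}$ has a nonzero lift at level $n-1+\ve$, Corollary~\ref{co1} gives $\TT_1$ with a nonzero $(\psi,\lambda_{\ve})$-generic lift at that level. It is cuspidal by Rallis' tower property together with Proposition~\ref{c1}. This is your first case, and your argument for it is fine.
\item If no twist occurs at level $n-1+\ve$, then in particular the generic twist has vanishing lift there. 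Corollary~\ref{a3} applied to the generic twist then gives a nonzero, cuspidal, $(\psi,\lambda_{\ve})$-generic lift at level $n+\ve$.
\end{itemize}
All lifts below level $n-1+\ve$ vanish for every twist by Proposition~\ref{c1}. Hence these two cases correspond exactly to the minimal level over all twists being $n-1+\ve$ or $n+\ve$, and no reconciliation with a particular $\TT_0$ is needed.
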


\begin{proof}
By Proposition \ref{c1}, $\Theta_{\psi,V_n^{\ve},W_{i}}(\wt{\pi}\otimes \mathrm{sgn}_{\TT})=0$ for all $i<n-1+\ve$ and $\TT \in \mf{T}$, because $\big(\Theta_{\psi,V_n^{\ve},W_{i}}(\wt{\pi}\otimes \mathrm{sgn}_{\TT})\big)_v$ is the local theta lift of $(\wt{\pi}\otimes \mathrm{sgn}_{\TT})_v$. Therefore, if $\Theta_{\psi,V_n^{\ve},W_{n-1+\ve}}(\wt{\pi}\otimes \mathrm{sgn}_{\TT_0}) \neq 0$ for some $\TT_0 \in \mf{T}$, there exists some $\TT_1 \in \mf{T}$ such that $\Theta_{\psi,V_n^{\ve},W_{n-1+\ve}}(\wt{\pi}\otimes \mathrm{sgn}_{\TT_1})$ is $(\psi,\lambda_{\ve})$-generic by Corollary \ref{co1}. Furthermore, by the Rallis tower property of global theta liftings, it is cuspidal. 

Suppose that $\Theta_{\psi,V_n^{\ve},W_{n-1+\ve}}(\wt{\pi}\otimes \mathrm{sgn}_{\TT})=0$ for all $\TT \in \mf{T}$. Since $\pi$ is $\mu_{\ve}$-generic, by Corollary \ref{a3}, there exists some $\TT_2 \in \mf{T}$ such that $\Theta_{\psi,V_n^{\ve},W_{n+\ve}}(\wt{\pi}\otimes \mathrm{sgn}_{\TT_2})$ is nonzero, cuspidal, and $(\psi,\lambda_{\ve})$-generic.
\end{proof}

Similarly, we can prove the following theorem using Corollary \ref{co2}. We omit the proof.

\begin{thm}\label{p6}
Let $\pi$ be an irreducible $\mu_{\ve}$-generic cuspidal representation of $\H_n^{\ve}(\A)$. If $\Theta_{\psi,V_n^{\ve},W_{k}}(\pi)$ is nonzero and $\Theta_{\psi,V_n^{\ve},W_{i}}(\pi)=0$ for all $i<k$, then $\Theta_{\psi,V_n^{\ve},W_{k}}(\pi)$ is $(\psi,\lambda_{\ve})$-generic and cuspidal.
\end{thm}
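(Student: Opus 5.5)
We follow the proof of Theorem~\ref{p5}, with Corollary~\ref{co2} playing the role of Corollary~\ref{co1}. The first step is to locate the first occurrence index $k$. Let $v$ be any non-archimedean place. Since $\pi$ is globally $\mu_{\ve}$-generic, $\pi_v$ is $\mu_{\ve,v}$-generic, that is, of $\mu_{n+\ve,\ve}$-Bessel type. For $i<n-1+\ve$ the local component at $v$ of $\Theta_{\psi,V_n^{\ve},W_i}(\pi)$ is a quotient of the local big theta lift $\Theta_{\psi_v,V,W_i}(\pi_v)$. This lift vanishes by Proposition~\ref{c1}, applied with $t=n+\ve$, because then $i<t-1$. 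Hence $\Theta_{\psi,V_n^{\ve},W_i}(\pi)=0$ for every $i<n-1+\ve$, and so $k\ge n-1+\ve$.

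We then split into two cases.

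\emph{Case $k=n-1+\ve$.} The lift $\Theta_{\psi,V_n^{\ve},W_{n-1+\ve}}(\pi)$ is nonzero, so Corollary~\ref{co2} makes it $(\psi,\lambda_{\ve})$-generic. It is the first nonvanishing lift in the tower, so Rallis' tower property makes it cuspidal.

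\emph{Case $k>n-1+\ve$.} Now $\Theta_{\psi,V_n^{\ve},W_{n-1+\ve}}(\pi)=0$. Corollary~\ref{a3} (the $\H_n^{\ve}$ version) shows that $\Theta_{\psi,V_n^{\ve},W_{n+\ve}}(\pi)$ is nonzero and $(\psi,\lambda_{\ve})$-generic. Since the lift at level $n-1+\ve$ vanishes, the same corollary also makes it cuspidal. So the first occurrence is exactly at $k=n+\ve$, and $\Theta_{\psi,V_n^{\ve},W_k}(\pi)$ has both required properties. The hypothesis that $\Theta_{\psi,V_n^{\ve},W_i}(\pi)=0$ for all $i<k$ is consistent only with these two values of $k$, so the two cases are exhaustive.

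The step that needs the most care is Corollary~\ref{co2} in the first case. Its content is the implication (v)$\Rightarrow$(iv) of Theorem~\ref{t2}, which uses the Rankin--Selberg theory of Section~\ref{sec:5}; we take that result as given. What remains is to check the local-global compatibility used in the first step: at a finite place, the local component of a nonzero global theta lift must be a quotient of the local big theta lift. This is standard, and it is used in exactly the same way in the proofs of Corollary~\ref{a2} and Theorem~\ref{p5}.
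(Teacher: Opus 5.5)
Your argument is correct and is the one the paper intends. The paper omits this proof, saying only that it goes like Theorem~\ref{p5} with Corollary~\ref{co2} in place of Corollary~\ref{co1}, and your steps reproduce that adaptation: Proposition~\ref{c1} with $t=n+\ve$ forces $k\ge n-1+\ve$, then Corollary~\ref{co2} plus Rallis' tower property handle $k=n-1+\ve$, and Corollary~\ref{a3} handles $k=n+\ve$ (with no $\mathrm{sgn}_{\TT}$ twists needed for $\H_n^{\ve}$).
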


\section{The global theta lifts from $\wt{\J}_{n}$ to $\G_{k}^{\ve}$}\label{sec:6}
Throughout this section, we assume that $F$ is a number field. Let $\{V_i^{\ve}\}_{i \ge 1}$ be the Witt tower of quadratic spaces containing $V^{\ve}$. We consider the global Weil representation $\omega_{\psi,W_n,V_k^{\ve}} \coloneqq \bigotimes_v \omega_{\psi_v,W_{n,v},V_{k,v}^{\ve}}$ of $\wt{\J}_n(\A) \times \G_k^{\ve}(\A)$. 

Let $W'$ and $Y'$ denote the subspaces of $W_n$ spanned by $\{ f_n, f_n^* \}$ and $\{ f_n \}$, respectively. Then $\wt{Y}_n \coloneqq Y_{n-1}^* \oplus Y'$ forms a maximal isotropic subspace of $W_{n}$. We can then decompose $\omega_{\psi,W_n,V_k^{\ve}}$ as a tensor product $\omega_{\psi,W_n,\overline{V_{k-1+\ve}}} \otimes \omega_{\psi,W_n,V^{\ve}}$ realized in the mixed Schwartz-Bruhat space $\mc{S}(X_{k-1+\ve}^* \otimes W_n)(\A) \otimes \mc{S}(V^{\ve} \otimes \wt{Y}_n)(\A)$.

There is an equivariant map $\theta_{\psi,W_n,V_k^{\ve}} \colon \mc{S}(X_{k-1+\ve}^* \otimes W_n)(\A) \otimes \mc{S}(V^{\ve} \otimes \wt{Y}_n)(\A) \to \mc{A}( \wt{\J}_n \times \G_k^{\ve})$ given by the theta series
\[
\theta_{\psi,W_n,V_k^{\ve}}(\phi;\wt{h'},g) \coloneqq \sum_{(\mathbf{x},\mathbf{y}) \in (X_{k-1+\ve}^* \otimes W_n)(F) \oplus (V^{\ve} \otimes \wt{Y}_n)(F)}\omega_{\psi,W_n,V_k^{\ve}}(\wt{h'},g)(\phi)(\mathbf{x},\mathbf{y}).
\]

For $\varphi \in \mc{A}(\wt{\J}_n)$, take $\ve=1$ if $\varphi$ is genuine and $\ve=0$ if $\varphi$ is non-genuine. When $\ve=1$, for any $\phi \in \mc{S}(X_{k-1+\ve}^* \otimes W_n)(\A) \otimes \mc{S}(V^{\ve} \otimes \wt{Y}_n)(\A)$, the expression $\theta_{\psi,W_n,V_k^{\ve}}(\phi;(h',\ve'),g) \cdot \varphi((h',\ve'))$ is independent of $\ve'$. Consequently, by evaluating at $\ve'=1$, we simplify the notation and denote  $\theta_{\psi,W_n,V_k^{\ve}}(\phi;(h',1),g)$ and $\varphi((h',1))$ by $\theta_{\psi,W_n,V_k^{\ve}}(\phi;h',g)$ and $\varphi(h')$, respectively.

For $\phi \in \mc{S}(X_{k-1+\ve}^* \otimes W_n)(\A) \otimes \mc{S}(V^{\ve} \otimes \wt{Y}_n)(\A)$, we define
\[
\theta_{\psi,W_n,V_k^{\ve}}(\phi,\varphi)(g) \coloneqq \int_{\J_n(F) \bs \J_n(\A)} \theta_{\psi,W_n,V_k^{\ve}}(\phi;h',g)\varphi(h') \, dh', \quad g \in \G_n^{\ve}(\A).
\]

For an irreducible cuspidal automorphic representation $\sigma$ of $\wt{\J}_n(\A)$, we define the space of global theta lifts as
\[
\Theta_{\psi,W_n,V_k^{\ve_{\sigma}}}(\sigma) \coloneqq \left\{ \theta_{\psi,W_n,V_k^{\ve_{\sigma}}}(\phi,\varphi) \;\middle|\; \phi \in \omega_{\psi,W_n,V_k^{\ve_{\sigma}}}, \ \varphi\in \sigma \right\}.
\] 
Then $\Theta_{\psi,W_n,V_k^{\ve_{\sigma}}}(\sigma)$ is an automorphic representation of $\G_k^{\ve_{\sigma}}(\A)$ (or of $\H_k^{\ve_{\sigma}}(\A)$ upon restriction). We define an automorphic representation $\tau_{\sigma}$ of $\wt{\J}_{n,n-1}'(\A)$ by
\begin{equation}\label{tau}
\tau_{\sigma} \coloneqq \begin{cases} 
\omega_{\psi_{cd},W'}, & \text{if } \ve_{\sigma}=0, \\ 
\II, & \text{if } \ve_{\sigma}=1. 
\end{cases}
\end{equation}
(Here, $\omega_{\psi_{cd},W'}$ is a representation acting on $\operatorname{Im}(\theta_{\psi_{cd},W'}')\,$; see \eqref{gtheta_prime}.)

For $k \in \{n-\ve_{\sigma}, n+1-\ve_{\sigma}\}$, consider the Fourier--Jacobi period $\mc{FJ}_{k,\psi}^{\lambda_{\ve_{\sigma}}}$, which acts on the space
\[
\begin{cases}
\sigma \boxtimes \tau_{\sigma} \boxtimes \nu_{\psi^{-1},W'}^{\lambda_{\ve_{\sigma}}}, & \text{if } k=n-\ve_{\sigma}, \\ 
\sigma, & \text{if } k=n+1-\ve_{\sigma}.
\end{cases}
\]

The following theorem states that the non-vanishing of $\mc{FJ}_{k-1+\ve_{\sigma},\psi}^{\lambda_{\ve_{\sigma}}}$ is deeply intertwined with the properties of the global theta lift.
\begin{thm}\label{b10} 
Let $\sigma$ be an irreducible cuspidal representation of $\wt{\J}_n(\A)$. For $k>n+1-\ve_{\sigma}$, if $\Theta_{\psi^{-1},W_{n},V_k^{\ve_{\sigma}}}(\sigma)$ is nonzero, then it is not generic. For $k \in \{n-\ve_{\sigma}, n+1-\ve_{\sigma}\}$, the non-vanishing of $\mc{FJ}_{k-1+\ve_{\sigma},\psi}^{\lambda_{\ve_{\sigma}}}$ is equivalent to the condition that $\Theta_{\psi^{-1},W_{n},V_k^{\ve_{\sigma}}}(\sigma)$ is nonzero and $\mu_{\ve_{\sigma}}$-generic.
\end{thm}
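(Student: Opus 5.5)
The plan is to mirror the proof of Theorem~\ref{a1} with the roles of the two members of the dual pair exchanged. We compute the $\mu_{\ve_\sigma}$-Whittaker coefficient of $f=\theta_{\psi^{-1},W_n,V_k^{\ve_\sigma}}(\phi,\varphi)$ in the mixed model $\mc{S}(X_{k-1+\ve}^*\otimes W_n)(\A)\otimes\mc{S}(V^{\ve}\otimes\wt{Y}_n)(\A)$, where $\ve=\ve_\sigma$. We write the maximal unipotent $\U_{k-1+\ve}^{\ve}$ of $\G_k^{\ve}$ as $Z_{k-1+\ve}$ times the unipotent radical $\U$ of the parabolic stabilizing $X_{k-1+\ve}$, and integrate first over the abelian piece $\N\subset\U$ (the $S_{k-1+\ve}$-part). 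By \eqref{a7}, this integral picks out those $\mathbf{w}=(w_1,\dots,w_{k-1+\ve})\in W_n(F)^{k-1+\ve}$ with $Gr(\mathbf{w})=0$, that is, the $w_i$ span an isotropic subspace.

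Next we integrate over the $\Hom(V^{\perp},X_{k-1+\ve})$-part, using \eqref{a9} (or \eqref{a11} when $V^\perp$ is split). Only the last row of this part carries the character $\mu_{\ve}$, through $(ue,e_{k-1+\ve}^*)$. The resulting integral pairs $y$ against $w_i$, so it forces the $\mathbf{y}$-variable to vanish in the first $k-2+\ve$ directions. In the last direction it becomes a partial Fourier transform, which produces the factor $\nu_{\psi^{-1},W'}^{\lambda_{\ve}}$ (when $\ve=0$) or, after Remark~\ref{1dim}, a scalar twist by $\psi_{\lambda_\ve}$ (when $\ve=1$).

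We then use $Z_{k-1+\ve}(F)\times\J_n(F)$-orbits on isotropic tuples exactly as before. If $w_1,\dots,w_{k-1+\ve}$ are linearly dependent, a column operation makes one entry zero, and a simple root subgroup stabilizing that tuple carries a nontrivial character, so the contribution vanishes. When $k>n+1-\ve$ an isotropic independent tuple of length $k-1+\ve>n$ cannot exist in $W_n$, which gives non-genericity. For $k\in\{n-\ve,n+1-\ve\}$, Witt's theorem leaves the single orbit of $(f_1,\dots,f_{k-1+\ve})$. Its stabilizer in $\J_n$ is $\U_{n,k-1+\ve}'\rtimes\J_{n,k-1+\ve}'$, and the embedding $v$ of $Z_{k-1+\ve}$ into $\J_n$ intertwines $\mu_{\ve}$ with $\mu'_{k-1+\ve}$.

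Unfolding the $\J_n(F)$-sum then gives
\[
W(f)=\int_{R(\A)\bs\J_n(\A)}(\omega(h')\phi)(\mathbf{f}_0)\,\mc{FJ}_{k-1+\ve,\psi}^{\lambda_\ve}(\sigma(h')\varphi\otimes\cdots)\,dh'.
\]
Here the inner period is taken against $\tau_\sigma\boxtimes\nu_{\psi^{-1},W'}^{\lambda_\ve}$ when $k=n-\ve$, where the leftover Weil factor on $V^{\ve}\otimes W'$ splits off as $\omega_{\psi_{cd},W'}$, which explains the definition \eqref{tau}. It is against $\sigma$ alone in the other case. The converse direction follows, as in Theorem~\ref{a1}, by choosing $\phi$ as an approximate identity concentrated at $\mathbf{f}_0$.

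The main obstacle is the middle step. We must identify precisely the residual theta function on $\wt{\J}_{n,k-1+\ve}'$ coming from the $V^{\ve}\otimes W'$ and $e'$ coordinates, with the correct character $\psi_{\lambda_\ve}$ and metaplectic normalization ($\gamma_\psi$ factors, and the constant $cd$). To do this, I would first write out \eqref{a11} explicitly for $V^{\ve}$ of type $(d,c)$, and then match the resulting Heisenberg action with the $\Omega_{\psi,W'}$ formulas of Section~\ref{heis}.
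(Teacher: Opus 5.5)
Your proposal is correct and follows the paper's proof essentially step for step. Both use the same mixed model, the $n(S_{k-1+\ve})$-integration via \eqref{a7} to isolate isotropic tuples, the $l(M_{k-1+\ve,2-\ve})$-integration via \eqref{a9}, the linear-dependence and Witt-orbit analysis, unfolding to $\mc{FJ}_{k-1+\ve,\psi}^{\lambda_{\ve}}$ (with $\theta'_{\psi_{cd},W'}$ supplying $\tau_\sigma$ when $\ve=0$), and the approximate-identity converse.

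Two bookkeeping corrections. First, the stabilizer of $(f_1,\dots,f_{k-1+\ve})$ in $\J_n$ is only $\J'_{n,k-1+\ve}$ times the unipotent radical of the parabolic stabilizing $Y_{k-1+\ve}$; it does not contain $v'(Z_{k-1+\ve})$. That missing factor of the period's domain $\U'_{n,k-1+\ve}\rtimes\J'_{n,k-1+\ve}$ comes from transporting the orthogonal-side $Z_{k-1+\ve}$-integral to $\J_n$. Second, no partial Fourier transform is involved: the $l(M)$-integral simply pins $\mathbf{y}$, up to the free $f_n$-coordinates $(c_1,c_{2-\ve})$ when $k=n-\ve$, and summing over these produces the residual theta series.
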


In the case where $\sigma$ is non-genuine (i.e., $\ve_\sigma = 0$) and $V_k^0$ is of type $(1,1)$ for $k \in \{n, n+1\}$, a similar result was established in \cite[Theorem~2.2]{GRS97} and in the subsequent discussion following \cite[Proposition~2.7]{GRS97}. Although our definition of the period $\mc{FJ}_{n-1, \psi}^{1}$ differs from the period $\mc{P}_{\psi^{-1}}$ appearing in \cite[pp.~85, 102]{GRS97}, the theorem implies that the non-vanishing of $\mc{FJ}_{n-1, \psi}^{1}$ is equivalent to the non-vanishing of $\mc{P}_{\psi^{-1}}$. 

When $\sigma$ is genuine (i.e., $\ve_\sigma = 1$), the result was established in \cite[Propositions~2 and 3]{Fu95}, with the exception of the case $k = n-1$. Below, we present a unified proof that not only covers the remaining cases from both references but also extends the results to spaces $V_k^0$ of arbitrary type $(d,c)$.

\begin{proof}
For convenience, we shall simply denote $\ve_\sigma$ by $\ve$. Using an argument analogous to the proof of Theorem \ref{t2}, one can demonstrate that the $\mu_\ve$-genericity of an automorphic representation of $\G_k^\ve$ is equivalent to that of its restriction to $\H_k^\ve$. Therefore, we shall focus on proving the case where $\Theta_{\psi^{-1},W_n,V_k^\ve}(\sigma)$ is regarded as a representation of $\H_k^\ve(\A)$.

For $f \in \Theta_{\psi^{-1},W_n,V_k^{\ve}}(\sigma)$, let us compute its Whittaker-Bessel period 
\[
W_{\psi}^{\ve}(f) = \int_{\U_{k}(F)\backslash \U_{k}(\A)} \mu_{\ve}^{-1}(u)f(u) \, du.
\] 
Write 
\[
f(h) = \int_{[\J_n]} \Theta_{\psi^{-1},W_n,V_k^{\ve}}(\phi;h',h)\varphi(h') \, dh'
\] 
for some $\phi \in \mc{S}(X_{k-1+\ve}^* \otimes W_n)(\A) \otimes \mc{S}(V^{\ve} \otimes \wt{Y}_n)(\A)$ and $\varphi \in \sigma$. 

Suppose that $\phi=\phi_1 \otimes \phi_2$, where $\phi_1\in \mc{S}(X_{k-1+\ve}^* \otimes W_n)(\A)$ and $\phi_2\in \mc{S}(V^{\ve} \otimes \wt{Y}_n)(\A)$. We use the following basis to represent the elements of $\H_k^{\ve}$ as matrices:
\[
\begin{cases}
\{e_1,\dots,e_{k-1},e,e',e_{k-1}^*,\dots,e_1^*\}, \quad &\text{when } \ve=0, \\ 
\{e_1,\dots,e_{k},e,e_{k}^*,\dots,e_1^*\}, \quad &\text{when } \ve=1. 
\end{cases}
\] 
Similarly, we represent the elements of $\J_{n}$ as matrices with respect to the basis $\{f_1,\dots,f_{n},f_{n}^*,\dots,f_{1}^*\}$.

For $z \in Z_{k-1+\ve}$, define
\[
v(z) = \begin{pmatrix}z & 0 &0 \\ 0 & \mathrm{Id}_{2-\ve} & 0\\ 0 & 0 & z^{\star}\end{pmatrix} \in \H_k^{\ve},
\] 
for $s \in S_{k-1+\ve}$, define
\[
n(s) = \begin{pmatrix} \mathrm{Id}_{k-1+\ve} & 0 & s \\ 0 & \mathrm{Id}_{2-\ve} & 0 \\ 0 & 0 & \mathrm{Id}_{k-1+\ve}\end{pmatrix} \in \H_k^{\ve},
\]
and for $m \in M_{k-1+\ve,2-\ve}$, define
\[
l(m) = \begin{pmatrix} \mathrm{Id}_{k-1+\ve} & m & -\frac{1}{2}mm' \\ 0 & \mathrm{Id}_{2-\ve} & m' \\ 0 & 0 & \mathrm{Id}_{k-1+\ve}\end{pmatrix} \in \H_k^{\ve}.
\]
(\emph{Caveat:} The definition of $v(z)$ here differs slightly from the one provided in the proof of Theorem \ref{a1}.)

Under the decomposition $\U_k = n(S_{k-1+\ve}) \cdot l(M_{k-1+\ve,2-\ve}) \cdot v(Z_{k-1+\ve})$, the Haar measure factors as $du = ds \, dm \, dz$ according to our conventions in Section~\ref{subsec:2-1}.

Therefore, we can write
\begin{align*}
W_{\psi}^{\ve}(f) &= \int_{[M_{k-1+\ve,2-\ve}]}\int_{[Z_{k-1+\ve}]}\mu_{\ve}^{-1}(l(m)v(z)) \\
&\quad \times \int_{[S_{k-1+\ve}]}\mu_{\ve}^{-1}(n(s))f\big(n(s)l(m)v(z)\big) \, ds \, dz \, dm.
\end{align*}

Let us first compute the partial Fourier coefficient $W^f(h)$ defined by
\[
W^f(h) \coloneqq \int_{[S_{k-1+\ve}]}\mu_{\ve}^{-1}(n(s))f(n(s)h) \, ds, \quad h\in \H_k^{\ve}(\A).
\]
We have
\begin{align*}
W^f(h) &= \int_{[S_{k-1+\ve}]}\mu_{\ve}^{-1}(n(s))\left(\int_{[\J_n]} \Theta_{\psi^{-1},W_n,V_k^{\ve}}(\phi;h',n(s)h)\varphi(h') \, dh'\right) ds \\
&= \int_{[S_{k-1+\ve}]}\mu_{\ve}^{-1}(n(s))\Bigg(\int_{[\J_{n}]}\sum_{\substack{\mathbf{x}\in (W_n(F))^{k-1+\ve}\\ \mathbf{y} \in (V^{\ve} \otimes \wt{Y}_n)(F)}} \\
&\quad \big(\omega_{\psi^{-1},W_n,V_k^{\ve}}(h',n(s)h)(\phi_1\otimes \phi_2)\big)(\mathbf{x};\mathbf{y})\varphi(h') \, dh'\Bigg) ds \\
&= \int_{[\J_{n}]}\sum_{\substack{\mathbf{x} \in \WW_{0}\\ \mathbf{y} \in (V^{\ve} \otimes \wt{Y}_n)(F)}}\big(\omega_{\psi^{-1},W_n,V_k^{\ve}}(h',h)(\phi_1\otimes \phi_2)\big)(\mathbf{x};\mathbf{y})\varphi(h') \, dh',
\end{align*}
where 
\[
\WW_0 = \left\{\mathbf{x}\in (W_{n}(F))^{k-1+\ve} \;\middle|\; \mathrm{Gr}(\mathbf{x}) = \begin{pmatrix} 0 & \cdots & 0 \\ \vdots  & \ddots & \vdots \\ 0 & \cdots & 0\end{pmatrix}\right\}.
\]
(The last equality follows from \eqref{a7}.)

Given any $\mathbf{x}=(x_1,\dots,x_{k-1+\ve})\in \WW_{0}$ and $\mathbf{y} \in (V^{\ve} \otimes \wt{Y}_n)(F)$, if the set $\{x_1,\dots,x_{k-1+\ve}\}$ is linearly dependent, then an argument analogous to the proof of Theorem \ref{a1} yields
\begin{equation*}
\begin{split}
&\int_{[Z_{k-1+\ve}]} \mu_{\ve}^{-1}(l(m)v(z)) \int_{[\J_{n}]} \big(\omega_{\psi^{-1},W_n,V_k^{\ve}}(h',l(m)v(z)h)(\phi_1\otimes \phi_2)\big)(\mathbf{x};\mathbf{y}) \varphi(h') \, dh' \, dz = 0.
\end{split}
\end{equation*}
When $k>n+1-\ve$, the set $\{x_1,\dots,x_{k-1+\ve}\}$ is necessarily linearly dependent because the dimension of a maximal isotropic subspace of $W_n$ is $n$. Consequently, $W_{\psi}^{\ve}(f)=0$. This completes the proof of the first statement.

When $k=n-\ve \text{ or }n+1-\ve$, only the linearly independent sets $\{x_1,\dots,x_{k-1+\ve}\}$ in $\WW_0$ contribute to non-trivial summands in $W^f(h)$. By the Witt extension theorem, there is only one orbit for the $\J_n(F)$-action on the linearly independent subsets in $\WW_{0}$. Choose its representative as $\w_0=(f_1,\dots,f_{k-1+\ve})\in \WW_0$ and define 
\[
\R_{k-1+\ve}'' \coloneqq \{h'\in \J_n \mid h'\w_0=\w_0 \}.
\]
Then
\[
\R_{k-1+\ve}'' = \begin{cases}
\begin{pmatrix} \mathrm{Id}_{n-2} & 0 & *&*&* \\ &  1 &  *&*&* \\  & & \mathrm{Id}_2 &*&*\\  &  & & 1 & 0\\ & & & & \mathrm{Id}_{n-2} \end{pmatrix} \in \J_{n}, & k=n-\ve, \\ \\ 
\begin{pmatrix}\mathrm{Id}_{n-1} &0&*&* \\ &1&*&* \\&&1&0 \\ &&&\mathrm{Id}_{n-1}\end{pmatrix} \in \J_{n}, & k=n+1-\ve.
\end{cases}
\]
Note that 
\begin{equation*}
\begin{split}
&\sum_{\mathbf{x} \in \WW_0}\omega_{\psi^{-1},W_n,\overline{V_{k-1+\ve}}}(\phi_1)(\mathbf{x})  = \sum_{h_0' \in \R_{k-1+\ve}''(F) \bs \J_n(F)}  \big(\omega_{\psi^{-1},W_n,\overline{V_{k-1+\ve}}}(h_0',\mathbf{1})(\phi_1)\big)(\w_0)
\end{split}
\end{equation*}
and the theta function on $\J_n(\A)$ given by
\[
\sum_{\mathbf{y} \in (V^{\ve} \otimes \wt{Y}_n)(F)}\big(\omega_{\psi^{-1},W_n,V^{\ve}}(h',\1)\phi_2\big)(\mathbf{y})
\]
is $\J_n(F)$-invariant. Thus,
\begin{align*}
W_{\psi}^{\ve}(f) 
&= \sum_{\mathbf{y} \in (V^{\ve} \otimes \wt{Y}_n)(F)} \int_{[Z_{k-1+\ve}]} \int_{[M_{k-1+\ve,2-\ve}]} \sum_{h_0' \in \R_{k-1+\ve}''(F) \bs \J_n(F)} \int_{[\J_{n}]} \mu_{\ve}^{-1}(l(m)v(z)) \\
&\quad \times \big(\omega_{\psi^{-1},W_n,V_k^{\ve}}(h_0'h',l(m)v(z))(\phi_1\otimes \phi_2)\big)(\w_0;\mathbf{y})\varphi(h') \, dh' \, dm \, dz \\
&= \sum_{\mathbf{y} \in (V^{\ve} \otimes \wt{Y}_n)(F)} \int_{[Z_{k-1+\ve}]} \int_{[M_{k-1+\ve,2-\ve}]}  \int_{\R_{k-1+\ve}''(F) \bs \J_{n}(\A)} \mu_{\ve}^{-1}(l(m)v(z)) \\
&\quad \times \big(\omega_{\psi^{-1},W_n,V_k^{\ve}}(h',l(m)v(z))(\phi_1\otimes \phi_2)\big)(\w_0;\mathbf{y})\varphi(h') \, dh' \, dm \, dz.
\end{align*}

Using the basis $\{e,e'\}$ of $V^{\ve}$, we identify $V^{\ve} \otimes \wt{Y}_n$ with $(\wt{Y}_n)^{2-\ve}$. (Note that $e'=0$ when $\ve=1$.) Then for each $\mathbf{y}=(y_1,y_{2-\ve}) \in \wt{Y}_n(F)^{2-\ve}$ (i.e., $\mathbf{y}=e \otimes y_1 + e'\otimes y_{2-\ve}$), utilizing \eqref{a9}, we have
\begin{align*}
&\int_{[M_{k-1+\ve,2-\ve}]}\mu_{\ve}^{-1}(l(m)) \cdot \big(\omega_{\psi^{-1},W_n,V_k^{\ve}}(h',l(m)h)(\phi_1\otimes \phi_{2})\big)(\w_0;\mathbf{y}) \, dm \\
&= \int_{[M_{k-1+\ve,2-\ve}]}\mu_{\ve}^{-1}(l(m)) \cdot \psi^{-1}\left( \sum_{i=1}^{k-1+\ve}  m_{i,1} \langle y_1,f_i \rangle_{W_n} + m_{i,{2-\ve}} \langle y_{2-\ve},f_i\rangle_{W_n} \right) \\
&\quad \times \big(\omega_{\psi^{-1},W_n,V_k^{\ve}}(h',h)(\phi_1\otimes \phi_2)\big)(\w_0;\mathbf{y}) \, dm.
\end{align*}

Observe that $\mu_{\ve}^{-1}(l(m))=\psi^{-1}(m_{k-1+\ve,1})$ and that the integral of a character over a compact group is nonzero if and only if the character is trivial. Accordingly, the above integral is nonzero if and only if $\mathbf{y}=(f_{k-1+\ve}^*+c_1 \cdot f_{k+\ve}, \ c_{2-\ve} \cdot f_{k+\ve}) \in \wt{Y}_n(F)^{2-\ve}$ for some $c_1,c_{2-\ve} \in F^{2-\ve}$. (Here, $f_{k+\ve}=0$ if $k=n+1-\ve$.) Put $\y_{c_1,c_{2-\ve}} = (f_{k-1+\ve}^*+c_1 \cdot f_{k+\ve}, \ c_{2-\ve} \cdot f_{k+\ve}) \in \wt{Y}_n(F)^{2-\ve}$.

Define $\y_{0} \in (V^{\ve}(F))^{k-1+\ve}$ and $y_{c_1,c_{2-\ve}}\in V^{\ve}(F)$ by
\[
\y_{0}=(0,\dots,0,e), \quad y_{c_1,c_{2-\ve}}=c_1 \cdot e +c_{2-\ve} \cdot e'.
\] 
Then, using the basis $\{f_1^*,\dots,f_{n-1}^*,f_n\}$ of $\wt{Y}_n$, we can express $\y_{c_1,c_{2-\ve}} \in (V^{\ve}(F))^{n}$ as
\[ 
\y_{c_1,c_{2-\ve}} = \begin{cases} (\y_{0},y_{c_1,c_{2-\ve}}), & k=n-\ve, \\ \y_0, & k=n+1-\ve. \end{cases}
\]

For the Weil representation $\omega_{\psi^{-1},W_{n},V^{\ve}}$, when $k=n+1-\ve$, we use the model $\mc{S}(Y_{n}^* \otimes V^{\ve})(\A)$. When $k=n-\ve$, we use the mixed model $\mc{S}(Y_{n-1}^* \otimes V^{\ve})(\A)\otimes \mc{S}(Y' \otimes V^{\ve})(\A)$, regarding it as the tensor product representation $\omega_{\psi^{-1},W_{n-1},V^{\ve}}\otimes \omega_{\psi^{-1},W',V^{\ve}}$.
In this case, we assume that $\phi_2$ is a pure tensor product $\phi_{2,1}\otimes \phi_{2,2}\in \mc{S}( Y_{n-1}^* \otimes V^{\ve})(\A)\otimes \mc{S}(Y' \otimes V^{\ve})(\A)$.

For $s \in S_{k-1+\ve}$, define
\[
n'(s) = \begin{cases}
\begin{pmatrix} \mathrm{Id}_{n-1} &  & s \\  & \mathrm{Id}_{2} &  \\  &  & \mathrm{Id}_{n-1}\end{pmatrix} \in \J_n, & k=n-\ve, \\ \\ 
\begin{pmatrix} \mathrm{Id}_{n} & s  \\   & \mathrm{Id}_{n}\end{pmatrix} \in \J_n, & k=n+1-\ve. 
\end{cases}
\]
By \eqref{a7}, for $s\in S_{k-1+\ve}(\A)$, we have
\begin{align*}
&\omega_{\psi^{-1},W_n,V_k^{\ve}}(n'(s),\1)(\phi_1\otimes \phi_2)(\w_0;\y_{c_1,c_{2-\ve}})= \phi_1(\w_0) \otimes \big(\omega_{\psi^{-1},W_{n},V^{\ve}}(n'(s),\1)(\phi_2)\big)(\y_{c_1,c_{2-\ve}}) \\
&= \psi^{-1}\left(\frac{1}{2}\mathrm{tr}\big(\mathrm{Gr}(\y_{0}) s \varpi_{k-1+\ve} \big)\right) \phi_1(\w_0) \phi_2(\y_{c_1,c_{2-\ve}}) = \psi^{-1}(\lambda_{\ve} s_{k-1+\ve,1}) (\phi_1\otimes \phi_2)(\w_0;\y_{c_1,c_{2-\ve}}).
\end{align*}

For $m \in M_{n-1,2}$, define
\[
l'(m) = \begin{pmatrix} \mathrm{Id}_{n-1} & m & -\frac{1}{2}mm' \\ 0 & \mathrm{Id}_{2} & m' \\ 0 & 0 & \mathrm{Id}_{n-1}\end{pmatrix} \in \J_n.
\]
Applying \eqref{a11} with the substitutions $V=W_n$, $W=V^{\ve}$, and $l=n$, we obtain, for $m\in M_{n-1,2}(\A)$,
\begin{align*}
&\omega_{\psi^{-1},W_n,V_{n-\ve}^{\ve}}(l'(m),\1)(\phi_1\otimes \phi_2)(\w_0;\y_{c_1,c_{2-\ve}}) = \phi_1(\w_0) \omega_{\psi^{-1},W_n,V^{\ve}}(l'(m),\1)(\phi_2)(\y_{c_1,c_{2-\ve}}) \\
&= \phi_1(\w_0) \phi_{2,1}(\y_{0}) \phi_{2,2}\big((m_{n-1,1} e + y_{c_1,c_{2-\ve}})\otimes f_n^*\big) \times \psi^{-1} \big(2\lambda_{\ve}m_{n-1,2} (c_1+m_{n-1,1})\big).
\end{align*}

For $\alpha \in \J_{n,n-1}'$, define the map $\k' \colon \J_{n,n-1}' \to \J_n$ by
\[
\k'(\alpha) = \begin{pmatrix}\mathrm{Id}_{n-1} & 0 &0 \\ 0 & \alpha & 0\\ 0 & 0 & \mathrm{Id}_{n-1}\end{pmatrix} \in \J_n.
\] 
Then \[\R_{k-1+\ve}'' = \begin{cases} \k'(\J_{n,n-1}') \cdot l'(M_{n-1,2})\cdot n'(S_{n-1}), & \text{if } k=n-\ve, \\ n'(S_{n}), & \text{if } k=n-\ve+1. \end{cases}
\]
Define a map $v' \colon Z_{k-1+\ve} \to \J_n$ as follows: for $z \in Z_{k-1+\ve}$,
\[
v'(z) \coloneqq \begin{cases}
\begin{pmatrix} z & & \\ &\mathrm{Id}_2& \\ && z^{\star}\end{pmatrix} \in \J_n, & \text{if } k=n-\ve, \\ \\ 
\begin{pmatrix} z & \\ & z^{\star}\end{pmatrix} \in \J_n, & \text{if } k=n-\ve+1. \end{cases}
\]
The Haar measures $dz$,
$d\alpha$, $dm$, $ds$, $dr''$, and $du'$ on the respective groups $Z_{k-1+\ve}$, $\J_{n,n-1}'$, $M_{n-1,2}$, $S_{k-1+\ve}$, $\R_{k-1+\ve}''$, and $\U_n'$ are normalized to be consistent with the conventions in Section~\ref{subsec:2-1}. In particular, they satisfy the following compatible integration formulas:
\[
dr'' = \begin{cases} 
d\alpha \, dm \, ds, & \text{if } k=n-\ve, \\ 
ds, & \text{if } k=n-\ve+1, 
\end{cases} \quad \text{and} \quad du' = dz \, dr''.
\]
From now on, we regard these groups as subgroups of $\J_n$.

From \eqref{a5}, it is straightforward to verify that for $z\in Z_{k-1+\ve}$:
\begin{itemize}
    \item $\begin{aligned}[t] 
    &\big(\omega_{\psi^{-1},W_n,V_k^{\ve}}(h',v(z))(\phi_1\otimes \phi_2)\big)(\w_0;\y_{c_1,c_{2-\ve}})= \big(\omega_{\psi^{-1},W_n,V_k^{\ve}}(z^{-1} h' ,\1)(\phi_1\otimes \phi_2)\big)(\w_0;\y_{c_1,c_{2-\ve}}),
    \end{aligned}$
    
    \item $\mu_{\ve}(v(z)) = \mu_{k-1+\ve}'(z).$
\end{itemize}

Assume that 
\[
\phi_{2,2} = \begin{cases} 
\phi_{3,1}\otimes \phi_{3,2} \in \mc{S}(Y' \otimes \langle e \rangle)(\A) \otimes \mc{S}(Y' \otimes \langle e' \rangle)(\A), & \text{if } \ve=0, \\ 
\phi_{3,1} \in \mc{S}(Y' \otimes \langle e \rangle)(\A), & \text{if } \ve=1. 
\end{cases}
\]

Then for $\alpha \in \J_{n,n-1}'(\A)$ and $m \in M_{n-1,2}(\A)$,
\begin{align*}
&\sum_{(c_1,c_{2-\ve}) \in F^{2-\ve}}  \big(\omega_{\psi^{-1},W_n,V_{n-\ve}^{\ve}}( \alpha m,\1)(\phi_1\otimes \phi_2)\big)(\w_0;\y_{c_1,c_{2-\ve}}) \\ 
&= \begin{cases} 
\phi_1(\w_0) \phi_{2,1}(\y_0) \theta_{\psi_{c}^{-1},W'}(\phi_{3,1})(\alpha m) \theta_{\psi_{cd},W'}'(\phi_{3,2})(\alpha), & \text{if } \ve=0, \\ 
\phi_1(\w_0) \phi_{2,1}(\y_0) \theta_{\psi_{d}^{-1},W'}(\phi_{3,1})(\alpha m), & \text{if } \ve=1. 
\end{cases}
\end{align*}
(For the definition of $\theta_{\psi_{\lambda},W'}$, see Remark \ref{1dim}.)

For any $h'\in \J_n(\A)$, there exists an integer $\mf{i}(h') \ge 1$ such that
\[
\omega_{\psi^{-1},W_n,V_{n-\ve}^{\ve}}(h',\1)(\phi_1 \otimes \phi_{2,1}\otimes \phi_{3,1}\otimes \phi_{3,2-\ve}) = \sum_{i=1}^{\mf{i}(h')}\phi_{1,h'}^{i}\otimes \phi_{2,h'}^{i} \otimes \phi_{3,h'}^{i}\otimes \phi_{4-\ve,h'}^{i},
\]
where for each $1\le i \le \mf{i}(h')$, $\phi_{1,h'}^{i}\otimes \phi_{2,h'}^{i} \otimes \phi_{3,h'}^{i}\otimes \phi_{4-\ve,h'}^{i}  \in $
\begin{equation*}
    \begin{split}
\begin{cases} 
\mc{S}(W_{n} \otimes X_{n-1+\ve}^*)\otimes\mc{S}(Y_{n-1}^* \otimes V^{\ve})\otimes\mc{S}(Y' \otimes e)\otimes \mc{S}(Y' \otimes e'), & \text{if } \ve=0, \\ 
\mc{S}(W_{n} \otimes X_{n-1+\ve}^*)\otimes\mc{S}(Y_{n-1}^* \otimes V^{\ve})\otimes\mc{S}(Y' \otimes e), & \text{if } \ve=1. \end{cases}
\end{split}
\end{equation*}
Note that $\mf{i}(\1)=1$ and $\phi_1 \otimes \phi_{2,1}\otimes \phi_{3,1}\otimes \phi_{3,2-\ve} = \phi_{1,\1}^{1}\otimes \phi_{2,\1}^{1} \otimes \phi_{3,\1}^{1}\otimes \phi_{4-\ve,\1}^{1}$. 
 
Define 
\[
\mc{F} \coloneqq \begin{cases} F^{2-\ve}, & \text{if } k=n-\ve, \\ \emptyset, & \text{if } k=n-\ve+1. \end{cases}
\]
When $\ve=1$, we regard $\theta_{\psi_{cd},W'}'(\phi_{4-\ve,h'}^{i})$ as the constant function $1$ on $ [\J_{n,n-1}']$.
 Combining these observations, we obtain
\begin{align*}
&W_{\psi}^{\ve}(f) = \sum_{(c_1,c_{2-\ve}) \in \mc{F}} \int_{\R_{k-1+\ve}''(F) \bs \J_{n}(\A)}\int_{[Z_{k-1+\ve}]}\mu_{\ve}^{-1}(v(z)) \\
&\quad \times \big(\omega_{\psi^{-1},W_n,V_k^{\ve}}(h',v(z))(\phi_1\otimes \phi_2)\big)(\w_0;\y_{c_1,c_{2-\ve}}) \varphi(h') \, dh' \, dz \\
&= \sum_{(c_1,c_{2-\ve}) \in \mc{F}} \int_{\R_{k-1+\ve}''(\A) \bs \J_{n}(\A)}\int_{[Z_{k-1+\ve}]}\mu_{\ve}^{-1}(v(z))\int_{[\R_{k-1+\ve}'']}\varphi(r''h') \\
&\quad \times \big(\omega_{\psi^{-1},W_n,V_k^{\ve}}(z^{-1}r''h',\1)(\phi_1\otimes \phi_2)\big)(\w_0;\y_{c_1,c_{2-\ve}})  \, dr''\, dz \, dh' \\
&= \sum_{(c_1,c_{2-\ve}) \in \mc{F}} \int_{\R_{k-1+\ve}''(\A) \bs \J_{n}(\A)}\int_{[Z_{k-1+\ve}]}\mu_{\ve}^{-1}(v(z))\int_{[\R_{k-1+\ve}'']}\varphi(z\wt{r''}z^{-1}h') \\
&\quad \times \big(\omega_{\psi^{-1},W_n,V_k^{\ve}}(\wt{r''}z^{-1}h',\1) \big)(\phi_1\otimes \phi_2)(\w_0;\y_{c_1,c_{2-\ve}}) \, d\wt{r''} \, dz \, dh' \\
&= \sum_{(c_1,c_{2-\ve}) \in \mc{F}} \int_{\R_{k-1+\ve}''(\A) \bs \J_{n}(\A)}\int_{[Z_{k-1+\ve}]}\mu_{\ve}^{-1}(v(z))\int_{[\R_{k-1+\ve}'']}\varphi(z\wt{r''}h') \\
&\quad \times \big(\omega_{\psi^{-1},W_n,V_k^{\ve}}(\wt{r''}h',\1)\big)(\phi_1\otimes \phi_2)(\w_0;\y_{c_1,c_{2-\ve}}) \, d\wt{r''} \, dz \, dh' \\
&= \begin{cases}
\begin{aligned}
    &\int_{\R_{n-1}''(\mathbb{A}) \bs \J_{n}(\mathbb{A})} \sum_{i=1}^{\mf{i}(h')} \phi_{1,h'}^{i}(\w_0) \phi_{2,h'}^{i}(\y_0) \\
    &\quad \times \Bigg(\int_{[\J_{n,n-1}']} \int_{[M_{n-1,2}]} \int_{[S_{n-1}]} \int_{[Z_{n-1}]} (\mu_{n-1}')^{-1}(z) \varphi(z\alpha msh') \\
    &\quad \times \theta_{\psi_{\lambda_{\ve}}^{-1},W'}(\phi_{3,h'}^{i})(\alpha ms) \theta_{\psi_{cd},W'}(\phi_{4-\ve,h'}^{i})(\alpha) \, d\alpha \, dm \, ds \, dz \Bigg) \, dh', 
\end{aligned} & \text{if } k = n - \ve, \\[6ex]
\begin{aligned}
    &\int_{\R_{n}''(\mathbb{A}) \bs \J_{n}(\mathbb{A})} \int_{[S_{n}]} \int_{[Z_{n}]} (\mu_{n}' \psi_{\lambda_{\ve}})^{-1}(zs) \varphi(zsh') \\
    &\quad \times \big(\omega_{\psi^{-1},W_n,V_{n-\ve+1}^{\ve}}(h',\mathbf{1})\big)(\phi_1 \otimes \phi_2)(\w_0;\y_{0}) \, ds \, dz \, dh',
\end{aligned} & \text{if } k = n - \ve + 1.
\end{cases} \\[10ex]
&= \begin{cases}
\begin{aligned}
    &\int_{\R_{n-1}''(\mathbb{A}) \bs \J_{n}(\mathbb{A})} \sum_{i=1}^{\mf{i}(h')} \phi_{1,h'}^{i}(\w_0) \phi_{2,h'}^{i}(\y_0) \\
    &\quad \times \mc{FJ}_{n-1,\psi}^{\lambda_{\ve}}\big(\mf{R}(h')(\varphi),\theta_{\psi_{cd},W'}'(\phi_{4-\ve,h'}^{i}),\theta_{\psi_{\lambda_{\ve}}^{-1},W'}(\phi_{3,h'}^{i})\big) \, dh', 
\end{aligned} & \text{if } k=n-\ve, \\[6ex]
\begin{aligned}
    &\int_{\R_{n}''(\mathbb{A}) \bs \J_{n}(\mathbb{A})} \big(\omega_{\psi^{-1},W_n,V_{n-\ve+1}^{\ve}}(h',\mathbf{1})(\phi_1\otimes \phi_{2})\big)(\w_0;\y_{0}) \\
    &\quad \times \mc{FJ}_{n,\psi}^{\lambda_{\ve}}\big(\mf{R}'(h')(\varphi)\big) \, dh',
\end{aligned} & \text{if } k=n-\ve+1.
\end{cases}
\end{align*}

In the second equality, we unfold the integral and apply the identity $\omega(h', v(z)) = \omega(z^{-1}h', \1)$. In the third equality, we use the change of variables $r''=z\wt{r''}z^{-1}$, which is valid because $v'(Z_{k-1+\ve})$ normalizes $\R_{k-1+\ve}''$. In the fourth equality, we shift the variable $h' \mapsto zh'$, leaving the Haar measure invariant. In the fifth equality, we expand the Weil representation into a finite sum, and in the final equality, we recognize the resulting inner integrals as the exact definitions of the Fourier-Jacobi periods.

Therefore, if $W_{\psi}^{\ve}(f) \ne 0$ for some $f \in \Theta_{\psi^{-1},W_n,V_k^{\ve}}(\sigma)$, it is clear that $\mc{FJ}_{k-1+\ve_{\sigma},\psi}^{\lambda_{\ve}}$ is not identically zero. Conversely, suppose that $\mc{FJ}_{k-1+\ve_{\sigma},\psi}^{\lambda_{\ve}}$ is not identically zero. Then there is some $\varphi' \in \sigma$ such that $\mc{FJ}_{k-1+\ve_{\sigma},\psi}^{\lambda_{\ve}}(\varphi')\ne0$. Choose a finite set $S$ containing all archimedean places such that for $v \notin S$, $(\w_{0,v},\y_{0,v})\in \WW_{0}(\OO_v) \times (V^{\ve} \otimes \wt{Y}_n)(\OO_v)$. 

For each $v \notin S$, we simply set the local Schwartz functions $\phi_{1,v}$ and $\phi_{2,v}$ to be the characteristic functions of the standard $\mc{O}_v$-lattices $(X_{k-1+\ve}^* \otimes W_n)(\OO_v)$ and $V^{\ve} \otimes \wt{Y}_n(\OO_v)$, respectively. Since $(\w_{0,v},\y_{0,v})$ lies in these lattices, the evaluation of $\phi_{1,v} \otimes \phi_{2,v}$ at this point is exactly $1$, ensuring that the infinite product is well-defined and non-vanishing.

For the places $v \in S$, we choose $\phi_{1,v}$ and $\phi_{2,v}$ to be highly concentrated around $\w_{0,v}$ and $\y_{0,v}$. Specifically, for non-archimedean $v \in S$, we take them to be the normalized characteristic functions of arbitrarily small compact open neighborhoods of $\w_{0,v}$ and $\y_{0,v}$. For archimedean $v \in S$, we choose them to be non-negative smooth bump functions tightly supported within an infinitesimal neighborhood of $\w_{0,v}$ and $\y_{0,v}$.

We then define the global Schwartz functions as $\phi_1 = \bigotimes_v \phi_{1,v}$ and $\phi_2 = \bigotimes_v \phi_{2,v}$. By shrinking the supports of the local functions strictly at the places $v \in S$, the global function $\phi = \phi_1 \otimes \phi_2$ essentially vanishes unless the shifted argument is extremely close to $(\w_0, \y_0)$. Recall that $R_{k-1+\ve}''$ is precisely the stabilizer of $(\w_0, \y_0)$. Thus, this choice of $\phi$ localizes the integration variable $h'$ in the outer integral over $R_{k-1+\ve}''(\A) \bs J_n(\A)$ to an arbitrarily small neighborhood of the identity coset. 

As the support shrinks, this localization forces the entire global integral to converge to a nonzero multiple of the inner period $\mc{FJ}_{k-1+\ve_{\sigma},\psi}^{\lambda_{\ve}}(\varphi') \ne 0$. Therefore, we can guarantee the existence of an appropriate $\phi_0 = \phi_1 \otimes \phi_2 \in \Omega_{\psi^{-1},W_n,V_{k}^{\ve}}$ such that $W_{\psi}^{\ve}\big(\Theta_{\psi^{-1},W_n,V_k^{\ve}}(\phi_0,\varphi')\big) \ne 0$. This completes the proof.\end{proof}
From Theorem~\ref{b10}, we deduce the following two corollaries. Since their proofs closely parallel those of Corollaries~\ref{a2} and \ref{a3}, we omit the details.

\begin{cor}\label{e2} 
Let $\sigma$ be an irreducible cuspidal representation of $\wt{\J}_n(\A)$. Assume that $\sigma_v$ is $(\psi_v,\lambda_{\ve_{\sigma}})$-generic for some non-archimedean place $v$. Then the non-vanishing of $\mc{FJ}_{n-1,\psi}^{\lambda_{\ve_{\sigma}}}$ on $\sigma \boxtimes \tau_{\sigma} \boxtimes \nu_{\psi^{-1},W_{n,n-1}'}^{\lambda_{\ve_{\sigma}}}$ is equivalent to the condition that $\Theta_{\psi^{-1},W_n,V_{n-\ve_{\sigma}}^{\ve_{\sigma}}}(\sigma)$ is nonzero, cuspidal, and $\mu_{\ve_{\sigma}}$-generic.
\end{cor}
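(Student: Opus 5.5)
The plan mirrors the proof of Corollary~\ref{a2}, with Theorem~\ref{b10} in place of Theorem~\ref{a1} and Proposition~\ref{con} in place of Proposition~\ref{c1}. Throughout write $\ve=\ve_\sigma$ and $k=n-\ve$, so that $k-1+\ve=n-1$.

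\emph{The easy direction $(\Leftarrow)$.} Assume $\Theta_{\psi^{-1},W_n,V_{n-\ve}^{\ve}}(\sigma)$ is nonzero and $\mu_{\ve}$-generic. The second statement of Theorem~\ref{b10}, applied with $k=n-\ve$, then gives directly that $\mc{FJ}_{n-1,\psi}^{\lambda_{\ve}}$ is not identically zero on $\sigma\boxtimes\tau_\sigma\boxtimes\nu^{\lambda_\ve}_{\psi^{-1},W'_{n,n-1}}$. Cuspidality is not needed for this implication.

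\emph{The direction $(\Rightarrow)$.} Assume the Fourier--Jacobi period is nonzero. Theorem~\ref{b10} then gives that $\Theta:=\Theta_{\psi^{-1},W_n,V_{n-\ve}^{\ve}}(\sigma)$ is nonzero and $\mu_\ve$-generic, so only cuspidality remains. Suppose $\Theta$ is not cuspidal. By Rallis' tower property there is an index $k_0<n-\ve$ with $\Theta_{\psi^{-1},W_n,V_{k_0}^{\ve}}(\sigma)$ nonzero and cuspidal, hence irreducible. Take an irreducible summand $\Pi$ of it. At the given finite place $v$, the component $\Pi_v$ is a nonzero quotient of the local big theta lift $\Theta_{\psi_v^{-1},W_n,V_{k_0}^\ve}(\sigma_v)$, so this big lift is nonzero. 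On the other hand, by hypothesis $\sigma_v$ is $(\psi_v,\lambda_\ve)$-generic, which is exactly $(\mu'_{n,v},\lambda_\ve)$-Fourier--Jacobi type with $t=n$ in Proposition~\ref{con}. The dimension of $V^{\ve}_{k_0}$ is $2k_0+\ve$, so the relevant comparison is $k_0<n-\ve=t-\ve_\sigma$. Proposition~\ref{con} therefore forces the local big theta lift to vanish, a contradiction. Hence $\Theta$ is cuspidal.

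\emph{Main obstacle.} The key step is matching the index conventions in Proposition~\ref{con}: its ``$n$'' is the Witt index of the orthogonal space and its ``$k$'' is the rank of the symplectic group, which must be carefully translated to the present notation. I would also check the conventions in the edge case $\ve=1$, where $V_{k_0}^1$ is odd dimensional; the sign $\psi^{-1}$ and the choice $\lambda_{\ve_\sigma}$ have to be consistent between Proposition~\ref{con} and the global lift. I expect this bookkeeping to work exactly as in Corollary~\ref{a2}.
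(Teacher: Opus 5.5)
Your proposal is correct and follows the same route the paper intends. The paper omits this proof, saying only that it parallels Corollary~\ref{a2}: $(\Leftarrow)$ comes from Theorem~\ref{b10} with $k=n-\ve_\sigma$, and cuspidality in $(\Rightarrow)$ comes from Rallis' tower property together with the local vanishing of Proposition~\ref{con} at the generic place $v$, taking $t=n$ and $k_0<n-\ve_\sigma$. Your translation of the indices in Proposition~\ref{con} (its orthogonal Witt index becoming $k_0$, its symplectic rank becoming $n$) is the right one.
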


\begin{cor}\label{e3} 
Let $\sigma$ be an irreducible cuspidal $(\psi,\lambda_{\ve_{\sigma}})$-generic automorphic representation of $\wt{\J}_n(\A)$. Then $\Theta_{\psi^{-1},W_n,V_{n+1-\ve_{\sigma}}^{\ve_{\sigma}}}(\sigma)$ is nonzero and $\mu_{\ve_{\sigma}}$-generic. Furthermore, if $\Theta_{\psi^{-1},W_n,V_{n-\ve_{\sigma}}^{\ve_{\sigma}}}(\sigma) = 0$, then $\Theta_{\psi^{-1},W_n,V_{n+1-\ve_{\sigma}}^{\ve_{\sigma}}}(\sigma)$ is cuspidal and irreducible.
\end{cor}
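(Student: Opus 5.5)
The first statement follows directly from Theorem~\ref{b10}, applied with $k=n+1-\ve_{\sigma}$. Since $\sigma$ is $(\psi,\lambda_{\ve_{\sigma}})$-generic, the Whittaker--Fourier--Jacobi period $\mc{FJ}_{n,\psi}^{\lambda_{\ve_{\sigma}}}=W_{\psi}^{\lambda_{\ve_{\sigma}}}$ is nonzero on $\sigma$. The equivalence in Theorem~\ref{b10} then says that $\Theta_{\psi^{-1},W_n,V_{n+1-\ve_{\sigma}}^{\ve_{\sigma}}}(\sigma)$ is nonzero and $\mu_{\ve_{\sigma}}$-generic. This mirrors exactly how Corollary~\ref{a3} was derived from Theorem~\ref{a1}.

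For the second statement, assume $\Theta_{\psi^{-1},W_n,V_{n-\ve_{\sigma}}^{\ve_{\sigma}}}(\sigma)=0$. By Rallis' tower property, the lift to $V_{n+1-\ve_{\sigma}}^{\ve_{\sigma}}$ is cuspidal as soon as all earlier lifts in the Witt tower $\{V_i^{\ve_{\sigma}}\}$ vanish. So it suffices to show
\[
\Theta_{\psi^{-1},W_n,V_i^{\ve_{\sigma}}}(\sigma)=0 \quad \text{for all } i<n-\ve_{\sigma}.
\]
Fix any finite place $v$. The global $(\psi,\lambda_{\ve_{\sigma}})$-genericity of $\sigma$ gives local genericity of $\sigma_v$, which says $\sigma_v$ is of $(\mu_{n,v}',\lambda_{\ve_{\sigma}})$-Fourier--Jacobi type with $t=n$. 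Suppose the global lift to $V_i^{\ve_{\sigma}}$ were nonzero. Then its local component at $v$ is a quotient of the local big theta lift $\Theta_{\psi_v^{-1},W_n,V_i}(\sigma_v)$, which must therefore be nonzero. But $i<n-\ve_{\sigma}$, so Proposition~\ref{con} (with $t=n$) forces that local big theta lift to vanish, a contradiction. Hence the lift to $V_{n+1-\ve_{\sigma}}^{\ve_{\sigma}}$ is the first occurrence and is cuspidal.

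Irreducibility then follows from the fact, recalled at the start of Section~\ref{sec:4}, that a nonzero cuspidal (hence square-integrable) global theta lift is irreducible (cf.\ \cite[Prop.~3.1]{Gan23}), combined with the Howe duality theorem at each place.

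The only point needing care is the index bookkeeping when matching Proposition~\ref{con} to the Witt tower. The tower index $i$ for $V_i^{\ve_{\sigma}}$ corresponds to $\dim V_i^{\ve_{\sigma}}=2i+\ve_{\sigma}$, and one must check that the vanishing range $i<n-\ve_{\sigma}$ is exactly the range excluded by Proposition~\ref{con} with $t=n$. I expect this to be routine, mirroring the proof of Corollary~\ref{a2}.
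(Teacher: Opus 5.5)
Your proposal is correct and follows the paper's own (omitted) argument, which parallels Corollary~\ref{a3}: the first assertion is Theorem~\ref{b10} with $k=n+1-\ve_{\sigma}$, where $\mc{FJ}_{n,\psi}^{\lambda_{\ve_{\sigma}}}$ is the Whittaker--Fourier--Jacobi period. Cuspidality comes from Rallis' tower property together with the local vanishing of Proposition~\ref{con}, and irreducibility from the square-integrability remark at the start of Section~\ref{sec:4}. (Strictly speaking, the vanishing at $V_{n-\ve_{\sigma}}^{\ve_{\sigma}}$ already forces all lower lifts to vanish by the persistence part of the tower property. When you invoke Proposition~\ref{con}, pass to the first nonzero level, where the lift is cuspidal and its irreducible summands have local components that are local theta lifts, as in the proof of Corollary~\ref{a2}.)
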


\begin{rem}
In the case where $\H_n^0 = \SO_{2n}$ is of type $(1,1)$, it is stated in \cite[Theorem~2.2]{GRS97} that if $\mc{P}_{\psi^{-1}}$ is nonzero on $\sigma$, then $\sigma$ must be generic. However, no proof is provided for this claim, and we suspect that it may not hold in general. To ensure the validity of the analogous assertion within our context, we have explicitly imposed a local genericity condition in Corollary~\ref{e2}.
\end{rem}
\bb

\section{The relation of $L_{\psi}(s,\sigma\times \chi_{\sigma})$ and the Fourier-Jacobi period $\mc{FJ}_{n-1,\psi}^{1}$}\label{sec:7}
Throughout this section, we assume that the quadratic space $V_n^0$ is of type $(d, 1)$ for some $d \in F^\times$. For the quadratic space $V_n^1$, we assume that its discriminant is trivial (i.e., $\mathrm{disc}(V_n^1) = 1$). For convenience, we let $W'$ denote the subspace $W_{n,n-1}$ of $W_n$ spanned by $\{f_n, f_n^*\}$, and we set $\J' \coloneqq \J_{n,n-1}'$. We establish the following results.
\begin{thm}\label{prop2}
Let $\sigma$ be an irreducible $(\psi,1)$-generic cuspidal representation of $\wt{\J}_n(\A)$. Let $S$ be a finite set of places of $F$ containing all archimedean places of $F$, such that for every $v \notin S$, both $\sigma_v$ and $\psi_{v}$ are unramified. Then the following conditions are equivalent:
\begin{enumerate}
    \item $\begin{cases} L_{\psi}(s,\sigma \times \chi_{d} ) \text{ has a pole at } s=1, & \text{if } \sigma \text{ is non-genuine}, \\ L_{\psi}(s,\sigma ) \text{ has a pole at } s=\frac{3}{2}, & \text{if } \sigma \text{ is genuine}. \end{cases}$
    \item $\begin{cases} L_{\psi}^S(s,\sigma \times \chi_{d} ) \text{ has a pole at } s=1, & \text{if } \sigma \text{ is non-genuine}, \\ L_{\psi}^S(s,\sigma) \text{ has a pole at } s=\frac{3}{2}, & \text{if } \sigma \text{ is genuine}. \end{cases}$
    \item $\mc{FJ}_{n-1,\psi}^{1}$ is nonzero on 
    $\begin{cases} \sigma \boxtimes \omega_{\psi_d,W'} \boxtimes \nu_{\psi^{-1},W'}, & \text{if } \sigma \text{ is non-genuine}, \\ \sigma\boxtimes \nu_{\psi^{-1},W'}, & \text{if } \sigma \text{ is genuine}. \end{cases}$
    \item $\Theta_{\psi^{-1},W_n,V_{n-\ve_{\sigma}}^{\ve_{\sigma}}}(\sigma)$ is nonzero and $\mu_{\ve_{\sigma}}$-generic.
    \item $\Theta_{\psi^{-1},W_n,V_{n-\ve_{\sigma}}^{\ve_{\sigma}}}(\sigma)$ is nonzero.
\end{enumerate}
\end{thm}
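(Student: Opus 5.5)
The plan mirrors the proof of Theorem~\ref{t1}, with the roles of the two members of the dual pair reversed. We prove the cycle (i)$\Rightarrow$(ii)$\Rightarrow$(iii)$\Rightarrow$(iv)$\Rightarrow$(v)$\Rightarrow$(i). Set
\[
s_0=1 \text{ if } \ve_\sigma=0, \qquad s_0=3/2 \text{ if } \ve_\sigma=1.
\]

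\textbf{Cheap implications.} The implication (iii)$\Rightarrow$(iv) is the case $k=n-\ve_\sigma$ of Theorem~\ref{b10}. Here we use $\lambda_{\ve_\sigma}=1$, which holds since $V_n^0$ has type $(d,1)$ and $\mathrm{disc}(V_n^1)=1$. For the same reason $\tau_\sigma=\omega_{\psi_d,W'}$ in the non-genuine case, matching the space appearing in (iii). The implication (iv)$\Rightarrow$(v) is trivial.

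For (v)$\Rightarrow$(i) we first need that $n-\ve_\sigma$ is the first occurrence index. Since $\sigma$ is $(\psi,1)$-generic, every local component is $(\psi_v,1)$-generic. Proposition~\ref{con} then forces $\Theta_{\psi^{-1},W_n,V_k^{\ve_\sigma}}(\sigma)=0$ for $k<n-\ve_\sigma$. Yamana's Rallis inner product formula in the first-occurrence range \cite[Theorem~2]{Yam14} then gives a pole of the completed doubling $L$-function. This pole sits at the point determined by $\dim V$, namely $s=\frac{1}{2}+\frac{\dim V}{2}-\frac{2n+1}{2}\cdot$(appropriate shift). After the functional equation \cite[Theorem~9.1]{Yam14} and self-duality, this is a pole at $s=1$ for $L_\psi(s,\sigma\times\chi_d)$; the twist $\chi_d$ appears because $\mathrm{disc}\,V_{n}^0=d$. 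In the genuine case it is a pole at $s=3/2$ for $L_\psi(s,\sigma)$. Checking that this bookkeeping lands exactly on $s_0$ is routine.

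The implication (i)$\Rightarrow$(ii) uses Remark~\ref{Ram}. Local components of generic $\sigma$ are almost tempered, with exponent $<1/2$, or $\le 1/2$ in the metaplectic case. By \cite[Theorem~1]{Yam14} each local factor $L_\psi(s,\sigma_v\times\chi_v)$ is then holomorphic at $s_0$. In the genuine case the worst GL-factor is $L_{GJ}(s-1/2,\tau)$, which is holomorphic at $3/2$ because $\tau$ is tempered. So a pole of the completed function at $s_0$ is a pole of the partial function.

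\textbf{The main obstacle: (ii)$\Rightarrow$(iii).} This needs a Rankin--Selberg integral, the analogue of Propositions~\ref{p1}--\ref{p3}. We would use the Ginzburg--Rallis--Soudry integrals on $\wt{\J}_n\times\GL_1$. In the non-genuine case, integrate the $\mu_{n-1}'$-Fourier--Jacobi coefficient of $\varphi\in\sigma$ against $\theta_{\psi^{-1},W'}$, the theta function $\theta'_{\psi_d,W'}$, and an Eisenstein series on $\wt{\SL}_2=\wt{\J}'$ induced from $\chi_d|\cdot|^{s-1/2}$. In the genuine case, integrate against a non-genuine Eisenstein series on $\SL_2$.

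The unfolding would follow Proposition~\ref{p1}: expand along the mirabolic of $\GL_n$, collapse to the Whittaker coefficient $W_\psi^1$, and obtain Euler products. Unramified computations via Casselman--Shalika (or Bump--Friedberg--Hoffstein in the metaplectic case) should give $L^S_\psi(s,\sigma\times\chi_d)/\zeta^S(2s)$, respectively $L^S_\psi(s+1/2,\sigma)/\zeta^S(2s)$ or a similar normalization. Nonvanishing of local integrals at ramified places, including archimedean ones, would be proved by the Dixmier--Malliavin approximate-identity argument of Proposition~\ref{p3}.

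A pole of the partial $L$-function at $s_0$ then forces a pole of the global integral. That pole must come from the Eisenstein series at its simple pole, $s=1$ in the $\SL_2$ normalization, where the residue is the constant function in the genuine case and a theta function $\theta'_{\psi_d,W'}$ in the non-genuine case. This residual theta function is what reproduces $\omega_{\psi_d,W'}$ in (iii). Taking residues yields exactly $\mc{FJ}^1_{n-1,\psi}\neq0$.

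I expect the metaplectic bookkeeping to be the hard part: matching Weil indices, identifying $\chi_d$, and locating the residue of the metaplectic Eisenstein series as an elementary theta function. These are where I would concentrate effort, citing \cite{GRS97,Fu95} for the split computations and adapting them to type $(d,1)$.
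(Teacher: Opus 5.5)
Your proposal is essentially the paper's proof. It uses the same cycle: (iii)$\Leftrightarrow$(iv) from Theorem~\ref{b10}, (i)$\Rightarrow$(ii) from the local bounds of Remark~\ref{Ram} together with \cite[Theorem~1]{Yam14}, (v)$\Rightarrow$(i) from Yamana, and (ii)$\Rightarrow$(iii) from the Ginzburg--Rallis--Soudry integrals over $[\J']$, whose Eisenstein residue is a constant in the genuine case and an elementary theta function in the non-genuine case. The only difference is that the paper cites \cite{GRS98} for the unfolding, the unramified computation and local nonvanishing, and \cite{GQT14} for the residues, rather than re-deriving them.

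One slip to fix: in the non-genuine case, $\theta'_{\psi_d,W'}$ must not be listed as a factor of the integrand. Together with $\varphi$, $\theta_{\psi^{-1},W'}$ and the genuine Eisenstein series, it would make the integrand genuine. The integrand is just the $\mu'_{n-1}$-coefficient of $\varphi$ times $\theta_{\psi^{-1},W'}$ times the Eisenstein series induced from $\gamma_\psi\chi_d|\cdot|^{2s}$. The theta function $\theta'_{\psi_d,W'}$ enters only through the residue, as your later paragraph correctly says.
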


\begin{rem}
In the case where $\sigma$ is non-genuine (i.e., $\ve_{\sigma} = 0$) and $V_n^0$ is of type $(1, 1)$, if the period $\mc{FJ}_{n-1, \psi}^{1}$ in (iii) is replaced by the period $\mc{P}_{\psi^{-1}}$ appearing in \cite[p.\,85]{GRS97}, the equivalence of (ii), (iii), and (iv) in Theorem~\ref{prop2} is established by \cite[Theorems~2.1 and 2.2]{GRS97}. Furthermore, when $\sigma$ is strongly generic, Theorem~\ref{prop2} was proved in \cite[Proposition~2.7]{GRS97}. Thus, Theorem~\ref{prop2} provides a significant extension of \cite[Proposition~2.7]{GRS97}, broadening the scope from \textit{strongly generic} representations of $\wt{\J}_n$ to arbitrary \textit{generic} representations. It is also worth mentioning that when $\sigma$ is genuine (i.e., $\ve_{\sigma} = 1$), the implication (v) $\Rightarrow$ (iv) was established in \cite[Theorem~4.2]{JS07} via a different methodology.
\end{rem}
Let $\B'=\T'\U'$ be the Borel subgroup of $\J'$, where $\T'$ is the split maximal torus of $\B'$ and $\U'$ is the unipotent radical of $\B'$. 
For $s \in \CC$, let the character of $\B'$ be given by
\[
|\cdot|^{s} \colon \begin{pmatrix} a & \\ & a^{-1}\end{pmatrix} \mapsto |a|^{s}.
\] 
Then $|\cdot|^{2}$ corresponds to the modulus character $\delta_{\B'}$ of $\B'$.
 
Define $\wt{\B}'(\A) = \wt{\T}'(\A)\U'(\A) = (pr)^{-1}(\B'(\A))$. By composing a character $\chi$ of $\B'(\A)$ with $pr$, we may view $\chi$ as a character of $\wt{\B}'(\A)$.

Let $I'(s,\chi)$ be the unnormalized induced representation $\ind_{\B'(\A)}^{\J'(\A)}(\chi |\cdot|^{2s})$ of $\J'(\A)$ induced from the character $\chi |\cdot|^{2s}$ of $\B'(\A)$. By definition, $I'(s,\chi)(\A)$ consists of all smooth functions $f_s \colon \J'(\A) \to \CC$ satisfying
\[
f_s(bg) = \chi(t) |t|^{2s} f_s(g), \quad b = tu \in \B'(\A) = \T'(\A)\U'(\A).
\]

We also define $\wt{I}'(s,\chi)$ as $\ind_{\wt{\B}'(\A)}^{\wt{\J}'(\A)}(\gamma_{\psi} \chi |\cdot|^{2s})$ the unnormalized induced representation of $\wt{\J}'(\A)$ induced from the character $\chi |\cdot|^{2s}$ of $\wt{\B}'(\A)$. (Recall that $\gamma_{\psi}$ is the Weil factor associated to $\psi$.) By definition, $\wt{I}'(s,\chi)(\A)$ consists of all smooth functions $\wt{f}_s \colon \wt{\J}'(\A) \to \CC$ satisfying
\[
\wt{f}_s(\wt{b} \wt{g}) = \ve' \gamma_{\psi}(t) \chi(t) |t|^{2s} \wt{f}_s(\wt{g}), \quad \wt{b} = (tu,\ve') \in \B'(\A)\times \mu_2, \ (t,\ve') \in \T'(\A) \times \mu_2.
\]

The Eisenstein series attached to holomorphic sections $f_s \in I'(s,\chi)(\A)$ and $\wt{f}_s \in \wt{I}'(s,\chi)(\A)$ are defined, respectively, by
\begin{align*}
E(f_s,g) &= \sum_{\alpha \in \B'(F)\bs \J'(F)}f_s(\alpha g), \quad \text{for } g \in \J'(\A), \\
E(\wt{f}_s,\wt{g}) &= \sum_{\wt{\alpha} \in \wt{\B}'(F)\bs \wt{\J}'(F)}\wt{f}_s(\wt{\alpha} \wt{g}), \quad \text{for } \wt{g} \in \wt{\J}'(\A).
\end{align*}
We define
\[
w' = \begin{pmatrix}
 & 1 &  &     \\
\mathrm{Id}_{n-1} &  &  &     \\
 &  &    & \mathrm{Id}_{n-1}  \\
   &  & 1 &   
\end{pmatrix} \in M_{2n \times 2n}(F),
\]
and define the conjugation map $j$ by
\begin{align*}
j(\wt{h}') \coloneqq \begin{cases} w'\wt{h}'(w')^{-1}, \quad  &\text{if } \wt{h}' \in \J_n(\A), \\ (w'h'(w')^{-1}, \ve'), \quad  &\text{if } \wt{h}'=(h',\ve') \in \wt{\J}_n(\A).\end{cases}
\end{align*}

Let $\sigma$ be an irreducible $(\psi,1)$-generic cuspidal automorphic representation of $\wt{\J}_n(\A)$. Then for any $\varphi \in \sigma$, the function $\wt{h}' \mapsto \varphi(j(\wt{h}'))$ belongs to $\sigma$. Indeed, since $w' \in \J_n(F)$ and $\varphi$ is left $\wt{\J}_n(F)$-invariant, we have 
\[
\varphi(j(\wt{h}')) = \varphi(w' \wt{h}' (w')^{-1}) = \varphi(\wt{h}' (w')^{-1}).
\]
Because $\sigma$ is an automorphic representation, it is stable under right translation, ensuring that this function remains in $\sigma$. We denote this function by $\varphi_{w'}\in \sigma$, so that $\varphi_{w'}(\wt{h}') \coloneqq \varphi(j(\wt{h}')).$

For $f_s \in I'(s,\chi)(\A)$ and $\wt{f}_s \in \wt{I}'(s,\chi)(\A)$, the global integrals $\J_{\psi}^{\chi}(\cdot, \cdot,\wt{f}_s)$ and $\wt{\J}_{\psi}^{\chi}(\cdot,\cdot,f_s)$ that represent the partial $L$-functions of $\sigma$ are defined as follows.

If $\sigma$ is non-genuine, then for $\varphi \in \sigma$, $\phi \in \Omega_{\psi^{-1},W'}$, and $\wt{f}_s \in \wt{I}'(s,\chi)(\A)$, we define
\[
\J_{\psi}^{\chi}(\varphi,\phi,\wt{f}_s) \coloneqq \int_{[\J']} \int_{[\U_{n,n-1}']} \varphi(j(u'g)) \theta_{\psi^{-1},W'}(\phi)(u'\wt{g}) E(\wt{f}_s,\wt{g}) (\mu_{n-1}')^{-1}(u') \, du' \, dg.
\]

If $\sigma$ is genuine, then for $\varphi \in \sigma$, $\phi \in \Omega_{\psi^{-1},W'}$, and $f_s \in I'(s,\chi)(\A)$, we define
\[
\wt{\J}_{\psi}^{\chi}(\varphi,\phi,f_s) \coloneqq \int_{[\J']} \int_{[\U_{n,n-1}']} \varphi(j(u'\wt{g})) \theta_{\psi^{-1},W'}(\phi)(u'\wt{g}) E(f_s,g) (\mu_{n-1}')^{-1}(u') \, du' \, dg.
\]
(Here, we regard $\J'$ as a subgroup of $\J_n$ and $\wt{g}$ is any element in $(pr)^{-1}(g) \subset \wt{\J}_n(\A)$.  Since $\theta_{\psi^{-1},W'}(\phi)(u'\wt{g}) E(\wt{f}_s,\wt{g})$ and $\varphi(j(u'\wt{g})) \theta_{\psi^{-1},W'}(\phi)(u'\wt{g})$ are non-genuine functions in their respective cases, the above integrals do not depend on the choice of $\wt{g}$.)

The rapid decay property of $\varphi$ ensures that the integrals $\J_{\psi}^{\chi}$ and $\wt{\J}_{\psi}^{\chi}$ are absolutely convergent for all $s \in \CC$, except at the poles of $E(f_s, g)$ and $E(\wt{f}_s, \wt{g})$, respectively.

There is a basic identity, analogous to Proposition~\ref{p1}, that relates $\J_{\psi}^{\chi}$ and $\wt{\J}_{\psi}^{\chi}$ to the global zeta integrals $Z_{\psi,\chi}'$ and $\wt{Z}_{\psi,\chi}'$. To introduce these zeta integrals, we first establish some notation.

For $x \in \mathbb{G}_a$, define
\[
\bar{x} = \begin{pmatrix}
\mathrm{Id}_{n-2}   & 0 & 0 & 0 & 0 & 0 \\ 
 & 1 & x & 0 & 0 & 0 \\  
 & & 1 & 0 & 0 & 0 \\  
 & & 0 & 1 & x^* & 0 \\ 
 & & & & 1 & 0 \\ 
 & & & & & \mathrm{Id}_{n-2} 
\end{pmatrix} \in \U_{n,n-1}'.
\]

Let $R \subset \U_{n,n-1}'$ be the unipotent subgroup consisting of all matrices of the form
\[
R = \left\{ \begin{pmatrix} \mathrm{Id}_{n-1} & r & & \\ & 1 & & \\ & & 1 & r^* \\ & & & \mathrm{Id}_{n-1} \end{pmatrix} \;\middle|\; r\in M_{n-1,1} \text{ such that the bottom row of } r \text{ is }\textbf{0} \right\}.
\]
If $\sigma$ is non-genuine, for $\varphi \in \sigma$, $\phi \in \mc{S}(Y'(\mathbb{A}))$, and $\wt{f}_s \in \wt{I}'(s,\chi)(\mathbb{A})$, we set
\begin{equation*}
\begin{split}
&Z_{\psi,\chi}'(\varphi,\phi,\wt{f}_s) \coloneqq \int_{\U'(\mathbb{A}) \bs \J'(\mathbb{A})} \int_{R(\mathbb{A})} \int_{\mathbb{G}_a(\mathbb{A})} W_{\varphi}^{1}\big(j(r\bar{x}g)\big) \big(\Omega_{\psi^{-1},W'}(\wt{g})(\phi)\big)(x) \wt{f}_s(\wt{g}) \, dx \, dr \, dg.
\end{split}
\end{equation*}

If $\sigma$ is genuine, for $\varphi \in \sigma$, $\phi \in \mc{S}(Y'(\mathbb{A}))$, and $f_s \in I'(s,\chi)(\mathbb{A})$, we set
\begin{equation*}
\begin{split}
&\wt{Z}_{\psi,\chi}'(\varphi,\phi,f_s) \coloneqq \int_{\U'(\mathbb{A}) \bs \J'(\mathbb{A})} \int_{R(\mathbb{A})} \int_{\mathbb{G}_a(\mathbb{A})} W_{\varphi}^{1}\big(j(r\bar{x}\wt{g})\big) \big(\Omega_{\psi^{-1},W'}(\wt{g})(\phi)\big)(x) f_s(g) \, dx \, dr \, dg.
\end{split}
\end{equation*}

The following theorem relates $\J_{\psi}^{\chi}$ and $\wt{\J}_{\psi}^{\chi}$ to the global zeta integrals $Z_{\psi,\chi}'$ and $\wt{Z}_{\psi,\chi}'$.

\begin{thm}[{\cite[Theorems~2.1 and 2.3]{GRS98}}]\label{secid} 
Let $\sigma$ be an irreducible $(\psi,1)$-generic cuspidal automorphic representation of $\wt{\J}_n(\A)$. For $\varphi\in \sigma$, $\phi \in \mc{S}(Y'(\A))$, and holomorphic sections $f_s\in I'(s,\chi)(\A)$ and $\wt{f}_s\in \wt{I}'(s,\chi)(\A)$, the integrals $Z_{\psi,\chi}'(\varphi,\phi,\wt{f}_s)$ and $\wt{Z}_{\psi,\chi}'(\varphi,\phi,f_s)$ are absolutely convergent except at the poles of $E(f_s,g)$ and $E(\wt{f}_s,\wt{g})$. For $\Re(s)\gg 0$, we have
\begin{align*}
\J_{\psi}^{\chi}(\varphi,\phi,\wt{f}_s) &= Z_{\psi,\chi}'(\varphi,\phi,\wt{f}_s), \quad &&\text{if } \sigma \text{ is non-genuine,} \\ 
\wt{\J}_{\psi}^{\chi}(\varphi,\phi,f_s) &= \wt{Z}_{\psi,\chi}'(\varphi,\phi,f_s), \quad &&\text{if } \sigma \text{ is genuine.}
\end{align*}
\end{thm}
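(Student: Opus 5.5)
The plan is the standard unfolding argument of Ginzburg--Rallis--Soudry, run in parallel with the proof of Proposition~\ref{p1}. I treat the non-genuine case; the genuine case is identical after swapping which factor carries the metaplectic cover. For $\Re(s)\gg0$, unfold the Eisenstein series $E(\wt{f}_s,\wt g)=\sum_{\B'(F)\bs\J'(F)}\wt f_s(\alpha\wt g)$ inside $\J_\psi^\chi$. The function $g\mapsto\int_{[\U_{n,n-1}']}\varphi(j(u'g))\theta_{\psi^{-1},W'}(\phi)(u'\wt g)(\mu'_{n-1})^{-1}(u')\,du'$ is left $\J'(F)$-invariant, because $\J'$ normalizes $\U'_{n,n-1}$ and fixes $\mu'_{n-1}$. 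Collapsing the sum therefore gives an integral over $\B'(F)\bs\J'(\A)$. Next, factor this as $\T'(F)\U'(\A)\bs\J'(\A)$ together with an inner integral over $[\U']$. Absolute convergence for $\Re(s)$ large, which justifies the interchange, follows from the rapid decay of $\varphi$ and the moderate growth of the theta series.

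Now analyze the inner integral over $[\U'][\U'_{n,n-1}]$. Write $\theta_{\psi^{-1},W'}(\phi)(u'\wt g)=\sum_{\xi\in F}(\Omega_{\psi^{-1},W'}(u'\wt g)\phi)(\xi)$ and use the Heisenberg formulas (i)--(iii) of \S\ref{heis}. The part of $\U'_{n,n-1}$ corresponding to $Y'$, namely the elements $\bar x$, translates the argument. This lets me absorb the sum over $\xi\in F$ into the $\bar x$-integration, turning $[\mathbb{G}_a]$ into $\mathbb{G}_a(\A)$ evaluated at $x$. The $Y'^*$-part and the centre produce the character $\psi$ in the remaining coordinates. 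After this step, $\varphi\circ j$ is integrated against a character of a unipotent group. Since $j$ is conjugation by $w'\in\J_n(F)$, that group is conjugate to a subgroup of the standard maximal unipotent of $\J_n$, missing exactly the root subgroups in $R$. The character is of Whittaker type $\mu'_n\psi_1$ on the part already present.

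Then I Fourier expand along the missing roots. As in \eqref{Fo}, I use the mirabolic-type expansion of the cuspidal function $\varphi$ on a $\GL_{n}$-Levi embedded in $\J_n$, which is the Piatetski-Shapiro--Shalika expansion $\sum_{\gamma\in Z_{n-1}(F)\bs\GL_{n-1}(F)}W^1_\varphi(\hat\gamma\,\cdot)$. Integrating against the characters kills all $\gamma$ except those of the form $\begin{pmatrix}c&0\\a&\mathrm{Id}\end{pmatrix}$, by the same inductive triviality argument as in Proposition~\ref{p1}. The sum over $a$ unfolds the $[R]$-integration into $R(\A)$. The sum over $c\in F^\times$ cancels the $\T'(F)$ quotient, with the extra torus character coming from the Weil factor $\gamma_\psi$ and the $\GL$-action in formula (iv), and this matches $\wt f_s$. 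The result is exactly $Z'_{\psi,\chi}$. I take the convergence statement for $Z'$ away from the Eisenstein poles from the identity itself plus meromorphic continuation, as in \cite{GRS98}.

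The main obstacle I expect is the middle step. The conjugation by $w'$ has to align $\U'\cdot\U'_{n,n-1}\cdot\{\bar x\}$ correctly with the Whittaker unipotent of $\J_n$. In parallel, the theta-series sum over $Y'(F)$ must unfold against $\bar x$ without picking up a degenerate $\xi=0$ term. I plan to handle that term by cuspidality: it produces a constant term of $\varphi$ along a unipotent radical, which vanishes. The metaplectic bookkeeping, namely the genuine/non-genuine pairing and $\gamma_\psi$, needs care, but it is routine.
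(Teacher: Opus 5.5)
Your overall route is the standard Ginzburg--Rallis--Soudry unfolding, and that is all the paper relies on here: it gives no proof beyond the citation. The route is: unfold $E$, absorb the full sum over $\xi\in F$ into the $\bar x$-variable, conjugate by $w'$, and then run the mirabolic expansion from the proof of Proposition~\ref{p1}. Your first two steps are right, but the step you yourself call the main obstacle is described incorrectly.

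\textbf{The conjugated unipotent group.} Write $\epsilon_1,\dots,\epsilon_n$ for the standard characters of the diagonal torus of $\J_n$ and $U_\alpha$ for root subgroups. Then $\bar x\in U_{\epsilon_{n-1}-\epsilon_n}$, $R=\prod_{i\le n-2}U_{\epsilon_i-\epsilon_n}$, $\U'=U_{2\epsilon_n}$, and the centre of $\mc H(W')$ is $U_{2\epsilon_{n-1}}$. The map $j$ sends $\epsilon_n\mapsto\epsilon_1$ and $\epsilon_i\mapsto\epsilon_{i+1}$ for $i\le n-1$.
\begin{itemize}
\item After the theta unfolding, $\varphi\circ j$ is integrated over $[N_0]$. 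Here $N_0$ is generated by all positive root subgroups except $U_{\epsilon_{n-1}-\epsilon_n}$. The character is $\psi$ on $U_{\epsilon_i-\epsilon_{i+1}}$ ($i\le n-2$) and on $U_{2\epsilon_{n-1}}$, and trivial on the other root subgroups. In particular $N_0\supset R$.
\item Consequently $j(N_0)=j(R)\,N_1$. The factor $j(R)=\prod_{k=2}^{n-1}U_{\epsilon_k-\epsilon_1}$ consists of negative root subgroups. $N_1$ is the standard maximal unipotent subgroup with the $n-1$ root subgroups $U_{\epsilon_1-\epsilon_k}$, $2\le k\le n$, removed.
\item That is one more missing root than $R$ has; the extra one is opposite to $j(\bar x)\in U_{\epsilon_n-\epsilon_1}$. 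Already for $n=2$, $R$ is trivial while $U_{\epsilon_1-\epsilon_2}$ is missing.
\end{itemize}

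\textbf{Why the Proposition~\ref{p1} argument then works.} Modulo the Siegel radical $U_0$, on which the character lives only on $U_{2\epsilon_n}$, the group $j(N_0)$ lies in the mirabolic $P_n$ of the Siegel Levi $\GL_n$. It has exactly the shape $\left(\begin{smallmatrix}1&0&0\\ {}^ty&z&{}^tx\\ 0&0&1\end{smallmatrix}\right)$ from that proof, with $y$ coming from $j(R)$ and the first row absent.
\begin{itemize}
\item You expand $p\mapsto\int_{[U_0]}\varphi(up\,\cdot)\,\psi^{-1}(u_{2\epsilon_n})\,du$ on $P_n$.
\item This function is cuspidal there, because the character is trivial on the unipotent radical of the stabilizer of $Y_k$ for $k<n$.
\item The ``last row of $\gamma$'' argument leaves $\gamma=\left(\begin{smallmatrix}c&0\\ a&\mathrm{Id}\end{smallmatrix}\right)$. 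Here $a$ gives $j(R)(F)$ and $c$ gives $j(\T'(F))$.
\end{itemize}
As you phrased it (a subgroup of the maximal unipotent missing the roots of $R$), there would be no $[R]$-integration to unfold. You would also be expanding along the wrong set of roots.

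\textbf{There is no degenerate $\xi=0$ term.} As you correctly said earlier, the whole sum over $\xi\in F$ combined with $x\in F\bs\A$ is exactly $\int_{\A}dx$. The $\xi=0$ summand is not a constant term: it still carries $\psi$ on $U_{2\epsilon_{n-1}}$ together with $\mu'_{n-1}$, and nothing makes it vanish. Discarding it would replace $\int_{\A}$ by $\int_{\A}-\int_{F\bs\A}$. So the cuspidality argument you propose for it is false and should be dropped.

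\textbf{Smaller corrections.}
\begin{itemize}
\item For $c\in F^\times$ there is no extra torus character to match. Indeed $\prod_v\gamma_{\psi_v}(c)=1$ and $\chi(c)|c|^{2s}=1$. You only use the left $\wt{\T}'(F)$-invariance of $\wt f_s$ and the rescaling $x\mapsto cx$ in $\Omega_{\psi^{-1},W'}$.
\item Rapid decay of $\varphi$ and moderate growth of $\theta$ justify only the Eisenstein unfolding. They also give absolute convergence of $\J_\psi^\chi$ and $\wt{\J}_\psi^\chi$ away from the poles of $E$, which is presumably what the convergence clause intends.
\item $Z'_{\psi,\chi}$ is an Euler product and converges absolutely only in a right half-plane. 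An identity of meromorphic functions cannot give its absolute convergence anywhere else.
\item The remaining interchanges (the $\xi$-unfolding, the sum over $\gamma$, and the unfolding of $a$ and $c$) need absolute convergence of the unfolded integral for $\Re(s)\gg0$. That comes from majorizing $W^1_\varphi$, which is where the local estimates of Ginzburg--Rallis--Soudry enter.
\end{itemize}
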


Motivated by the definitions of the global zeta integrals, we define the local zeta integrals as their local counterparts. For an arbitrary place $v$ of $F$, choose a $(\psi_v,1)$-generic $\sigma \in \Irr(\wt{\J}_n(F_v))$. Then for any $W \in W_{\psi_v}^{1}(\sigma)$, $\phi \in \mc{S}(Y'(F_v))$, and holomorphic sections $f_{s,v} \in I'(s,\chi)(F_v)$ and $\wt{f}_{s,v} \in \wt{I}'(s,\chi)(F_v)$, the local integrals $Z_{\psi_v,\chi_v}'(W,\phi,\wt{f}_{s,v})$ and $\wt{Z}_{\psi_v,\chi_v}'(W,\phi,f_{s,v})$ are defined as
\begin{align*}
&Z_{\psi_v,\chi_v}'(W,\phi,\wt{f}_{s,v}) \coloneqq \int_{\U'(F_v) \bs \J'(F_v)} \int_{R(F_v)} \int_{\mathbb{G}_a(F_v)} W\big(j(r\bar{x}g)\big) \big(\Omega_{\psi_v^{-1},W'} (\wt{g})(\phi)\big)(x) \wt{f}_{s,v}(\wt{g}) \, dx \, dr \, dg, \\
&\quad \text{if } \sigma \text{ is non-genuine,} \\[15pt]
&\wt{Z}_{\psi_v,\chi_v}'(W,\phi,f_{s,v})\coloneqq \int_{\U'(F_v) \bs \J'(F_v)} \int_{R(F_v)} \int_{\mathbb{G}_a(F_v)} W\big(\wt{\jmath}(r\bar{x}\wt{g})\big) \big(\Omega_{\psi_v^{-1},W'} (\wt{g})(\phi)\big)(x) f_{s,v}(g) \, dx \, dr \, dg, \\
&\quad \text{if } \sigma \text{ is genuine.}
\end{align*}

\begin{prop}[{\cite[Lemmas~3.4 and 3.5]{GRS98}}]
Let $v$ be a place of $F$. Suppose that $\sigma \in \Irr(\wt{\J}_n(F_v))$ is $(\psi_v,1)$-generic. Then for all $W \in W_{\psi_v}^{1}(\sigma)$ and $\phi \in \mc{S}(Y'(F_v))$, and for any holomorphic sections $f_{s,v} \in I'(s,\chi)(F_v)$ and $\wt{f}_{s,v} \in \wt{I}'(s,\chi)(F_v)$, the integrals $Z_{\psi_v,\chi_v}'(W,\phi,\wt{f}_{s,v})$ and $\wt{Z}_{\psi_v,\chi_v}'(W,\phi,f_{s,v})$ are absolutely convergent for $\Re(s) \gg 0$. Furthermore, they admit meromorphic continuations to the whole complex plane.
\end{prop}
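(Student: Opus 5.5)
The statement to prove is the last proposition: the local zeta integrals $Z'_{\psi_v,\chi_v}$ and $\wt{Z}'_{\psi_v,\chi_v}$ converge absolutely for $\Re(s)\gg0$ and continue meromorphically. I would follow the standard pattern of \cite{GRS98} (cf. the proof of Proposition~\ref{p4}) in three steps: Iwasawa reduction to a torus integral, a gauge bound on Whittaker functions, and a rationality or holonomicity argument for the continuation.

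\textbf{Step 1: reduction to the torus.} Use the Iwasawa decomposition $\J'(F_v)=\U'(F_v)\T'(F_v)\K'_v$ with $t=\mathrm{diag}(a,a^{-1})$. Then $dg=|a|^{-2}\,da\,dk$. The section satisfies $f_{s,v}(tk)=\chi_v(a)|a|^{2s}f_{s,v}(k)$ in the genuine case, and the same with an extra factor $\gamma_{\psi_v}(a)$ in the non-genuine case. The Weil representation acts by
\[
\Omega_{\psi_v^{-1},W'}(t)\phi(x)=\gamma(a)|a|^{1/2}\phi(xa).
\]
Since $\K'_v$ is compact and acts smoothly, with $W$ and $\phi$ ranging over finite-dimensional translates, it suffices to bound
\[
\int_{F_v^\times}\int_{R(F_v)}\int_{F_v}\bigl|W(j(r\bar x t))\bigr|\,\bigl|\phi(xa)\bigr|\,|a|^{2\Re s-3/2}\,dx\,dr\,da .
\]
Next conjugate $t$ past $r$ and $\bar x$. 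Substituting $x\mapsto x a^{-1}$ and $r\mapsto t r t^{-1}$ changes the measure only by a power $|a|^{c}$. The integral becomes
\[
\int |W(j(r\bar x)\,j(t))|\,|\phi(x)|\,|a|^{2\Re s+c'}\,dx\,dr\,da,
\]
where $j(t)=w'tw'^{-1}$ is the torus element $\mathrm{diag}(a,1,\dots,1,a^{-1})$ of $\J_n$.

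\textbf{Step 2: gauge bound.} Apply the standard estimate for Whittaker functions on the maximal unipotent of $\wt{\J}_n$: there are finitely many finite functions $\xi$ and a Schwartz function $\Phi$ on the simple root coordinates such that
\[
|W(u\,t\,k)|\le \sum \xi(t)\,\Phi(\alpha(t)).
\]
I would combine this with the standard lemma of Soudry \cite[\S4]{Sou93} and Jacquet–Piatetski-Shapiro–Shalika for integrals over "opposite" unipotent variables such as $r$. That lemma writes $j(r\bar x)$ in Iwasawa form and bounds $W$ by a function of the $R$-coordinates whose integral converges, with polynomial growth in $|a|$. This yields absolute convergence for $\Re(s)$ larger than a constant depending only on the exponents of $\sigma$.

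\textbf{Step 3: meromorphic continuation.} This is the main obstacle, especially at archimedean places. At non-archimedean places, the Bernstein–Zelevinsky asymptotic expansion of $W$ along $a\to0$, together with compact support as $|a|\to\infty$, shows the integral is a rational function in $q^{-s}$. The $R$- and $x$-integrals reduce to finite sums after a Dixmier–Malliavin-type smoothing as in the proof of Proposition~\ref{p3}. At archimedean places, I would use the asymptotic expansion of Whittaker functions near $a=0$, of the form
\[
\sum_{\lambda,m} c_{\lambda,m}(\text{rest})\,a^{\lambda}\log^m|a|
\]
(Wallach). Integrating term by term against $|a|^{2s}$ gives meromorphic functions with poles at shifts of the exponents $\lambda$. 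The remaining $R$- and $x$-integrals are handled by the same Fourier-transform smoothing used in Proposition~\ref{p3}, which reduces to the torus integral. Ensuring that the $r$-integral commutes with the asymptotic expansion is where the real work lies. I would adapt Soudry's archimedean treatment in \cite{Sou95}; equivalently, following \cite{GRS98}, I would cite those lemmas verbatim.
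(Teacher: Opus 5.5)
Your proposal follows essentially the paper's route: the paper proves nothing here beyond citing \cite[Lemmas~3.4 and 3.5]{GRS98}, and your sketch (Iwasawa reduction, gauge bounds for the opposite-unipotent integral, asymptotic expansions) is the standard argument behind those lemmas, ending in the same citation. Two small corrections: after your substitutions the torus lands on the \emph{left}, $W(j(t)\,j(r\bar x)\,j(k))$ rather than $W(j(r\bar x)\,j(t))$, and the left form is what the Soudry/JPSS lemma needs. Also, the smoothing in Proposition~\ref{p3} passes to an approximate-identity limit, which only proves non-vanishing; to use it for meromorphic continuation of an arbitrary $W$, keep it as an exact identity by absorbing the Fourier-transformed variable into a new smooth Whittaker vector via right convolution.
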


By Theorem~\ref{secid}, for pure tensors $\varphi=\otimes_v \varphi_v\in \sigma$ and $\phi=\otimes_v \phi_v \in \mc{S}(Y'(\A))$, and holomorphic decomposable sections $f_s=\otimes_v f_{s,v}\in I'(s,\chi)(\A)$ and $\wt{f}_s=\otimes_v \wt{f}_{s,v}\in \wt{I}'(s,\chi)(\A)$, we have the Euler product expansions
\begin{align*}
\J_{\psi}^{\chi}(\varphi,\phi,\wt{f}_s) &= \prod_v Z_{\psi_v,\chi_v}'(W_{\psi_v}^{1}(\varphi_v),\phi_v,\wt{f}_{s,v}), \quad \text{for } \Re(s)\gg 0, \\ 
\wt{\J}_{\psi}^{\chi}(\varphi,\phi,f_s) &= \prod_v \wt{Z}_{\psi_v,\chi_v}'(W_{\psi_v}^{1}(\varphi_v),\phi_v,f_{s,v}), \quad \text{for } \Re(s)\gg 0.
\end{align*}

For a non-archimedean place $v$ of $F$, we say that $\phi_v \in \mc{S}(Y'(F_v))$ is unramified if 
\[
\phi_v(x) = \begin{cases} 1, & \text{if } x \in Y'(\mc{O}_v), \\ 0, & \text{otherwise}. \end{cases}
\] 
The following proposition provides the computation of the local zeta integrals with unramified data for $\Re(s) \gg 0$.

\begin{prop}[{\cite[Theorems~3.1 and 3.2]{GRS98}}]\label{unram} 
Let $v$ be a non-archimedean place of $F$. Suppose that $\sigma \in \Irr(\wt{\J}_n(F_v))$ is $(\psi_v,1)$-generic and unramified. Choose $W^0 \in W_{\psi_v}^{1}(\sigma)$, and holomorphic sections $f_{s,v}^0 \in I'(s,\chi)(F_v)$ and $\wt{f}_{s,v}^0 \in \wt{I}'(s,\chi)(F_v)$ such that $W^0(k')=1$ and $f_{s,v}^0(k')=\wt{f}_{s,v}^0(k')=1$ for all $k' \in \K_v'$. Suppose that $\phi^0 \in \mc{S}(Y'(F_v))$ is unramified. Then for $\Re(s) \gg 0$, we have
\begin{align*}
Z_{\psi_v,\chi_v}'(W^0,\phi^0,\wt{f}_{s,v}^0) &= \frac{L_{\psi_v}(2s-\frac{1}{2},\sigma\times \chi)}{\zeta_{v}(4s-1)}, \quad &&\text{if } \sigma \text{ is non-genuine,} \\ 
\wt{Z}_{\psi_v,\chi_v}'(W^0,\phi^0,f_{s,v}^0) &= \frac{L_{\psi_v}(2s-\frac{1}{2},\sigma\times \chi)}{\zeta_{v}(2s)}, \quad &&\text{if } \sigma \text{ is genuine.} 
\end{align*}
\end{prop}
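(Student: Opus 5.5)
The statement just above is Proposition~\ref{unram}, the unramified computation of the local zeta integrals (cited from \cite[Theorems~3.1 and 3.2]{GRS98}). I would prove it by the same route as Proposition~\ref{p2}: reduce to a one-variable torus integral via Iwasawa decomposition, kill the unipotent integrations by support arguments, and finish with the Casselman--Shalika formula.

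\textbf{Step 1 (reduction to the torus).} Write $\J'(F_v)=\U'(F_v)\T'(F_v)\K'_v$ with $\T'=\{t(a)=\diag(a,a^{-1})\}$. The vectors $W^0$, $\phi^0$ and $f^0_{s,v}$ (resp.\ $\wt f^0_{s,v}$) are all $\K'_v$-fixed; in the metaplectic case this uses the canonical splitting of $\K'_v$ for odd residual characteristic. Hence the $g$-integral collapses to an integral over $a\in F_v^\times$ with weight $\delta_{\B'}^{-1}(t(a))=|a|^{-2}$. The section contributes $\chi(a)|a|^{2s}$, together with the factor $\gamma_{\psi_v}(a)$ in the non-genuine case, where $\wt f_{s,v}$ lives on $\wt\J'$.

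Next, use the Weil representation formulas of \S\ref{heis}. The torus acts by
\[
\Omega_{\psi_v^{-1},W'}(t(a))\phi^0(x)=\gamma_{\psi_v^{-1}}(a)|a|^{1/2}\phi^0(xa).
\]
So the $\gamma$-factors cancel between $\Omega$ and the section, or, in the genuine case, against the genuineness of $W^0\circ j$. This leaves
\[
\int_{F_v^\times}\int_{R}\int_{F_v}W^0\bigl(j(r\bar x\,t(a))\bigr)\phi^0(xa)\,\chi(a)|a|^{2s+1/2-2}\,dx\,dr\,da .
\]

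\textbf{Step 2 (support of the unipotent integrals).} Conjugate $t(a)$ to the left through $r\bar x$. This rescales $x\mapsto a^{-1}x$ and $r\mapsto a^{-1}r$, and produces a Jacobian $|a|^{n-1}$ up to sign conventions. Then run the inductive argument of Proposition~\ref{p2}: multiply on the right by suitable elements of $\U'_n(\mc O_v)$, which are conjugate into the positions carrying the generic character $\mu'_n\psi$. Right $\K'$-invariance of $W^0$ then forces $W^0(j(r\bar x\,t'(a)))=0$ unless $r$ and $x$ are integral, in which case they may be dropped. The same trick with a root element in the first simple root position forces $|a|\le1$. The $x$-integral, against $\phi^0(xa)$ with $x$ integral and $a$ integral, contributes volume $1$.

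\textbf{Step 3 (Casselman--Shalika).} What remains is
\[
\sum_{k\ge0}W^0\bigl(t'(\varpi^k)\bigr)\chi(\varpi)^k q^{-k(2s+c)}
\]
for an explicit shift $c$ coming from the Jacobians. Here $t'(a)=j(t(a))=\diag(a,1,\dots,1,a^{-1})$ is the first coweight. Casselman--Shalika gives $W^0(t'(\varpi^k))=\delta^{1/2}_{\B_n'}(t'(\varpi^k))\,\mathrm{Tr}\,\mathrm{Sym}^k$-type characters of the standard representation of the dual group. For the metaplectic group I would use the $\psi$-twisted Casselman--Shalika formula, with dual group $\Sp_{2n}(\CC)$ and Satake parameter $c_{\sigma}$ as in \S\ref{ltn}.

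The modulus factors should combine to make the exponent exactly $2s-\tfrac12$, which is the normalization check. One then applies the generating-function identity
\[
\sum_k \mathrm{Tr}\,\rho_{(k,0,\dots)}(c)X^k=\det(1-cX)^{-1}\,(1-X^2)
\]
for the relevant classical dual group: $\SO_{2n+1}$ in the non-genuine case and $\Sp_{2n}$ in the genuine case, as in \cite[(3.22)]{Kap15}. The correction factor $(1-X^2)$ becomes $\zeta_v(4s-1)^{-1}$ in the non-genuine case. In the genuine case the correction is instead $\zeta_v(2s)^{-1}$, arising from the normalization of $I'(s,\chi)$ and a leftover Tate-type integral. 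I would track that case separately and check it against $n=1$ by hand.

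\textbf{Main obstacle.} I expect the main obstacle to be Step 2 combined with the exponent bookkeeping. Controlling the support of $W^0(j(r\bar x t))$ requires picking the right integral unipotent elements compatible with $j$. Making the metaplectic $\gamma$-factors and the half-integral shifts cancel exactly is delicate. I would first verify the formula for $n=1$, where $R$ is trivial, and then induct.
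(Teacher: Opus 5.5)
The paper gives no proof of its own here; it quotes \cite[Theorems~3.1 and 3.2]{GRS98}. So the only question is whether your direct computation closes.

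In the non-genuine case it does. Steps~1--2 go through as in Proposition~\ref{p2}: the long-root element of $\U_n'(\OO_v)$ pins down $x$, the Levi elements $E_{1,j}(c)$ pin down $r$, and the first simple root pins down $a$. The exponent comes out as $2s+\tfrac12-2+(1-n)+n=2s-\tfrac12$, since $\delta_{\B_n'}^{1/2}(t'(a))=|a|^n$. For $\SO_{2n+1}(\CC)$ the decomposition $\Sym^k(\CC^{2n+1})=\rho_{(k,0,\dots,0)}\oplus\Sym^{k-2}(\CC^{2n+1})$ gives the factor $1-Y^2$ with $Y=\chi_v(\varpi)q^{-(2s-1/2)}$. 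Strictly this is $L(4s-1,\chi_v^2)^{-1}$; it equals $\zeta_v(4s-1)^{-1}$ only because $\chi=\chi_d$ is quadratic where the proposition is applied.

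The genuine case has a real gap.

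\emph{Your identity is false for $\Sp_{2n}(\CC)$.} There $\Sym^k(\CC^{2n})$ is already irreducible, so $\sum_k \Tr\,\rho_{(k,0,\dots,0)}(c)Y^k=\det(1-cY)^{-1}$ with no correction factor at all.

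\emph{The source you propose for $\zeta_v(2s)^{-1}$ does not exist in your own reduction.} The section $f^0_{s,v}$ is the normalized spherical one and contributes only $\chi(a)|a|^{2s}$. Step~2 forces every unipotent variable, $x$ included, into $\OO_v$ with total volume $1$, so no Tate-type integral is left over. Run with a linear-type Casselman--Shalika formula ($\delta^{1/2}$ times an irreducible $\Sp_{2n}(\CC)$-character), your argument would therefore output $L_{\psi_v}(2s-\tfrac12,\sigma\times\chi)$ with no denominator, which is wrong.

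\emph{What is missing is the metaplectic Casselman--Shalika formula} of Bump--Friedberg--Hoffstein. The normalized spherical $\psi_v$-Whittaker function of a genuine unramified representation of $\Mp_{2n}(F_v)$ is a $q^{-1/2}$-deformation of an $\Sp_{2n}(\CC)$-character. On $t'(\varpi^k)$ it equals, up to the $\gamma_{\psi_v}$-factor (this is what actually cancels the one coming from $\Omega$),
\[
\delta_{\B_n'}^{1/2}\bigl(\Tr\,\rho_{(k,0,\dots,0)}(c_\sigma)-\epsilon\,q^{-1/2}\,\Tr\,\rho_{(k-1,0,\dots,0)}(c_\sigma)\bigr),
\]
for a sign $\epsilon$ determined by $\psi_v$. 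Summing over $k$ gives $(1-\epsilon q^{-1/2}Y)\det(1-c_\sigma Y)^{-1}$. For trivial $\chi$ one has $q^{-1/2}Y=q^{-2s}$, which is exactly the factor $\zeta_v(2s)^{-1}$, with $\epsilon=1$ in the normalization implicit in the statement.

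For $n=1$ this deformation is the familiar numerator in Shimura's formula for $\sum_m a(tm^2)m^{-s}$, so your planned $n=1$ check would expose the problem. For general $n$ no induction on $n$ is needed, since Step~2 already reduces everything to the torus. What you must do is feed in the metaplectic formula rather than the linear one, or compute the Jacquet integral directly, where quadratic Gauss sums produce the $q^{-1/2}$ term.
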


We also require a non-vanishing result for the local zeta integrals. 

\begin{prop}[{\cite[Propositions~3.6 and 3.7]{GRS98}}]\label{nonv}
Let $v$ be a place of $F$ and $\lambda \in F_v^{\times}$. Suppose that $\sigma \in \Irr(\wt{\J}_n(F_v))$ is $(\psi_v,1)$-generic. Given $s_0\in \CC$, there exist $W_0 \in W_{\psi_v}^{1}(\sigma)$, $\phi_0 \in \mc{S}(Y'(F_v))$, and $\K_v'$-finite sections $f_{s_0,v}\in I'(s,\chi)(F_v)$ and $\wt{f}_{s_0,v}\in \wt{I}'(s,\chi)(F_v)$ such that $Z_{\psi_v,\chi_v}'(W_0,\phi_0,\wt{f}_{s_0,v}) \neq 0$ and $\wt{Z}_{\psi_v,\chi_v}'(W_0,\phi_0,f_{s_0,v}) \neq 0$. 
\end{prop}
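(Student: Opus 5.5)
The plan is to prove the non-vanishing by the same archimedean smoothing strategy used in the proof of Proposition~\ref{p3}. The non-archimedean case is covered by \cite[Propositions~3.6 and 3.7]{GRS98}; the argument below is uniform in $v$. I treat the non-genuine integral $Z'_{\psi_v,\chi_v}$; the genuine one is identical after replacing $\wt f_{s,v}$ by $f_{s,v}$, since the Weil factor $\gamma_{\psi}$ and the cover only contribute a harmless nonvanishing continuous factor near the identity.

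\textbf{Step 1: localize the section.} Using the Iwasawa decomposition $\J'(F_v)=\B'(F_v)\K_v'$, write the integral as an integral over $a\in F_v^\times$ and $k\in\K_v'$, with the factor $\chi(a)|a|^{2s_0}\delta_{\B'}^{-1}(a)$ coming from $\wt f_{s,v}(t(a)k)$. Choose standard sections whose restriction to $\K'_v$ is an approximate identity at $\1$. In the limit the $k$-integral collapses to $k=\1$. It then suffices to make
\[
\int_{F_v^\times}\int_{R(F_v)}\int_{F_v} W\big(j(r\bar x t(a))\big)\,\big(\Omega_{\psi_v^{-1},W'}(t(a))\phi\big)(x)\,\chi(a)|a|^{\beta}\,dx\,dr\,da
\]
nonzero for some real $\beta$ depending on $s_0$. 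By the Weil representation formulas, $\Omega(t(a))\phi(x)=\gamma(a)|a|^{1/2}\phi(ax)$.

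\textbf{Step 2: kill the $x$-integral.} Take $\phi$ to be an approximate identity at $0\in F_v$. After the substitution $x\mapsto a^{-1}x$, the $x$-integral reduces to evaluation at $\bar 0=\1$, up to a shift of $\beta$.

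\textbf{Step 3: kill the $R$-integral.} Proceed inductively over the coordinates of $r$, as in Proposition~\ref{p3}. By Dixmier--Malliavin, replace $W$ by $\int W_1(g\,u(y))\phi_1(y)\,dy$, where $u(y)$ is a root element of $\U_n'$ that commutes past $j(r\,t(a))$ and picks up the character value $\psi(a r_i y)$. This produces $\widehat{\phi}_1(a r_i)$. Letting $\widehat{\phi}_1$ approach $\delta_0$ removes one coordinate of $r$ at a time.

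\textbf{Step 4: kill the torus integral.} This reduces the problem to $\int_{F_v^\times}W_2(j(t(a)))\chi(a)|a|^{\beta'}da\neq 0$. Smooth once more by a simple root element on which $\mu'_n\psi$ is nontrivial along $j(t(a))$; this introduces $\widehat{\phi}_2(a)$. Choosing $\widehat{\phi}_2$ concentrated at $1$ collapses the integral to $W_3(\1)$, which is nonzero for a suitable $W_3$. As in Proposition~\ref{p3}, repeating these limits with absolute values gives absolute convergence at $s_0$.

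\textbf{Main obstacle.} I expect the hard part to be the root-element bookkeeping under the conjugation $j$ in Steps 3 and 4. One must check that, for each coordinate of $r$ and for the torus, there is a unipotent element of $\U_n'$ that commutes appropriately with $j(r\bar x t(a))$ and on which the character $\mu_n'\cdot\psi$ restricts nontrivially in the right variable. I would verify this by explicit matrix computation with $w'$, following \cite[Lemma~1.6]{GRS97}.
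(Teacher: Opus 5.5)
Your route differs from the paper's: the paper gives no argument for Proposition~\ref{nonv} and only quotes \cite[Propositions~3.6 and 3.7]{GRS98}, while you transplant the smoothing argument of Proposition~\ref{p3}. The root bookkeeping you flag as the main obstacle is routine. As $r\in R$ and $x\in F_v$ vary, $j(r\bar{x})$ runs over the lower unipotent elements supported in the first column (rows $2,\dots,n$, with $x$ in row $n$) of the Siegel Levi $\GL_n$, and $j(t(a))=\diag(a,1,\dots,1,a^{-1})$. For the entry in row $m$ ($2\le m\le n-1$), the root element in position $(1,m+1)$ commutes past $j(r\,t(a))$ modulo upper unipotent elements and produces $\psi(a\,y\,r_{m-1})$, provided every entry below row $m$ is already zero. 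Likewise $E_{12}$ produces $\widehat{\phi}_2(a)$. This also explains why $x$ must be removed first, and using the Weil-representation function $\phi$ for that is a nice shortcut.

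The genuine gap is analytic: the closing sentence of your Step~4 is circular. Passing to limits with absolute values presupposes that each member of your approximating families already gives an absolutely convergent integral at $s_0$. That is exactly what is in question when $\Re(s_0)$ lies left of the half-plane of convergence, and the proposition is stated for every $s_0\in\CC$.

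At an archimedean place the argument genuinely breaks there. The only thing suppressing the region $a\to0$ is the factor $\widehat{\phi}_2(a)$, and that identity holds exactly only at $k=\1$, $x=0$, $r=0$. A smooth archimedean section cannot vanish off $\B'(F_v)$, and $\phi$ cannot be a point mass. So the true integrand involves $\mf{R}(j(k))W$ for $k\neq\1$ and translates of $W$ by $j(\bar x)$ for $x\neq 0$, which are not of the smoothed form. Their power-type asymptotics as $a\to 0$ are in general not killed, so the $a$-integral behaves like $\int_{|a|<1}|a|^{2\Re(s_0)+c}\,d^{\times}a$ and diverges for $\Re(s_0)\ll0$. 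Dominated convergence is then unavailable, and your Steps~1--3 cannot be interchanged with the integral. In addition, approximate identities on $\K'_v$ are not $\K'_v$-finite at archimedean $v$, so even where things converge you owe a continuity-plus-density step to produce the $\K'_v$-finite sections the statement demands. (The closing paragraph of the proof of Proposition~\ref{p3}, which you imitate, has the same defect.)

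Your argument can be completed in two situations.
\begin{itemize}
\item At a non-archimedean place, replace every approximation by an exact choice: a section supported in $\B'(F_v)K_0$ for an open compact $K_0$ fixing $W$ and $\phi$; $\phi$ the characteristic function of a small lattice; smoothing against characteristic functions of compact open sets, with $\widehat{\phi}_2$ the characteristic function of $1+\mathfrak{p}^m$. Then all identities hold exactly, the integrand is compactly supported in $(a,r,x)$ with $a$ away from $0$, and the integral is entire and equal to a nonzero constant for every $s_0$.
\item At an archimedean place, your argument works for $s_0$ in the half-plane of absolute convergence, where uniform gauge estimates justify the limits, once the density step is added.
\end{itemize}
For arbitrary $s_0$ at an archimedean place you need an ingredient your sketch lacks, for instance continuity in $(k,x,r)$ of the meromorphically continued inner integrals, derived from uniform asymptotic expansions of Whittaker functions.
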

Now we are ready to prove Theorem~\ref{prop2}.

\begin{customproof0}
We first prove the implication (i) $\Rightarrow$ (ii). Define 
\[
s_{\sigma} \coloneqq \begin{cases} 1, & \text{if } \sigma \text{ is non-genuine,} \\ \frac{3}{2}, & \text{if } \sigma \text{ is genuine,} \end{cases} \quad \chi_{\sigma} \coloneqq \begin{cases} \chi_d, & \text{if } \sigma \text{ is non-genuine,} \\ \II, & \text{if } \sigma \text{ is genuine.} \end{cases}
\] 
Suppose, for the sake of contradiction, that $L_{\psi}^S(s,\sigma\times \chi_{\sigma})$ is holomorphic at $s=s_{\sigma}$. Since 
\[
L_{\psi}(s,\sigma\times \chi_{\sigma}) = L_{\psi}^S(s,\sigma\times \chi_{\sigma}) \prod_{v \in S} L_{\psi}(s,\sigma_v\times \chi_{\sigma,v}),
\] 
the pole of $L_{\psi}(s,\sigma\times \chi_{\sigma})$ must originate from $L_{\psi}(s,\sigma_v\times \chi_{\sigma,v})$ for some place $v \in S$. We can uniquely express $\sigma_v$ as the Langlands quotient $L(\tau_{1,v} |\cdot|^{e_1}, \cdots , \tau_{k,v} |\cdot|^{e_k} , \sigma_{0,v})$. 

When $\wt{\J}_n=\J_n$ (i.e., $\sigma$ is non-genuine), then $e_1 < \frac{1}{2}$ by the non-trivial Ramanujan bound \cite[Corollary~10.1]{CKPS04}; consequently, the product \[\prod_{i=1}^k L_{\mathrm{GJ}}(s+e_i,\tau_{i,v}) L_{\mathrm{GJ}}(s-e_i,\tau_{i,v}^{\vee})\] is holomorphic at $s=s_0$ by \cite[Remark~3.2.4]{Jac79}. By \cite[Theorem~1]{Yam14},
\[
L(s,\sigma_v\times \chi_{\sigma,v}) = L(s,\sigma_{0,v}\times \chi_{\sigma,v}) \prod_{i=1}^k L_{\mathrm{GJ}}(s+e_i,\sigma_{i,v}\times \chi_{\sigma,v}) L_{\mathrm{GJ}}(s-e_i,\sigma_{i,v}^{\vee}\times \chi_{\sigma,v}),
\]
where $L_{\mathrm{GJ}}$ denotes the local $L$-factor of general linear groups defined by Godement and Jacquet (\cite{GJ72}). Since $e_1 < \frac{1}{2}$, the product $\prod_{i=1}^k L_{\mathrm{GJ}}(s+e_i,\tau_{i,v}\times \chi_{\sigma,v}) L_{\mathrm{GJ}}(s-e_i,\tau_{i,v}^{\vee}\times \chi_{\sigma,v})$ is holomorphic at $s=1$ by \cite[Remark~3.2.4]{Jac79}, and $L(s,\sigma_{0,v}\times \chi_{\sigma,v})$ is holomorphic at $s=1$ by \cite[Lemma~7.2]{Yam14}. Therefore, $L(s,\sigma_v\times \chi_{\sigma,v})$ must be holomorphic at $s=1$, which leads to a contradiction.

Similarly, when $\wt{\J}_n$ is a metaplectic group (i.e., $\sigma$ is genuine), using functoriality from metaplectic groups to general linear groups (\cite[Theorem~11.2]{GRS11}), we  obtain $e_1\le \frac{1}{2}$. Proceeding with arguments analogous to those above, we again derive a contradiction. This establishes the implication (i) $\Rightarrow$ (ii). 

Next, we prove the implication (ii) $\Rightarrow$ (iii). Assume that (ii) holds. Then by Propositions~\ref{unram} and \ref{nonv}, there exist $\varphi \in \sigma$, $\phi \in \mc{S}(Y'(\A))$, and holomorphic sections $f_s\in I'(s,\II)(\A)$ and $\wt{f}_s \in \wt{I}'(s,\chi_{d})(\A)$ such that 
\begin{align*}
&\J_{\psi}^{\chi_d}(\varphi,\phi,\wt{f}_s) \text{ has a pole at } s=\frac{3}{4}, \quad &&\text{if } \sigma \text{ is non-genuine,} \\ 
&\wt{\J}_{\psi}^{\II}(\varphi,\phi,f_s) \text{ has a pole at } s=1, \quad &&\text{if } \sigma \text{ is genuine.}
\end{align*}
The poles of these global zeta integrals necessarily arise from the corresponding Eisenstein series involved in the integration. According to \cite[Proposition~6.2, Theorem~7.1]{GQT14}, there exist a $1$-dimensional orthogonal space $\wt{V}$ with discriminant $d$, nonzero constants $c_1, c_2 \in \CC$, and a Schwartz function $\phi' \in \mc{S}(\wt{V} \otimes Y')(\A)$ such that
\begin{align}
\label{res1}\mathrm{Res}_{s=3/4} E(\wt{f}_s, \wt{g}) &= c_1 \int_{[\mu_2]} \theta_{\psi, \wt{V}, W'}(\phi')((\mathbf{t} \cdot \mathbf{e}), \wt{g}) \, d\mathbf{t}, && \text{if } \sigma \text{ is non-genuine,} \\ \label{res2}
\mathrm{Res}_{s=1} E(f_s, g) &= c_2, && \text{if } \sigma \text{ is genuine.}
\end{align}
(Specifically, to derive \eqref{res1}, we apply the parameters $(m,n)=(3,1)$ for \cite[Proposition~6.2]{GQT14} and $(m,n)=(1,1)$ for \cite[Theorem~7.1]{GQT14}. Similarly, to derive \eqref{res2}, we substitute $(m,n)=(4,1)$ into \cite[Proposition~6.2]{GQT14} and $(m,n)=(0,1)$ into \cite[Theorem~7.1]{GQT14}. We note that the induction used in \cite{GQT14} is normalized.)

In view of Remark~\ref{1dim}, we shall identify $\mc{S}(\wt{V} \otimes Y')$ and $\theta_{\psi, \wt{V}, W'}'(\phi')$ with $\mc{S}(Y')$ and $\theta_{\psi_d, W'}'(\phi')$, respectively.

If $\sigma$ is non-genuine, by applying an appropriate right translation action of $\O(\wt{V})(\mathbb{A})$, there exists $\phi'' \in \mc{S}(Y')(\mathbb{A})$ such that 
\begin{equation*}
\begin{split}
&\mc{FJ}_{n-1,\psi}^{1}(\varphi_{w'}\otimes \theta_{\psi_d,W'}'(\phi'')\otimes \phi) \\
&= \int_{[\J']} \int_{[\U_{n,n-1}']} \varphi(j(u'h')) \theta_{\psi^{-1},W'}(\phi)(u'\wt{h}')\theta_{\psi_d,W'}'(\phi'')(\wt{h}')(\mu_{n-1}')^{-1}(u') \, du' \, dh' \neq 0.
\end{split}
\end{equation*}

If $\sigma$ is genuine, we obtain
\begin{equation*}
\begin{split}
&\mc{FJ}_{n-1,\psi}^{1}(\varphi_{w'}\otimes \phi)= \int_{[\J']} \int_{[\U_{n,n-1}']} \varphi(j(u'\wt{h}')) \theta_{\psi^{-1},W'}(\phi)(u'\wt{h}')  (\mu_{n-1}')^{-1}(u') \, du' \, dh' \neq 0.\quad \quad \quad \
\end{split}
\end{equation*}

This establishes the implication (ii) $\Rightarrow$ (iii). The implication (iii) $\Rightarrow$ (iv) is a consequence of Theorem~\ref{b10}, and (iv) $\Rightarrow$ (v) is trivial. Therefore, it suffices to show that (v) implies (i). If $\Theta_{\psi^{-1},W_n,V_{n-\ve}^{\ve}}(\sigma)$ is nonzero, then by \cite[Lemma~10.2]{Yam14}, $L_{\psi}(s,\sigma \times \chi_{\sigma})$ has a pole at $s=s_{\sigma}$. (We remark that there is a typo in the statement of \cite[Lemma~10.2]{Yam14}; the first condition stated there should be the holomorphy at $-s_m+\frac{1}{2}$.) This proves the implication (v) $\Rightarrow$ (i), completing the proof. \qed
\end{customproof0}

\begin{rem}\label{non-ge1}
In the proof of Theorem~\ref{prop2}, the chain of implications (iii) $\Rightarrow$ (iv) $\Rightarrow$ (v) $\Rightarrow$ (i) remains valid even when $\sigma$ is not globally generic. In the implication (i) $\Rightarrow$ (ii), we specifically utilized the almost tempered property of the local components of globally generic representations (see Remark~\ref{Ram}). Consequently, the chain of implications (iii) $\Rightarrow$ (iv) $\Rightarrow$ (v) $\Rightarrow$ (i) $\Rightarrow$ (ii) remains valid even if $\sigma$ is not globally generic, provided that it is almost tempered at every place.
\end{rem}

From Theorem~{\ref{prop2}}, we have the following corollary.
\begin{cor}\label{co11}Let $\sigma$ be an irreducible $(\psi,1)$-generic cuspidal representation of $\wt{\J}_n(\A)$. If $\Theta_{\psi^{-1},W_n,V_{n-\ve_{\sigma}}^{\ve_{\sigma}}}(\sigma)$ is nonzero, then $\Theta_{\psi^{-1},W_n,V_{n-\ve_{\sigma}}^{\ve_{\sigma}}}(\sigma)$ is $\mu_{\ve_{\sigma}}$-generic.
\end{cor}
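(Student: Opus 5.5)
The plan is to read Corollary~\ref{co11} off the equivalences in Theorem~\ref{prop2}, in the same way that Corollary~\ref{co2} follows from Theorem~\ref{t2}. The hypothesis is that $\sigma$ is an irreducible $(\psi,1)$-generic cuspidal representation of $\wt{\J}_n(\A)$. Under the standing conventions of Section~\ref{sec:7} ($V_n^0$ of type $(d,1)$, $\mathrm{disc}(V_n^1)=1$), this is exactly the setting in which Theorem~\ref{prop2} applies. I would fix a finite set $S$ of places containing the archimedean ones, outside of which $\sigma_v$ and $\psi_v$ are unramified, so that all five conditions of that theorem are meaningful.

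The argument has three steps, taken in order.

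\begin{enumerate}
\item Assume $\Theta_{\psi^{-1},W_n,V_{n-\ve_{\sigma}}^{\ve_{\sigma}}}(\sigma)\neq 0$. This is condition (v) of Theorem~\ref{prop2}.
\item By that theorem, (v) is equivalent to (iv). Hence the lift is nonzero and $\mu_{\ve_{\sigma}}$-generic, which is the claim.
\item Make explicit that the route passes through the analytic conditions, which is where genericity is genuinely used:
\begin{itemize}
\item Yamana's Lemma~10.2 gives (v)$\Rightarrow$(i).
\item The Ramanujan-type bounds for generic $\sigma$ give (i)$\Rightarrow$(ii).
\item The Rankin--Selberg integrals of Theorem~\ref{secid}, the unramified computation of Proposition~\ref{unram}, the local nonvanishing of Proposition~\ref{nonv}, and the residue formulas \eqref{res1} and \eqref{res2} give (ii)$\Rightarrow$(iii).
\item Theorem~\ref{b10} gives (iii)$\Rightarrow$(iv).
\end{itemize}
\end{enumerate}

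The only point needing care is whether "$\mu_{\ve_{\sigma}}$-generic" in the corollary means the same thing as in condition (iv). In particular, the lift should be regarded on $\G_k^{\ve}$ versus its restriction to $\H_k^{\ve}$ consistently. As in the proof of Theorem~\ref{b10}, genericity of an automorphic representation of $\G_k^{\ve}$ is equivalent to genericity of its restriction to $\H_k^{\ve}$, so the two readings agree. I do not expect any real obstacle here, since all the substance lies in Theorem~\ref{prop2}.
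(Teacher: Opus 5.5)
Your proposal is correct and matches the paper's argument. The paper states the corollary as an immediate consequence of Theorem~\ref{prop2}, i.e.\ the implication (v)$\Rightarrow$(iv) obtained through the chain (v)$\Rightarrow$(i)$\Rightarrow$(ii)$\Rightarrow$(iii)$\Rightarrow$(iv) that you describe, and your remark on reading $\mu_{\ve_{\sigma}}$-genericity on $\G_k^{\ve}$ versus $\H_k^{\ve}$ agrees with the convention fixed in the proof of Theorem~\ref{b10}.
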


Combining Proposition~{\ref{con}}, Corollary~\ref{e3} and Corollary~\ref{co11}, we get the following theorem. 

\begin{thm}{\label{q1}}Let $\sigma$ be an irreducible $(\psi,1)$-generic cuspidal representation of $\wt{\J}_n(\A)$. If $\Theta_{\psi,W_n,V_{k_0}^{\ve_{\sigma}}}(\sigma)$ is nonzero and $\Theta_{\psi^{-1},W_n,V_{i}^{\ve_{\sigma}}}(\sigma)=0$ for all $i<k_0$, then $\Theta_{\psi^{-1},W_n,V_{k_0}^{\ve_{\sigma}}}(\sigma)$ is $\mu_{\ve_{\sigma}}$-generic and cuspidal.
\end{thm}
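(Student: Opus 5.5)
We follow the proof of Theorem~\ref{p5}, now in the Fourier--Jacobi setting. Write $\ve=\ve_\sigma$. Since $\sigma$ is globally $(\psi,1)$-generic, each local component $\sigma_v$ is $(\psi_v,1)$-generic, i.e.\ of $(\mu'_{n,v},1)$-Fourier--Jacobi type with $t=n$.

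\textbf{Step 1: bound the first occurrence.} Since $V_n^0$ has type $(d,1)$ and $\mathrm{disc}(V_n^1)=1$, we have $\lambda_\ve\equiv 1$. Apply Proposition~\ref{con} at a non-archimedean place $v$ with $t=n$. It shows that the local big theta lift $\Theta_{\psi_v^{-1},W_n,V_i^{\ve}}(\sigma_v)$ vanishes whenever $i<n-\ve$. The global lift, whenever nonzero, has local components equal to the local theta lifts $\theta(\sigma_v)$. Hence $\Theta_{\psi^{-1},W_n,V_i^\ve}(\sigma)=0$ for all $i<n-\ve$, and therefore $k_0\ge n-\ve$. (We read the hypothesis with $\psi^{-1}$ throughout, as in the conclusion.)

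\textbf{Step 2: the case $k_0=n-\ve$.} The lift $\Theta_{\psi^{-1},W_n,V_{n-\ve}^\ve}(\sigma)$ is nonzero. By Corollary~\ref{co11}, which comes from the equivalence (v)$\Rightarrow$(iv) of Theorem~\ref{prop2}, it is $\mu_\ve$-generic. It is the first occurrence, so Rallis' tower property makes it cuspidal.

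\textbf{Step 3: the case $k_0>n-\ve$.} Here $\Theta_{\psi^{-1},W_n,V_{n-\ve}^\ve}(\sigma)=0$. By Corollary~\ref{e3}, the lift $\Theta_{\psi^{-1},W_n,V_{n+1-\ve}^\ve}(\sigma)$ is nonzero, $\mu_\ve$-generic and cuspidal. Since $k_0$ is the first index with a nonzero lift, this forces $k_0=n+1-\ve$, and the conclusion follows.

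\textbf{Main obstacle.} The delicate point is Step 1. Passing from the local vanishing in Proposition~\ref{con} to global vanishing requires that a nonzero global theta lift has local components given by local theta lifts. This is standard: the lift is cuspidal at the first occurrence, and one applies Howe duality. A second point to check is that the character normalization ($\lambda_{\ve_\sigma}=1$, and $\psi$ versus $\psi^{-1}$) matches the hypotheses of Proposition~\ref{con} and Corollary~\ref{e3}. Everything else is a direct combination of earlier results.
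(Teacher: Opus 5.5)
Your proposal is correct and is essentially the argument the paper intends: the paper only says to combine Proposition~\ref{con}, Corollary~\ref{e3} and Corollary~\ref{co11}, in parallel with the proof of Theorem~\ref{p5}. As in your plan, Proposition~\ref{con} together with local--global compatibility at the first occurrence gives $k_0\ge n-\ve_\sigma$; Corollary~\ref{co11} and Rallis' tower property handle $k_0=n-\ve_\sigma$; and Corollary~\ref{e3} forces $k_0=n+1-\ve_\sigma$ otherwise, with your reading of the mixed $\psi/\psi^{-1}$ in the hypothesis as $\psi^{-1}$ throughout being the intended one.
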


\bb

\section{Application to the non-tempered GGP conjecture} \label{sec8}

Throughout this section, we assume that $F$ is a number field. For the sake of clarity, we shall adopt the standard notation $\SO_{2n+1}$, $\SO_{2n}$, $\Sp_{2n}$, and $\Mp_{2n}$ in place of $\H_{n}^1$, $\H_n^0$, $\J_n$, and $\wt{\J}_n$, respectively. For their respective subgroups, we use $\SO_{2}$, $\SO_{3}$, $\Mp_{2}$, and $\Sp_{2}$ in place of $\H^0$, $\H^1$, $\wt{\J}_{n,n-1}'$, and $\J_{n,n-1}'$.

We now apply our previous results to establish certain cases of the global Gan--Gross--Prasad (GGP) conjecture. As illustrated in Diagram~\ref{diag:logical_flow}, the proof of the main theorem is assembled as follows. Theorems~\ref{a1} and~\ref{b10} relate the non-vanishing of the relevant special periods to the non-vanishing and genericity of suitable global theta lifts. The local vanishing results of Section~\ref{sec:3}, together with a local genericity hypothesis, determine the first occurrence index by excluding any prior occurrences in the theta tower. Theorems~\ref{t2} and~\ref{prop2} then translate these period conditions into poles or non-vanishing statements for the relevant $L$-functions. Finally, these analytic conditions are interpreted in terms of global Arthur parameters, yielding one direction non-tempered Gan--Gross--Prasad cases. The converse direction is established by reversing these implications and applying the global descent theory.

\begin{figure}[htbp]
\centering
\[
\begin{array}{c}
\text{non-vanishing of a special period}
\\[0.4em]
\Downarrow \quad \text{\scriptsize Theorems~\ref{a1}, \ref{b10}}
\\[0.4em]
\text{nonzero generic theta lift}
\\[0.4em]
\Downarrow \quad \text{\scriptsize Section~\ref{sec:3} + local genericity}
\\[0.4em]
\text{first occurrence at the expected rank}
\\[0.4em]
\Downarrow \quad \text{\scriptsize Theorems~\ref{t2}, \ref{prop2}}
\\[0.4em]
\text{pole / non-vanishing of a relevant } L\text{-function}
\\[0.4em]
\Downarrow
\\[0.4em]
\text{Arthur parameter condition}
\\[0.4em]
\Downarrow
\\[0.4em]
\text{desired non-tempered GGP statement}
\end{array}
\]
\caption{Skeleton of the logical flow of the proof.}
\label{diag:logical_flow}
\end{figure}
\bb

To formulate the global GGP conjecture precisely, we first recall the notion of global $A$-parameters.

\subsection{Global $A$-parameters and $L$-packets}\label{sec:A-param}

Let $G_k$ be one of the groups $\SO_{2k}$, $\SO_{2k+1}$, $\Sp_{2k}$, or $\Mp_{2k}$. In this section, we recall the construction of global $A$-parameters and their associated packets for $G_k$.

\subsubsection{Discrete Global $A$-parameters}\label{subsec:discrete-A}
For a non-negative integer $d$, let $[d]$ denote the unique $(d+1)$-dimensional irreducible representation of $\SL_2(\CC)$. By convention, we set $[-1]$ to be $0$. A global $A$-parameter for $G_k$ is defined as a formal finite direct sum:
\begin{equation}\label{art}
M = \bigoplus_{i=1}^{r} M_i \boxtimes [d_i],
\end{equation}
where each $M_i$ is an irreducible cuspidal unitary automorphic representation of $\GL_{n_i}(\A)$. Note that each $M_i$ is almost tempered. For simplicity, we write $M_i$ for $M_i \boxtimes [0]$ and $[d]$ for $\II \boxtimes [d]$. 

A global $A$-parameter $M$ of $G_k$ is called \textbf{tempered} if $d_i=0$ for all $1 \le i \le r$. 

We say that a global $A$-parameter $M$ of $G_k=\SO_{2k}$ or $\Sp_{2k}$ (resp. $G_k=\SO_{2k+1}$ or $\Mp_{2k}$) is \textbf{discrete} if the following conditions hold:
\begin{itemize}
    \item Each $d_i \ge -1$ is an integer satisfying the dimension identity:
    \[
    \sum_{i=1}^r n_i(d_i+1) = \begin{cases} 2k, & \text{if } G_k=\SO_{2k}, \\ 2k+1, & \text{if } G_k=\Sp_{2k} \end{cases} 
    \qquad \text{(resp. } 2k, \text{ if } G_k=\SO_{2k+1} \text{ or } \Mp_{2k});
    \]
    \item $M_i$ is of orthogonal type (i.e., $L(s, M_i, \mathrm{Sym}^2)$ has a pole at $s=1$) if $d_i$ is even (resp. odd);
    \item $M_i$ is of symplectic type (i.e., $L(s, M_i, \wedge^2)$ has a pole at $s=1$) if $d_i$ is odd (resp. even);
    \item The central characters $\omega_{M_i}$ of $M_i$ satisfy $\prod_{i=1}^r \omega_{M_i}^{d_i+1} = \chi_d$ (resp. $1$);
    \item The pairs $(M_i, d_i)$ are distinct.
\end{itemize}
Under these conditions, $M$ is said to be \textbf{orthogonal} (resp. \textbf{symplectic}).

\subsubsection{Relevant $A$-parameters}
Let $M$ and $N$ be two discrete global $A$-parameters. We can write them in the form:
\begin{align*}
M &= \bigoplus_{i=1}^{t} M_{i} \boxtimes [n_i], \\
N &= \bigoplus_{i=1}^{t} M_{i} \boxtimes [m_i],
\end{align*}
where the $M_{i}$ are irreducible unitary cuspidal automorphic representations of $\GL_{r_i}(\A)$, and $n_{i}, m_{i} \ge -1$ are integers. (Recall that $[-1] = 0$.)

We say that $M$ and $N$ are \textbf{relevant} if there exists a permutation $p$ of $\{1,2,\dots, t\}$ such that
\[ 
M = \bigoplus_{i=1}^{t} M_{i} \boxtimes [n_{p(i)}] 
\]
and $|n_{p(i)} - m_i| = 1$ for all $1 \le i \le t$. In this case, one of $M$ and $N$ is symplectic, and the other is orthogonal.

\subsubsection{Local $L$-parameters and $L$-packets}\label{subsec:local-packets}

Given a discrete global $A$-parameter $M$, its local component at a place $v$ is given by $M_v = \bigoplus_{i=1}^r M_{i,v} \boxtimes [d_i]$. Let $W(F_v)$ denote the Weil group of $F_v$, and define the Weil--Deligne group as:
\[
WD(F_v) \coloneqq
\begin{cases}
W(F_v) \times \SL_2(\CC), & \text{if } v \text{ is non-archimedean,}\\[4pt]
W(F_v), & \text{if } v \text{ is archimedean.}
\end{cases}
\]
Via the local Langlands correspondence for general linear groups, we regard each component $M_{i,v}$ as a representation of $WD(F_v)$ into $\GL_{n_i}(\CC)$, so that $M_v$ constitutes a representation of $WD(F_v) \times \SL_2(\CC)$, which we call a local $A$-parameter. We say that $M_v$ is \textbf{tempered} if $d_i = 0$ for all $1 \le i \le r$. 

The local $A$-parameter induces a local $L$-parameter $\wt{M}_v \colon WD(F_v) \to {}^L G_k$ via the diagonal map:
\[ 
w \longmapsto \left(w, \mathrm{diag}(|w|^{1/2}, |w|^{-1/2})\right). 
\]
Explicitly, the induced local $L$-parameter admits the decomposition:
\begin{equation}
\wt{M}_v = \bigoplus_{i=1}^r \bigoplus_{j=0}^{d_i} M_{i,v} |\cdot|^{\frac{d_i}{2}-j}.
\end{equation}
Note that if $M_v$ is tempered, then $\wt{M}_v$ coincides with $M_v$, since the $\SL_2(\CC)$-factor is trivial.

Associated with $\wt{M}_v$ is a local $L$-packet $\Pi_{\wt{M}_v}$ consisting of irreducible smooth genuine representations of $G_k(F_v)$ (or of its pure inner forms if $G_k \in \{\SO_{2k+1}, \SO_{2k}\}$), all of which share the same $\GL_1$-twisted $\gamma$-factors as $\wt{M}_v$. Furthermore, if $M_v$ is tempered, then every member of $\Pi_{\wt{M}_v}$ is almost tempered. In addition, if the $M_{i,v}$ are all tempered representations of $WD(F_v)$ (i.e., the image is bounded), then every member of $\Pi_{\wt{M}_v}$ is tempered (see \cite{Art13, AG17, GI18, Ish24, Li24}). Note that when $G_k = \Mp_{2k}$, the packet $\Pi_{\wt{M}_v}$ depends on the choice of a non-trivial additive character $\psi_v$.
\subsubsection{Global $L$-packets and Satake parameters}\label{subsec:global-packets}
The global $L$-packet $\Pi_M$ is a set of irreducible (genuine) automorphic representations of $G_k(\A)$ (or of its pure inner forms) that occur in the discrete automorphic spectrum. It is characterized as a subset of the restricted tensor product $\sideset{}{'}\bigotimes_v \Pi_{\wt{M}_v}$ satisfying certain global conditions. For any $\pi \in \Pi_M$, the local component $\pi_v$ is unramified for almost all places $v$. Let $\{c_{1,i,v}, \dots, c_{n_i,i,v}\}$ be the Satake parameters of the unramified representation $M_{i,v}$. Then the Satake parameter of $\pi_v$ is given by the multiset
\begin{equation}\label{sat}
\bigcup_{i=1}^{r} \bigcup_{m=1}^{n_i} \left\{ c_{m,i,v} q_v^{-\frac{d_i}{2}}, c_{m,i,v} q_v^{-\frac{d_i}{2}+1}, \dots, c_{m,i,v} q_v^{\frac{d_i}{2}} \right\}.
\end{equation}
It follows that all elements of the global packet $\Pi_M$ are \textbf{nearly equivalent}; that is, they are isomorphic at almost all places $v$. When $G_k=\Mp_{2k}$, the packet $\Pi_M$ depends on the choice of $\psi$ because the local $L$-packets $\Pi_{\wt{M}_v}$ depend on $\psi_v$.

If $\pi \in \Pi_M$ is a globally generic cuspidal representation, then $M$ coincides with the functorial lift of $\pi$ to the general linear group. For $G_k \in \{\SO_n, \Mp_{2n}\}$, this is proved in \cite[Theorem~B.1]{AHKO25}, and the approach therein applies equally well to other classical groups.
\subsection{$L$-functions of $A$-parameters}
\subsubsection{Definitions of local and global $L$-functions}
We define the $L$-functions for discrete global $A$-parameters as follows. Let $\Pi_1$ and $\Pi_2$ be irreducible cuspidal automorphic representations of $\GL_n(\A)$ and $\GL_m(\A)$, respectively, and let $d_1, d_2$ be non-negative integers. For each place $v$ of $F$, we define the following local $L$-functions:\begin{align*}
&L(s, (\Pi_{1,v} \times \Pi_{2,v}) \boxtimes [d_1])  \coloneqq \prod_{i=0}^{d_1} L\left(s + \frac{d_1}{2} - i, \Pi_{1,v} \times \Pi_{2,v}\right), \\[10pt]
&L(s, (\Pi_{1,v} \boxtimes [d_1]) \times (\Pi_{2,v} \boxtimes [d_2])) \coloneqq \prod_{k=0}^{\min\{d_1, d_2\}} L(s, (\Pi_{1,v} \times \Pi_{2,v}) \boxtimes [d_1 + d_2 - 2k]).
\end{align*}
Here, $L(s, \Pi_{1,v} \times \Pi_{2,v})$ denotes the local Rankin--Selberg (or Artin) $L$-function, where $\Pi_{1,v}$ and $\Pi_{2,v}$ are viewed as representations of the Weil--Deligne group $WD(F_v)$ via the local Langlands correspondence. For the trivial character $\II_v$ of $\GL_1(F_v)$, we define:
\[ 
L(s, \Pi_{1,v} \boxtimes [d_1]) \coloneqq \prod_{i=0}^{d_1} L\left(s + \frac{d_1}{2} - i, \Pi_{1,v} \times \II_v\right). 
\]

Let $\Sym^2$ and $\wedge^2$ denote the symmetric and exterior square representations, respectively. For $\gamma \in \{\Sym^2, \wedge^2\}$, we set:
\[ 
L(s, \gamma(\Pi_{1,v}) \boxtimes [d]) \coloneqq L(s, (\gamma \circ \Pi_{1,v}) \boxtimes [d]), 
\]
where $\Pi_{1,v}$ is again viewed as a representation of $WD(F_v)$. Taking the product over all places $v$, we define the completed $L$-functions as $L(s, \cdot) \coloneqq \prod_v L(s, \cdot_v)$. Although the existence of $\gamma(\Pi_1)$ as an automorphic representation is not known in general, we formally define its completed $L$-function by:
\[ 
L(s, \gamma(\Pi_1) \boxtimes [d]) \coloneqq \prod_v L(s, \gamma(\Pi_{1,v}) \boxtimes [d]). 
\]
When $d_1 = d_2 = 0$, the $L$-functions $L(s, \Pi_1 \times \Pi_2)$ and $L(s, \gamma(\Pi_1))$ coincide with the standard completed Rankin--Selberg and symmetric/exterior square $L$-functions, respectively. We also employ the partial $L$-functions $L^S(s, \cdot) \coloneqq \prod_{v \notin S} L(s, \cdot_v)$ for a finite set of places $S$ such that $\Pi_{1,v}$ and $\Pi_{2,v}$ are unramified for $v \notin S$.

For a cuspidal representation $\rho$ of $\GL_n$ and a representation $W$ of $\SL_2(\CC)$, we define the symmetric and exterior square decompositions as follows:
\begin{align*} 
\Sym^2(\rho \boxtimes W) &\coloneqq \left( \Sym^2(\rho) \boxtimes \Sym^2(W) \right) \oplus \left( \wedge^2(\rho) \boxtimes \wedge^2(W) \right),\\
\wedge^2(\rho \boxtimes W) &\coloneqq \left( \Sym^2(\rho) \boxtimes \wedge^2(W) \right) \oplus \left( \wedge^2(\rho) \boxtimes \Sym^2(W) \right).
\end{align*}

Consider two discrete global $A$-parameters $M = \bigoplus_{\alpha} V_{\alpha}$ and $N = \bigoplus_{\beta} W_{\beta}$. We then define the following exterior and symmetric square parameters for $M$:
\begin{align*}
    \wedge^2(M) &\coloneqq \bigoplus_{\alpha} \wedge^2(V_{\alpha}) \oplus \bigoplus_{\alpha < \beta} (V_{\alpha} \otimes V_{\beta}), \\
    \Sym^2(M) &\coloneqq \bigoplus_{\alpha} \Sym^2(V_{\alpha}) \oplus \bigoplus_{\alpha < \beta} (V_{\alpha} \otimes V_{\beta}).
\end{align*}

Furthermore, the product $L$-function associated with $M$ and $N$ is defined by:
\begin{align*}
    L(s, M \times N) &\coloneqq \prod_{\alpha, \beta} L(s, V_{\alpha} \times W_{\beta}).
\end{align*}
Based on these structural decompositions, the $L$-functions $L(s, \Sym^2(M))$ and $L(s, \wedge^2(M))$, as well as their partial analogues, are defined accordingly through the products of the $L$-functions of their constituent irreducible components.

\subsubsection{Extended central $L$-value}
Suppose that $M$ and $N$ are relevant discrete $A$-parameters, with $M$ orthogonal and $N$ symplectic. It has been established that the ratio
\begin{equation}\label{clv}
L(s, M, N) \coloneqq \frac{L(s + \frac{1}{2}, M \times N)}{L(s + 1, M, \wedge^2) L(s + 1, N, \Sym^2)}
\end{equation}
is holomorphic at $s=0$ (cf. \cite[Theorem~9.7]{GGP20}). Consequently, we may define the \textbf{extended central $L$-value} $L(0, M, N)$ as the evaluation of $L(s, M, N)$ at $s=0$.

\subsection{The global GGP conjecture for some non-tempered cases}\label{relev}
In this section, we establish certain non-tempered cases of the global GGP conjecture. We begin by formulating the conjecture for the Bessel and Fourier--Jacobi cases (cf. \cite[Conjecture~9.1]{GGP20}).

First, consider the Bessel case. Let $V_m \subset V_n$ be a pair of non-degenerate orthogonal spaces over $F$ of dimensions $m$ and $n$, respectively. Let $V_m^{\perp}$ denote the orthogonal complement of $V_m$ in $V_n$, and assume that $V_m^{\perp}$ is a split orthogonal space with Witt index $k-1$. We let $\SO(V_n)$ and $\SO(V_m)$ denote the connected components of their respective isometry groups.

Let $V_m' \subset V_n'$ be another pair of non-degenerate orthogonal spaces of dimensions $m$ and $n$. If $V_m'^{\perp} \simeq V_m^{\perp}$ as orthogonal spaces, we say that $\SO(V_n') \times \SO(V_m')$ is a \textbf{relevant pure inner form} of $\SO(V_n) \times \SO(V_m)$. For a global $A$-parameter $M \times N$ of $\SO(V_n) \times \SO(V_m)$, let $\Pi_{M \times N}^{\mathrm{rel}} \subset \Pi_M \times \Pi_N$ denote the set of automorphic representations of $\SO(V_n') \times \SO(V_m')$, where the pair $\SO(V_n') \times \SO(V_m')$ ranges over all relevant pure inner forms. 

While the Bessel period $\mc{B}_{k,\psi}^{\ve}$ was previously defined for automorphic forms on the quasi-split pair $\SO(V_n) \times \SO(V_m)$, it extends analogously to any relevant pure inner form. We shall retain the same notation for this extension.

Next, we consider the Fourier--Jacobi case. Let $W_m \subset W_n$ be a pair of symplectic spaces over $F$ of dimensions $2m$ and $2n$, respectively. We decompose $W_n = W_m \oplus W_m^{\perp}$ and assume that $W_m^{\perp}$ is a split symplectic space with Witt index $k$. Let $\Sp(W_m)$ and $\Sp(W_n)$ be the corresponding isometry groups, and let $\Mp(W_m)$ and $\Mp(W_n)$ denote their respective double covers. 

The Fourier--Jacobi period $\mc{FJ}_{k,\psi}^{\lambda}$ is defined for automorphic forms on the pair $\Mp(W_n) \times \Sp(W_m)$ or $\Sp(W_n) \times \Mp(W_m)$. For a global $A$-parameter $M \times N$ associated with such a pair, we similarly let $\Pi_{M \times N}^{\mathrm{rel}} \coloneqq \Pi_M \times \Pi_N$ denote the associated set of representations. We note that, unlike the orthogonal case, the pure inner forms of $\Sp(W_n)$ and $\Mp(W_n)$ are unique (up to isomorphism), and thus the set $\Pi_{M \times N}^{\mathrm{rel}}$ consists of the product of the global packets on the groups themselves.

\begin{con1}[{\cite[Conjecture~9.1]{GGP20}}]
Let $(G_n, G_m)$ be one of the following pairs of classical groups:
\[
(G_n, G_m) \in \{ (\Sp(W_n), \Mp(W_m)), (\Mp(W_n), \Sp(W_m)) \}.
\]
Let $M \times N$ be a discrete global $A$-parameter of $G_n \times G_m$. Consider an irreducible (genuine) discrete automorphic representation $\pi_1 \boxtimes \pi_2$ of $G_n(\A) \times G_m(\A)$ associated with the $A$-parameter $M \times N$. Then the following assertions hold:

\begin{enumerate}
    \item Suppose that $n-m$ is odd.
    \begin{enumerate}
        \item If the Bessel period $\mc{B}_{k,\psi}^{\ve}$ on $\pi_1 \boxtimes \pi_2$ is nonzero, then $(M, N)$ is a relevant pair.
        \item Assume that $(M, N)$ is a relevant pair. Then the following conditions are equivalent:
        \begin{enumerate}
            \item[(1)] There exists an irreducible discrete automorphic representation $\pi_1' \boxtimes \pi_2'$ in $\Pi_{M \times N}^{\mathrm{rel}}$ such that $\mc{B}_{k,\psi}^{\ve}$ on $\pi_1' \boxtimes \pi_2'$ is nonzero.
            \item[(2)] $L(0, M, N)$ is nonzero.
        \end{enumerate}
    \end{enumerate}

    \item Suppose that $n-m$ is even.
    \begin{enumerate}
        \item If the Fourier--Jacobi period $\mc{FJ}_{k,\psi}^1$ is nonzero on $\pi_1 \boxtimes \pi_2 \boxtimes \nu_{\psi^{-1},W_{m}}^{\lambda}$, then $(M, N)$ is a relevant pair.
        \item Assume that $(M, N)$ is a relevant pair. Then the following conditions are equivalent:
        \begin{enumerate}
            \item[(1)] There exists an irreducible discrete automorphic representation $\pi' = \pi_1' \boxtimes \pi_2'$ in $\Pi_{M \times N}^{\mathrm{rel}}$ such that $\mc{FJ}_{k,\psi}^1$ is nonzero on $\pi_1' \boxtimes \pi_2' \boxtimes \nu_{\psi^{-1},W_{m}}^{\lambda}$.
            \item[(2)] $L(0, M, N)$ is nonzero.
        \end{enumerate}
    \end{enumerate}
\end{enumerate}
\end{con1}
For the remainder of this section, we establish the conjecture in three distinct non-tempered settings, namely:
\[
(G_n, G_m) \in \{ (\SO_{2n}, \SO_3), \ (\Sp_{2n}, \Mp_2), \ (\Mp_{2n}, \Sp_2) \}.
\]

In these respective cases, we take the non-tempered $A$-parameter $N$ of $G_m$ to be $[1]$, $\chi_d \boxtimes [1]$, and $[2]$. We note that when $(G_m,N)=(\SO_3,[1])$ or $(\Sp_2,[2])$, the local component group of $N$ is trivial (a singleton). Consequently, the global $L$-packet $\Pi_N$ consists of a unique element, which is the trivial character $\II$ of $G_m(\A)$ (see \cite[Sect.~25]{GGP12}).

When $(G_m,N)=(\Mp_2,\chi_d \boxtimes [1])$, the local component group of $N$ is $\ZZ / 2\ZZ$, and the global $L$-packet $\Pi_N$ is given by
\[
\Pi_N \coloneqq \big\{ \omega_{\psi_d, W_{n,n-1}'}^S \big\}_{|S| \text{ is even}}
\]
(see Proposition~\ref{p.t} and \cite[Proposition~10.2.4]{Li24}).

For $G_n=\SO_{2n}$ or $\Sp_{2n}$, we take $M$ to be a tempered $A$-parameter relevant to $N$. For $G_n=\Mp_{2n}$, we take $M$ to be a non-tempered $A$-parameter relevant to $N$.

\bb

Regarding the analytic properties of $L$-functions, we rely on the following well-known facts (see \cite{JSS83, Sha88} and \cite[Theorem~1.3]{Gr11}).

\begin{fac} \label{fac1} 
Let $\sigma_1$ and $\sigma_2$ be cuspidal automorphic representations of $\GL_n(\A)$ and $\GL_m(\A)$, respectively. Let $\Sym^2$ and $\wedge^2$ denote the symmetric and exterior square representations, respectively. We also let $L(s,\gamma(\sigma_1))$ denote the completed symmetric or exterior square $L$-function, corresponding to $\gamma=\Sym^2$ or $\wedge^2$. We review some analytic properties of $L(s,\sigma_1 \times \sigma_2)$ and $L(s,\gamma(\sigma_1))$:
\begin{itemize}
    \item $L(s,\sigma_1 \times \sigma_2)$ and $L(s,\gamma(\sigma_1))$ admit meromorphic continuations to the whole complex plane.
    \item $L(s,\sigma_1 \times \sigma_2)$ and $L(s,\gamma(\sigma_1))$ satisfy functional equations relating their values at $s$ and $1-s$.
    \item $L(s,\sigma_1 \times \sigma_2)$ and $L(s,\gamma(\sigma_1))$ are holomorphic for all $s \in \CC \setminus \{0,1\}$.
    \item In the regions $\Re(s) \ge 1$ and $\Re(s) \le 0$, the $L$-functions $L(s,\sigma_1 \times \sigma_2)$ and $L(s, \gamma(\sigma_1))$ are nonzero.
    \item $L(s,\sigma_1 \times \sigma_2)$ has poles at $s=0$ and $s=1$ if and only if $m=n$ and $\sigma_2 \simeq \sigma_1^{\vee}$ (the contragredient dual of $\sigma_1$). In this case, the poles are simple.
    \item If $\sigma_1$ is not self-dual, then $L(s,\gamma(\sigma_1))$ is an entire function.
    \item If $\sigma_1$ is self-dual (i.e., $\sigma_1 \simeq \sigma_1^{\vee}$), exactly one of $L(s,\Sym^2(\sigma_1))$ and $L(s,\wedge^2(\sigma_1))$ has simple poles at $s=0$ and $s=1$.
\end{itemize}
\end{fac}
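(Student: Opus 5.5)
The statement collects standard analytic results, so I plan to assemble it from the literature rather than derive anything new. I will treat the Rankin--Selberg functions $L(s,\sigma_1\times\sigma_2)$ and the square functions $L(s,\gamma(\sigma_1))$ separately, and within each family verify the five kinds of assertion in order: meromorphy, the functional equation, location of possible poles, non-vanishing in $\Re(s)\ge 1$ and $\Re(s)\le 0$, and the pole criterion.

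\emph{Rankin--Selberg functions.} I will use the Jacquet--Piatetski-Shapiro--Shalika integrals \cite{JSS83} together with the Langlands--Shahidi method \cite{Sha88}.
\begin{enumerate}
\item Meromorphic continuation and the functional equation relating $s$ and $1-s$ (with $\sigma_i$ replaced by $\sigma_i^\vee$) follow from \cite{JSS83} once the archimedean local factors are matched with the Langlands--Shahidi normalization.
\item Boundedness in vertical strips, plus the fact that poles of the global integral can only come from the mirabolic Eisenstein series, show that poles can occur only at $s=0,1$.
\item Those poles occur exactly when $m=n$ and $\sigma_2\simeq\sigma_1^\vee$, and then they are simple. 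This is the residue computation of Jacquet--Shalika and Moeglin--Waldspurger.
\item Non-vanishing on $\Re(s)=1$ is Shahidi's theorem \cite{Sha88}. For $\Re(s)>1$ it follows from the absolutely convergent Euler product. The region $\Re(s)\le 0$ then follows from the functional equation, because the archimedean $\Gamma$-factors have no zeros.
\end{enumerate}

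\emph{Square functions.} I realize $L(s,\wedge^2(\sigma_1))$ and $L(s,\Sym^2(\sigma_1))$ as Langlands--Shahidi $L$-functions attached to the Siegel parabolics of $\SO_{2n}$ and $\SO_{2n+1}$ (respectively $\Sp_{2n}$) with inducing data $\sigma_1$.
\begin{enumerate}
\item This gives meromorphy, the functional equation and non-vanishing on $\Re(s)\ge 1$ by \cite{Sha88}, and non-vanishing on $\Re(s)\le0$ again by the functional equation.
\item Holomorphy away from $\{0,1\}$ is the one genuinely delicate point. I will take it from \cite[Theorem~1.3]{Gr11}, which handles the completed functions, including the ramified and archimedean normalizations.
\item For non-self-dual $\sigma_1$, the factorization
\[
L(s,\sigma_1\times\sigma_1)=L(s,\Sym^2(\sigma_1))\,L(s,\wedge^2(\sigma_1))
\]
combines with the Rankin--Selberg part as follows. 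The left side is entire by the pole criterion. Both factors on the right are holomorphic away from $\{0,1\}$ and non-zero at $s=0,1$, since these points lie in the non-vanishing region. Hence neither factor can have a pole there, so both are entire.
\item If $\sigma_1$ is self-dual, $L(s,\sigma_1\times\sigma_1)$ has simple poles at $0$ and $1$. By the same factorization and non-vanishing, exactly one of the two factors carries each pole, and the pole is simple.
\item Which factor it is does not depend on the point. The functional equation exchanges $0$ and $1$ within each function, so the factor with a pole at $1$ is also the one with a pole at $0$.
\end{enumerate}

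The main obstacle is the completed (rather than partial) holomorphy in the third bullet of the statement for $\gamma(\sigma_1)$. Partial $L$-functions could acquire or lose poles through ramified local factors. I would not reprove this; I would simply cite \cite{Gr11}, the only place where it is handled carefully.
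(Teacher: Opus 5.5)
Your plan is correct and is essentially the paper's own approach. The paper gives no argument beyond citing \cite{JSS83, Sha88} and \cite[Theorem~1.3]{Gr11}, and your only addition (deriving the last two bullets from $L(s,\sigma_1\times\sigma_1)=L(s,\Sym^2(\sigma_1))\,L(s,\wedge^2(\sigma_1))$, the Rankin--Selberg pole criterion, and non-vanishing of each factor at $s=0,1$) is sound.

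One caveat applies equally to the paper's formulation. The claims ``holomorphic off $\{0,1\}$'' and ``entire when not self-dual'', and hence your step that $L(s,\sigma_1\times\sigma_1)$ is entire for non-self-dual $\sigma_1$, require the central characters to be normalized to be trivial on $\RR_{>0}$. For instance, $\sigma_2=\sigma_1^{\vee}\otimes|\det|^{it}$ with $t\neq 0$ gives $L(s,\sigma_1\times\sigma_2)$ poles at $s=-it$ and $s=1-it$.
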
 

We also employ the following elementary facts concerning the decomposition of representations of $\SL_2(\CC)$ for various algebraic manipulations:
\begin{itemize}
    \item $\Sym^2([i]) = [2i] \oplus [2i-4] \oplus [2i-8] \oplus \cdots$
    \item $\wedge^2([i]) = [2i-2] \oplus [2i-6] \oplus [2i-10] \oplus \cdots$
    \item $[i] \otimes [j] = \bigoplus_{k=0}^{\min\{i, j\}} [i+j-2k]$
\end{itemize}
In what follows, we denote $\mc{B}_{k,\psi}^{\ve}$ and $\mc{FJ}_{k,\psi}^1$ simply by $\mc{B}$ and $\mc{FJ}$, respectively, since the omitted data is clear from the context.

\subsubsection{$\SO_{2n} \times \SO_3$ case}
\begin{thm}\label{thm:main-SO}
For $n \ge 2$, let $M$ be a discrete global $A$-parameter of $\SO_{2n}$ and let $N = [1]$ be the non-tempered discrete global $A$-parameter of $\SO_3$. Then the following assertions hold:
\begin{enumerate}
    \item Suppose that $\pi \in \Pi_M$ is cuspidal and $\pi_v$ is $\mu_{0,v}$-generic for some finite place $v$ of $F$. If the Bessel period $\mc{B}$ on $\pi$ (i.e., on $\pi \boxtimes \II$) is nonzero, then $M$ is tempered and the pair $(M, N)$ is relevant.
    
    \item Suppose that $M$ is tempered. Then there exists $\pi \in \Pi_M$ such that $\mc{B}(\pi) \neq 0$ if and only if $L(0,M,N) $ is nonzero.
\end{enumerate}
\end{thm}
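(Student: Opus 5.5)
Part (i) runs through Corollary~\ref{a2} and Rallis' inner product formula; part (ii) through Theorem~\ref{t2} combined with global descent.

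\textbf{Part (i).} Let $\pi \in \Pi_M$ be cuspidal, with $\pi_v$ $\mu_{0,v}$-generic at some finite place, and assume $\mc{B}(\pi)\neq 0$.

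1. By Corollary~\ref{a2} with $\ve=0$, the theta lift $\Theta_{\psi,V_n^0,W_{n-1}}(\pi)$ to $\Sp_{2n-2}(\A)$ is nonzero, cuspidal and $(\psi,\lambda_0)$-generic.
2. Proposition~\ref{c1} applied at $v$ kills every lift to $W_k$ with $k<n-1$. So $n-1$ is the first occurrence index.
3. The implication (v)$\Rightarrow$(i) of Theorem~\ref{t2} does not use genericity (Remark~\ref{non-ge}). Hence $L(s,\pi)=L(s,M)$ has a pole at $s=1$.
4. Let $\sigma=\Theta(\pi)$ on $\Sp_{2n-2}$. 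It is cuspidal and globally generic, so its $A$-parameter $M_\sigma$ is the functorial lift of $\sigma$ to $\GL_{2n-1}$, which is tempered as a parameter. The Satake parameters of a theta lift in the first occurrence range give $M = M_\sigma \oplus \II$ at almost all places.
5. By strong multiplicity one for isobaric sums, this identity holds as parameters. Thus $M$ is tempered and contains $\II\boxtimes[0]$.
6. Relevance with $N=[1]$ then holds: pair $\II\boxtimes[0]$ with $[1]$, and pair each other summand $M_i\boxtimes[0]$ with $M_i\boxtimes[-1]=0$, the differences being $1$ in every case.

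The step I expect to be the main obstacle is 4--5, identifying $M$ from $\sigma$. I would first use Rallis' inner product formula together with the pole of $L(s,M)=\prod_i L(s,M_i)$. By Fact~\ref{fac1}, this pole forces some $M_i$ to equal $\II$, and only with $d_i=0$: a summand $\II\boxtimes[d]$ with $d\ge 2$ would create a higher pole incompatible with step 3 and the first occurrence at $n-1$. Any remaining non-tempered summand would then be excluded by the genericity of $\sigma$ via the theta correspondence of Satake parameters.

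\textbf{Part (ii), setting up the $L$-value.} Take $M$ tempered. If $M$ does not contain $\II$, then $(M,N)$ is not relevant and both sides fail; in particular $L(s+\tfrac12, M\times[1])=L(s+1,M)L(s,M)$ is regular and $L(0,M,N)$ vanishes because of the denominator pole discussed below. So assume $\II\subset M$. Then
\[
L(s,M,N)=\frac{L(s+1,M)\,L(s,M)}{L(s+1,M,\wedge^2)\,L(s+1,[1],\Sym^2)}.
\]
I would evaluate this at $s=0$ using Fact~\ref{fac1} and the decomposition $\Sym^2[1]=[2]$, which gives $L(s+1,[1],\Sym^2)=\zeta(s+2)\zeta(s+1)\zeta(s)$. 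Counting orders of poles at $s=0$ through $\wedge^2 M$ (cross terms $\II\otimes M_j$ and self terms), I expect to find that $L(0,M,N)\neq0$ if and only if $L(s,M)$ has a pole at $s=1$.

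\textbf{Part (ii), the two directions.} For ($\Rightarrow$), apply part (i) to the given $\pi\in\Pi_M$. Local genericity is available because tempered packets contain a generic member at a finite place. Step 3 then gives the pole of $L(s,M)$ at $s=1$. For ($\Leftarrow$), use Ginzburg--Rallis--Soudry descent to produce a globally $\mu_0$-generic cuspidal $\pi'\in\Pi_M$ for a suitable type $(d,c)$ (Remark~\ref{type}). Then Theorem~\ref{t2}, (i)$\Rightarrow$(iii), gives $\mc{B}(\pi')\neq0$.
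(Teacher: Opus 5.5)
Your route matches the paper's, but three steps need fixing, and one of them fails as written.

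\textbf{Part (i).} You use Corollary~\ref{a2} and Proposition~\ref{c1} to put the first occurrence at $n-1$ with a generic cuspidal lift $\sigma$ to $\Sp_{2n-2}$, then identify $M$ via the unramified theta correspondence and disjointness of packets. This is what the paper does. However, the identity in step~4 is wrong. For $V_n^0$ of discriminant $d$, the unramified correspondence twists by $\chi_d$, and the correct relation is $M=\II\oplus\chi_d\cdot M_\sigma$, as in the paper. An $A$-parameter of $\SO_{2n}$ of type $(d,c)$ has determinant $\chi_d$, while one of $\Sp_{2n-2}$ has trivial determinant. So your $M_\sigma\oplus\II$ is false whenever $d\notin F^{\times 2}$. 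Temperedness and relevance do not see the twist, so the conclusion of (i) survives.

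You should also drop the ``main obstacle'' paragraph. Steps~4--5 already determine $M$, and temperedness comes for free from the genericity of $\sigma$. The detour is also not correct as written: $L(s,M)=\prod_i L(s,M_i)$ holds only when all $d_i=0$. Moreover, using $L(s,\pi)=L(s,M)$ via Proposition~\ref{eql} requires a hypothesis on $M$ that you do not yet have in (i).

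\textbf{Part (ii), direction $(\Rightarrow)$.} This is the step that fails as written. You invoke part (i) and justify its local genericity hypothesis by saying that tempered packets contain a generic member at a finite place. That says nothing about the given $\pi$: it is an arbitrary member of $\Pi_M$ and need not be the generic member of $\Pi_{\wt M_v}$ at any place.

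The hypothesis is nevertheless true, for a different reason. At almost every $v$, $\pi_v$ is the unramified member of an unramified packet with almost tempered Satake parameter. By Casselman--Shalika it is then generic for the unramified Whittaker datum, and this datum is $\mu_{0,v}$ once $c,d\in\OO_v^\times$ and $\psi_v$ is unramified. You also need to record that $\pi$ is cuspidal because $M$ is tempered, since part (i) assumes cuspidality. The paper avoids part (i) at this point: it applies Theorem~\ref{t2} with Remark~\ref{non-ge} directly to the cuspidal, almost tempered $\pi$.

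\textbf{Part (ii), direction $(\Leftarrow)$.} You may not pass to ``a suitable type $(d,c)$''. The Bessel period in the statement is attached to the given type, so the descended $\pi'$ must be generic for that $\mu_0$, not for the $\mu_0$ of some other type.
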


\begin{proof}
We first prove (i). Suppose that $\mc{B}$ is nonzero on $\pi$. Choose a Witt tower of symplectic spaces $\{W_k\}_{k\ge1}$ of discriminant $d$. Then by Theorem~\ref{t2} and Remark~\ref{non-ge}, the global theta lift $\Theta_{\psi,V_n^{0},W_{n-1}}(\pi)$ of $\pi$ to $\Sp_{2n-2}(\A)$ is nonzero and $(\psi,c)$-generic. 
Furthermore, Proposition~\ref{c1} implies that $\Theta_{\psi,V_n^{0},W_{n-3}}(\pi) = 0$ since the non-vanishing of $\mc{B}$ on $\pi$ implies that $\pi_v$ is of $\mu_{n-1,0}$-type. Therefore, $n(\pi)$, the first occurrence index of $\pi$, is $n-1$ or $n-2$. Suppose that $n(\pi) = n-2$. Then $\Theta_{\psi,V_n^{0},W_{n-2}}(\pi)$ is cuspidal and irreducible. Thus, $(\Theta_{\psi,V_n^{0},W_{n-2}}(\pi))_v$ is the local theta lift $\Theta_{\psi_v,V_{n,v}^{0},W_{n-2,v}}(\pi_{v})$ of $\pi_{v}$ to $\Sp_{2n-4}(F_v)$. However, this contradicts Proposition~\ref{c1} since $\pi_{v}$ is $\mu_{0,v}$-generic by our assumption. Therefore, we must have $n(\pi) = n-1$.

Put $\sigma = \Theta_{\psi, V_n^0, W_{n-1}}(\pi)$. By Theorem~\ref{a1}, $\sigma$ is a $(\psi, c)$-generic cuspidal automorphic representation of $\Sp_{2n-2}(\A)$. Let $N'$ denote the discrete tempered $A$-parameter associated with $\sigma$. Then, by the results of Kudla on the theta correspondence for unramified representations (\cite{Ku86}), the local parameters satisfy $M_v = \II_{\GL_1, v} \oplus \chi_{d, v} \cdot N_v'$ for almost all places $v$ (cf. \eqref{sat}). 

It follows that $\pi$ belongs to both $\Pi_M$ and $\Pi_{\II_{\GL_1} \oplus \chi_d \cdot N'}$. Since any two distinct global $A$-packets are disjoint, we deduce that $M = \II_{\GL_1} \oplus \chi_d \cdot N'$. Consequently, $M$ is tempered and the pair $(M, N)$ is relevant, which completes the proof of assertion (i).

Next, we prove (ii). 

\textit{Proof of $(\Rightarrow)$:} Suppose that there exists $\pi_1 \in \Pi_M$ such that $\mc{B}$ is nonzero on $\pi_1$. Since $M$ is a tempered $A$-parameter, $\pi_1$ must be cuspidal and almost tempered. Then by Theorem~\ref{t2} and Remark~\ref{non-ge}, $L(s,\pi_1)$ has a simple pole at $s=1$. Consequently, by Proposition~\ref{eql}, it follows that $L(s,M)$ also has a pole at $s=1$. Since $M$ is an orthogonal parameter, it is straightforward to check that $\operatorname{ord}_{s=1} \big( L(s,\wedge^2(M)) \big) = 0$. Furthermore, since $\Sym^2(N) = \II_{\GL_1} \boxtimes [2]$, we have $\operatorname{ord}_{s=1} \big( L(s,\Sym^2(N)) \big) = -2$.
 
On the other hand, we have 
\[
L(s+1/2, M \times N) = L(s+1, M) \cdot L(s, M).
\]
Since $\operatorname{ord}_{s=1} \big( L(s,M) \big) = -1$, we obtain $\operatorname{ord}_{s=0} \big( L(s+1/2, M \times N) \big) = -2$. 
Therefore, by the definition of the adjoint $L$-function $L(s, M, N)$, we deduce
\begin{equation}\label{ord}
    \operatorname{ord}_{s=0} \big( L(s, M, N) \big) = -2 - (-2) = 0.
\end{equation}
This proves the forward direction of (ii).

\textit{Proof of $(\Leftarrow)$:} From the calculation above, we see that
\[
\operatorname{ord}_{s=0} \big( L(s,M,N) \big) = 2 \cdot \operatorname{ord}_{s=1} \big( L(s,M) \big) + 2.
\]
Therefore, if $L(0,M,N) \neq 0$ (meaning the order is $0$), then
\[
\operatorname{ord}_{s=1} \big( L(s,M) \big) = -1,
\]
which implies that $L(s,M)$ has a simple pole at $s=1$. 

On the other hand, let $\Pi$ be the isobaric automorphic representation of $\GL_{2n}(\A)$ corresponding to $M$. By the global descent of $\Pi$ from $\GL_{2n}(\A)$ to $\SO_{2n}(\A)$ (see \cite[Theorem~11.2]{GRS11}), we obtain a globally $\mu_0$-generic cuspidal automorphic representation $\pi_1$ of $\SO_{2n}(\A)$. 
Since $\pi_1$ has the $A$-parameter $M$, the $L$-function $L(s,\pi_1)$, which is equal to $L(s,M)$ by Proposition~\ref{eql}, has a pole at $s=1$. 
Hence, by Theorem~\ref{t2}, $\mc{B}$ is nonzero on $\pi_1$. 
This completes the proof of (ii). 
\end{proof}

\begin{rem}
When $\SO_{2n}$ is split, \cite[Theorem~1.6]{JL14} shows that $\mu_{0,v}$-genericity implies that $M$ is tempered and $\pi$ is cuspidal. Therefore, in this split case, we do not need to assume that $\pi$ is cuspidal. However, for quasi-split $\SO_{2n}$, this implication is currently unknown, and thus we have made this assumption explicit. It is also remarkable that Hazeltine, Liu, and Lo \cite[Theorem~4.6]{HLL24} proved the enhanced version of the Shahidi conjecture for local $A$-packets of $\Sp_{2n}$ and split $\SO_{2n+1}$.
\end{rem}
\subsubsection{$\Sp_{2n} \times \Mp_2$ case}
\begin{thm}\label{thm:main-Sp}
For $n \ge 1$, let $M$ be a discrete $A$-parameter of $\Sp_{2n}$ and, for $d\in F^{\times}$, let $N = \chi_d \boxtimes [1]$ be a non-tempered discrete $A$-parameter of $\Mp_2$. Then the following assertions hold:
\begin{enumerate}
    \item Suppose that $\sigma \in \Pi_M$ and $\sigma_v$ is $(\psi,1)$-generic for some finite place $v$ of $F$. If there is some $\rho \in \Pi_N$ such that the Fourier--Jacobi period $\mc{FJ}$ is nonzero on $\sigma \boxtimes \rho \boxtimes \nu_{\psi^{-1},W_{n,n-1}'}$, then the pair $(M,N)$ is relevant.
    
    \item Suppose that $M$ is tempered. Then there exist $\sigma_1 \in \Pi_M$ and $\rho_1 \in \Pi_N$ such that $\mc{FJ}$ is nonzero on $\sigma_1 \boxtimes \rho_1 \boxtimes \nu_{\psi^{-1},W_{n,n-1}'}$ if and only if $L(0,M,N) \neq 0$.
\end{enumerate}
\end{thm}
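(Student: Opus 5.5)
The plan follows the proof of Theorem~\ref{thm:main-SO}, now working with the Fourier--Jacobi machinery of Sections~\ref{sec:6}--\ref{sec:7} in the non-genuine case $\ve_\sigma=0$. The relevant tower is orthogonal, of type $(d,1)$. Since $\Pi_N=\{\omega^S_{\psi_d,W'}\}_{|S|\ \text{even}}$, each $\rho\in\Pi_N$ is a summand of the image of $\theta'_{\psi_d,W'}$. The first step is to rewrite the period. After the translate $\varphi\mapsto\varphi_{w'}$ and a translation by $\O(\wt V)(\A)$ (as in the proof of Theorem~\ref{prop2}), nonvanishing of $\mc{FJ}$ on $\sigma\boxtimes\rho\boxtimes\nu_{\psi^{-1},W'}$ is equivalent to nonvanishing of $\mc{FJ}^{1}_{n-1,\psi}$ on $\sigma\boxtimes\omega_{\psi_d,W'}\boxtimes\nu_{\psi^{-1},W'}$. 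This is condition (iii) of Theorem~\ref{prop2}.

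\textbf{Part (i).} By Theorem~\ref{b10}, $\Theta:=\Theta_{\psi^{-1},W_n,V_n^0}(\sigma)$ is nonzero and $\mu_0$-generic. The local genericity of $\sigma_v$, together with Proposition~\ref{con} (taking $t=n$), excludes any earlier occurrence in the tower $V_k^0$, $k<n$. Hence $\Theta$ is cuspidal by the tower property, exactly as in Corollary~\ref{e2}. Let $M'$ be the $A$-parameter of $\Theta$, a parameter of $\SO_{2n}$ of type $(d,1)$. Kudla's unramified computation \cite{Ku86} and \eqref{sat} give $M'_v=M_v\oplus\chi_{d,v}$ for almost all $v$, because first occurrence is at $\dim V=2n=\dim W$. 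Disjointness of global packets then forces $M'=M\oplus\chi_d$, so $\chi_d$ does not occur in $M$. To obtain relevance, note that $N=\chi_d\boxtimes[1]$ and $M$ contains no $\chi_d\boxtimes[0]$ or $\chi_d\boxtimes[2]$ summand. Both conditions follow if $M$ is tempered; in general I would use Theorem~\ref{prop2} (v)$\Rightarrow$(i) together with Remark~\ref{non-ge1}, namely that $L(s,M\times\chi_d)$ has a pole at $s=1$. Since $M$ is symplectic, this pole forces $\chi_d\boxtimes[0]$ to be a summand of $M$. With $n_p=0$ and $m=1$, the pair is then relevant. I would present the argument in this order: first the pole of $L(s,\sigma\times\chi_d)$ from (v)$\Rightarrow$(i), then read off the summand $\chi_d$, and finally match the remaining summands with $[-1]$.

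\textbf{Part (ii).} I would carry out the order computation used for \eqref{ord}. Here $\Sym^2(N)=\II\boxtimes[2]$, which contributes order $-2$ at $s=0$ in the denominator term $L(s+1,N,\Sym^2)$. For tempered symplectic $M$, $L(s+1,M,\wedge^2)$ is holomorphic and nonzero at $s=0$. The numerator is $L(s+\tfrac12,M\times N)=L(s+1,M\times\chi_d)L(s,M\times\chi_d)$. By the functional equation, its order at $s=0$ equals $2\operatorname{ord}_{s=1}L(s,M\times\chi_d)$. Therefore $L(0,M,N)\ne0$ if and only if $L(s,M\times\chi_d)$ has a (simple) pole at $s=1$.

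For $(\Rightarrow)$: tempered $M$ makes $\sigma_1$ cuspidal and almost tempered, so Remark~\ref{non-ge1} with (iii)$\Rightarrow$(ii)$\Rightarrow$(i) of Theorem~\ref{prop2} gives the pole.

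For $(\Leftarrow)$: I would apply global descent \cite{GRS11} to the isobaric lift of $M$ to $\GL_{2n+1}$, which yields a $(\psi,1)$-generic cuspidal $\sigma_1\in\Pi_M$. Then (ii)$\Rightarrow$(iii) of Theorem~\ref{prop2} produces nonvanishing of the period on $\sigma_1\boxtimes\omega_{\psi_d,W'}\boxtimes\nu$, and this nonvanishing lands on some summand $\rho_1=\omega^S_{\psi_d,W'}$ of the relevant form.

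The main obstacle is the first step. It requires checking carefully that the period with $\rho$ an element of $\Pi_N$ matches the version with $\theta'_{\psi_d,W'}$ appearing in Theorem~\ref{prop2}, including the handling of the $\varphi_{w'}$ conjugation and the restriction to the even-$|S|$ summands. A second, lesser obstacle is the relevance argument in part (i) when $M$ is not assumed tempered.
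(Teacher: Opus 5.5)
There is a genuine gap in part (i), in two places.

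\textbf{Cuspidality is missing.} Part (i) does not assume $\sigma$ cuspidal. Yet everything you invoke requires it: the absolutely convergent period, Theorem~\ref{b10}, and Proposition~\ref{con} together with the tower property. The paper settles this at the outset with \cite[Theorem~1.3]{Liu11}. Genericity of $\sigma_v$ at a finite place forces $M$ to be tempered, so $\sigma$ is cuspidal and almost tempered. Your argument has no substitute for this input.

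\textbf{The unramified relation is backwards.} $M$ is a $(2n+1)$-dimensional \emph{orthogonal} parameter, while $M'$ is $2n$-dimensional, so $M'=M\oplus\chi_d$ is impossible. First occurrence at $\dim V=\dim W$ gives instead $M=\chi_d\oplus(\chi_d\otimes M')$. Thus $\chi_d$ \emph{does} occur in $M$; your claim that it does not also contradicts your next step. Since $M'$ is tempered ($\pi$ is $\mu_0$-generic cuspidal), relevance is then immediate: $\chi_d\boxtimes[0]$ pairs with $\chi_d\boxtimes[1]$, and every other summand is of the form $\boxtimes[0]$, paired with $[-1]$.

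Your substitute, reading relevance off the pole of $L(s,M\times\chi_d)$ at $s=1$, fails without temperedness. A summand $\chi_d\boxtimes[2]$ already produces a (double) pole there. Moreover, matching the remaining summands with $[-1]$ requires them to be of the form $\boxtimes[0]$, which is exactly what you have not shown. So your ``lesser obstacle'' is where the argument breaks.

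Conversely, your ``main obstacle'' is immediate. The image of $\theta'_{\psi_d,W'}$ is $\bigoplus_{|S|\ \mathrm{even}}\omega^S_{\psi_d,W'}$, which is the direct sum of the members of $\Pi_N$. Hence nonvanishing on some $\rho\in\Pi_N$ is equivalent to nonvanishing on $\sigma\boxtimes\omega_{\psi_d,W'}\boxtimes\nu_{\psi^{-1},W'}$, and no $\varphi_{w'}$ or $\O(\wt V)$ manipulation is needed.

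Part (ii) follows the paper's argument, and descending from $\GL_{2n+1}$ is correct. A few small fixes are needed:
\begin{itemize}
\item For $(\Rightarrow)$ the chain is (iii)$\Rightarrow$(iv)$\Rightarrow$(v)$\Rightarrow$(i) of Theorem~\ref{prop2}, combined with Proposition~\ref{eql}.
\item For $(\Leftarrow)$ the chain is (i)$\Rightarrow$(ii)$\Rightarrow$(iii) of Theorem~\ref{prop2}, again combined with Proposition~\ref{eql}.
\item $L(s+1,M,\wedge^2)$ is regular and nonzero at $s=0$ because $M$ is orthogonal, not symplectic.
\end{itemize}
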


\begin{proof}
Before proceeding with the proof, we first note that since $\omega_{\psi_d,W_{n,n-1}'}$ is the direct sum of all members of $\Pi_N$, for any $\sigma \in \Pi_M$, the condition that there exists some $\rho \in \Pi_N$ such that $\mc{FJ}$ is nonzero on $\sigma \boxtimes \rho \boxtimes \nu_{\psi^{-1},W_{n,n-1}'}$ is equivalent to $\mc{FJ}$ being nonzero on $\sigma \boxtimes \omega_{\psi_d,W_{n,n-1}'} \boxtimes \nu_{\psi^{-1},W_{n,n-1}'}$.

We first prove (i). By \cite[Theorem~1.3]{Liu11}, the local genericity of $\sigma_v$ implies the temperedness of $M$. Therefore, $\sigma$ is cuspidal.

Suppose that $\mc{FJ}$ is nonzero on $\sigma \boxtimes \rho \boxtimes \nu_{\psi^{-1},W_{n,n-1}'}$ for some $\rho \in \Pi_N$. Choose a Witt tower of even-dimensional orthogonal spaces $\{V_{k}^0\}_{k\ge1}$ of type $(d,1)$. By Theorem~\ref{prop2}, the global theta lift $\Theta_{\psi^{-1},W_n,V_{n}^0}(\sigma)$ of $\sigma$ to $\SO_{2n}(\A)$ is nonzero and $\mu_{0}$-generic. 
Furthermore, Proposition~\ref{con} implies that $\Theta_{\psi^{-1},W_n,V_{n-2}^0}(\sigma)=0$ because $\mc{FJ}$ is nonzero on $\sigma \boxtimes \omega_{\psi_d,W_{n,n-1}'} \boxtimes \nu_{\psi^{-1},W_{n,n-1}'}$.
Therefore, the first occurrence index $n(\sigma)$ of $\sigma$ is $n$ or $n-1$. Suppose that $n(\sigma)=n-1$. Then $\Theta_{\psi^{-1},W_n,V_{n-1}^0}(\sigma)$ is cuspidal and irreducible. Thus, $(\Theta_{\psi^{-1},W_n,V_{n-1}^0}(\sigma))_v$ is the local theta lift $\Theta_{\psi_v^{-1},W_{n,v},V_{n-1,v}^0}(\sigma_v)$ of $\sigma_{v}$ to $V_{n-1,v}^0$. However, this contradicts Proposition~\ref{con} since $\sigma_{v}$ is $(\psi,1)$-generic by our assumption. Therefore, we must have $n(\sigma)=n$.

Set $\pi = \Theta_{\psi^{-1}, W_n, V_n^0}(\sigma)$, and let $M'$ denote the $A$-parameter associated with $\pi$. According to Theorem~\ref{b10}, $\pi$ is an irreducible $\mu_0$-generic cuspidal automorphic representation of $\SO_{2n}(\A)$, which implies that $M'$ is a discrete  tempered parameter of $\SO_{2n}(\A)$. By the results of Kudla on the theta correspondence for unramified representations \cite{Ku96, Ku86} and the disjointness of global $A$-packets, it follows that $M = \chi_d \oplus (\chi_d \otimes M')$ (cf. \eqref{sat}). Since $M'$ is tempered, the pair $(M, N)$ is relevant. This completes the proof of assertion~(i).

Next, we prove (ii). 

\textit{Proof of $(\Rightarrow)$:} Suppose that there exist $\sigma_1 \in \Pi_M$ and $\rho_1 \in \Pi_N$ such that $\mc{FJ}$ is nonzero on $\sigma_1 \boxtimes \rho_1 \boxtimes \nu_{\psi^{-1},W_{n,n-1}'}$. By Theorem~\ref{prop2} and Remark~\ref{non-ge1}, $L_{\psi}(s,\sigma_1 \times \chi_d)$ has a pole at $s=1$, and consequently, $L(s,M \times \chi_d)$ also has a pole at $s=1$ by Proposition~\ref{eql}. Furthermore, since $M$ is a tempered orthogonal $A$-parameter, we have $\operatorname{ord}_{s=1} \big( L(s,\wedge^2(M)) \big) = 0$.
Note that 
\[
\Sym^2(N) = \II \boxtimes \Sym^2([1]) = [2],
\]
and therefore, $\operatorname{ord}_{s=1} \big( L(s,\Sym^2(N)) \big) = -2$.
On the other hand,  
\begin{equation*}
\begin{split}
&\operatorname{ord}_{s=0} \big( L(s+1/2,M \times N) \big)  = \operatorname{ord}_{s=0} \big( L(s,M \times \chi_d) \cdot L(s+1,M \times \chi_d) \big) \\
& = 2 \cdot \operatorname{ord}_{s=0} \big( L(s+1,M \times \chi_d) \big) = -2.
\end{split}
\end{equation*}
Therefore, $L(0,M,N) \neq 0$. This proves the forward direction of (ii).

\textit{Proof of $(\Leftarrow)$:} Let $\Pi$ be the isobaric automorphic representation of $\GL_{2n}(\A)$ corresponding to $M$. By the global descent of $\Pi$ from $\GL_{2n}(\A)$ to $\Sp_{2n}(\A)$ (see \cite[Theorem~11.2]{GRS11}), we obtain a globally $(\psi,1)$-generic cuspidal automorphic representation $\sigma_1$ of $\Sp_{2n}(\A)$. From the calculation above, since $L(0,M,N) \neq 0$, the $L$-function $L_{\psi}(s,\sigma_1\times \chi_d) = L(s,M\times \chi_d)$ has a pole at $s=1$. Then, by Theorem~\ref{prop2}, $\mc{FJ}$ is nonzero on $\sigma_1 \boxtimes \omega_{\psi_d,W_{n,n-1}'} \boxtimes \nu_{\psi^{-1},W_{n,n-1}'}$. This proves the backward direction of (ii).
\end{proof}
\begin{rem}
    The non-tempered local GGP conjecture \cite[Conjecture~6.1]{GGP20} predicts that $\rho_1 \in \Pi_N$ in (ii) should be $\omega_{\psi_d,W_{n,n-1}'}^{\emptyset}$.
\end{rem}

\subsubsection{$\Mp_{2n} \times \Sp_2$ case}
\begin{thm}\label{thm:main-Mp}
For $n \ge 1$, let $M$ be a discrete $A$-parameter of $\Mp_{2n}$ and let $N= [2]$ be a non-tempered discrete $A$-parameter of $\Sp_2$. Then the following assertions hold:
\begin{enumerate}
    \item Suppose that $\sigma \in \Pi_{M}$ is cuspidal and $\sigma_v$ is $(\psi_v,1)$-generic for some finite place $v$ of $F$. If $\mc{FJ}$ is nonzero on $\sigma \boxtimes \nu_{\psi^{-1},W_{n,n-1}'}$, then $M = M' \oplus [1]$ for some discrete tempered $A$-parameter $M'$, and the pair $(M,N)$ is relevant.
    
    \item Suppose that $M = M' \oplus [1]$ for some discrete tempered $A$-parameter $M'$. Then there exists a cuspidal $\sigma_1 \in \Pi_M$ such that $\mc{FJ}$ is nonzero on $\sigma_1 \boxtimes \nu_{\psi^{-1},W_{n,n-1}'}$ if and only if $L(0,M,N) \neq 0$.
\end{enumerate}
\end{thm}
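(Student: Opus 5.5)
We follow the same template as Theorems~\ref{thm:main-SO} and~\ref{thm:main-Sp}, now with $\sigma$ genuine, so $\ve_\sigma=1$ and the relevant theta tower is the odd orthogonal tower $\{V_k^1\}$ of trivial discriminant.

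\textbf{Part (i).} Assume $\mc{FJ}$ is nonzero on $\sigma\boxtimes\nu_{\psi^{-1},W_{n,n-1}'}$.

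\begin{enumerate}
\item By the chain (iii)$\Rightarrow$(iv) of Theorem~\ref{prop2}, which uses only Theorem~\ref{b10} and no genericity of $\sigma$, the global theta lift $\Theta_{\psi^{-1},W_n,V_{n-1}^1}(\sigma)$ to $\SO_{2n-1}(\A)$ is nonzero and $\mu_1$-generic.
\item Proposition~\ref{con}, applied at the place where $\sigma_v$ is $(\psi_v,1)$-generic, forbids any occurrence of $\sigma$ on $V_k^1$ with $k<n-1$. So the first occurrence index is exactly $n-1$.
\item By Rallis' tower property, $\pi\coloneqq\Theta_{\psi^{-1},W_n,V_{n-1}^1}(\sigma)$ is then cuspidal, irreducible and globally generic.
\item Globally generic cuspidal representations of $\SO_{2n-1}$ have tempered $A$-parameter $M'$, namely the functorial lift (the paper cites \cite[Theorem~B.1]{AHKO25}).
\item Kudla's unramified computation for $(\Mp_{2n},\SO_{2n-1})$ gives Satake parameters $M_v=M'_v\oplus\{q_v^{1/2},q_v^{-1/2}\}$ at almost all $v$, i.e.\ the unramified Satake data of $M'\oplus[1]$ (cf.~\eqref{sat}). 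Disjointness of global $A$-packets then forces $M=M'\oplus[1]$.
\item Relevance with $N=[2]$ is immediate: pair $\II\boxtimes[1]$ with $\II\boxtimes[2]$ (difference $1$), and each summand of $M'$ with $[-1]=0$ in $N$.
\end{enumerate}

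\textbf{Part (ii), order computation.} With $M=M'\oplus[1]$, we have
\[
M\times N=M'\boxtimes[2]\oplus([1]\otimes[2]),\qquad [1]\otimes[2]=[3]\oplus[1].
\]
This gives $L(s+\tfrac12,M\times N)=L(s+\tfrac32,M')L(s+\tfrac12,M')L(s-\tfrac12,M')$ times the $\zeta$-factors of $[3]\oplus[1]$. For the denominator, $\Sym^2([2])=[4]\oplus[0]$, and $\wedge^2(M)=\wedge^2(M')\oplus M'\boxtimes[1]\oplus\wedge^2([1])$ with $\wedge^2([1])=[0]$. Counting poles and zeros of shifted $\zeta$ at $s=0$, together with $L(s,M')$ at $s=\tfrac12\pm1$ (with $L(\tfrac32,M')\ne0$ and $L(s,\wedge^2 M')$ regular and nonzero at $1$ since $M'$ is symplectic), should give
\[
\operatorname{ord}_{s=0}L(s,M,N)=c+\operatorname{ord}_{s=1/2}L(s,M')+\operatorname{ord}_{s=-1/2}L(s,M')
\]
for an explicit constant $c$. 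Using the functional equation, $L(0,M,N)\neq0$ should then reduce to a condition on $L(\tfrac12,M')$. The main check here is that this condition matches (v)$\Leftrightarrow$(i) of Theorem~\ref{prop2}, i.e.\ a pole of $L_\psi(s,\sigma_1)=L(s,M)=L(s,M')L(s+\tfrac12)L(s-\tfrac12)$ at $s=\tfrac32$. Indeed, $\zeta(s-\tfrac12)$ contributes a pole at $\tfrac32$ unless $L(1,M')$ or $L(2,M')$ cancels it, and temperedness rules that out. So I expect the condition to become $L(\tfrac12,M')\neq0$, which is exactly the theta-dichotomy criterion for the first occurrence of $\sigma$ at $\SO_{2n-1}$.

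\textbf{Part (ii), the two directions.}
\begin{itemize}
\item \emph{($\Rightarrow$).} Use (iii)$\Rightarrow$(v) of Theorem~\ref{prop2} (valid without genericity by Remark~\ref{non-ge1}), then the Rallis inner product / Yamana's Lemma~10.2 at the first occurrence index, to get the required $L$-value condition.
\item \emph{($\Leftarrow$).} This is the main obstacle: $\Pi_M$ contains no generic member, so Theorem~\ref{prop2} cannot be applied directly. Instead, descend $M'$ (via \cite[Theorem~11.2]{GRS11}) to a generic cuspidal $\pi$ on the appropriate $\SO_{2n-1}$ with parameter $M'$. Since $L(\tfrac12,M')\neq0$, the theta lift $\sigma_1=\Theta(\pi)$ to $\Mp_{2n}$ is nonzero, cuspidal (it is the first occurrence, by Proposition~\ref{c1} at a generic place), and lies in $\Pi_M$.
\item Applying Theorem~\ref{b10} in reverse to the generic lift $\Theta(\sigma_1)=\pi$ at $k=n-1$ (Howe duality/involutivity of the global lift) gives $\mc{FJ}\neq0$ on $\sigma_1\boxtimes\nu_{\psi^{-1},W'_{n,n-1}}$.
\end{itemize}

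Two details need care: identifying $\Theta(\Theta(\pi))$ with $\pi$ globally, and ensuring the discriminant and type conventions match $\lambda_1=1$.
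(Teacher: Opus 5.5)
Your part (i) is essentially the paper's argument: Theorems~\ref{b10}/\ref{prop2} give a nonzero generic lift to $\SO_{2n-1}$, Proposition~\ref{con} at the generic place fixes the first occurrence at $n-1$, and Kudla's unramified computation with disjointness of packets gives $M=M'\oplus[1]$. Part (ii), however, contains two genuine errors.

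\emph{The $L$-value bookkeeping is wrong.} In \eqref{clv} the orthogonal parameter carries $\wedge^2$ and the symplectic one carries $\Sym^2$. Here $M$ (a parameter for $\Mp_{2n}$) is symplectic and $N=[2]$ is orthogonal. So the denominator is $L(s+1,\Sym^2 M)\,L(s+1,\wedge^2 N)$, with $\Sym^2 M=\Sym^2 M'\oplus[2]\oplus(M'\otimes[1])$ and $\wedge^2[2]=[2]$. You used $\Sym^2 N$ and $\wedge^2 M$ instead. Moreover, since the summands of $M'$ are of symplectic type, $L(s,\wedge^2 M')$ has a pole at $s=1$ and is not regular there. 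With the correct factors, put $a=\operatorname{ord}_{s=3/2}L(s,M')$ and $b=\operatorname{ord}_{s=1/2}L(s,M')$, and note $\operatorname{ord}_{s=-1/2}L(s,M')=a$ by the functional equation. The numerator then has order $-4+2a+b$ and the denominator $(-2+a+b)+(-2)$, so $\operatorname{ord}_{s=0}L(s,M,N)=a=0$. The central value $L(\tfrac12,M')$ cancels out. Hence $L(0,M,N)\neq0$ for every $M=M'\oplus[1]$ with $M'$ tempered. It is the same condition as the simple pole of $L(s,M)=L(s,M')\zeta(s+\tfrac12)\zeta(s-\tfrac12)$ at $s=\tfrac32$, which you yourself observed is automatic. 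So the criterion $L(\tfrac12,M')\neq0$ you aim at in both directions is not the content of the theorem. The ($\Rightarrow$) direction is just (iii)$\Rightarrow$(i) of Theorem~\ref{prop2} via Remark~\ref{non-ge1}, followed by Proposition~\ref{eql}, as in the paper.

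\emph{Your ($\Leftarrow$) construction gets cuspidality backwards.} Proposition~\ref{c1}, applied to a generic $\pi$ on $\SO_{2n-1}$, only kills lifts to $\Mp_{2k}$ with $k<n-1$; it says nothing about $\Mp_{2n-2}$. By Theorem~\ref{t2} with $\ve=1$, $\pi$ has a nonzero lift to $\Mp_{2n-2}$ precisely when $L(\tfrac12,M')\neq0$. In that case its lift to $\Mp_{2n}$ is non-cuspidal by the tower property. So $\Theta(\pi)$ on $\Mp_{2n}$ is cuspidal only when $L(\tfrac12,M')=0$, the opposite of what you assume.

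Your starting premise is also wrong. For $\Mp_{2n}$ the Ramanujan-type bound only gives exponents $\le\tfrac12$ (Remark~\ref{Ram}), and $\Pi_M$ can contain globally generic cuspidal members. Already for $n=1$, the cuspidal pieces $\omega_{\psi}^{S}$, $S\neq\emptyset$, of the Weil representation lie in $\Pi_{[1]}$ and are generic. The paper uses exactly this: it takes a globally $(\psi,1)$-generic cuspidal $\sigma_1\in\Pi_M$ from Ginzburg--Rallis--Soudry descent and applies (i)$\Rightarrow$(iii) of Theorem~\ref{prop2}, the pole at $s=\tfrac32$ being automatic.

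If you want to keep a theta construction instead, work on $\O_{2n-1}$. Choose $\TT\in\mf{T}$ so that $\pi\otimes\mathrm{sgn}_{\TT}$ has a locally obstructed lift to $\Mp_{2n-2}$ at some place, using theta dichotomy for the two extensions of $\pi_v$. Its lift to $\Mp_{2n}$ is then a first occurrence, hence cuspidal and generic by Corollary~\ref{a3}. Theorem~\ref{prop2} then applies to it directly, with no need to invert Theorem~\ref{b10} through a global Howe-duality statement.
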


\begin{proof}
We first prove (i). Suppose that $\mc{FJ}$ is nonzero on $\sigma \boxtimes \nu_{\psi^{-1},W_{n,n-1}'}$.
Choose a Witt tower of odd-dimensional orthogonal spaces $\{V_{k}^{1}\}_{k \ge 1}$ of discriminant $1$. 
By Theorem~\ref{prop2}, the global theta lift $\Theta_{\psi^{-1},W_n,V_{n-1}^{1}}(\sigma)$ of $\sigma$ to $\SO_{2n-1}(\A)$ is nonzero and $\mu_{0}$-generic. 
Furthermore, Proposition~\ref{con} implies that $\Theta_{\psi^{-1},W_n,V_{n-3}^{1}}(\sigma) = 0$ because $\mc{FJ}$ is nonzero on $\sigma \boxtimes \nu_{\psi^{-1},W_{n,n-1}'}$. 
Hence, the first occurrence index $n(\sigma)$ is either $n-1$ or $n-2$. 
Suppose that $n(\sigma)=n-2$. 
Then $\Theta_{\psi^{-1},W_n,V_{n-2}^{1}}(\sigma)$ is irreducible and cuspidal. 
Consequently, $(\Theta_{\psi^{-1},W_n,V_{n-2}^{1}}(\sigma))_v$ is the local theta lift $\Theta_{\psi_v^{-1},W_{n,v},V_{n-2,v}^{1}}(\sigma_v)$ of $\sigma_v$ to $V_{n-2,v}^{1}$. 
However, this contradicts Proposition~\ref{con}, since $\sigma_v$ is $(\psi_v,1)$-generic by our assumption. 
Therefore, we conclude that $n(\sigma)=n-1$.

Set $\pi = \Theta_{\psi^{-1}, W_n, V_{n-1}^1}(\sigma)$, and let $M'$ denote the $A$-parameter associated with $\pi$. According to Theorem~\ref{b10}, $\pi$ is an irreducible $\mu_1$-generic cuspidal automorphic representation of $\SO_{2n-1}(\A)$, which implies that $M'$ is a discrete tempered $A$-parameter of $\SO_{2n-1}(\A)$. 

By Kudla's results on the theta correspondence for unramified representations \cite{Ku96, Ku86} and the disjointness of global $A$-packets, the global $A$-parameters $M$ and $M'$ satisfy the relation $M = M' \oplus [1]$ (cf. \eqref{sat}). Consequently, $(M, N)$ constitutes a relevant pair, thereby completing the proof of assertion (i).

Next, we prove (ii). 

\textit{Proof of $(\Rightarrow)$:} Suppose that there exists a cuspidal representation $\sigma_1 \in \Pi_M$ such that $\mc{FJ}$ is nonzero on $\sigma_1 \boxtimes \nu_{\psi^{-1},W_{n,n-1}'}$. Since $M = M' \oplus [1]$, by Proposition~\ref{eql}, we have
\[
L_{\psi}(s,\sigma_1) = L(s,M') \cdot L(s-1/2,\II_{\GL_1}) \cdot L(s+1/2,\II_{\GL_1}).
\]
By Theorem~\ref{prop2} and Remark~\ref{non-ge1}, $L(s,\sigma_1)$ has a simple pole at $s=3/2$, and consequently, 
\[
\operatorname{ord}_{s=3/2} \big( L(s,M') \big) = 0.
\]

Note that 
\begin{align*}
\Sym^2(M) &= \Sym^2([1]) \oplus \Sym^2(M') \oplus ([1] \otimes M') \\
&\quad = [2] \oplus \Sym^2(M') \oplus ([1] \otimes M'), \\[5pt]
\wedge^2(N) &= [2].
\end{align*}
Therefore, 
\begin{align*}
\operatorname{ord}_{s=1} \big( L(s,\Sym^2(M)) \big) &= -2 + \operatorname{ord}_{s=1/2} \big( L(s,M') \big) + \operatorname{ord}_{s=3/2} \big( L(s,M') \big), \\
\operatorname{ord}_{s=1} \big( L(s,\wedge^2(N)) \big) &= -2.
\end{align*}
On the other hand, since 
\[
M \otimes N = [1] \oplus [3] \oplus (M' \otimes [2]),
\]
we have
\begin{equation*}
\begin{split}
&\operatorname{ord}_{s=0} \big( L(s+1/2, M \times N) \big)  = -2 - 2 + 2 \cdot \operatorname{ord}_{s=3/2} \big( L(s,M') \big) + \operatorname{ord}_{s=1/2} \big( L(s,M') \big).
\end{split}
\end{equation*}
Since $\operatorname{ord}_{s=3/2} \big( L(s,M') \big) = 0$, we deduce that $L(0,M,N) \neq 0$. This proves the forward direction of (ii).

\textit{Proof of $(\Leftarrow)$:} The calculation above shows that $L(0,M,N) \neq 0$ implies $\operatorname{ord}_{s=3/2} \big( L(s,M') \big) = 0$. Let $\Pi$ be the isobaric automorphic representation $\II_{\GL_2} \times M_1 \times \cdots \times M_k$ of $\GL_{2n}(\A)$ corresponding to $M = M' \oplus [1]$. By the global descent of $\Pi$ from $\GL_{2n}(\A)$ to $\Mp_{2n}(\A)$ (see \cite[Theorem~11.2]{GRS11}), we obtain a globally $(\psi,1)$-generic cuspidal automorphic representation $\sigma_1$ of $\Mp_{2n}(\A)$. 

Note that 
\[
\operatorname{ord}_{s=3/2} \big( L(s,M) \big) = -1 + \operatorname{ord}_{s=3/2} \big( L(s,M') \big) = -1.
\]
Since $\sigma_1$ has the $A$-parameter $M$, the $L$-function $L_{\psi}(s,\sigma_1) = L(s,M)$ has a pole at $s=3/2$. Then, by Theorem~\ref{prop2}, $\mc{FJ}$ is nonzero on $\sigma_1 \boxtimes \nu_{\psi^{-1},W_{n,n-1}'}$. This proves the backward direction of (ii).
\end{proof}

\subsection*{Acknowledgements}
We are deeply indebted to the mathematical legacy of Benedict Gross and Stephen Rallis, whose pioneering works have profoundly guided the direction of this research. We express our sincere gratitude to Hiraku Atobe, Wee Teck Gan, Eyal Kaplan and Dipendra Prasad for their invaluable comments on an earlier draft of this paper. We also thank Kazuki Morimoto for bringing his  result to our attention. 

The first author thanks KAIST and Catholic Kwandong University for providing excellent working environments. He is also grateful to Dongho Byeon, Youn-Seo Choi, Youngju Choie, Wee Teck Gan, Haseo Ki, Zhengyu Mao, Sug Woo Shin, and Shunsuke Yamana for their constant support and encouragement. This work was supported by a National Research Foundation of Korea (NRF) grant (No. RS-2023-00237811). During the preparation of this manuscript, the authors utilized Google’s Gemini to improve English language clarity and assist with LaTeX formatting.
\appendix
\section{The equality of $L$-functions} \label{appendix:L-factors}

In this appendix, we provide a proof for the identity between the analytic $L$-functions defined by the doubling method and $L$-functions associated with Arthur parameters.

\begin{prop}\label{eql}
Let $G_n$ be one of the classical groups $\SO_{2n}$, $\Sp_{2n}$, $\SO_{2n+1}$, or the metaplectic group $\Mp_{2n}$. In the metaplectic case, fix the additive character $\psi$ used to define the corresponding packets and $L$-factors. Let 
\[
M = \bigoplus_{i=1}^{r} M_{i} \boxtimes [d_i], \quad d_i \ge 0
\]
be a global discrete $A$-parameter of $G_n$, where each $M_i$ is an irreducible unitary cuspidal automorphic representation of $\GL_{n_i}(\A)$. Assume that $M$ satisfies one of the following conditions:
\begin{enumerate}
    \item Each $M_{i}$ is tempered (i.e., everywhere locally tempered);
    \item For every $i$ with $d_i \ge 1$, the representation $M_i$ is tempered.
\end{enumerate}
Then for any $\pi \in \Pi_{M}$ and any automorphic character $\chi$ of $\GL_1(\A)$, we have the equality of completed $L$-functions:
\[
L(s, \pi \times \chi) = L(s, M \times \chi).
\]
\end{prop}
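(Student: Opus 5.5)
The equality is local, so the plan is to reduce it to a place-by-place statement. Both sides are Euler products: $L(s,\pi\times\chi)=\prod_v L_\psi(s,\pi_v\times\chi_v)$ by the definition in Section~\ref{ltn}, where Yamana's doubling factors are used at every place. On the other side, $L(s,M\times\chi)=\prod_v L(s,M_v\times\chi_v)$, where each factor is obtained from the local $A$-parameter via the shifted Godement--Jacquet factors $L(s+\tfrac{d_i}{2}-j,M_{i,v}\times\chi_v)$. It therefore suffices to prove that for every place $v$,
\[
L_\psi(s,\pi_v\times\chi_v)=\prod_{i=1}^r\prod_{j=0}^{d_i}L\bigl(s+\tfrac{d_i}{2}-j,\,M_{i,v}\times\chi_v\bigr)=L(s,\wt{M}_v\otimes\chi_v),
\]
that is, that the doubling $L$-factor of $\pi_v$ equals the Artin $L$-factor of the $L$-parameter $\wt{M}_v$ induced from $M_v$.

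The first step uses the compatibility of doubling factors with the local Langlands correspondence, and I would prove the local identity through it. Yamana \cite{Yam14} shows that his gcd definition agrees with the Lapid--Rallis $\gamma$-factor definition, and Gan \cite{Gan12} does the same in the metaplectic case. These $\gamma$-factors satisfy the Lapid--Rallis characterizing properties: multiplicativity, compatibility with unramified data and with $\GL_1$-twists, and the global functional equation. By the characterization results of Arthur \cite{Art13}, Atobe--Gan \cite{AG17}, Gan--Ichino \cite{GI18}, Ishimoto \cite{Ish24} and Li \cite{Li24}, the doubling $\gamma$-factor of any $\pi_v\in\Pi_{\wt{M}_v}$ equals $\gamma(s,\wt{M}_v\otimes\chi_v,\psi_v)$. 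To get from $\gamma$-factors to $L$-factors, write $\pi_v=L(\tau_1|\cdot|^{e_1},\dots,\tau_k|\cdot|^{e_k},\pi_0)$ as a Langlands quotient and apply \cite[Theorem~1]{Yam14}:
\[
L(s,\pi_v\times\chi_v)=L(s,\pi_0\times\chi_v)\prod_iL_{\mathrm{GJ}}(s+e_i,\tau_i\chi_v)L_{\mathrm{GJ}}(s-e_i,\tau_i^\vee\chi_v).
\]
The $L$-parameter of $\pi_v$ has the matching decomposition $\phi_{\pi_0}\oplus\bigoplus_i(\tau_i|\cdot|^{e_i}\oplus\tau_i^\vee|\cdot|^{-e_i})$. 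For tempered $\pi_0$, the equality $L(s,\pi_0\times\chi_v)=L(s,\phi_{\pi_0}\otimes\chi_v)$ follows from the $\gamma$-factor identity together with the standard fact that for tempered representations the $L$-factor is recovered from $\gamma$ through its poles in a half plane; this is the content of Yamana's comparison theorem. So $L(s,\pi_v\times\chi_v)$ is the Artin factor of the $L$-parameter of $\pi_v$.

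The main obstacle is to identify the $L$-parameter of $\pi_v$ with $\wt{M}_v$. An element of a local $A$-packet need not have $L$-parameter $\wt{M}_v$ in general; only one distinguished member does. This is where hypotheses (i)/(ii) are used. Under (i), or under (ii) with $d_i\ge1$, the relevant $M_{i,v}$ are tempered. The non-tempered $\SL_2$-parts then come from tempered blocks $M_{i,v}\boxtimes[d_i]$, and for an Arthur packet whose $\SL_2$-type pieces have tempered $\GL$-blocks, I would show that the packet coincides with the $L$-packet $\Pi_{\wt{M}_v}$ in the sense described in Section~\ref{subsec:local-packets}. I expect this to go via Mœglin's results on elementary packets, or, more directly, via the paper's convention that $\Pi_M\subset\bigotimes'_v\Pi_{\wt{M}_v}$. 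With that convention the step is essentially immediate. In that case the only real work is the $\gamma$-to-$L$ passage for the non-tempered members. For the components with $d_i=0$ and $M_{i,v}$ possibly non-tempered (case (ii)), the exponents satisfy $|e|<\tfrac12$, so the standard modules are irreducible and the Langlands-quotient formula of \cite{Yam14} applies verbatim. At archimedean places I would argue identically, using the archimedean Langlands classification and the corresponding results in \cite{Yam14}. Taking the product over all $v$ then gives the global identity.
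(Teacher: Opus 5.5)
Your proposal is essentially the paper's proof. You reduce to the local identity, treat tempered members via equality of the doubling and Galois $\gamma$-factors plus separation of poles, and treat the general case via Yamana's multiplicativity for Langlands quotients, identifying the $L$-parameter of $\pi_v$ with $\wt{M}_v$ through the convention $\Pi_M\subset\bigotimes'_v\Pi_{\wt{M}_v}$ exactly as the paper does. Drop the suggested M{\oe}glin alternative, though: local $A$-packets with tempered $\GL$-blocks need not coincide with $L$-packets. For instance, for $\Mp_2$ and $M=[1]$ the local $A$-packet at a finite place contains the odd Weil representation, whose doubling $L$-factor is $\zeta_v(s+\tfrac12)$ rather than $\zeta_v(s+\tfrac12)\zeta_v(s-\tfrac12)$, so that convention is genuinely needed for the statement, not just a shortcut.
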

\begin{proof}
 As the global $L$-function identity follows by taking the product over all places $v$, it suffices to show the identity for the local $L$-factors at each place $v$ of $F$:
\[
L(s, \pi_v \times \chi_v) = L(s, M_{v} \times \chi_v),
\]
where $M_{v}$ (and $M_{i,v}$) is viewed as a representation of the Weil--Deligne group $WD(F_v) \times \SL_2(\CC)$ via the local Langlands correspondence (LLC) for $\GL_n$, and $L(s, \pi_v \times \chi_v)$ is the local $L$-factor defined via the doubling method \cite{LR05, Yam14}. We divide the proof into two cases, depending on which condition in the statement $M$ satisfies:
\begin{itemize}
    \item \textbf{Case (i)} 
    
   \subsubsection*{Step 1: The Tempered Case}
Assume that the local $A$-parameter $M_v$ is tempered (i.e., $d_i = 0$ for all $i$). Consequently, its associated local $L$-parameter $\wt{M}_v$ coincides with $M_v$ and is therefore a bounded representation of $WD(F_v)$, since each $M_{i,v}$ is tempered. By the local Langlands correspondence (LLC) for $G_n$ established in \cite{Art13, AG17, GI18, Ish24, Li24}, any representation $\pi_v \in \Pi_{\wt{M}_v}$ is tempered, and we have the following equality of local gamma factors:
$$\gamma(s, \pi_v \times \chi_v, \psi_v) = \gamma(s, \wt{M}_v \times \chi_v, \psi_v).$$
Here, $\gamma(s, \pi_v \times \chi_v, \psi_v)$ is the standard $\gamma$-factor defined by Lapid--Rallis \cite{LR05} via the doubling method, and $\gamma(s, \wt{M}_v \times \chi_v, \psi_v)$ is Tate's $\gamma$-factor as defined in \cite{Ta79}. Recall that the $\gamma$-factor relates to the $L$-factors and the epsilon factor via the identity:
$$\gamma(s, \pi_v \times \chi_v, \psi_v) = \varepsilon(s, \pi_v \times \chi_v, \psi_v) \frac{L(1-s, \pi_v^{\vee} \times \chi_v^{-1})}{L(s, \pi_v \times \chi_v)}.$$
For a tempered representation $\pi_v$, the poles of $L(s, \pi_v \times \chi_v)$ are confined to the half-plane $\operatorname{Re}(s) \le 0$, whereas the poles of $L(1-s, \pi_v^{\vee} \times \chi_v^{-1})$ lie in $\operatorname{Re}(s) \ge 1$. Because there is no cancellation of poles between the numerator and the denominator, the $L$-factor $L(s, \pi_v \times \chi_v)$ is uniquely determined as the inverse of the polynomial part of the numerator of $\gamma(s, \pi_v \times \chi_v, \psi_v)^{-1}$. Therefore, the equality of the gamma factors guarantees the equality of the $L$-factors:
$$L(s, \pi_v \times \chi_v) = L(s, \wt{M}_v \times \chi_v) = L(s, M_v \times \chi_v).$$

    \subsubsection*{Step 2: The General Case}
    Now consider the general case where $M_v = \bigoplus_{i=1}^r M_{i,v} \boxtimes [d_i]$. The associated local $L$-parameter is 
    \[
    \wt{M}_v = \bigoplus_{i=1}^r \bigoplus_{j=0}^{d_i} M_{i,v} |\cdot|^{\frac{d_i}{2}-j}.
    \]
    Since each $M_i$ is self-dual, its local component $M_{i,v}$ is also self-dual. Therefore, we can rewrite $\wt{M}_v$ as:
    \begin{equation*}
    \wt{M}_v = \bigoplus_{i=1}^{e} N_{i,v} |\cdot|^{s_i} \oplus N_{v} \oplus \bigoplus_{i=1}^{e} N_{i,v}^{\vee} |\cdot|^{-s_i},
    \end{equation*}
    where $N_{i,v}$ are irreducible tempered representations, $s_1 \ge \dots \ge s_e > 0$, and $N_v$ is a tempered $L$-parameter. 

    By the definition of $L$-factors for $A$-parameters, we have:
    \begin{align*}
    L(s, M_v \times \chi_v) &= \prod_{i=1}^r \prod_{j=0}^{d_i} L\left(s + \frac{d_i}{2} - j, M_{i,v} \times \chi_v\right) \\
    &= L(s, N_v \times \chi_v) \prod_{i=1}^e \left( L(s+s_i, N_{i,v} \times \chi_v) \cdot L(s-s_i, N_{i,v}^{\vee} \times \chi_v) \right).
    \end{align*}

    On the other hand, by the LLC for $G_n$, the local $L$-packet $\Pi_{\wt{M}_v}$ consists of the Langlands quotients $L(N_{1,v} |\cdot|^{s_1}, \dots, N_{e,v} |\cdot|^{s_e}, \sigma_0)$ where $\sigma_0 \in \Pi_{N_v}$. By the multiplicative property of the doubling $L$-factors \cite[Theorem~7.1]{Yam14}, we have:
    \[
    L(s, \pi_v \times \chi_v) = L(s, \sigma_0 \times \chi_v) \prod_{i=1}^e \left( L(s+s_i, N_{i,v} \times \chi_v) \cdot L(s-s_i, N_{i,v}^{\vee} \times \chi_v) \right).
    \]
    Since $N_v$ is tempered, Step 1 implies $L(s, \sigma_0 \times \chi_v) = L(s, N_v \times \chi_v)$. Thus, 
    \[
    L(s, \pi_v \times \chi_v) = L(s, M_v \times \chi_v).
    \]\vspace{0.1cm}

    \item \textbf{Case (ii)}
    
    Without loss of generality, we may assume that
    \[
    M_v = \Big(\bigoplus_{i=1}^f M_{i,v}\boxtimes [d_i]\Big) \oplus \bigoplus_{j=f+1}^r M_{j,v},
    \]
    where $d_i \ge 1$ and $M_{i,v}$ are tempered self-dual representations for $i \le f$, and $M_{j,v}$ are almost tempered self-dual representations for $j > f$. 

    By the Langlands classification for general linear groups, since each $M_{j,v}$ is almost tempered and self-dual representation, it can be written as the Langlands quotient
    \[
    M_{j,v} = L\big( \tau_{j,1}|\cdot|^{a_{j,1}}, \dots, \tau_{j,k_j}|\cdot|^{a_{j,k_j}}, \tau_{j,0}, \tau_{j,k_j}^{\vee}|\cdot|^{-a_{j,k_j}}, \dots, \tau_{j,1}^{\vee}|\cdot|^{-a_{j,1}} \big),
    \]
    where each $\tau_{j,t}$ ($1 \le t \le k_j$) is a tempered representation of $\GL_{m_{j,t}}(F_v)$ with $1/2 > a_{j,1} \ge \dots \ge a_{j,k_j} > 0$, and $\tau_{j,0}$ is a (possibly trivial) tempered self-dual representation.

    Consequently, the local $L$-parameter $\wt{M}_v$ associated with $M_v$ can be rearranged into the following symmetric form:
    \[
    \wt{M}_v = \bigoplus_{i=1}^{e'} N_{i,v} |\cdot|^{s_i'} \oplus N_{v} \oplus \bigoplus_{i=1}^{e'} N_{i,v}^{\vee} |\cdot|^{-s_i'},
    \]
    where $N_{i,v}$ are irreducible tempered representations, $s_1' \ge \dots \ge s_{e'}' > 0$, and $N_v$ is a tempered $L$-parameter. Here, the positive exponents $\{s_i'\}$ and their corresponding representations $N_{i,v}$ arise either from the $A$-parameter segments $[d_i]$ with $d_i \ge 1$, or from the Langlands exponents $a_{j,t}$ and representations $\tau_{j,t}$ of the almost tempered components $M_{j,v}$ for $j > f$. The central tempered parameter $N_v$ absorbs all the exponent-zero components of the $A$-parameters as well as the central tempered representations $\tau_{j,0}$.

    By the definition of $L$-factors for $A$-parameters and the standard properties of local $L$-factors for Langlands quotients, the $L$-factor of $M_v$ factorizes exactly according to this symmetric decomposition. Specifically, we have:
    \begin{align*}
    L(s, M_v \times \chi_v) &= \prod_{i=1}^f \prod_{l=0}^{d_i} L\left(s + \frac{d_i}{2} - l, M_{i,v} \times \chi_v\right) \cdot \prod_{j=f+1}^r L(s, M_{j,v} \times \chi_v) \\
    &= \prod_{i=1}^f \prod_{l=0}^{d_i} L\left(s + \frac{d_i}{2} - l, M_{i,v} \times \chi_v\right) \\
    &\quad \cdot \prod_{j=f+1}^r \left( L(s, \tau_{j,0} \times \chi_v) \prod_{t=1}^{k_j} L(s+a_{j,t}, \tau_{j,t} \times \chi_v) L(s-a_{j,t}, \tau_{j,t}^{\vee} \times \chi_v) \right) \\
    &= L(s, N_v \times \chi_v) \prod_{i=1}^{e'} \left( L(s+s_i', N_{i,v} \times \chi_v) \cdot L(s-s_i', N_{i,v}^{\vee} \times \chi_v) \right).
    \end{align*}
    Following the exact same procedure as in Step 2 of Case (i), utilizing the multiplicativity of doubling $L$-factors for the standard module associated with $\wt{M}_v$ \cite[Theorem~7.1]{Yam14}, we conclude that:
    \[
    L(s, \pi_v \times \chi_v) = L(s, M_v \times \chi_v).
    \]
\end{itemize}
\end{proof}

\end{document}